\documentclass[11pt,]{article} 
\usepackage[margin=1in]{geometry} 
\usepackage{setspace}
\usepackage{amsmath}
\usepackage{amsthm}
\usepackage{amssymb}
\usepackage{dsfont}
\usepackage[mathscr]{eucal}
\usepackage{bm}
\usepackage{mathtools}
\usepackage{graphicx}
\usepackage{subcaption}
\usepackage{float}
\usepackage[inline]{enumitem}
\usepackage{url} 
\IfFileExists{microtype.sty}{\usepackage{microtype}}{}
\usepackage{relsize}
\usepackage[dvipsnames]{xcolor}
\usepackage{hyperref}
  \hypersetup{ 
      unicode=false,          
      pdftoolbar=true,        
      pdfmenubar=true,        
      pdffitwindow=false,     
      pdfstartview={FitH},    
      pdftitle={My title},    
      pdfauthor={Author},     
      pdfsubject={Subject},   
      pdfcreator={Creator},   
      pdfproducer={Producer}, 
      pdfkeywords={keyword1} {key2} {key3}, 
      pdfnewwindow=true,      
      colorlinks=true,        
      linkcolor=BrickRed,     
      citecolor=blue,         
      filecolor=magenta,      
      urlcolor=blue,          
  } 
\usepackage{booktabs} 
\usepackage{array}
\usepackage[]{cleveref}
\crefrangeformat{assumption}{assumptions~#3#1#4--#5#2#6}
\Crefrangeformat{assumption}{Assumptions~#3#1#4--#5#2#6}
\crefname{assumption}{Assumption}{Assumptions}
\Crefname{assumption}{Assumption}{Assumptions}
\usepackage{expl3}
\usepackage{color-edits}
  \definecolor{authRed}{HTML}{C0392B}
  \definecolor{authBlue}{HTML}{2980B9}
  \definecolor{authGreen}{HTML}{27AE60}
  \definecolor{authPurple}{HTML}{8E44AD}
  \definecolor{authOrange}{HTML}{D35400}
  \addauthor[CY]{cy}{authBlue}
  \addauthor[JB]{jb}{authRed}
  \addauthor[AA]{aa}{authPurple}
  \addauthor[to]{to}{authOrange}
  \addauthor[ym]{ym}{authGreen}
\usepackage[round,authoryear]{natbib}

\theoremstyle{definition}
\newtheorem{theorem}{Theorem}[section]

\newtheorem{proposition}{Proposition}[section]
\newtheorem{lemma}{Lemma}[section]
\newtheorem*{lemma*}{Lemma}
\newtheorem{assumption}{Assumption}[section]
\newtheorem{definition}{Definition}[section]

\newtheorem{remark}{Remark}[section]
\numberwithin{equation}{section}

\newcommand{\E}{\mathbb E}

\newcommand{\1}{\mathbf 1}
\newcommand{\Var}{\mathrm{Var}}
\newcommand{\Aspace}{\mathscr A}
\newcommand{\calF}{\mathcal F}
\newcommand{\calD}{\mathcal D}

\newcommand{\norm}[1]{\left\lVert #1\right\rVert}

\newcommand{\dotmu}{\dot\mu}
\newcommand{\AJ}{\textsf{AJ}}
\newcommand{\AJCov}{\textsf{AJ-Cov}}
\newcommand{\KM}{\textsf{KM}}
\newcommand{\KMCov}{\textsf{KM-Cov}}
\newcommand{\os}{\textsf{os}}
\newcommand{\cf}{\textsf{cf}}
\newcommand{\plug}{\textsf{pi}}
\newcommand{\Cox}{\textsf{Cox}}

\newcommand{\PathVar}{\operatorname{pvar}}

\newcommand{\alive}{a_{\mathrm{surv}}}
\newcommand{\ezero}{e_0}
\newcommand{\Pev}{\mathbb P_{\mathrm{ev},m}}
\newcommand{\Ptr}{\mathbb P_{\mathrm{tr},n}}
\newcommand{\Pn}{\mathbb P_n}
\newcommand{\cJ}{\mathcal J}

\newcommand{\TV}{\mathrm{TV}}
\newcommand{\PFS}{\mathrm{PFS}}
\newcommand{\OS}{\mathrm{OS}}

\newcommand*{\indep}{\mathrel{\text{{\rotatebox[origin=c]{90}{\resizebox{2.00ex}{1.55ex}{$\vDash$}}}}}}

\ExplSyntaxOn
  \NewDocumentCommand{\invsc}{m}
    {
      \tl_set:Nn \l_tmpa_tl { #1 }
      \regex_replace_all:nnN { ([A-Z]) }
        { \c{textsc}\cB\{ \c{MakeLowercase}\cB\{ \1 \cE\} \cE\} } \l_tmpa_tl
      \l_tmpa_tl
    }
\ExplSyntaxOff
\let\oldblacklozenge\blacklozenge
\renewcommand{\blacklozenge}{{%
  \mathord{\mathchoice%
    {\vcenter{\hbox{\scalebox{0.8}{$\displaystyle\oldblacklozenge$}}}}%
    {\vcenter{\hbox{\scalebox{0.8}{$\textstyle\oldblacklozenge$}}}}%
    {\vcenter{\hbox{\scalebox{0.8}{$\scriptstyle\oldblacklozenge$}}}}%
    {\vcenter{\hbox{\scalebox{0.8}{$\scriptscriptstyle\oldblacklozenge$}}}}%
  }%
}}
\let\oldlozenge\lozenge
\renewcommand{\lozenge}{{%
  \mathord{\mathchoice%
    {\vcenter{\hbox{\scalebox{0.8}{$\displaystyle\oldlozenge$}}}}%
    {\vcenter{\hbox{\scalebox{0.8}{$\textstyle\oldlozenge$}}}}%
    {\vcenter{\hbox{\scalebox{0.8}{$\scriptstyle\oldlozenge$}}}}%
    {\vcenter{\hbox{\scalebox{0.8}{$\scriptscriptstyle\oldlozenge$}}}}%
  }%
}}

  \title{\larger[1]{\textbf{Surrogate-powered Causal Inference \\ with Censored Outcomes}}} 
    \author{\smaller[1]
      {  Yaroslav Mukhin\thanks{
           We thank
           Francesca Molinari,
           Whitney Newey,
           Jana Mareckova,
           Christophe Gaillac,
           Alexander Belloni,
           James Robins,
           Helene Rytgaard,
           Sijia Li,
           Michael Lechner,
           Torben Martinussen,
           Thomas Gerds,
           Thomas Scheike,
           Mats Stensrud,
           Jaap Abbring,
           Bryan Graham,
           Caleb Miles
           for encouragement and helpful comments. All errors are our own.
         }\;\;\footnote{Corresponding author. Center for Data Science for Enterprise and Society, Departmetns of Economics and Statistics, Cornell University, Ithaca NY; \texttt{ym545@cornell.edu}. }
         \quad Tereza Oprea\footnote{Center for Applied Mathematics, Cornell University, Ithaca NY; \texttt{teo28@cornell.edu}. }
         \quad Arielle Anderer\footnote{SC Johnson College of Business, Cornell University, Ithaca, NY; \texttt{aea68@cornell.edu}. }
         \quad Christina Lee Yu\footnote{ School of Operations Research and Information Engineering, Cornell University, Ithaca NY; \texttt{cleeyu@cornell.edu}. }
         \quad Jelena Bradic\footnote{ Department of Statistics and Data Science, Center for Data Science for Enterprise and Society, Cornell University, Ithaca, NY; \texttt{jelena.bradic@cornell.edu}. }
      }
    }
\date{\smaller[1]{\texttt{Sep 27, 2026}}}

\begin{document}
\maketitle
\begin{spacing}{1.15}
\begin{abstract} 
Clinical trials with survival endpoints lose information when participants are censored before death is observed.
We develop target-preserving estimators that use posttreatment disease history, such as recurrence or progression, to recover information lost to censoring for marginal survival and restricted mean survival effects.
The difficulty is that the intermediate event is downstream of treatment: naive adjustment can change the causal estimand, and the useful information enters only through the observed coarsening.
We derive observed-data influence functions with and without recurrence history and obtain an exact gain identity.
The identity shows that efficiency improvement is driven by the censoring hazard, the split of the alive risk set into recurrence states, and the residual-survival separation between those states.
In the no-covariate illness--death model, the Aalen--Johansen estimator realizes the recurrence-augmented efficient score after standardization to the marginal target.
With covariates, correctly specified Cox--Breslow transition hazards provide a root-\(N\) plug-in benchmark, while a hazard-induced one-step estimator gives rate robustness and, under primitive transition and censoring learner rates, canonical inference with a second-order product remainder.
A semi-synthetic metastatic breast cancer study calibrated from digitized progression-free survival and overall survival curves illustrates the gain identity.
The framework applies broadly to censored time-to-event studies with informative intermediate histories.
\end{abstract} 

\noindent{\it Keywords:} censoring; intermediate event history; semiparametric efficiency; marginal survival; restricted mean survival.
\newpage

\section{Introduction}\label{sec:introduction}%

Cancer clinical trials are expensive because survival outcomes take time.
A day of an oncology trial has been estimated to cost an average of \$0.54 million.
Yet reviews report that 30--40\% of enrolled patients are censored before their survival outcome is observed
\citep{smith2024new,rosen2020censored,vitale2026censoring,moore2018estimated}.
A censored patient is not uninformative: we know that the patient survived up to the censoring time.
But the trial does not observe the terminal event that the study was designed to compare.
This loss of information matters because the sample size required to detect a treatment effect scales with the efficiency bound for the target estimand.
More information per enrolled patient means more power, shorter studies, and faster evidence.
The same statistical problem appears well beyond oncology, whenever causal effects are defined through censored event times, including public-health interventions, labor-market spells, startup lifetimes, and other duration outcomes
\citep{hernan00,honore2006bounds,honore2018new,deaner2024causal,hellmann02,alvarez2024decomposing}.

Causal inference with a survival endpoint is a compound missing-data problem.
Under the potential-outcomes framework, each unit has potential survival times $T^0,T^1$, but treatment assignment reveals at most one of them
\citep{neyman23,rubin05,rubin76,ding18}.
Survival data add a second coarsening: even the factual event time may be hidden by right censoring.
If $C$ denotes the censoring time, the observed data contain $\widetilde T=T\wedge C$ and $\Delta=I(T\le C)$ rather than necessarily observing $T$ itself
\citep{andersen12}.
Thus a survival trial has two statistical tasks.
The first is identification: the analyst needs assumptions on treatment assignment and censoring that connect the observed law to the marginal potential-outcome estimand.
The second is precision: after identification, the analyst wants to recover as much of the information lost to censoring as possible.
This paper is primarily about the second task, while also showing how richer disease history can make the censoring condition more plausible.

The guiding principle is simple: auxiliary data improve efficiency only when they recover information hidden by sampling or censoring.
If $T_i^a$ is fully observed, measuring auxiliary variables on each unit does not improve first-order efficiency for estimating a functional of the marginal distribution $P_{T^a}$.
This holds regardless of the strength of its association with $T^a_i$.
The outcome itself is already the relevant contribution to the marginal parameter.
Gains become possible when some contributions are missing or coarsened.
Then auxiliary variables improve precision by refining the conditional mean of the missing residual contribution.
This information-recovery view underlies augmented estimators for missing and censored data 
\citep{robins92,robins94,robins95,murray1996nonparametric}.

Covariate adjustment is the familiar application of this idea.
To estimate an arm-specific parameter $\theta^a$, units assigned to the opposite arm have $T_i^a$ missing.
But their baseline covariates $X_i$ are observed.
Augmenting the arm-specific estimator with $X_i$ can recover part of the missing counterfactual information and improve precision
\citep{hahn98,robins95,li2023estimating}.
This is safe for total-effect estimands because $X_i$ is measured before treatment and is therefore not a mediator.
The same $X$-based information helps estimate both $\theta^0$ and $\theta^1$.
In contrast, the observed outcome under the opposite treatment, $T_i^{1-a}$, cannot be used as ordinary auxiliary information for $\theta^a$: it is never observed jointly with $T_i^a$, and using it would effectively change the problem into estimating a treatment contrast.
The limitation in survival studies is that baseline covariates are frozen at time zero.
Long-run survival often depends on post-randomization disease events, leaving substantial conditional variation in $T^a$ even after rich baseline adjustment.

This motivates using a posttreatment intermediate event time $R^a$.
In oncology, $T^a$ and $R^a$ may be time to death and time to progression.
Knowing whether progression has occurred before a censoring time can strongly change the conditional residual-survival probability for the event $\{T^a>\tau\}$.
Our goal is to use this later disease-state information to recover power lost to censoring while preserving the marginal survival estimand.
The target remains, for example,

\begingroup
\setstretch{1}
\setlength{\jot}{0pt}
\begin{align}\label{eq:parameters}
  \theta^a_\tau=P(T^a>\tau),
  \qquad
  \mu^a_\tau=E(T^a\wedge \tau)=\int_0^\tau P(T^a>t)\,dt .
\end{align}
\endgroup
The intermediate event is not promoted to the endpoint.
It is used only as observed-data information about the terminal event.

A posttreatment variable cannot be used like a baseline covariate without changing the causal question.
The counterfactual recurrence time $R^{1-A}$ is missing by definition, and the factual recurrence time is itself observed only until death or censoring.
Thus recurrence cannot be used to borrow information across treatment arms in the same way as pretreatment $X$.
Moreover, including recurrence as a time-dependent covariate in a treatment-effect Cox model generally changes the estimand: since recurrence is affected by treatment, conditioning on it targets a conditional or direct-effect-like quantity rather than the marginal total effect on survival.
This target drift is entangled with the noncollapsibility and limited causal interpretation of hazard-ratio estimands 
\citep{hernan10,luo25}.
Our construction avoids this problem.
It uses recurrence armwise, inside the observed-data score, and then standardizes back to the marginal survival or restricted-mean estimand.

There is also an identification opportunity.
Causal survival analysis requires an assumption on the censoring mechanism.
If dropout depends on latent survival beyond the observed history used in the analysis, a marginal treatment-effect estimator can be inconsistent.
Recording posttreatment disease events expands the observed history: instead of requiring censoring to be independent of latent death given only baseline variables and being alive, one can condition on the current recurrence state.
This formulation makes explicit which disease information must be observed for the censoring condition to be plausible, while allowing for the possibility of unmeasured dropout. In oncology, where censoring bias can distort estimated treatment effects, this distinction is practically important 
\citep{rosen2020censored,vitale2026censoring}.
We focus on efficiency, but the same representation clarifies how recurrence history can relax the censoring assumption.

Our contribution is threefold.
First, we show that recurrence history has no automatic efficiency value for marginal survival inference: without coarsening or model restrictions, it cannot improve first-order efficiency.
Second, we show that right censoring changes this conclusion.
We derive the observed-data influence functions (IFs) with and without recurrence history and obtain an exact efficiency-gain identity.
The gain is large when censoring hazard is high, recurrence splits the alive risk set into well-populated states, those states have different conditional residual-survival means, and sufficient subjects remain alive at the relevant times.
Third, we develop estimators that attain the resulting efficiency bounds.
In the no-covariate illness--death model, the Aalen--Johansen (AJ) estimator realizes the recurrence-augmented efficient score after standardization to the marginal survival or restricted mean survival target.
With covariates, correctly specified Cox--Breslow transition hazards give a root-$n$ plug-in benchmark, while our one-step residual-augmented estimator accommodates flexible conditional-hazard learners.
The one-step correction has two roles: it gives a rate-robust mean when censoring is correctly learned, and it yields canonical root-$N$ inference when the transition and censoring errors satisfy the usual product-rate condition.

\subsection{Related literature}\label{subsec:related-literature}

Semiparametric missing-data theory shows how inverse weighting, augmentation, and observed-data projection recover information lost to coarsening
\citep{robins92,robins94,robins95}.
Our setting is different from a routine augmented inverse-probability-weighting problem.
The auxiliary information is a stopped posttreatment illness--death history: recurrence lies downstream of treatment, can be precluded by death, and is observed only until censoring.
The main step is therefore to project the full-data survival score $I(T>\tau)-\theta$ onto the observed filtration generated by the censored transition processes.
The task is distinct from augmenting with a fully observed covariate regression.
This projection yields an inverse-censoring-weighted martingale integral; recurrence changes the efficient influence function by replacing the alive-only residual-survival mean with the state-specific means.  
The resulting gain is therefore a structural consequence of censoring, state occupancy, and conditional-mean separation, not a generic augmentation term.

Our work is distinct from surrogate endpoint validation.
Classical surrogate endpoint methods ask whether an intermediate endpoint can validate, replace, or stand in for the clinical endpoint 
\citep{prentice1989surrogate,fleming96,buyse2000validation}.
Surrogate-index and limited-outcome methods use short-term outcomes to learn long-term effects under surrogacy, comparability, or missing-primary-outcome assumptions \citep{athey25,kallus25}.  
We do not make recurrence the endpoint and do not estimate a surrogate-index effect.
The target remains marginal survival or restricted mean survival.
Recurrence enters only armwise, inside the observed-data score, and is then integrated out.
The goal is target-preserving information recovery, not surrogate validation.

The distinction from covariate adjustment is equally important.
Baseline covariates can improve precision because they are observed before treatment and can be used in both arms without changing the causal estimand
\citep{hahn98,robins95}.
Semi-supervised learning, prediction-powered inference and its refinements have the same target-preservation logic: auxiliary predictions are useful only after a correction restores validity for the original target
\citep{zhang2022high,angelopoulos2023ppi++,zrnic24crossppi}.
Our correction takes the form of a martingale residual adjustment induced by the fitted illness--death transition law.
This construction differs from calibration of black-box predictions on unlabeled data and is essential because recurrence is a posttreatment variable.
Naively conditioning on it in a treatment-effect model can change a marginal total-effect question into a conditional or direct-effect-like question.
Furthermore, in survival settings, hazard ratios are noncollapsible and sensitive to conditioning choices
\citep{hernan10,martinussen2022causality,luo25}.

\section{Problem setup}\label{sec:problem-setup}%

Let $A\in\{0,1\}$ be treatment and let $X$ be baseline covariates.
For arm $a$, let $(T^a,R^a,C^a)$ denote the potential death time, recurrence time, and censoring time, following the potential-outcomes notation of \citet{neyman23} and \citet{rubin05}.
The semi-competing-risk convention is $R^a<T^a$ if recurrence occurs before death, while $R^a=\tau_0$ denotes no recurrence before death or the study horizon.
The factual variables are $(T,R,C)=(T^A,R^A,C^A)$, and the observed coarsening is

\begingroup
\setstretch{1}
\setlength{\jot}{0pt}
\begin{align} \label{eq:C-coarsening-map}
  (\tilde T,\Delta)&=(T\wedge C,\,I(T\le C)),
  \qquad
  (\tilde R,\Gamma)=(R\wedge \tilde T,\,I(R\le \tilde T)).
\end{align}
\endgroup
For each unit, the observed data are represented by the six-tuple $ O=(\tilde T,\Delta,\tilde R,\Gamma,A,X) $.
The theory below is developed armwise; treatment effects are obtained by applying the same construction in each randomized or ignorable treatment arm and taking contrasts.

\begin{assumption} \label{ass:time-reg}
For each arm $a$, $(T^a,R^a,C^a)$ is supported on $[0,\tau_0]^3$ and is jointly absolutely continuous on $[0,\tau_0)^3$.
The death time $T^a$ is continuous, so $P(T^a=\tau_0)=0$, while point masses at $\tau_0$ are allowed for $R^a$ and $C^a$.
\end{assumption}

For an event time $U$, write $S_U(t)=P(U>t)$, $F_U=1-S_U$, and $\lambda_U=f_U/S_U$ when a density exists.
We write $S\equiv S_T$, $G\equiv S_C$, $\lambda\equiv\lambda_T$, and $\lambda_{01}\equiv\lambda_R$ in a fixed arm.
Throughout, $\theta$ and $\mu_\tau$ denote a generic single-arm survival probability and restricted mean survival time (RMST), \eqref{eq:parameters}, $0<\tau<\tau_0$.
Censoring coarsens the endpoint without eliminating all prognostic information carried by the subject.
At the censoring time, the observed history still contains both the partial death history and the intermediate disease state.
The latter is inferentially relevant only through its effect on the conditional residual-survival mean at that time.
Hence recurrence can improve efficiency only when censoring occurs with nonnegligible probability in regions where the current disease state predicts subsequent survival.
We formalize this censoring-supported information-recovery principle and characterize when recurrence recovers information about the survival endpoint lost to censoring.

\begin{assumption} \label{ass:A-ignore}
Consistency holds, $(T,R,C)=(T^A,R^A,C^A)$ almost surely. 
Moreover, $A\indep(T^0,T^1,R^0,R^1,C^0,C^1)\mid X$, with $e\le P(A=1\mid X)\le1-e$ a.s. for some $e>0$.
\end{assumption}

\begin{assumption} \label{ass:C-ignore}
For each arm $a$, $C^a\indep (T^a,R^a)\mid X$. 
\end{assumption}

Together with Assumptions~\ref{ass:time-reg}  -- \ref{ass:A-ignore},
Assumption \ref{ass:C-ignore} identifies the recurrence and death transition hazards from the coarsened observations,
as $S^a(\tau) =E\exp\left\{-\int_0^\tau \lambda^a(t\mid X)dt\right\}$, and $ \lambda^a(t\mid X)dt =P\{t\le \tilde T<t+dt\mid A=a,\,\tilde T\ge t,\,X\} $.
Identification reduces the causal problem to an armwise censored survival problem.

\section{Surrogate-powered efficiency} \label{sec:efficiency}%

Under an unrestricted joint law, $R$ provides no first-order information for a marginal functional of fully observed $T$.

\begin{theorem}\label{thm:no-gain}
Suppose $(T,R)$ is fully observed, $P_{R\mid T}$ is unrestricted,
and $\psi(P)=\psi(P_T)$ is a scalar pathwise differentiable parameter with marginal IF $\varphi_T$.
Then
$ \varphi_{\mathrm{joint}}(T,R)=\varphi_T(T). $
\end{theorem}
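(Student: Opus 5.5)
We argue through the pathwise-derivative characterization of the influence function in the nonparametric model for the joint law $P=P_{T,R}$. Factor the joint law as $P_{T,R}=P_T\otimes P_{R\mid T}$. Because $P_{R\mid T}$ is unrestricted, the tangent space of the joint model splits into the orthogonal sum
\[
\mathcal T=\mathcal T_T\oplus\mathcal T_{R\mid T},\qquad
\mathcal T_T=\overline{\{h(T):E h(T)=0\}},\quad
\mathcal T_{R\mid T}=\overline{\{g(T,R):E[g(T,R)\mid T]=0\}}.
\]
Every regular one-dimensional submodel $P_\epsilon$ has score $s=s_T(T)+s_{R\mid T}(T,R)$ with $s_T\in\mathcal T_T$ and $s_{R\mid T}\in\mathcal T_{R\mid T}$. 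Orthogonality follows from iterated expectations: $E[h(T)g(T,R)]=E[h(T)E(g\mid T)]=0$.

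The first step is to compute the pathwise derivative along such a submodel. Since $\psi(P)=\psi(P_T)$ depends only on the $T$-marginal, and the $T$-marginal of $P_\epsilon$ has score $E[s\mid T]=s_T$, pathwise differentiability of $\psi$ on the marginal model gives
\[
\partial_\epsilon\psi(P_\epsilon)\big|_{\epsilon=0}=E[\varphi_T(T)s_T(T)].
\]
The second step is to check that $\varphi_T(T)$, viewed as a function of $(T,R)$, is a gradient in the joint model. This holds because $E[\varphi_T(T)s_{R\mid T}(T,R)]=0$ by the same conditioning argument, so
\[
E[\varphi_T(T)s]=E[\varphi_T s_T]=\partial_\epsilon\psi(P_\epsilon)\big|_{\epsilon=0}.
\]
In addition, $\varphi_T$ has mean zero and finite variance, and it lies in $\mathcal T_T\subset\mathcal T$.

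The third step is uniqueness. The canonical influence function is the unique gradient lying in the closure of the tangent space. Any other gradient differs from $\varphi_T$ by an element orthogonal to $\mathcal T$. When $P_{T}$ is also nonparametric, $\mathcal T$ equals all of $L_2^0(P)$, so that difference is zero. Hence $\varphi_{\mathrm{joint}}(T,R)=\varphi_T(T)$. If instead $P_T$ is restricted and $\varphi_T$ is understood as the efficient influence function in the marginal model, the same conclusion holds: $\varphi_T\in\mathcal T_T$, and $\mathcal T_T$ is the full marginal component of $\mathcal T$.

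The only delicate point is making sure that "unrestricted $P_{R\mid T}$" really yields the full conditional tangent set $\mathcal T_{R\mid T}$, and not merely a subset. To verify this, construct explicit submodels $dP_\epsilon=(1+\epsilon g)\,dP$ with bounded $g$ satisfying $E[g\mid T]=0$. These leave $P_T$ fixed and stay inside the model. Density of bounded functions in $L_2$ then closes the argument. Assumption~\ref{ass:time-reg} ensures these perturbations remain valid laws on $[0,\tau_0]^2$, including the point mass of $R$ at $\tau_0$.
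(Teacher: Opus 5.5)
Your proposal is correct and follows essentially the same route as the paper. Both arguments rest on two facts: the nuisance tangent space for the unrestricted $P_{R\mid T}$ is $\{g: E[g\mid T]=0\}$, and its orthogonal complement in $L^2_0$ is the set of mean-zero functions of $T$. The paper starts from the joint EIF and reasons that it must be orthogonal to the nuisance scores, hence a function of $T$. You instead verify directly that $\varphi_T$ is a joint gradient lying in the tangent space, which makes explicit the gradient-matching step the paper leaves implicit in its ``consequently''.
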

With censoring, $R$ can improve efficiency by refining the projection of the unobserved residual-survival contribution onto the history available at censoring.
The semiparametric definitions and proof are deferred to Appendix~\ref{app:full-data-efficiency};
see also the general efficiency frameworks of
\citep{ibragimov1981statistical,newey1990semiparametric,vandervaart98}.
The theorem separates the full-data and coarsened-data problems: any efficiency gain derived below arises from coarsening, not from imposing a surrogate-validity restriction on the joint law.

\subsection{Efficient influence functions}%

Fix a treatment arm and suppress $X$.
We derive observed-data scores using death history alone and using recurrence history as well.
Recurrence changes the information about subsequent death without itself being the endpoint.
We therefore track whether a patient is alive before recurrence, alive after recurrence, or dead through the illness--death process

\begingroup
\setstretch{1}
\setlength{\jot}{0pt}
\begin{align} \label{eq:S-state}
  \mathcal S(t) =0\,I(t<R\wedge T)+1\,I(R\le t<T)+2\,I(T\le t), \qquad \mathcal S(0)=0
\end{align}
\endgroup
\citep{aalen78,andersen12}.
The state space is ordered by the transitions $0\to1$, $0\to2$, and $1\to2$, where state $0$ denotes alive and recurrence-free, state $1$ denotes alive after recurrence, and state $2$ denotes death.
Thus the transition $0\to1$ updates the observed alive-state history, whereas the transitions $0\to2$ and $1\to2$ determine the death endpoint.
Censoring acts as a coarsening mechanism on this event history: it stops observation of the current alive state rather than defining an additional clinical state.

Let $\cJ=\{01,02,12\}$.
For $\ell=(j,k)\in\cJ$, write $j(\ell)=j$ and $k(\ell)=k$.
In the full data, the alive risk set at time $t$ is partitioned into those who have not yet recurred, $Y_0^*(t)=I(t\le R,\,t\le T)$, and those who have recurred, $Y_1^*(t)=I(R<t\le T)$.
Hence $Y_T^*(t)=Y_0^*(t)+Y_1^*(t)$.
The full-data transition counts distinguish recurrence before death,
death before recurrence, and death after recurrence, respectively:
$N_{01}^*(t) =I(R\le t,\,R<T)$,
$N_{02}^*(t) =I(T\le t,\,T<R)$,
$N_{12}^*(t) =I(R<T\le t)$.
The full-data recurrence risk indicator and event count are
$Y_R^*(t)=Y_0^*(t)$ and $N_R^*(t)=N_{01}^*(t)$.
Death can occur from either alive state, so
$N_T^*(t)=N_{02}^*(t)+N_{12}^*(t)$.
Finally, $N_C^*(t)=I(C\le t)$ counts censoring.

For $0\le t\le \tau$, define the augmented and marginal event-history filtrations

\begingroup
\setstretch{1}
\setlength{\jot}{0pt}
\begin{align} \label{eq:F-filtrations}
  \mathcal F^\blacklozenge(t)
  =
  \sigma\{N_T^*(s),N_R^*(s):s\le t\},
  \qquad
  \mathcal F^\bullet(t)
  =
  \sigma\{N_T^*(s):s\le t\}.
\end{align}
\endgroup
The filtration $\mathcal F^\blacklozenge(t)$ records both the recurrence and death histories, whereas $\mathcal F^\bullet(t)$ records only the death history.
Under Assumption~\ref{ass:C-ignore}, adding censoring history to the conditioning information leaves the residual-survival means below unchanged. 
For $\theta=P(T>\tau)$, recall the full-data IF $\phi^*(T)=I(T>\tau)-\theta$.
The recurrence information relevant for the marginal endpoint is the projection contrast

\begingroup
\setstretch{1}
\setlength{\jot}{0pt}
\begin{align} \label{eq:IF-contrast}
  \Delta_\theta(t)
  =
  M_\theta^\blacklozenge(t)-M_\theta^\bullet(t),
  \quad   M_\theta^\blacklozenge(t)
  =
  E\{\phi^*(T)\mid \mathcal F^\blacklozenge(t)\}, \  M_\theta^\bullet(t)
  =
  E\{\phi^*(T)\mid \mathcal F^\bullet(t)\}.  
\end{align}
\endgroup
The contrast vanishes when recurrence adds no information about residual survival beyond death history.
Under censoring, any efficiency gain depends on its value at censoring times.

\begin{assumption}  \label{ass:Markov-state}
The current state suffices for residual survival:
 for   $0\le t\le \tau$ and $j=0,1$,
$
  E \{I(T>\tau)\mid \mathcal F^\blacklozenge(t) \}
  =
  v_0(t)Y_0^*(t+)+v_1(t)Y_1^*(t+), $
with
$ v_j(t)=P\{T>\tau\mid \mathcal S(t)=j\}.
$
\end{assumption}

Under Assumption~\ref{ass:Markov-state}, recurrence affects residual
survival through the current state. Suppose the transitions
$0\to1$, $0\to2$, and $1\to2$ have state-dependent hazards
$\lambda_{01}$, $\lambda_{02}$, and $\lambda_{12}$, respectively.
For illness--death transitions $jk\in \cJ$, define the $\mathcal F^\blacklozenge$-martingales $ M^\blacklozenge_{jk}(t) =N^*_{jk}(t)-\int_0^tY_j^*(u)\lambda_{jk}(u)\,du. $
The death count $N_T^*=N^*_{02}+N^*_{12}$ has
$\mathcal F^\bullet$-martingale $ M_T^\bullet(t) =N_T^*(t)-\int_0^tY_T^*(u)\lambda_T(u)\,du, $
where $\lambda_T$ is its marginal death hazard.
The state-specific residual-survival probabilities satisfy

\begingroup
\setstretch{1}
\setlength{\jot}{0pt}
\begin{align*}%
  \frac{d}{dt}v_1(t)=\lambda_{12}(t)v_1(t),
 \qquad \mbox{and} \qquad
  \frac{d}{dt}v_0(t)
  =
  \lambda_{01}(t)\{v_0(t)-v_1(t)\}
  +
  \lambda_{02}(t)v_0(t),
\end{align*}%
\endgroup
with the terminal conditions $v_0(\tau)=v_1(\tau)=1$ and $v_2(t)=0$.
The transition hazards are otherwise unrestricted.
Under Assumption~\ref{ass:Markov-state}, and with $ v(t)=P\{T>\tau\mid T>t\} $, 

\begingroup
\setstretch{1}
\setlength{\jot}{0pt}
\begin{equation}
  \label{eq:M-blacklozenge}
  M_\theta^\blacklozenge(t)
 =
  \sum_{jk\in\cJ}\int_0^t\{v_k(s)-v_j(s)\}\,
  dM_{jk}^\blacklozenge(s)
  =
  \sum_{j=0}^1 v_j(t)Y_j^*(t+)-\theta,
\end{equation}
\begin{equation}
  \label{eq:M-bullet}
  M_\theta^\bullet(t)
  =
  \int_0^t\{0-v(s)\}\,dM_T^\bullet(s)
 =
  v(t)Y_T^*(t+)-\theta.
\end{equation}
\endgroup

The marginal analysis assigns $v(t)$ to all survivors,
whereas the recurrence-augmented analysis assigns $v_0(t)$ to recurrence-free survivors and $v_1(t)$ to post-recurrence survivors.
Their difference is

\begingroup
\setstretch{1}
\setlength{\jot}{0pt}
\begin{align}\label{eq:IF-contrast2}
\Delta_\theta(t)
=\{v_0(t)-v(t)\}Y_0^*(t+)
 +\{v_1(t)-v(t)\}Y_1^*(t+).
\end{align}
\endgroup
A gain is possible where both alive states have positive probability,
their residual-survival means differ, and censoring can occur.
The martingale property of $\Delta_\theta$ is not immediate from \eqref{eq:IF-contrast}--\eqref{eq:IF-contrast2}; the integral representations \eqref{eq:M-blacklozenge} and \eqref{eq:M-bullet} establish it.

The observed processes and filtrations are stopped at $\tilde T=T\wedge C$:

\begingroup
\setstretch{1}
\setlength{\jot}{0pt}
\begin{align}%
  \label{eq:observed-sa}
  \mathcal F^\lozenge(t)=\mathcal F^\blacklozenge(t\wedge\tilde T)
  \qquad \text{and} \qquad
  \mathcal F^\circ(t)=\mathcal F^\bullet(t\wedge\tilde T) .
\end{align}%
\endgroup
The observed at-risk and events processes are:
$ Y(t)=Y_T^*(t)Y_C^*(t), $
$ Y_j(t)=Y_j^*(t)Y_C^*(t),$
$ N_T(t)=N_T^*(t\wedge\tilde T), $
$ N_{jk}(t)=N_{jk}^*(t\wedge\tilde T). $
If death is observed, the endpoint is known.
If censoring occurs first, the observed-data IF inserts a conditional mean.
The $\mathcal F^\circ$-score inserts the alive-only conditional mean;
the $\mathcal F^\lozenge$-score inserts the state-specific conditional mean.
 
\begin{assumption} 
\label{ass:eif-gain-primitives}
Assumptions~\ref{ass:time-reg},
\ref{ass:C-ignore}, and~\ref{ass:Markov-state} hold, $ \inf_{0\le t\le \tau}G(t)>0, $ and   $M_{jk}^\blacklozenge$, $jk\in\cJ$,  $M_T^\bullet$, and the censoring martingale $M_C$ are square integrable on $[0,\tau]$.
\end{assumption}

Under Assumptions~\ref{ass:time-reg} and~\ref{ass:C-ignore}, censoring is orthogonal to the event martingales.
In particular,
$
  \langle M_C,M_{jk}^\blacklozenge\rangle(t)=0,
  \  jk\in\cJ,
$
and
$
  \langle M_C,M_T^\bullet\rangle(t)=0,
  \  0\le t\le\tau .
$
This orthogonality is used below to separate censoring variation from recurrence and death variation in the observed-data efficient scores.

\begin{theorem}
\label{thm:eif-main}
Under Assumption~\ref{ass:eif-gain-primitives}, the influence functions for $\theta=P\{T>\tau\}$ are

\begingroup
\setstretch{1}
\setlength{\jot}{0pt}
\begin{align}
  \label{eq:IF-star}
  \phi^*(T)
  &=I(T>\tau)-\theta
  =\int_0^\tau dM^\bullet_\theta(t)
  =\int_0^\tau dM^\blacklozenge_\theta(t),
  \\
  \label{eq:IF-Delta}
  \phi^\Delta(\tilde T,\Delta)
  &=\frac{\Delta}{G(\tilde T)}\{\phi^*(\tilde T)-\phi_0\}+\phi_0,
  \\
  \label{eq:IF-circ}
  \phi^\circ(\tilde T,\Delta)
  &=\int_0^\tau \frac{Y_C^*(t)}{G(t)}\,dM^\bullet_\theta(t)
  =\frac{\Delta_{C,\tau}}{G(\tilde T\wedge\tau)}\phi^*(\tilde T)
    +\int_0^{\tilde T\wedge\tau}M^\bullet_\theta(t-)\frac{dM_C^\bullet(t)}{G(t)},
  \\
  \label{eq:IF-lorenge}
  \phi^\lozenge(\tilde T,\Delta,\tilde R,\Gamma)
  &=\int_0^\tau \frac{Y_C^*(t)}{G(t)}\,dM^\blacklozenge_\theta(t)
  =\frac{\Delta_{C,\tau}}{G(\tilde T\wedge\tau)}\phi^*(\tilde T)
    +\int_0^{\tilde T\wedge\tau}M^\blacklozenge_\theta(t-)\frac{dM_C^\blacklozenge(t)}{G(t)},
\end{align}
\endgroup
where $\Delta_{C,\tau}=I\{C>T\wedge\tau\}$ and $\phi_0=E\{\phi^*(T)\omega(T)\}$.
\end{theorem}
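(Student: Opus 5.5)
The plan is to treat the four displays as the standard coarsening-at-random (CAR) construction of \citet{robins92,robins94}, applied to the stopped filtrations \eqref{eq:observed-sa}. Each observed-data influence function is an inverse-probability-of-censoring weighted (IPCW) full-data score minus its projection onto the censoring tangent space. The tangent space changes with the observed filtration: $\mathcal F^\circ$ gives one space and $\mathcal F^\lozenge$ another.

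\textbf{Full-data identity \eqref{eq:IF-star}.} Evaluate \eqref{eq:M-blacklozenge} and \eqref{eq:M-bullet} at $t=\tau$. Since $v_j(\tau)=v(\tau)=1$, we get $M^\blacklozenge_\theta(\tau)=M^\bullet_\theta(\tau)=Y_T^*(\tau+)-\theta=I(T>\tau)-\theta$. Both martingales start at zero, because $v(0)=v_0(0)=\theta$ and everyone starts in state $0$. So each stochastic integral over $[0,\tau]$ equals $\phi^*(T)$.

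\textbf{IPCW display \eqref{eq:IF-Delta}.} Under Assumption~\ref{ass:C-ignore}, $E\{\Delta/G(\tilde T)\mid T\}=1$ on $[0,\tau]$. This makes $\Delta\phi^*(\tilde T)/G(\tilde T)$ unbiased. Subtracting $\phi_0(\Delta/G-1)$ gives the centred IPCW influence function. The weight $\omega$ should be read as the normalising weight $G^{-1}$ (as in the appendix). The term $\phi_0$ then follows from mean-zero centring, so this display is a direct verification.

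\textbf{Martingale forms \eqref{eq:IF-circ} and \eqref{eq:IF-lorenge}.} Take the predictable weight $H(t)=Y_C^*(t)/G(t)$, which is $\mathcal F^\circ$-predictable and $\mathcal F^\lozenge$-predictable. By Assumption~\ref{ass:C-ignore} and the orthogonality $\langle M_C,M^\blacklozenge_{jk}\rangle=\langle M_C,M^\bullet_T\rangle=0$, the integrals $\int H\,dM^\bullet_\theta$ and $\int H\,dM^\blacklozenge_\theta$ remain mean-zero martingales. The reason is that the integrands of \eqref{eq:M-blacklozenge} and \eqref{eq:M-bullet} are deterministic functions of $s$ times the transition martingales. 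Efficiency then comes from the Tsiatis characterisation. Any observed-data influence function has the form $\mathrm{IPCW}-\Pi[\mathrm{IPCW}\mid\Lambda_C]$, where $\Lambda_C=\{\int h(t,\mathcal F(t-))\,dM_C(t)\}$. The projection term is $\int E\{\phi^*\mid \mathcal F(t),T\ge t\}\,dM_C/G$. Under Assumption~\ref{ass:Markov-state} and censoring ignorability, this conditional mean equals $M_\theta^\bullet(t-)+\theta$ or $M^\blacklozenge_\theta(t-)+\theta$. This is where the two filtrations produce $v$ versus $(v_0,v_1)$.

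\textbf{Equality of the two forms.} Integrate by parts. Write $d\{H M_\theta\}=H\,dM_\theta+M_\theta(t-)\,dH$, and use $dH(t)=-dN_C(t)/G(t)+Y_C^*(t)\,d\Lambda_C(t)/G(t)=-dM_C(t)/G(t)$ for continuous $G$. This gives
\[
\int_0^\tau H\,dM_\theta=H(\tilde T\wedge\tau)\,M_\theta(\tilde T\wedge\tau)+\int_0^{\tilde T\wedge\tau}M_\theta(t-)\,\frac{dM_C(t)}{G(t)}.
\]
The integral on the right is the second term of \eqref{eq:IF-circ}; the boundary term still has to be matched to the first.
\begin{itemize}
\item On $\{C>T\wedge\tau\}$, $M_\theta(T\wedge\tau)=\phi^*(T)$. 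For $T\le\tau$ this holds because the state is absorbing with $v_2=0$; for $T>\tau$ because $M_\theta(\tau)=\phi^*$. The boundary term is then $\Delta_{C,\tau}\phi^*(\tilde T)/G(\tilde T\wedge\tau)$.
\item On censoring, $H(C)=0$ after the jump, so the boundary term vanishes.
\end{itemize}
Sign conventions need care. Boundary jumps at $t=C$ must be assigned consistently to the $dM_C$ integral.

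\textbf{Main obstacle.} The delicate step is showing that the projection onto $\Lambda_C$ under $\mathcal F^\lozenge$ uses exactly the state-specific means. This requires
\[
E\{\phi^*\mid \mathcal F^\blacklozenge(t),C\ge t\}=E\{\phi^*\mid\mathcal F^\blacklozenge(t)\}.
\]
I would obtain it from Assumption~\ref{ass:C-ignore} conditional on $X$, together with Assumption~\ref{ass:Markov-state}. Separately, square integrability (Assumption~\ref{ass:eif-gain-primitives}) together with $\inf G>0$ justifies the martingale and integration-by-parts steps.
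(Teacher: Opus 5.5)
\textbf{Gap in \eqref{eq:IF-Delta}.} Your argument does not establish this display with the stated $\phi_0$.

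First, $\omega$ is not $G^{-1}$. In the paper, $\omega(t)=\{(1-G(t))/G(t)\}/E\{(1-G(T))/G(T)\}$. Using $E\phi^*(T)=0$, this gives $\phi_0=E\{\phi^*(T)/G(T)\}/[E\{1/G(T)\}-1]$, which in general differs from $E\{\phi^*(T)/G(T)\}$ and from its $G^{-1}$-normalised version.

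Second, and more fundamentally, centring cannot pin down $\phi_0$. Since $E\{\Delta/G(T)\mid T\}=1$, the display has mean zero for \emph{every} constant $\phi_0$. In fact, write $\phi^\Delta=\Delta\phi_T(\tilde T)+(1-\Delta)\phi_0$ with $\phi_T=(\phi^*-\phi_0)/G+\phi_0$, so that $G\phi_T=\phi^*-\phi_0(1-G)$. Then for every score $s_V=\Delta s_T(\tilde T)+(1-\Delta)E\{s_T(T)\mid C<T\}$, the pathwise identity $E\{\phi^\Delta s_V\}=E\{\phi^*(T)s_T(T)\}$ holds for every $\phi_0$. The term $-\phi_0E\{(1-G)s_T\}$ from uncensored units cancels the censored-cell contribution $\phi_0P(C<T)E\{s_T\mid C<T\}$. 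So direct verification only shows you have \emph{a} gradient.

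The missing idea is the tangent-space constraint of the missing-outcome model with $G$ fixed. The efficient gradient must itself be a score, which forces $\phi_0=E\{\phi_T(T)\mid C<T\}$, equivalently $E\phi_T(T)=0$. Substituting $\phi_T$ and solving this linear equation is what yields $\phi_0=E\{\phi^*(T)\omega(T)\}$ with the normalised $(1-G)/G$ weight. Note also that the unbiasedness step needs $G>0$ on the whole support of $T$, not only on $[0,\tau]$, because $\phi^*(T)\neq0$ for $T>\tau$.

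\textbf{The remaining displays.} Your treatment of \eqref{eq:IF-star}, \eqref{eq:IF-circ} and \eqref{eq:IF-lorenge} is sound. One caveat: you take the stochastic-integral form \eqref{eq:M-blacklozenge} as given, whereas the paper proves it by identifying the jumps $v_k-v_j$ in the Jacod representation.

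Your route reverses the paper's order.
\begin{itemize}
\item The paper differentiates the Jacod likelihood to get hazard scores. It matches $E\int H^\lozenge_{jk}h_{jk}\lambda_{jk}Y_j\,dt$ with $E\int H^\blacklozenge_{jk}h_{jk}\lambda_{jk}Y_j^*\,dt$, using $EY_j=G\,EY_j^*$, to obtain $H^\lozenge_{jk}=H^\blacklozenge_{jk}/G$, which is the martingale form. It then derives the augmented-IPCW form by the same product-rule calculation you use.
\item You instead obtain the augmented-IPCW form from the Robins--Rotnitzky/Tsiatis projection onto the censoring augmentation space, and integrate by parts to reach the martingale form.
\end{itemize}

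Your route gives the second form for general (non-Markov) histories at once and makes the ``predict the missing residual'' interpretation explicit. However, it certifies efficiency in the nonparametric CAR model. To claim efficiency in the paper's model (Markov transitions, independent censoring), you should add that the result lies in that model's tangent space. The paper's route exhibits this directly, since the integrands $\{v_k(t)-v_j(t)\}/G(t)$ are deterministic.

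One slip: the conditional mean of $\phi^*$ given $\{T\ge t\}$ and the history is $M_\theta(t-)$, not $M_\theta(t-)+\theta$. The latter belongs to the alternative IPCW term $I(\tilde T>\tau)/G(\tau)-\theta$. Your integration-by-parts identity correctly produces $M_\theta(t-)$.
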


The superscripts $*,\Delta,\circ,\lozenge$ denote, respectively, the full, missing-outcome, marginal-censored, and recurrence-augmented-censored data.
The theorem separates three sources of variation.
In the full-data experiment, recurrence does not alter the IF for the marginal survival probability:
$ \phi^*(T) $ is obtained under both the marginal death filtration and the recurrence-augmented filtration.
Second, right censoring replaces the unavailable full-data contribution after the censoring time by a predictable projection on the observed history.
The marginal censored-data IF $\phi^\circ$ is the usual right-censoring IF written in martingale form.
Third, the recurrence-augmented IF $\phi^\lozenge$ has the same inverse-censoring-weighted complete contribution as $\phi^\circ$. 
The only change is in the augmentation term: the marginal residual-survival process $M_\theta^\bullet(t-)$ is replaced by the finer projection $M_\theta^\blacklozenge(t-)$, which conditions on the current illness--death state.
Hence recurrence can improve efficiency only by improving the prediction of the missing residual contribution at censoring times.
In particular, there is no gain without censoring, and there is no gain when $ M_\theta^\blacklozenge(t)=M_\theta^\bullet(t) $ on the time region where censoring occurs.
Under the current-state formulation, this condition is equivalent to the absence of a relevant state-specific residual-survival contrast between recurrence-free and post-recurrence survivors.
We state and interpret that comparison in \Cref{subsec:gain-snapshot}.

\subsection{No-covariate AJ estimation}%
\label{subsec:estimation}

The recurrence-augmented score suggests a natural estimator: estimate the transition law of the observed illness--death process and standardize the resulting state-occupation probabilities to the marginal survival or RMST target.
In the no-covariate model, this construction coincides with the classical AJ estimator 
\citep{aalen78,andersen12}.
The estimator learns the transition dynamics $0\to1$, $0\to2$, and $1\to2$, and then averages over the induced paths to recover the marginal death-time functional.
It can therefore be viewed as the empirical counterpart of the recurrence-augmented efficient score.

Let $A_{jk}(t)=\int_0^t\lambda_{jk}(u)\,du$ and define the cumulative-hazard matrix and transition matrix

\begingroup
\setstretch{1}
\setlength{\jot}{0pt}
\begin{align} \label{eq:matrix-product-integral}
  d\mathbf A(t)=
  \displaystyle
  \begin{psmallmatrix}
  \displaystyle
  -dA_{01}(t)-dA_{02}(t) &
  \displaystyle
  dA_{01}(t) &
  \displaystyle
  dA_{02}(t)\\
  \displaystyle
  0 &
  \displaystyle
  -dA_{12}(t) &
  \displaystyle
  dA_{12}(t)\\
  \displaystyle
  0 &
  \displaystyle
  0 &
  \displaystyle
  0
  \end{psmallmatrix},
  \quad
  \mathbf P(s,t) = \prod_{(s,t]} \big\{I+d\mathbf A(u) \big\},
\end{align}
\endgroup
with all increments evaluated at $t$.
Since $\mathcal S(0)=0$, the state-occupation probabilities are $P_j(t)=P_{0j}(0,t)$, and the survival and RMST estimands are

\begingroup
\setstretch{1}
\setlength{\jot}{0pt}
\begin{align*}%
  \theta(t)=P_0(t)+P_1(t),
  \qquad
  \mu_\tau=\int_0^\tau\theta(t)\,dt.
\end{align*}%
\endgroup
For $u\le t$, write $v_0(u,t)=P_{00}(u,t)+P_{01}(u,t)$ and $v_1(u,t)=P_{11}(u,t)$ for survival to $t$ conditional on the state at $u$.
Their integrals $V_j(u,t)=\int_u^t v_j(u,s)\,ds$ give the corresponding expected survival time over $[u,t]$.

For $n$ subjects in a fixed treatment arm,
let $N_{jk,i}$ and $Y_{j,i}$ denote subject $i$'s observed transition count and risk indicator,
and write $\bar N_{jk}=n^{-1}\sum_{i=1}^nN_{jk,i}$ and $\bar Y_j=n^{-1}\sum_{i=1}^nY_{j,i}$.
For $jk\in\cJ$, the Nelson--Aalen (NA) estimator and the AJ transition matrix $(\widehat P_{jk}(s,t))$ are

\begingroup
\setstretch{1}
\setlength{\jot}{0pt}
\begin{align*}
  \widehat A_{jk}^{\textsf{AJ}}(t) =\int_0^t\frac{I\{\bar Y_j(u)>0\}}{\bar Y_j(u)} \,d\bar N_{jk}(u),
  \qquad
  \widehat{\mathbf P}(s,t) =\prod_{(s,t]}\{I+d\widehat{\mathbf A}^{\textsf{AJ}}(u)\},
\end{align*}
\endgroup
with the ratio defined as zero when $\bar Y_j(u)=0$.
Replacing $A_{jk}$ by $\widehat A_{jk}^{\textsf{AJ}}$ in $\mathbf A$ defines $\widehat{\mathbf A}^{\textsf{AJ}}$.
Since $\mathcal S(0)=0$, the AJ survival and RMST estimators are

\begingroup
\setstretch{1}
\setlength{\jot}{0pt}
\begin{equation}\label{eq:AJ}
\widehat\theta(t)
=\widehat P_{00}(0,t)+\widehat P_{01}(0,t),
\qquad
\widehat\mu_\tau^{\textsf{AJ}}
=\int_0^\tau\widehat\theta(t)\,dt.
\end{equation}
\endgroup

\begin{assumption}
\label{ass:aj-main-integrated}
Assumption~\ref{ass:eif-gain-primitives} holds.
For $\ell=jk\in\cJ$, $M_{\ell,i}(t)=N_{\ell,i}(t)-\int_0^tY_{j,i}(u)\,dA_{\ell}(u)$ is square integrable.
Write $q_j(u)=E\{Y_j(u)\}=P_j(u-)G(u-)$, $j=0,1$.
\begin{enumerate}[label=\textnormal{(AJ\arabic*)},leftmargin=2em]
\item\label{ajmain:support}
  With $\mathcal T_j=\{u\in(0,\tau]:q_j(u)>0\}$, the infimum of $q_j$ on every compact subset of $\mathcal T_j$ is positive,
  and $\int_0^\tau I\{q_{j(\ell)}(u)=0\}\,dA_\ell(u)=0$.
\item\label{ajmain:inverse}
  For each $\ell$,
\[
   V_\ell^\textsf{AJ}(\tau)= \int_{(0,\tau]}\frac{I\{q_{j(\ell)}(u)>0\}}{q_{j(\ell)}(u)}\,dA_\ell(u)<\infty.
\]
\end{enumerate}
\end{assumption}

These conditions impose no uniform lower bound on the state-specific risk probabilities;
in particular, $q_1$ may vanish near time zero because recurrence has not yet occurred.
Without uniform positivity, the NA proof requires additional control.
We localize to $\{q_j\ge\varepsilon\}$, where standard linearization applies, and bound the remaining martingale contribution through
$\int_{\{0<q_j(u)<\varepsilon\}}q_j(u)^{-1}\,dA_\ell(u)$,
$j=j(\ell)$.
This integral vanishes as $\varepsilon\downarrow0$ by \ref{ajmain:inverse},
which also ensures that the contribution from empty empirical risk sets is negligible.
Condition~\ref{ajmain:support} ensures that $dA_\ell$ assigns no mass to $\{q_{j(\ell)}=0\}$.

Integrating the recurrence-augmented survival EIF in \eqref{eq:IF-lorenge} over the time horizon gives the EIF for RMST,
denoted by $\phi_\mu^\lozenge$: 

\begingroup
\setstretch{1}
\setlength{\jot}{0pt}
\begin{equation} \label{eq:main-nocov-rmst-eif}
  \phi_\mu^\lozenge(O)=\sum_{\ell\in\cJ}\int_0^\tau H_\ell^\lozenge(u)dM_\ell(u),
\end{equation}
where
\begin{equation} \label{eq:main-nocov-rmst-H}
  H_{01}^\lozenge(u)=\frac{V_1(u,\tau)-V_0(u,\tau)}{G(u-)},
  \quad
  H_{02}^\lozenge(u)=-\frac{V_0(u,\tau)}{G(u-)},
  \quad
  H_{12}^\lozenge(u)=-\frac{V_1(u,\tau)}{G(u-)}.
\end{equation}
\endgroup
Each integrand is the change in expected remaining-survival time at the corresponding transition,
importance-weighted by $G(u-)^{-1}$ to account for censoring \citep{robins94}.
Recurrence changes this expectation from $V_0(u,\tau)$ to
$V_1(u,\tau)$, whereas death from state $j$ reduces
$V_j(u,\tau)$ to zero.

\begin{theorem} \label{thm:aj-integrated-main}
Under Assumption~\ref{ass:aj-main-integrated}, the AJ RMST estimator satisfies $\widehat\mu^\textsf{AJ}_\tau-\mu_\tau = (\mathbb P_n-P)\phi_\mu^\lozenge + o_p(n^{-1/2})$. 
If $0<\sigma_\mu^2=\Var\{\phi_\mu^\lozenge(O)\}<\infty$, then $ \sqrt n(\widehat\mu^\textsf{AJ}_\tau-\mu_\tau)\rightsquigarrow N(0,\sigma_\mu^2), $ and the AJ estimator attains the semiparametric efficiency bound.
\end{theorem}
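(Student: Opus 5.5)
The plan is to linearize the AJ estimator through the functional delta method for the product integral, then integrate over $t\in[0,\tau]$.

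\textbf{Step 1: Duhamel identity for the AJ matrix.} For each fixed $t\le\tau$, write the Duhamel equation
\[
\widehat{\mathbf P}(0,t)-\mathbf P(0,t)=\int_{(0,t]}\widehat{\mathbf P}(0,u-)\,d(\widehat{\mathbf A}-\mathbf A)(u)\,\mathbf P(u,t).
\]
Read off the $(0,0)+(0,1)$ entries. This gives
\[
\widehat\theta(t)-\theta(t)=\sum_{\ell\in\cJ}\int_0^t\widehat P_{j(\ell)}(u-)\{v_{k(\ell)}(u,t)-v_{j(\ell)}(u,t)\}\,d(\widehat A_\ell-A_\ell)(u),
\]
with $v_2\equiv0$.

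\textbf{Step 2: Nelson--Aalen linearization.} Decompose
\[
\widehat A_\ell-A_\ell=\int\frac{I\{\bar Y_j>0\}}{\bar Y_j}\,d\bar M_\ell-\int I\{\bar Y_j=0\}\,dA_\ell .
\]
The leading term should be $\int q_j^{-1}d\bar M_\ell$, and the rest negligible. Since $q_j(u)=P_j(u-)G(u-)$, we have $P_j(u-)/q_j(u)=1/G(u-)$ on $\mathcal T_j$.

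\textbf{Step 3: Integrate over $t$.} Apply Fubini to the linear term. Because $\int_u^\tau\{v_k(u,t)-v_j(u,t)\}\,dt=V_k(u,\tau)-V_j(u,\tau)$, the linear term becomes exactly
\[
\sum_\ell\int_0^\tau H_\ell^\lozenge\,d\bar M_\ell=(\mathbb P_n-P)\phi^\lozenge_\mu ,
\]
matching \eqref{eq:main-nocov-rmst-H}.

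\textbf{Step 4: Remainders.} Three remainder terms must be shown to be $o_p(n^{-1/2})$:
\begin{enumerate*}[label=(\roman*)]
\item replacing $\widehat P_j(u-)$ by $P_j(u-)$, which is a product of two $O_p(n^{-1/2})$ errors, handled by uniform consistency of AJ;
\item replacing $\bar Y_j^{-1}$ by $q_j^{-1}$;
\item the empty-risk-set term.
\end{enumerate*}
Each integrand is a predictable process, so martingale (Lenglart) inequalities apply.

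\textbf{Main obstacle: no uniform positivity.} The hardest part is that $q_1$ need not be bounded below, for instance near $0$. I would localize to $\{q_j\ge\varepsilon\}$ as the paper indicates. On $\{q_j\ge\varepsilon\}$, Glivenko--Cantelli gives uniform convergence $\bar Y_j/q_j\to1$, and the standard argument works there. On $\{0<q_j<\varepsilon\}$, I would use Lenglart. The predictable variation of $\sqrt n\int P_j(u-)\bar Y_j^{-1}I\{\bar Y_j>0\}\,d\bar M_\ell$ is bounded by about $\int P_j^2\bar Y_j^{-1}\,dA_\ell$. After taking expectations this is controlled by $\int_{\{q_j<\varepsilon\}}q_j^{-1}\,dA_\ell$, which tends to $0$ as $\varepsilon\downarrow0$ by \ref{ajmain:inverse}. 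To justify that expectation bound I would use $E[I\{\bar Y_j>0\}/\bar Y_j]\lesssim 1/q_j$ for binomial counts. The same quantity bounds the empty-risk-set term, via $P(\bar Y_j=0)\le(1-q_j)^n$, and \ref{ajmain:support} removes mass on $\{q_j=0\}$. The time integrals $V_j\le\tau$ are bounded, so integrating over $t$ preserves all these rates.

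\textbf{Step 5: Asymptotic normality and efficiency.} Asymptotic normality then follows from the CLT. For efficiency, note that $\phi^\lozenge_\mu$ is $\int_0^\tau\phi^\lozenge$ for $\theta(t)$, and $\phi^\lozenge$ is the EIF by \Cref{thm:eif-main}. The transition hazards are unrestricted, so the model is locally nonparametric for the observed law. A regular asymptotically linear estimator with influence function equal to the EIF therefore attains the bound.
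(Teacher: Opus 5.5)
Your proposal is correct and has the same skeleton as the paper's proof. The shared steps are:
\begin{itemize}
\item the left-hand Duhamel identity \eqref{eq:duhamel-left}, with $\widehat{\mathbf P}(0,u-)$ on the left, so that after integrating in $t$ the coefficient $\widehat P_j(u-)\{V_k(u,\tau)-V_j(u,\tau)\}$ is predictable;
\item the martingale decomposition of the Nelson--Aalen error;
\item the identity $P_j(u-)/q_j(u)=1/G(u-)$;
\item Lenglart's inequality, fed by $E[I\{\bar Y_j>0\}/\bar Y_j]\lesssim q_j^{-1}$ and \ref{ajmain:inverse}.
\end{itemize}

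The real difference is in your step (ii). You localize to $\{q_j\ge\varepsilon\}$, using Glivenko--Cantelli there and \ref{ajmain:inverse} on the thin set. That is the route sketched in the main text after Assumption~\ref{ass:aj-main-integrated}. The appendix proves \Cref{lem:weighted-na-linearization} without localization. Because the weight is $q_jh_\ell$, the remainder martingale has predictable variation $n^{-1}\int h_\ell^2(q_j-\bar Y_j)^2\bar Y_j^{-1}I\{\bar Y_j>0\}\,dA_\ell$. The exact binomial bound $E\{(K-nq)^2K^{-1}I(K>0)\}\le C$ then makes this $O(n^{-2})$, using only $A_\ell(\tau)<\infty$.

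Likewise, the paper controls your term (i) with the rate $\sup_u\|\widehat{\mathbf P}(0,u)-\mathbf P(0,u)\|_\infty=O_p(n^{-1/2})$, not mere consistency. So the paper gets $O_p(n^{-1})$ remainders where you get $o_p(n^{-1/2})$, which is all the theorem needs. For normality, your plain i.i.d.\ CLT applied to $\mathbb P_n\phi_\mu^\lozenge$ is simpler than the paper's use of Rebolledo's theorem.

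Two points need tightening.

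\textbf{Term (i).} ``A product of two $O_p(n^{-1/2})$ errors'' is not a valid bound by itself. $\widehat A_\ell-A_\ell$ has total variation $O_p(1)$, so $\sup_u|\widehat P_j-P_j|\cdot|\widehat A_\ell-A_\ell|_{\mathrm{TV},\tau}$ is only $O_p(n^{-1/2})$. You mention predictability, but the bound must actually go through it. Split the term into two parts:
\begin{itemize}
\item a martingale part, with predictable variation at most $n^{-1}\tau^2\sup_u|\widehat P_j(u-)-P_j(u-)|^2\int I\{\bar Y_j>0\}\bar Y_j^{-1}\,dA_\ell$, where the last integral is $O_p(1)$ by \ref{ajmain:inverse};
\item an empty-risk-set part.
\end{itemize}
The uniform consistency of AJ that you invoke must itself be proved without positivity. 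Use Doob's inequality with the same inverse-risk bound, then the Lipschitz estimate of \Cref{lem:product-integral-expansion} on $\{|\widehat A^{\AJ}|_\tau\le M\}$.

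\textbf{Efficiency.} The observed-data model is not locally nonparametric. The Markov structure in Assumption~\ref{ass:Markov-state} is a semiparametric restriction, as the remark after the theorem notes. The tangent space consists of hazard scores $\int h_\ell(t)\,dM_\ell(t)$ with deterministic $h_\ell$, plus censoring scores. Your conclusion still holds, but for a different reason. $\phi_\mu^\lozenge$ has deterministic integrands $H_\ell^\lozenge$, so it lies in that tangent space. Given that it is a gradient, it is therefore the canonical gradient. That gradient property comes from the identification of Theorem~\ref{thm:eif-main} integrated over $t$, on which the paper also conditions its efficiency claim. An asymptotically linear estimator whose influence function is the canonical gradient is regular and efficient (van der Vaart, 1998, Lemma 25.23).
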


Recurrence history changes the observed-data efficient score.
It replaces the marginal residual-survival mean by state-specific residual-survival means for recurrence-free and post-recurrence survivors.
Thus, in this model, the AJ estimator is the empirical implementation of the optimal observed-data projection.
The comparison with Kaplan--Meier (KM) is therefore a comparison between two projections: one that conditions only on the death history, and one that also uses the disease-state history at the censoring times where the efficiency bound depends on it. 

\begin{remark}%
The Markovian trajectories part of \Cref{ass:Markov-state} is a semiparametric restriction but is not the source of the gain described here.
The IFs of \Cref{thm:eif-main} and the gain identity \eqref{eq:gain} below do not require Markovian transitions.
With non-Markovian data, the AJ remains consistent  and more efficient than the KM, but is not semiparametrically efficient 
\citep{datta2001validity, munch2023targeted}.
\end{remark}%

\subsection{When is the gain largest?} \label{subsec:gain-snapshot}%

We now return to the variance comparison.
Under Assumption~\ref{ass:eif-gain-primitives},

\begingroup
\setstretch{1}
\setlength{\jot}{0pt}
\begin{align}%
  \label{eq:gain}
  \Var(\phi^\circ) - \Var(\phi^\lozenge)
  =\int_0^\tau \frac{\lambda_C(t)S_T(t)}{G(t)}
  \Var\big\{M^\blacklozenge_\theta(t-)-M^\bullet_\theta(t-)\mid T\ge t\big\}\,dt.
\end{align}%
\endgroup
The efficiency-gain identity in \eqref{eq:gain} gives the design rule for when the augmented AJ estimator should improve most over the marginal KM estimator.
Under \Cref{ass:Markov-state}  we can write,

\begingroup
\setstretch{1}
\setlength{\jot}{0pt}
\begin{align} \label{eq:M-theta-variance}%
  \Var\big\{M^\blacklozenge_\theta(t-)-M^\bullet_\theta(t-)\mid T\ge t\big\}
  =\frac{P_{00}(0,t)P_{01}(0,t)}{S_T(t)^2}\{v_0(t)-v_1(t)\}^2.
\end{align}%
\endgroup
Thus recurrence helps precisely when censoring occurs at times when recurrence has split the alive population and the two alive states have different residual survival.
The gain is largest when the following four conditions hold concurrently: 
\begin{enumerate}[label=\textnormal{(\roman*)},leftmargin=2em]
\item censoring is appreciable at times that matter for the target, so that $\lambda_C(t)/G(t)$ is large;
\item recurrence has already split the alive population into two non-negligible groups, so that both $P_{00}(0,t)/S_T(t)$ and $P_{01}(0,t)/S_T(t)$ are substantial;
\item the two alive states have sharply different residual survival, so that $|v_1(t)-v_0(t)|$, or $|V_1(t,\tau)-V_0(t,\tau)|$ for RMST, is large;
\item enough subjects remain alive, so that the factor $S_T(t)$ has not already collapsed.
\end{enumerate}
Thus, weak censoring, rare recurrence, very late recurrence, non-prognostic recurrence, or near-complete mortality all force the gain to be small.

\begin{figure}[H]%
  \centering
  \includegraphics[width=0.92\linewidth]{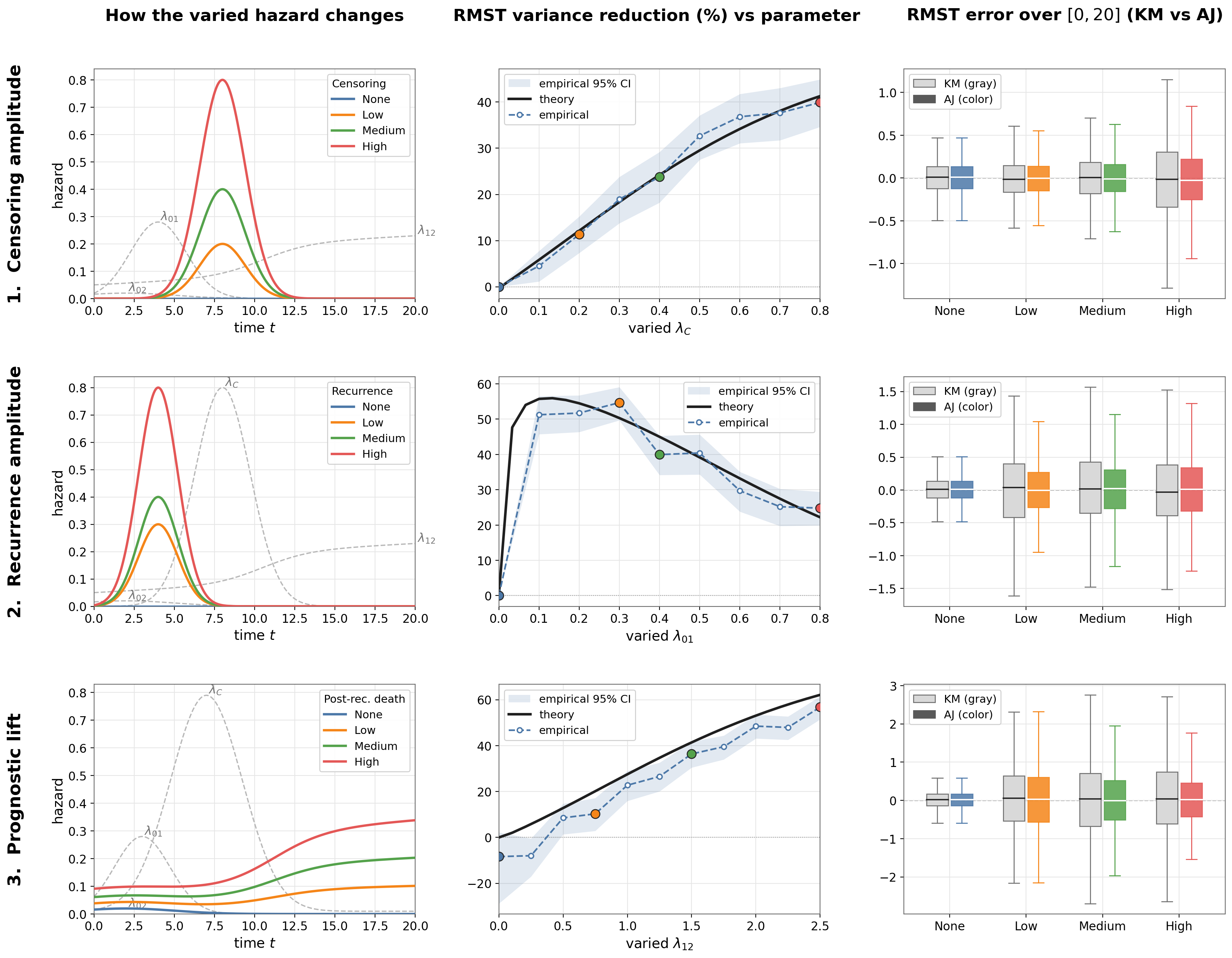}
  \caption{
  \footnotesize   Comparing the variance of two estimators: KM uses only $(\tilde T,\Delta)$, while AJ adds $(\tilde R,\Gamma)$.
    Left col: data designs varying censoring hazard (top), recurrence hazard (mid), post-recurrence hazard (bottom).
    Center col: AJ asymptotic variance reduction over KM across varying data designs.
    Right col: KM vs AJ fixed-$n$ variance across varying data designs.
  }
  \label{fig:gain-snapshot-combined}
\end{figure}%

We illustrate the gain identity \eqref{eq:gain} with synthetic data experiments.
We generate $800$ samples according to a parametrized illness-death law:
\begin{align*}%
  \lambda_C(t)    = A\,e^{-(t-8)^2/4},
  \qquad
  \lambda_{01}(t) = B\,e^{-(t-4)^2/3},
  \qquad
  \lambda_{02}(t) = 0.02\,e^{-(t-2)^2/18},
  \\
  \lambda_{12}(t)  = \lambda_{02}(t) + \kappa\, b(t),
  \qquad
  b(t) = \bigl(0.03 + 0.0005t\bigr) \big\{ 1 +  {2.4} \{1+e^{-(t-11)/1.7}\}^{-1} \big\}
\end{align*}%
on a finite time horizon $\tau=20$, with $n=1000$ subjects, and compute the $\mathcal{F}^\lozenge$-measurable AJ and the $\mathcal{F}^\circ$-measurable KM estimators of the RMST.
We report empirical variances via boxplots in Fig. \ref{fig:gain-snapshot-combined}, third column.
We also compare the empirical AJ-vs-KM variance reduction, expressed as a percentage of the KM variance,
against our theoretical prediction \ref{eq:M-theta-variance}, computed numerically from the input $\lambda$'s.
See Fig. \ref{fig:gain-snapshot-combined}, second column.
Simulation details are provided in \Cref{app:no-X-simulations}.

To illustrate the impact of censoring, recurrence, and post-recurrence prognostic lift on variance reduction,
we vary the corresponding input hazards, see Fig. \ref{fig:gain-snapshot-combined}, first column.

We find:
(i) The gain increases with the strength of censoring (row one, column two), but so does the variance of both the KM and the AJ estimators (row one, column three): the stronger the censoring, the harder the estimation problem becomes.
(ii) As recurrence increases from zero, subjects begin transitioning into state $1$, and the gain rises sharply; as recurrence increases further, we observe a slow decay, since subjects in state $1$ are much more likely to transition onward to state $2$, decreasing the occupancies for both states $0$ and $1$.
(iii) While in theory the efficiency gain is always positive, in practice, if the surrogate is not predictive of the true outcome, the AJ estimator can be noisier than KM, as we observe in the empirical curve of the second column.
This, however, is not the case as the surrogate becomes even a little predictive, and we observe a linear proportional relationship between the prognostic lift and the efficiency gain.

\section{Covariate-adjusted estimation}%
\label{sec:covariates}

The no-covariate result is clean because the transition law is a finite-dimensional-in-state, nonparametric-in-time object that AJ estimates directly.
Continuous covariates change the problem.
Now the analyst must learn how the whole transition system varies with $X$.
This is not ordinary baseline covariate adjustment for a scalar mean; the nuisance is a conditional illness--death law.
With finite well-populated strata, one may fit AJ within each stratum and standardize \citep{aalen78}.
With continuous or multi-dimensional $X$, however, the object is the conditional transition law $A_0(\cdot\mid X)$.

\subsection{Cox transitional hazard estimator}%

The Cox--Breslow setting is used as a benchmark rather than as the general method \citep{cox72,breslow72,andersen12}.
Its role is to illustrate the best-case plug-in regime: when each transition model is correctly specified and finite-dimensional,
the conditional transition law is estimated at a sufficiently fast rate.
Thus ordinary standardization is already first-order valid.
For each transition $\ell=(j,k)\in\cJ$, suppose

\begingroup
\setstretch{1}
\setlength{\jot}{0pt}
\begin{equation}
\label{eq:cox-main-transition-model}
  E\{dN_\ell(u)\mid\mathcal F_{u-},X\}
  =Y_j(u)\exp(\beta_{\ell,0}^\top X)d\Lambda_{0,\ell,0}(u).
\end{equation}
Let $s_{\ell,0}(u)=E\{Y_j(u)\exp(\beta_{\ell,0}^\top X)\}$.
\endgroup

\begin{assumption}
\label{ass:cox-main-primitives}
The training data are i.i.d., Assumption~\ref{ass:eif-gain-primitives}   holds conditionally on $X$.
The model \eqref{eq:cox-main-transition-model} is correctly specified.
In addition: for each $\ell\in\cJ$,
\begin{enumerate}[label=\textnormal{(CX\arabic*)},leftmargin=2em]
  \item\label{cxmain:bounded}  For $K < \infty$,  $\|X\|\le K$ almost surely, and $\|\beta_{\ell,0}\|<R_\ell<\infty$.
  \item The baseline cumulative hazard is supported on the predictable source support and has finite inverse-risk mass:

  \begingroup
  \setstretch{1}
  \setlength{\jot}{0pt}
  \begin{equation} \label{eq:cox-main-source-support}
    \int I\{s_{\ell,0}(u)=0\}d\Lambda_{0,\ell,0}(u)=0,
    \qquad
    \int\frac{I\{s_{\ell,0}(u)>0\}}{s_{\ell,0}(u)}d\Lambda_{0,\ell,0}(u)<\infty.
  \end{equation}
  \endgroup
  \item\label{cxmain:information} The Cox information matrix is nonsingular, i.e., for $\bar X_\ell(\beta,u) = {s_\ell^{(1)}(\beta,u)} / {s_\ell^{(0)}(\beta,u)}.$

  \begingroup
  \setstretch{1}
  \setlength{\jot}{0pt}
  \begin{align*}%
    \lambda_{\min}\left\{\int V_\ell(\beta_{\ell,0},u)s_{\ell,0}(u)d\Lambda_{0,\ell,0}(u)\right\}>0 ,
    \quad
    V_\ell(\beta,u) = \frac{s_\ell^{(2)}(\beta,u)}{s_\ell^{(0)}(\beta,u)} - \bar X_\ell(\beta,u)\bar X_\ell(\beta,u)^\top .
  \end{align*}%
  \endgroup
\end{enumerate}
\end{assumption}

The source-support formulation is important in the illness--death setting.
The post-recurrence risk set is empty early in follow-up, so requiring a uniform lower bound for that risk set would rule out the natural process.
The condition instead asks only that the baseline hazard put mass where the corresponding source state has population risk and that the inverse-risk variation be finite.

Let $\widehat\beta_\ell$ be the partial-likelihood estimator and $\widehat\Lambda_{0,\ell}$ the predictable-support Breslow estimator.
The fitted conditional hazards are $d\widehat A_{\ell,n}(u\mid x)=\exp(\widehat\beta_\ell^\top x)d\widehat\Lambda_{0,\ell}(u)$, $\ell\in\cJ$.
For fixed $x$, the state probabilities are computed by the product-integral recursion associated with the fitted
transition hazards.
At a jump time $u$,

\begingroup
\setstretch{1}
\setlength{\jot}{0pt}
\begin{align*}%
  \widehat P_0(u\mid x)
  &=
  \widehat P_0(u-\mid x)
  \big\{1-\Delta\widehat A_{01,n}(u\mid x)-\Delta\widehat A_{02,n}(u\mid x)\big\},
  \\[1em]
  \widehat P_1(u\mid x)
  &=
  \widehat P_1(u-\mid x)
  \big\{1-\Delta\widehat A_{12,n}(u\mid x)\big\}
  +
  \widehat P_0(u-\mid x)\Delta\widehat A_{01,n}(u\mid x).
\end{align*}%
\endgroup
Let $I_1,I_2$ be a partition of $N=n+m$ i.i.d.\ observations, $n=|I_1|$, and $m=|I_2|$.
Let $\widehat A_n$ be fitted on $I_1$, and let $I_2$ be an independent evaluation sample.
The Cox plug-in RMST is

\begingroup
\setstretch{1}
\setlength{\jot}{0pt}
\begin{align} \label{eq:Cox-plug-in}%
   \widehat\mu_{n,m}^{\Cox,\plug}(\tau)
  =
  m^{-1}\sum_{i=1}^m
  \mu_{\widehat A_n}(\tau\mid X_i^{\rm ev}), \qquad  \mu_{\widehat A_n}(\tau\mid x)
  =
  \int_0^\tau
  \Big\{ \widehat P_0(t\mid x)+\widehat P_1(t\mid x) \Big\}\,dt .
\end{align}%
\endgroup 

\begin{proposition} \label{prop:cox-main-regime}
Under Assumption~\ref{ass:cox-main-primitives},
$ \text{for each } \ell\in\cJ $,
$ \|\widehat\beta_\ell-\beta_{\ell,0}\|=O_p(n^{-1/2}), $\;
$ \sup_{t\le\tau} \big|\widehat\Lambda_{0,\ell}(t)-\Lambda_{0,\ell,0}(t)\big| =O_p(n^{-1/2}), $\;
and\; $ \widehat\mu_{n,m}^{\Cox,\plug}(\tau)-\mu_\tau =O_p(m^{-1/2}+n^{-1/2}). $
\end{proposition}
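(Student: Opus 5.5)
The argument has three layers: the Cox parameters, the Breslow baselines, and a perturbation argument for the product integral followed by an evaluation-sample decomposition.

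\textbf{Step 1: $\widehat\beta_\ell$.} For each $\ell=jk\in\cJ$ the transition model \eqref{eq:cox-main-transition-model} is a multiplicative-intensity model. Its counting process is $N_\ell$, its at-risk process is $Y_j$, and $Y_j$ is predictable with respect to the observed filtration $\mathcal F^\lozenge$ augmented by $X$. The classical Andersen--Gill argument therefore applies.
\begin{itemize}
\item The partial-likelihood score at $\beta_{\ell,0}$ is a sum of $\mathcal F$-martingale integrals $\int\{X_i-\bar X_{\ell,n}(\beta_{\ell,0},u)\}\,dM_{\ell,i}(u)$.
\item Boundedness \ref{cxmain:bounded} makes the integrands bounded, so this score is $O_p(n^{1/2})$.
\item By \ref{cxmain:bounded}, the empirical processes $S^{(r)}_{\ell,n}(\beta,u)$, $r=0,1,2$, converge uniformly in $(\beta,u)$ over a compact neighbourhood of $\beta_{\ell,0}$ (Glivenko--Cantelli over a VC-type class).
\item The observed information converges to the nonsingular matrix of \ref{cxmain:information}.
\end{itemize}
Concavity of the log partial likelihood then gives consistency, and a Taylor expansion of the score gives $\|\widehat\beta_\ell-\beta_{\ell,0}\|=O_p(n^{-1/2})$.

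\textbf{Step 2: Breslow baselines (main obstacle).} The difficulty is that $s_{\ell,0}$ has no uniform lower bound, since $s_{12,0}$ vanishes near $0$. I would mimic the localization sketched for the AJ estimator after Assumption~\ref{ass:aj-main-integrated}, and decompose
\[
\widehat\Lambda_{0,\ell}-\Lambda_{0,\ell,0}=\int \frac{I\{S^{(0)}_{\ell,n}>0\}}{S^{(0)}_{\ell,n}(\widehat\beta_\ell,u)}\,d\bar M_\ell+\int\Big\{\frac{S^{(0)}_{\ell,n}(\beta_{\ell,0},u)}{S^{(0)}_{\ell,n}(\widehat\beta_\ell,u)}-1\Big\}d\Lambda_{0,\ell,0}-\int I\{S^{(0)}_{\ell,n}=0\}\,d\Lambda_{0,\ell,0}.
\]
The three terms are handled as follows.
\begin{itemize}
\item \emph{Second term.} Boundedness of $X$ gives $|S^{(0)}(\beta_0)/S^{(0)}(\widehat\beta)-1|\le e^{2K\|\widehat\beta-\beta_0\|}-1$ uniformly in $u$, even where the risk set is small. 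This term is therefore $O_p(n^{-1/2})\Lambda_{0,\ell,0}(\tau)$.
\item \emph{Martingale term on $\{s_{\ell,0}\ge\varepsilon\}$.} Lenglart's inequality, with predictable variation $n^{-1}\int I\{\cdot\}/S^{(0)}_{\ell,n}\,d\Lambda_{0,\ell,0}$, gives $O_p(n^{-1/2})$ uniformly in $t$.
\item \emph{Martingale term on $\{0<s_{\ell,0}<\varepsilon\}$.} Here I need $S^{(0)}_{\ell,n}\gtrsim s_{\ell,0}$ with high probability. I would get this from a ratio-type bound on the empirical at-risk process (e.g., a Wellner-type inequality for $\inf S^{(0)}_n/s_0$ on $\{s_0\ge c/n\}$). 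The contribution is then bounded by $n^{-1}\int_{\{0<s_{\ell,0}<\varepsilon\}}s_{\ell,0}^{-1}\,d\Lambda_{0,\ell,0}$, which is finite by \eqref{eq:cox-main-source-support} and vanishes as $\varepsilon\downarrow0$.
\item \emph{Empty-risk-set term.} This has expectation at most $\int(1-s_{\ell,0}/e^{KR_\ell})^n\,d\Lambda_{0,\ell,0}$ up to constants. Using $(1-p)^n\le 1/(np)$, this is $O(n^{-1})$ times the inverse-risk mass.
\end{itemize}
Together these give the sup-norm rate $O_p(n^{-1/2})$ for $\widehat\Lambda_{0,\ell}$.

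\textbf{Step 3: RMST.} Write
\[
\widehat\mu^{\Cox,\plug}_{n,m}-\mu_\tau=(\mathbb P_m-P)\,\mu_{\widehat A_n}(\tau\mid X)+P\{\mu_{\widehat A_n}-\mu_{A_0}\}.
\]
Here I use that $\mu_\tau=E\,\mu_{A_0}(\tau\mid X)$ by the identification in Section~\ref{sec:problem-setup} and the product-integral representation \eqref{eq:matrix-product-integral}.

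\emph{First term.} Conditional on $I_1$, the summands are i.i.d.\ and bounded by $\tau$, so Chebyshev gives $O_p(m^{-1/2})$. This uses $0\le\widehat P_0+\widehat P_1\le1$, which holds once the jump sizes are at most $1$; that is true with probability tending to one, since $\widehat\Lambda_{0,\ell}$ has small jumps.

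\emph{Second term.} The map $\Lambda\mapsto\mathbf P$ is Hadamard (indeed Lipschitz in sup-norm on bounded-variation balls) differentiable, by the Duhamel equation. Since $d\widehat A_{\ell,n}(\cdot\mid x)-dA_\ell(\cdot\mid x)=e^{\widehat\beta^\top x}(d\widehat\Lambda-d\Lambda_0)+(e^{\widehat\beta^\top x}-e^{\beta_0^\top x})\,d\Lambda_0$, Duhamel combined with integration by parts yields
\[
\sup_{x,t}\big|\widehat P_j(t\mid x)-P_j(t\mid x)\big|\lesssim \max_\ell\Big\{\sup_t|\widehat\Lambda_{0,\ell}-\Lambda_{0,\ell,0}|+\|\widehat\beta_\ell-\beta_{\ell,0}\|\Big\}.
\]
The constants depend only on $K$, the $R_\ell$, and the total variations $\Lambda_{0,\ell,0}(\tau)$ and $\widehat\Lambda_{0,\ell}(\tau)=O_p(1)$. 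Integrating over $t\in[0,\tau]$ and averaging over $X$ gives $O_p(n^{-1/2})$, which completes the proof.
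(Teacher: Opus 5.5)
Your architecture matches the paper's: a score/curvature argument for $\widehat\beta_\ell$, then a Breslow decomposition into a martingale term, a coefficient-replacement term and an empty-risk-set term, then the product-integral Lipschitz bound with an evaluation-sample decomposition. The coefficient and empty-risk-set terms of Step~2 are handled correctly, but the martingale term is not, for two reasons.

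\emph{Non-predictable integrand.} The integrand $J_{\ell,n}(u)/S^{(0)}_{\ell,n}(\widehat\beta_\ell,u)$ is not predictable, because $\widehat\beta_\ell$ depends on the whole sample path on $[0,\tau]$. So $\int J_{\ell,n}/S^{(0)}_{\ell,n}(\widehat\beta_\ell,\cdot)\,d\bar M_\ell$ is not a martingale, and neither Lenglart's inequality nor the predictable variation you write down applies to it. The paper avoids this by first analysing $\widetilde\Lambda_{0,\ell}$, which has $\beta_{\ell,0}$ in the denominator, and only then transferring to $\widehat\beta_\ell$. On $J_{\ell,n}=1$, bounded covariates give $|1/S^{(0)}_{\ell,n}(\widehat\beta_\ell,u)-1/S^{(0)}_{\ell,n}(\beta_{\ell,0},u)|\le C\|\widehat\beta_\ell-\beta_{\ell,0}\|/S^{(0)}_{\ell,n}(\beta_{\ell,0},u)$. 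Integrating against the \emph{nonnegative} measure $\Ptr dN_\ell$ then yields $\sup_t|\widehat\Lambda_{0,\ell}-\widetilde\Lambda_{0,\ell}|\le C\|\widehat\beta_\ell-\beta_{\ell,0}\|\,\widetilde\Lambda_{0,\ell}(\tau)$.

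\emph{Unproved ratio bound.} Your control of the small-risk region rests on a uniform ratio bound $S^{(0)}_{\ell,n}\gtrsim s_{\ell,0}$ that you do not prove. For $\ell=12$ the risk indicator $I(R<u\le\tilde T)$ is not monotone in $u$, so this is not a Wellner-type empirical-distribution statement. It is also unnecessary. Doob's $L^2$ inequality needs only $E\langle\cdot\rangle(\tau)$, and by Tonelli this reduces to the fixed-$u$ binomial inverse moment $E\{J_{\ell,n}(u)/S^{(0)}_{\ell,n}(\beta_{\ell,0},u)\}\le C\,\1\{s_{\ell,0}(u)>0\}/s_{\ell,0}(u)$ of Lemma~\ref{lem:cox-denominator-bounds}. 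Hence $E\langle\cdot\rangle(\tau)\le Cn^{-1}\int\1\{s_{\ell,0}>0\}s_{\ell,0}^{-1}\,d\Lambda_{0,\ell,0}=O(n^{-1})$ directly from \eqref{eq:cox-main-source-support}, with no $\varepsilon$-localization. Your own small-region bound is already $O(n^{-1})$ for fixed $\varepsilon$, so the split was redundant anyway.

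Two further remarks. In Step~1, uniform convergence of $S^{(r)}_{\ell,n}$ does not by itself give convergence of the observed information, since $S^{(1)}_{\ell,n}/S^{(0)}_{\ell,n}$ is uncontrolled where $s_{\ell,0}$ is small. The classical Andersen--Gill conditions assume $\inf_u s^{(0)}(u)>0$, which fails for the $1\to2$ transition. The fix is easy: the risk-set covariance is bounded by $2K^2$, and $E\int\1\{s_{\ell,0}(u)<\varepsilon\}\,\Ptr dN_\ell(u)\le\varepsilon\Lambda_{0,\ell,0}(\tau)$. The paper handles this through Lemma~\ref{lem:cox-self-normalized-ratios}. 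Your martingale representation of the score at $\beta_{\ell,0}$ is correct and simpler than the paper's empirical-process route.

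In Step~3 you genuinely differ from the paper. The paper row-stabilizes the raw Cox hazards so they are admissible on every sample, and bounds the correction by $O_p(n^{-1})$ in total variation (Lemma~\ref{lem:cox-row-stabilization}). You instead assert that the raw hazards are admissible with probability tending to one. That is right, but the justification should be continuity of $\Lambda_{0,\ell,0}$ (Lemma~\ref{lem:cox-no-ties}). Continuity gives $\max_u\Delta\widehat\Lambda_{0,\ell}(u)\le2\sup_t|\widehat\Lambda_{0,\ell}-\Lambda_{0,\ell,0}|=O_p(n^{-1/2})$, which combined with $\sup_x e^{\widehat\beta_\ell^\top x}=O_p(1)$ makes the jumps small. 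Your route proves the rate for the estimator exactly as displayed in \eqref{eq:Cox-plug-in}. The paper's route buys pathwise admissibility at the cost of modifying the estimator.
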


\subsection{Flexible conditional hazards: why one-step estimator matters}%

The plug-in estimator directly averages the fitted conditional RMST, hence is generally sensitive to first-order errors in the transition hazards.
The one-step estimator adds a correction based on observed-minus-fitted transition increments.
Each increment is weighted by the fitted change in remaining RMST at that transition and adjusted for censoring.
We use the same estimated transition law to construct the fitted RMST, the transition residuals, and their weights.

Estimate the conditional transition law and censoring survival by $\widehat A_n$ and $\widehat G_n$ using a training sample of size $n$.
Let $P_Xg=\int g\,dP_X$, $Pf=\int f\,dP$ and let $\mathbb P_{{\rm ev},m}f=m^{-1}\sum_{i=1}^m f(O_i)$ denote, respectively,
averaging over the population law of $X$, of $O$, and an independent evaluation sample.
Let $\widehat P_{jk,n}(u,t\mid x)$ be the transition probabilities obtained from $\widehat A_n(u\mid x)$ by product integration \eqref{eq:matrix-product-integral}.
Define the fitted residual RMST by

\begingroup
\setstretch{1}
\setlength{\jot}{0pt}
\[
  \widehat V_{j,n}(u,\tau\mid x)
  =\int_u^\tau\sum_{r=0}^1
    \widehat P_{jr,n}(u,t\mid x)\,dt,
  \qquad j=0,1,
\]
\endgroup
and set $\widehat V_{2,n}=0$.
Since all subjects enter in state $0$, write
$\widehat\mu_n(x):=\mu_{\widehat A_n}(\tau\mid x)
=\widehat V_{0,n}(0,\tau\mid x)$.
For $\ell\in\cJ$, define
\[
  \widehat H_{\ell,n}(u,x)
  =
  \frac{\widehat V_{k(\ell),n}(u,\tau\mid x)
        -\widehat V_{j(\ell),n}(u,\tau\mid x)}
       {\widehat G_n(u-\mid x)},
\]
where $\widehat G_n(u-\mid x)>0$ on the estimation interval.
Each weight is the fitted change in remaining RMST at transition
$\ell$, adjusted for censoring.

The one-step score adds weighted transition residuals to the
fitted RMST:

\begingroup
\setstretch{1}
\setlength{\jot}{0pt}
\begin{equation}
\label{eq:main-psihat-def}
  \widehat\psi_n(O)
  =
  \widehat\mu_n(X)
  +\sum_{\ell\in\cJ}\int_0^\tau
    \widehat H_{\ell,n}(u,X)
    \big\{dN_\ell(u)
    -Y_{j(\ell)}(u)\,d\widehat A_{\ell,n}(u\mid X)\big\}.
\end{equation}
\endgroup
The plug-in and one-step estimators are therefore

\begingroup
\setstretch{1}
\setlength{\jot}{0pt}
\begin{align}
\label{eq:os-est-integrated-main}
  \widehat\mu_{n,m}^{\plug}(\tau)
  &=\mathbb P_{{\rm ev},m}\{\widehat\mu_n(X)\},
  \qquad
  \widehat\mu_{n,m}^{\os}(\tau)
  =\mathbb P_{{\rm ev},m}\{\widehat\psi_n(O)\}.
\end{align}
\endgroup

The theorem below first controls the population bias $P\widehat\psi_n-\mu_\tau$, conditional on the training sample,
under bounded transition fits and consistent censoring estimation.
It then gives the plug-in convergence rate and, under stronger nuisance-rate conditions,
the efficient expansion of the one-step estimator with a second-order transition--censoring product remainder.

The actual implementation uses cross-fitting.
Let $I_1,\ldots,I_K$ be a partition of $N$ i.i.d.\ observations, with $K$ fixed, $m_k=|I_k|$, and $N=\sum_{k=1}^K m_k$.
For each fold $k$, fit $\widehat A^{(-k)}$ and $\widehat G^{(-k)}$ on the observations outside $I_k$, and evaluate the fitted plug-in and one-step summands only on observations in $I_k$.
For $i\in I_k$, let $\widehat\psi_{-k}(O_i)$ denote \eqref{eq:main-psihat-def} with $(\widehat A_n,\widehat G_n)$ replaced by $(\widehat A^{(-k)},\widehat G^{(-k)})$.  
The cross-fitted estimators are
\begin{align} \label{eq:crossfit-plugin-os-main}
  \widehat\mu_{N}^{\cf,\plug}(\tau)
  &=
  \frac1N\sum_{k=1}^K\sum_{i\in I_k} \mu_{\widehat A^{(-k)}}(\tau\mid X_i),
  \qquad
  \widehat\mu_{N}^{\cf,\os}(\tau)
  =
  \frac1N\sum_{k=1}^K\sum_{i\in I_k} \widehat\psi_{-k}(O_i).
\end{align}

Let $\mathscr A_\tau$ be the class of admissible finite-variation cumulative-hazard triplets on $[0,\tau]$, with nondecreasing off-diagonal transition coordinates and row increments at most one.
For $A,B\in\mathscr A_\tau$, let $H=B-A$, and let $d\mathbf H$ be the corresponding signed transition-matrix increment.
Define

\begingroup
\setstretch{1}
\setlength{\jot}{0pt}
\begin{align*}
  \kappa_{\rm PI}(A,B)&=\sup_{0\le s\le t\le\tau}\|P_B(s,t)-P_A(s,t)\|_\infty,
  \\
  \omega_{\rm PI}(A,B)&=\sup_{0\le s\le t\le\tau}\left\|P_B(s,t)-P_A(s,t)-\int_{(s,t]}P_A(s,u-)d\mathbf H(u)P_A(u,t)\right\|_\infty.
\end{align*}
\endgroup
With a sample partition $I_1,\ldots,I_K$, fixed $K$, and $m_k=|I_k|$,
assume there are constants $0<\underline\pi\le\overline\pi<1$ such that $\underline\pi\le m_k/N\le\overline\pi$ for all large $N$ and all folds $k$.
For each fold $k$, let $\mathcal D_{-k}$ be the observations outside $I_k$, and let $\widehat A^{(-k)}$ and $\widehat G^{(-k)}$ be $\mathcal D_{-k}$-measurable estimators.
For $M<\infty$, write
$ \mathscr A_\tau(M)=\{A\in\mathscr A_\tau: |A|_\tau\le M\}, $
with $ |A|_\tau=\sum_{\ell\in\cJ}A_\ell(\tau). $
For each fold define

\begingroup
\setstretch{1}
\setlength{\jot}{0pt}
\[
  \kappa_{-k}(x)=\kappa_{\rm PI} \big\{A_0(\cdot\mid x),\,\widehat A^{(-k)}(\cdot\mid x)\big\},
  \qquad
  \omega_{-k}(x)=\omega_{\rm PI} \big\{ A_0(\cdot\mid x),\,\widehat A^{(-k)}(\cdot\mid x) \big\}.
\]
\endgroup

For the sharper local comparison, also write
$
  D_{k,\ell}(u\mid x)
  =
  \widehat A_{\ell}^{(-k)}(u\mid x)-A_{\ell,0}(u\mid x),
$  and $
  \delta_{A,-k}(x)
  =
  \max_{\ell\in\cJ}\sup_{0\le u\le\tau}|D_{k,\ell}(u\mid x)|.
$
Define the signed censoring-survival-ratio error

\begingroup
\setstretch{1}
\setlength{\jot}{0pt}
\begin{align*}
  e_{G,-k}(u,x)
  =
  1-\frac{G(u-\mid x)}{\widehat G^{(-k)}(u-\mid x)},
  \qquad
  \eta_{-k}(x)=\sup_{0\le u\le\tau}|e_{G,-k}(u,x)|.
\end{align*}
\endgroup
For an admissible transition law $A$, put

\begingroup
\setstretch{1}
\setlength{\jot}{0pt}
\[
  W_{\ell,A}(u,x)
  =
  V^A_{k(\ell)}(u,\tau\mid x)-V^A_{j(\ell)}(u,\tau\mid x),
  \qquad
  L_{\ell,0}(u,x)
  =
  P^{A_0}_{j(\ell)}(u-\mid x)W_{\ell,A_0}(u,x).
\]
\endgroup

\begin{assumption}
\label{ass:main-transition-martingale-linearization}
For a deterministic sequence $a_N\downarrow0$, suppose that for every fold $k$, relative to the training-sample calendar-time filtration $\mathbb F_{-k}=\{\mathcal F_{-k}(u):0\le u\le\tau\}$,
\begin{align*}
  D_{k,\ell}(u\mid x)
  =
  U_{k,\ell}(u\mid x)+R_{k,\ell}(u\mid x),
  \qquad \ell\in\cJ,
\end{align*}
where $U_{k,\ell}(\cdot\mid x)$ is a square-integrable $\mathbb F_{-k}$-martingale and the stochastic Fubini interchanges below are valid.
The process $e_{G,-k}(u,x)$ is left-continuous and $\mathbb F_{-k}$-predictable.
For every bounded $\mathbb F_{-k}$-predictable multiplier $z$, write
$
\|z\|_{2,\infty}
=
\left[
P_X\left\{\sup_{0\le u\le\tau}|z(u,X)|^2\right\}
\right]^{1/2},
$
and define
\begin{align*}
  \mathcal U_{k,z}(t)
  &=
  \sum_{\ell\in\cJ}
  P_X\int_0^t
  L_{\ell,0}(u,X)z(u,X)\,dU_{k,\ell}(u\mid X),
\\
  \mathcal R_{k,z}
  &=
  \sum_{\ell\in\cJ}
  P_X\int_0^\tau
  L_{\ell,0}(u,X)z(u,X)\,dR_{k,\ell}(u\mid X).
\end{align*}
There are random variables $C_{k,N}=O_p(1)$, uniformly over the fixed number of folds, such that
$
  \langle\mathcal U_{k,z}\rangle(\tau)
  \le C_{k,N}a_N^2\|z\|_{2,\infty}^2,
$ and $ 
  |\mathcal R_{k,z}|
  \le C_{k,N}a_N\|z\|_{2,\infty}.
$
\end{assumption}

Assumption~\ref{ass:main-transition-martingale-linearization} is the conditional analogue of the weighted Nelson--Aalen linearization.
Taking $z=e_{G,-k}$, using $\|e_{G,-k}\|_{2,\infty}^2=P_X\{\eta_{-k}(X)^2\}$, and applying Lenglart's inequality gives

\begingroup
\setstretch{1}
\setlength{\jot}{0pt}
\begin{align} \label{eq:main-AG-transform-rate}
  \max_{1\le k\le K}
  \left|
  \sum_{\ell\in\cJ}P_X\int_0^\tau
  L_{\ell,0}(u,X)e_{G,-k}(u,X)\,dD_{k,\ell}(u\mid X)
  \right|
  =O_p(a_Nc_N)
\end{align}
\endgroup
whenever
$\max_k P_X\{\eta_{-k}(X)^2\}=O_p(c_N^2)$.

\begin{theorem}  \label{thm:plugin-onestep-integrated-summary}
Assume conditionally independent censoring.
Suppose that, for some constants $g_0>0$ and $M<\infty$, the event
\begin{align*}%
  \mathcal E_N=
  \bigcap_{k=1}^K
  \left\{
  \begin{array}{l}
    A_0(\cdot\mid x) \text{ and } \widehat A^{(-k)}(\cdot\mid x) \in\mathscr A_\tau(M)\quad \text{for }P_X\text{-a.e. }x,
    \\[2mm]
    G(\cdot\mid x) \text{ and } \widehat G^{(-k)}(\cdot\mid x) \text{ are survival functions},
    \\[1mm]
    \displaystyle\inf_{0\le u\le\tau,\, x}G(u-\mid x)\ge g_0,
    \qquad
    \inf_{0\le u\le\tau,\, x}\widehat G^{(-k)}(u-\mid x)\ge g_0/2
  \end{array}
\right\}
\end{align*}%
satisfies $P(\mathcal E_N)\to1$, where the infima over $x$ are understood $P_X$-essentially.
Suppose that, for a deterministic sequence $c_N\downarrow0$,

\begingroup
\setstretch{1}
\setlength{\jot}{0pt}
\begin{align} \label{eq:main-censoring-rate}
  \max_{1\le k\le K}P_X\{\eta_{-k}(X)^2\}=O_p(c_N^2).
\end{align}
\endgroup
Let $\widehat\mu_N^{\cf,\plug}$ and $\widehat\mu_N^{\cf,\os}$ be the
cross-fitted estimators in \eqref{eq:crossfit-plugin-os-main}, with hazard-induced fold-specific one-step weights.
Then the following statements hold.

\begin{enumerate}[label=\textnormal{(\alph*)},leftmargin=2em]
\item \textit{Global one-step robustness.}
The hazard-induced cross-fitted one-step estimator satisfies

\begingroup
\setstretch{1}
\setlength{\jot}{0pt}
\begin{equation} \label{eq:main-os-rate-with-Ghat}
  \widehat\mu_N^{\cf,\os}(\tau)-\mu_\tau
  =
  O_p(N^{-1/2}+c_N).
\end{equation}
\endgroup
This conclusion uses no convergence of the transition learner.

\item \textit{Plug-in rate.}
Suppose, in addition, that for deterministic sequences $a_N,b_N\downarrow0$,

\begingroup
\setstretch{1}
\setlength{\jot}{0pt}
\begin{align} \label{eq:main-plugin-precise-rates}
  \max_{1\le k\le K}P_X\{\delta_{A,-k}(X)^2\}=O_p(a_N^2),
  \qquad
  \max_{1\le k\le K}P_X\{\omega_{-k}(X)\}=O_p(b_N).
\end{align}
\endgroup
Then 
the cross-fitted plug-in estimator satisfies
\begingroup
\setstretch{1}
\setlength{\jot}{0pt}
\begin{align}
  \label{eq:main-plugin-rate}
  \widehat\mu_N^{\cf,\plug}(\tau)-\mu_\tau
  =
  O_p(N^{-1/2}+a_N+b_N).
\end{align}
\endgroup

\item \textit{Local second-order one-step rate.}
Suppose \eqref{eq:main-plugin-precise-rates} and Assumption~\ref{ass:main-transition-martingale-linearization} hold.
Let

\begingroup
\setstretch{1}
\setlength{\jot}{0pt}
\begin{align*}%
  \psi_0(O)=
  \mu_{A_0}(\tau\mid X)
  +
  \sum_{\ell\in\cJ}\int_0^\tau H_{\ell,0}(u,X)dM_{\ell,0}(u),
  \qquad
  \phi^\lozenge(O)=\psi_0(O)-\mu_\tau.
\end{align*}%
\endgroup
Then

\begingroup
\setstretch{1}
\setlength{\jot}{0pt}
\begin{align} \label{eq:main-os-canonical-expansion}
  \widehat\mu_N^{\cf,\os}(\tau)-\mu_\tau
  =
  (\mathbb P_N-P)\phi^\lozenge
  +O_p(a_Nc_N)
  +O_p\{N^{-1/2}(a_N+c_N)\},
\end{align}
\endgroup
and with it

\begingroup
\setstretch{1}
\setlength{\jot}{0pt}
\begin{equation} \label{eq:main-os-canonical-rate}
  \widehat\mu_N^{\cf,\os}(\tau)-\mu_\tau
  =
  O_p(N^{-1/2}+a_Nc_N).
\end{equation}
\endgroup
\end{enumerate}
\end{theorem}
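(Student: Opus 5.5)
The plan is to condition on the training sample for each fold and reduce everything to a von Mises–type expansion of the population functional $P\widehat\psi_{-k}$ around $\mu_\tau$. For fixed fold $k$ on $\mathcal E_N$, I write
\[
\widehat\mu_N^{\cf,\os}-\mu_\tau=\sum_k \frac{m_k}{N}\Big\{(\mathbb P_{I_k}-P)\widehat\psi_{-k}+\big(P\widehat\psi_{-k}-\mu_\tau\big)\Big\}.
\]
The first term is an empirical process over a class indexed by a fixed function. Conditionally on $\mathcal D_{-k}$, it is a centred mean of i.i.d. terms bounded through $M$ and $g_0/2$, so it is $O_p(N^{-1/2})$ by Chebyshev. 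For part (c), I instead compare it with $(\mathbb P_{I_k}-P)\psi_0$. Its conditional variance is $P(\widehat\psi_{-k}-\psi_0)^2$, which I bound by a constant times $P_X\delta_{A,-k}^2+P_X\eta_{-k}^2$, using Lipschitz continuity of the product integral on $\mathscr A_\tau(M)$ (Duhamel). This gives the $O_p\{N^{-1/2}(a_N+c_N)\}$ term.

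The core step is an exact bias identity. Using $E\{dN_\ell\mid X,\mathcal F_{u-}\}=Y_{j}(u)dA_{\ell,0}(u\mid X)$ and $E\{Y_j(u)\mid X\}=P^{A_0}_j(u-\mid X)G(u-\mid X)$ under conditionally independent censoring,
\[
P\widehat\psi_{-k}-\mu_\tau=P_X\Big[\mu_{\widehat A}(X)-\mu_{A_0}(X)-\sum_\ell\int_0^\tau P^{A_0}_j(u-)\frac{G(u-)}{\widehat G(u-)}\widehat W_\ell(u)\,dD_{k,\ell}(u)\Big].
\]
Write $G/\widehat G=1-e_{G,-k}$. Duhamel's formula for the RMST gives $\mu_{\widehat A}-\mu_{A_0}=\sum_\ell\int P^{A_0}_j(u-)\widehat W_\ell(u)\,dD_\ell(u)$ exactly, with the perturbation placed so that the fitted law sits on the right, matching $\widehat V$. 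The key cancellation is therefore
\[
P\widehat\psi_{-k}-\mu_\tau=\sum_\ell P_X\int P^{A_0}_j(u-)\widehat W_\ell(u)\,e_{G,-k}(u)\,dD_{k,\ell}(u).
\]

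For (a), I do not expand $D$. I integrate by parts in $u$, or bound directly, using that $\widehat A$ and $A_0$ have total variation at most $M$ and that $P^{A_0}_j\widehat W_\ell$ is bounded with bounded variation. This gives $|\text{bias}|\lesssim P_X\eta_{-k}(X)$. Cauchy–Schwarz then yields $O_p(c_N)$ with no transition rate.

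For (b), the plug-in bias is $P_X\{\mu_{\widehat A}-\mu_{A_0}\}$. Its first-order part $\sum_\ell P_X\int L_{\ell,0}\,dD_\ell$ is bounded by integration by parts using $\delta_{A,-k}$. The remainder is controlled by $\omega_{-k}$ after integrating over $t\le\tau$. The evaluation fluctuation is $O_p(N^{-1/2})$.

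For (c), I replace $\widehat W_\ell$ by $W_{\ell,A_0}$, at a cost bounded by $\kappa_{-k}\eta_{-k}\cdot\|D\|_{TV}$. I expect this term to be $O_p(a_Nc_N)$ via Cauchy–Schwarz with $\kappa\lesssim\delta_A$. The leading term is then $\sum_\ell P_X\int L_{\ell,0}\,e_{G,-k}\,dD_{k,\ell}$, which is exactly \eqref{eq:main-AG-transform-rate} with $z=e_{G,-k}$, predictable by Assumption~\ref{ass:main-transition-martingale-linearization}. Lenglart's inequality handles the martingale part $U$ and the stated bound handles $R$, giving $O_p(a_Nc_N)$.

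Summing over the fixed number of folds with $m_k/N$ bounded gives \eqref{eq:main-os-canonical-expansion}, and \eqref{eq:main-os-canonical-rate} follows.

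The main obstacle is the replacement of $\widehat W$ by $W_{A_0}$ in (c). A naive bound of $dD$ in total variation does not give a product rate, because $\delta_A$ is a sup-norm rather than a variation norm. I would first try integrating by parts to move the differential onto $(\widehat W-W_0)e_G P_j$. This requires $e_G$ to have controlled variation, which is plausible because both $\widehat G$ and $G$ are survival functions bounded below. If that fails, I would absorb this term into $\mathcal R_{k,z}$ using a predictable multiplier.
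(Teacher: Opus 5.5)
Your proposal is correct and follows the paper's proof essentially step for step. The left-hand Duhamel form gives the exact foldwise bias $\sum_{\ell\in\cJ}P_X\int_0^\tau P^{A_0}_{j(\ell)}(u-\mid X)\,W_{\ell,\widehat A^{(-k)}}(u,X)\,e_{G,-k}(u,X)\,dD_{k,\ell}(u\mid X)$. Part (a) then follows from the crude mass-$M$ bound plus Cauchy--Schwarz, part (b) from the foldwise plug-in expansion, and part (c) from splitting the bias into the $L_{\ell,0}\,e_{G,-k}\,dD_{k,\ell}$ term controlled by \eqref{eq:main-AG-transform-rate} and the $W_{\ell,\widehat A^{(-k)}}-W_{\ell,A_0}$ replacement term.

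The obstacle you flag is not real. The paper bounds the replacement term by exactly your naive estimate $C_{\tau}\,P_X\{\kappa_{-k}(X)\eta_{-k}(X)\}\cdot 2M$: the product rate comes from the two sup-norm factors $\kappa_{-k}$ and $\eta_{-k}$, while $\int_0^\tau|dD_{k,\ell}|\le 2M$ only needs to be bounded on $\mathcal E_N$. Integration by parts is needed elsewhere, namely for the compensator piece $\int_0^\tau Y_{j(\ell)}\widehat H^{(-k)}_{\ell}\,dD_{k,\ell}$ of $\widehat\psi_{-k}-\psi_0$. Its weight carries no small factor, and Duhamel--Lipschitz continuity alone does not control it. The paper handles it with the bounded-variation bound of Lemma~\ref{lem:rmst-weight-variation}.
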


Part (a) is the global robustness statement: bounded admissibility and coherence give a valid one-step mean at rate $N^{-1/2}+c_N$, even when the transition learner is inconsistent.  
Parts (b) and (c) are local statements.
The plug-in retains the first-order transition error $a_N$, whereas the coherent one-step correction removes that term and leaves the mixed transition--censoring remainder $a_Nc_N$.    If the nuisance product-rate condition
$
  a_Nc_N=o(N^{-1/2})
$
holds,
then the canonical expansion
$
\widehat\mu_N^{\cf,\os}(\tau)-\mu_\tau
=
(\mathbb P_N-P)\phi^\lozenge+o_p(N^{-1/2})
$
holds and the one-step reaches the semiparametric efficiency bound.

In particular, if $a_N\asymp c_N\asymp r_N$ and
$b_N=O(r_N^2)$, then

\begingroup
\setstretch{1}
\setlength{\jot}{0pt}
\[
\widehat\mu_N^{\cf,\plug}(\tau)-\mu_\tau
=O_p(N^{-1/2}+r_N),
\qquad
\widehat\mu_N^{\cf,\os}(\tau)-\mu_\tau
=O_p(N^{-1/2}+r_N^2).
\]
\endgroup
For the standard integrated nonparametric rate $r_N=N^{-\beta/(2\beta+d)}$, the one-step remainder is $N^{-2\beta/(2\beta+d)}$, and canonical root-$N$ inference follows when $\beta>d/2$.
More generally, if the transition and censoring learners have rates $N^{-\alpha_A}$ and $N^{-\alpha_G}$, respectively, the product-rate condition is $\alpha_A+\alpha_G>1/2$.

\section{Numerical Work}%

We contrast finite-$N$ performance of eight RMST estimators, obtained by crossing three binary design choices:
First, either death-only data or the death-and-recurrence data \eqref{eq:observed-sa} are used;
marked \textsf{KM} and \textsf{AJ}.
Second, either transition hazards are unadjusted for $X$, as in the no-$X$-AJ \eqref{eq:AJ},
or fit conditional on $X$ with Cox \eqref{eq:cox-main-transition-model} or a flexible learner;
marked ``\textsf{Cov}.''
Third, either the fitted transition law is plugged-in \eqref{eq:Cox-plug-in},
or corrected by one-step augmentation \eqref{eq:os-est-integrated-main};
marked ``\textsf{pi}'' or ``\textsf{os}.''
Crossing recurrence (\textsf{KM/AJ}), covariates (\textsf{$\emptyset$/Cov}), and debiasing (\textsf{pi/os}) yields

\begingroup
\setstretch{1}
\setlength{\jot}{0pt}
\begin{align} \label{eq:eight-estimators}%
  \big\{
  \widehat\mu^{\KM},\quad
  \widehat\mu^{\AJ},\quad
  \widehat\mu^{\KMCov,\plug},\quad
  \widehat\mu^{\AJCov,\plug},\quad
  \widehat\mu^{\KM,\os},\quad
  \widehat\mu^{\AJ,\os},\quad
  \widehat\mu^{\KMCov,\os},\quad
  \widehat\mu^{\AJCov,\os}
  \big\}.
\end{align}%
\endgroup
Each matched pair differs in exactly one feature and isolates the value of recurrence, covariates, or the one-step correction.
\Cref{app:simulations}
provides complete experimental details.

\subsection{Synthetic simulation studies} \label{sec:simulations}%

\begin{figure}[t]%
  \centering
  \includegraphics[width=\linewidth]{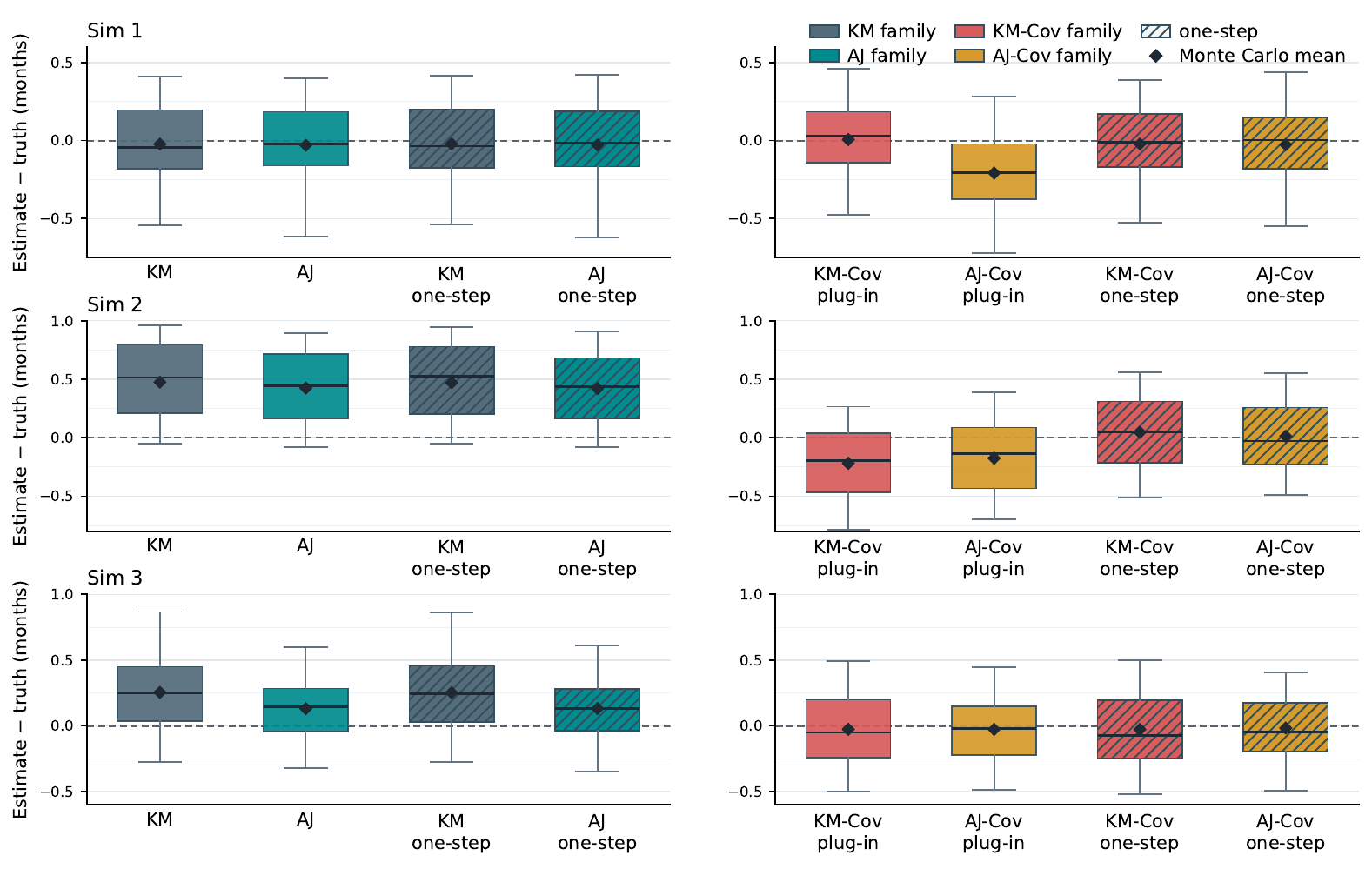}
  \caption{Fixed-$N$ estimator distributions; eight RMST estimators; three experiments.
  }
  \label{fig:sim-boxplots}
\bigskip
\end{figure}%

Three simulation designs vary, in turn, misspecification in the post-recurrence hazard,
the flexibility of the nuisance learner, and the amount and timing of recurrence information.
Figure~\ref{fig:sim-boxplots}
reports the $N=800$ replicate error distributions for the strongest cell of each design:
each box is one scenario, sample-size, and estimator cell,
hatching distinguishes one-step from plug-in estimators, and color distinguishes the KM and AJ families.

We find:
(i) $\widehat\mu^{\AJCov,\plug}$ is inconsistent under misspecified $\widehat A_{12}$, but $\widehat\mu^{\AJCov,\os}$ restores consistency, top row.
(ii) $\widehat\mu^{\AJCov,\os}$ removes finite sample bias of $\widehat\mu^{\AJCov,\plug}$ in truly nonlinear design with nuisances learned by gradient-boosted Poisson-loss regression, middle row.
(iii) the efficiency gain from recurrence is lowered by covariate adjustment and when censoring arrives ahead of recurrence,
bottom row.

\Cref{app:sim-covariate-scenarios} details the three mechanisms and their scaling on $N$, \Cref{fig:sim-contrasts}.
Read together, the three designs separate the two questions that the population gain identity of \Cref{subsec:gain-snapshot} conflates:
Simulations 1 and 2 show that the one-step correction protects the target under a wrong or merely flexible nuisance model;
Simulation 3 shows that, even with a correctly specified model, the AJ-versus-KM gain tracks how strongly recurrence has split the risk set and separated the state-specific
residual-survival means, eroding its finite-sample benefit once censoring hides recurrence before it can be observed.

\subsection{Application to semi-synthetic clinical trials data}\label{sec:application}%

We use metastatic breast cancer (MBC) clinical-trial curves to build semi-synthetic experiments to contrast the performance of eight estimators \eqref{eq:eight-estimators}.
The data come from a repository of 1,865 drug-therapy trials assembled by \citet{silberholz2019}.
Our setting requires a two-arm randomized trial with armwise curves for the intermediate (progression/PFS) and terminal (death/OS) events.
We conducted a quality-control audit to verify that the repository representation of each trial was sufficiently consistent with the corresponding source publication for our analysis.
This yields 40 comparisons we can simulate from.
Table~\ref{tab:rct_summary} in the Appendix summarizes key features of the patients and their outcomes.

From each trial, we calibrate a latent arm-specific event-time law to the published censoring-adjusted survival curves.
Because the public files do not contain the underlying patient-level censoring process, censoring is generated from a separately specified design.
The resulting application is semi-synthetic: published trial curves determine the event-time marginals, and an explicit generative model determines the joint illness--death path, patient covariates, and censoring mechanism.

\subsubsection{Semi-synthetic data-generation}%

We run two sets of experiments.
In both we simulate the illness--death process \eqref{eq:S-state} with transitions $\cJ$.
The estimand throughout is the arm-wise RMST with $\tau=24$ months, and the treatment effect is $\mu^1-\mu^0$.
Full details are given in Appendix~\ref{app:datagen}.

We first fix the marginals at prespecified smooth Weibull PFS and OS curves.
The distributions for these curves are chosen to resemble the MBC trials in scale but not fitted specifically to any of them. 
We calibrate a single illness--death process such that its induced marginal curves match these two Weibull distributions.
Holding both marginals smooth and fixed allows us to isolate the effect of the coupling mechanism alone.

We then apply the coupling with the marginals from an actual trial from the processed digitized PFS and OS curves.
For clarity of exposition we present one comparison from \cite{conte1996chemotherapy} in detail.
We select this example because it displays the surrogate paradox: at $\tau = 24$ months, treatment improves PFS by 1.322 months on average, while worsening OS by 0.749 months.
Later, in Section~\ref{sec:allTrials}, we further examine all 40 comparisons.
We take only the arm-specific digitized survival values as trial inputs.

The pair of curves alone cannot determine the process that generated it. 
PFS counts both recurrence and death without recurrence without distinction. 
We therefore construct a family of joint laws for recurrence and death, all of which reproduce the PFS and OS marginals, and present the results across that family. 
Let $\pi_j$ denote the share of recurrence-free exits in interval $j$ allocated to recurrence; choosing $\pi_j$ fixes the remaining transition flows, so the compatible laws form a one-dimensional family in each interval. 
We evaluate five couplings sweeping the feasible range, \emph{Lower}, \emph{Quarter}, \emph{Midpoint}, \emph{Three-quarter}, and \emph{Upper}. 
In the trial-anchored experiment, we evaluate two additional couplings \emph{Smooth} and \emph{Constant projection}.
The couplings differ in how many patients occupy the post-recurrence state, when they enter it, and how quickly they then die.

Each simulated patient carries a baseline covariate $X\sim\mathcal N(0,1)$, drawn independently of the treatment.
It enters all three transition hazards through nonlinear, time-varying log-risk terms,
with per-interval intercepts calibrated so that averaging over $X$ still reproduces the same PFS and OS marginals.

Censoring is imposed after the latent paths are drawn.
We draw censoring times independently of recurrence, death, and the baseline covariate given the treatment arm.
Each arm follows a single two-point law, censoring at a fixed landmark with a probability calibrated so that the pooled-arm expected chance of censoring before death is $50\%$.

\subsubsection{Results and Main takeaways}%

\begin{figure}[t]
  \centering
  \includegraphics[width=\linewidth]{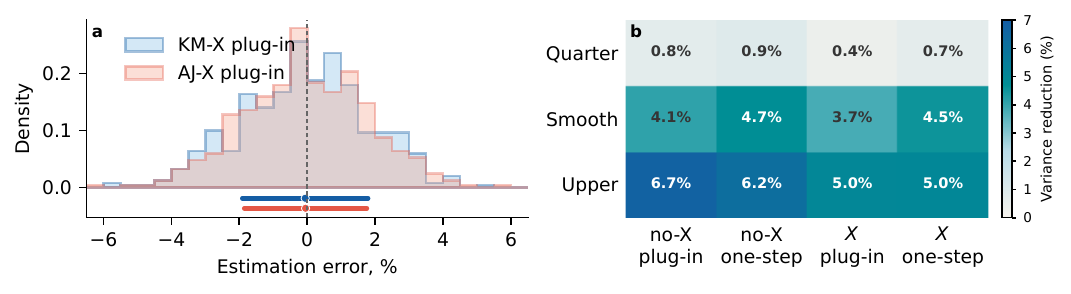}
  \caption{
    CONTE0796-anchored experiment.
    (a): distributions of no-$X$ KM and AJ plug-in estimators under \emph{Upper}.
    (b): variance reduction for estimator-by-coupling combinations.
  }
  \label{fig:main-mc}
\end{figure}

Figure~\ref{fig:main-mc} shows fixed-$N$ comparisons for the trial-anchored couplings.
Panel~(a) enlarges a single AJ-vs-KM pair.
The two error distributions are centered but differ in spread, a $6.7\%$ variance reduction.
Panel~(b) reports the reduction in coupling-by-estimator combinations.
All twelve are positive.
The estimators preserve the same ordering in their gains: under \emph{Upper}, the no-$X$ estimators improve by $6.7\%$ and $6.2\%$, compared with $5.0\%$ for their covariate-adjusted counterparts.
The smaller adjusted gain is consistent with the baseline covariate already capturing part of the relevant prognostic information.
Collectively, these results show that observing and incorporating recurrence increases efficiency, but the gain depends on the joint law connecting recurrence and death.
This cannot simply be determined from effect sizes and PFS and OS curves.
Importantly, the estimator exploits recurrence without requiring the joint law to be known in advance: in the worst case the benefit is small, and in the best case it is substantial.

These gains translate directly into trial size.
A variance reduction of $\mathbb{V}$ means a Kaplan--Meier analysis requires $1/(1-\mathbb{V})$ times as many patients to reach the same precision.
Under the most informative construction, where $\mathbb{V}\approx5\%$, a trial that would otherwise enroll $600$ patients reaches the same precision with roughly $570$.
At \$50{,}000 per participant \citep{moore2018estimated}, that is a saving of approximately \$1.5 million.
The saving comes from information already collected in the course of the trial.

One might naturally ask whether individual-level and trial-level correlation used to validate surrogate endpoints
\citep{prentice1989surrogate, buyse2000validation} would tell us in advance the efficiency gain with recurrence.
However, they cannot, because the gain also depends on when recurrence occurs relative to when follow-up ends.
These measures were developed to assess whether a surrogate can substitute for the true endpoint.
Our question is different: overall survival remains the estimand and recurrence enters only as auxiliary information.

\subsubsection{Impact Across Trials}\label{sec:allTrials}%

\begin{figure}[t]%
  \centering
  \begin{minipage}[c]{0.49\textwidth}
    \centering
    \includegraphics[angle=270,width=\linewidth,keepaspectratio]{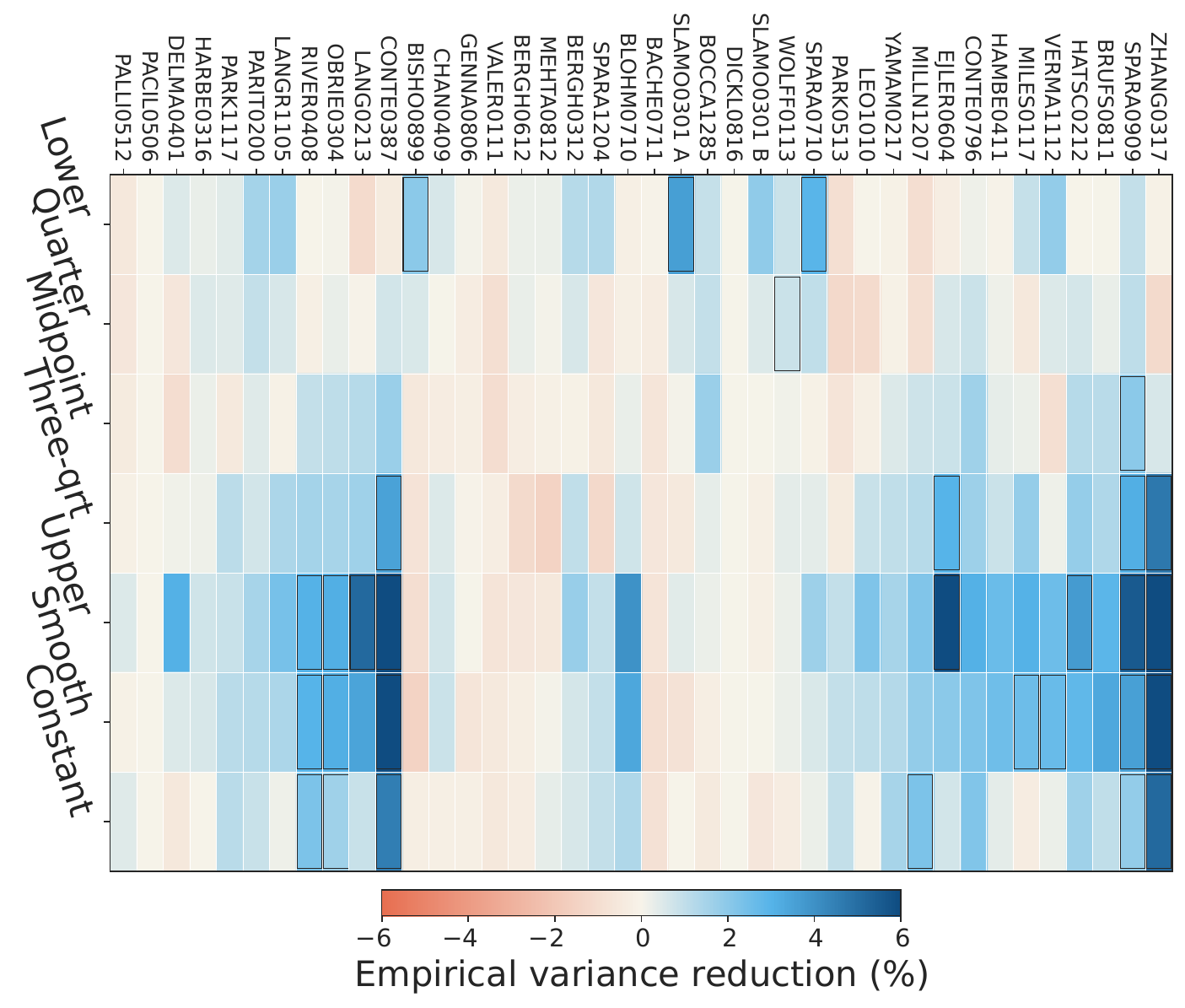}

  \end{minipage}%
  \hfill
  \begin{minipage}[c]{0.49\textwidth}
    \centering
    \includegraphics[width=\linewidth,]{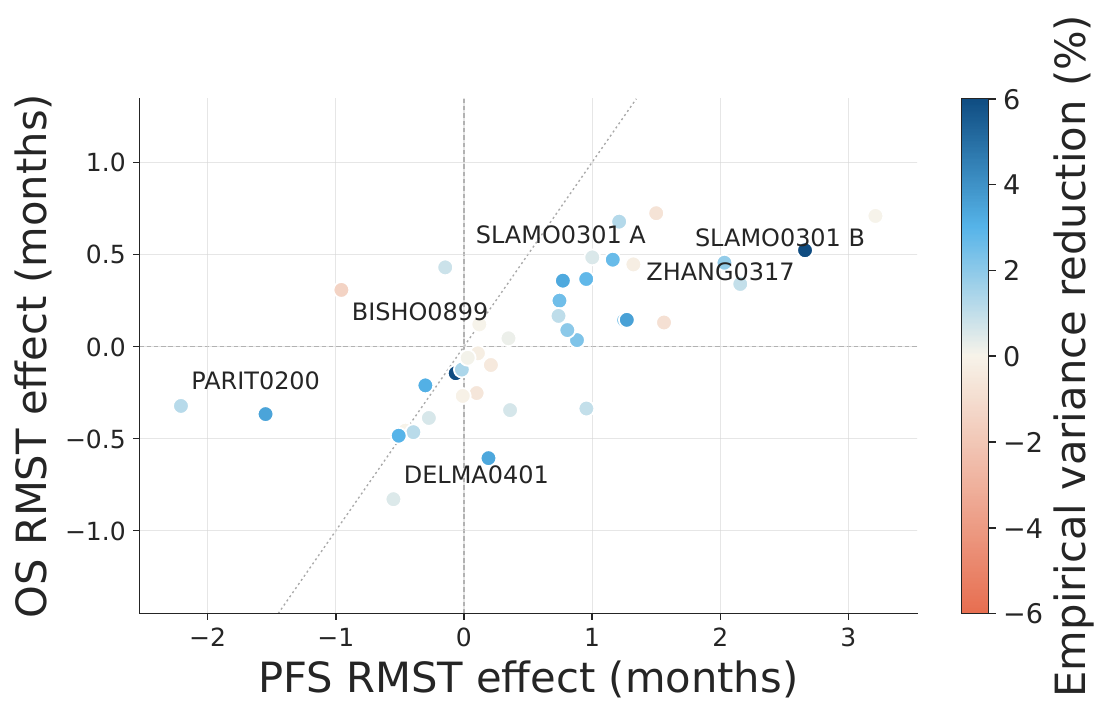}
  \end{minipage}
  \caption{
    (a):
      variance reduction with recurrence (\%),
      for 40 trials (columns) under each seven transition couplings (rows).
    (b):
      trial's own 12-month PFS (x-axis) and OS (y-axis) treatment effects (months),
      and variance reduction with recurrence (\%, color),
      for 40 trials under \emph{Smooth} coupling.
  }
  \label{fig:all-40-trials}
  \bigskip
\end{figure}%

To assess whether the CONTE0796 pattern generalizes, we repeat the same construction across all 40 eligible comparisons,
holding the estimators and the seven transition couplings fixed and varying only the trial.
Figure~\ref{fig:all-40-trials} 
reports the resulting empirical variance gain for every trial-by-coupling combination, along with the target-vs-surrogate treatment effects comparison for each trial.

\section{Discussion}%
Survival data are important targets for causal inference and pose challenges for identification and efficiency.
Surrogate outcomes carry no first-order efficiency value with complete data,
but recover information lost to censoring and relax the ignorability assumption by refining the predictable residual-survival projection.
We derive the information efficiency bound and the exact gap between the marginal KM and the augmented AJ scores.
In the no-covariate setting, this leads to the natural and efficient standardized AJ estimator.
In the setting with covariates, we show that a cross-fitted hazard-induced one-step estimator achieves rate-robustness and canonical root-$N$ inference.
Interesting follow-up questions are how to select surrogate events toward optimal efficiency and how to employ surrogate events in the design of experiments.

\clearpage
\appendix

\section*{Technical appendices}
\addcontentsline{toc}{section}{Technical appendices}

\tableofcontents

\section{Notation and relation to the main text}\label{app:notation2}

We use the data, coarsening map, and estimands of Section~\ref{sec:problem-setup},
with coarsening map \eqref{eq:C-coarsening-map} and estimands \eqref{eq:parameters}.
The illness--death states, transition indices, risk sets, and event processes are those of Section~\ref{sec:efficiency};
the event-history and observed filtrations are given in \eqref{eq:F-filtrations} and~\eqref{eq:observed-sa}.
The projection martingales and their contrast are defined in \eqref{eq:IF-contrast} and~\eqref{eq:IF-contrast2}.

For no-covariate estimation, we use the product integral, AJ estimator, and
RMST influence function in \eqref{eq:matrix-product-integral}, \eqref{eq:AJ},
and \eqref{eq:main-nocov-rmst-eif}--\eqref{eq:main-nocov-rmst-H}.
The conditional-hazard, sample-splitting, and cross-fitting notation is that
of Section~\ref{sec:covariates}, with estimators given in
\eqref{eq:os-est-integrated-main} and~\eqref{eq:crossfit-plugin-os-main}.
The moduli $\kappa_{\rm PI},\omega_{\rm PI}$ and the foldwise errors
$D_{k,\ell},\delta_{A,-k},e_{G,-k},\eta_{-k}$ retain their definitions there.
Additional norm conventions are introduced in
Section~\ref{sec:setup-notation}; single-split sampling and empirical-mean
notation are given at the beginning of Section~\ref{app:covariate-proofs}.

The proofs below use the assumptions of the corresponding main-text
results. Section~\ref{sec:cox-breslow-details} distinguishes the raw fitted Cox hazards
from their row-stabilized version.

The order of the proofs follows Sections~\ref{sec:efficiency}
and~\ref{sec:covariates}. Supplementary statements are placed with their
proofs, and the numerical details are collected in Section~\ref{app:simulations}.

\subsection{Variables and potential outcomes}

\begin{itemize}[leftmargin=0em,itemsep=-0.1em]
  \item $A \in \{0, 1\}$: Treatment indicator
  \item $X \in \mathbb{R}^d$: Baseline covariate vector with support $\mathcal{X}$.
  \item $\tau_0 > 0$: Event support bound ($T^a, R^a, C^a \in [0, \tau_0]$).
  \item $\tau \in (0, \tau_0)$: Evaluation horizon for $\theta$ and $\mu$.
  \item $(T^a, R^a, C^a)$: Potential death, recurrence/progression, censoring time.
  \item $(T, R, C)$: Factual event times under consistency: $(T, R, C) = (T^A, R^A, C^A)$.
  \item $\tilde T = T \wedge C$: Observed event time.
  \item $\Delta = I(T \le C)$: Death indicator.
  \item $\tilde R = R \wedge \tilde T = R \wedge T \wedge C$: Observed intermediate (recurrence/progression) event time.
  \item $\Gamma = I(R \le \tilde T) = I(R \le T \wedge C)$: Intermediate event indicator.
  \item $O = (\tilde T, \Delta, \tilde R, \Gamma, A, X)$: Full observed-data tuple for an individual unit.
  \item $P, P_X$: Population law, population covariate law.
  \item $\mathbb{P}_n, \mathbb{P}_N$: Empirical measure over a sample of size $n$ or $N$, i.e., $\mathbb{P}_n f = n^{-1}\sum_{i=1}^n f(O_i)$.
\end{itemize}
\subsection{Target estimands and marginal distributions}

\begin{itemize}[leftmargin=0em,itemsep=-0.1em]
  \item $\theta^a_\tau = P(T^a > \tau) = S^a(\tau)$: Arm-specific marginal survival probability at horizon $\tau$.
  \item $\mu^a_\tau = E(T^a \wedge \tau) = \int_0^\tau P(T^a > t)\,dt$: Restricted mean survival time (RMST) up to horizon $\tau$.
  \item $S(t) \equiv S_T(t) = P(T > t)$: Marginal survival function of the terminal event $T$.
  \item $G(t) \equiv S_C(t) = P(C > t)$: Marginal survival function of the censoring time $C$.
  \item $G(t \mid X) = P(C > t \mid X)$ Conditional survival function of $C$.
  \item $F_U(t) = 1 - S_U(t)$: Cumulative distribution function for a generic random variable $U$.
  \item $\lambda_T(t), \lambda_C(t), \lambda_R(t)$: Marginal hazard rates for death, censoring, and recurrence.
  \item $v(t) = P(T > \tau \mid T > t)$: Residual-survival probability to $\tau$ given survival past $t$.
\end{itemize}
\subsection{Illness--death process and risk sets}

\begin{itemize}[leftmargin=0em,itemsep=-0.1em]
  \item $\mathcal{S}(t) \in \{0, 1, 2\}$: Illness--death state process at calendar/study time $t$:
\[
  \mathcal{S}(t) = 0 \cdot I(t < R \wedge T) + 1 \cdot I(R \le t < T) + 2 \cdot I(T \le t), \qquad \mathcal{S}(0) = 0,
\]
where $0$ is \textit{alive and recurrence-free}, $1$ is \textit{alive post-recurrence}, and $2$ is \textit{dead} (absorbing).
\item $\cJ = \{01, 02, 12\}$: The set of admissible state transitions ($0\to1$, $0\to2$, $1\to2$).
\item $\ell = (j, k) \in \cJ$: Generic transition with source state $j = j(\ell)$ and destination state $k = k(\ell)$.
\item $\lambda_{jk}(t)$: Instantaneous transition hazard from state $j$ to state $k$ at time $t$.
\item $\mathbf{A}(t)$: Cumulative transition hazard matrix, $A_{jk}(t) = \int_0^t \lambda_{jk}(u)\,du$ and $A_{jj}(t) = -\sum_{k \ne j} A_{jk}(t)$.
\item $P_{jk}(s, t) = P(\mathcal{S}(t) = k \mid \mathcal{S}(s) = j)$: Transition probability.
\item $p_j = P(\mathcal{S}(0) = j)$: Initial state probability distribution, $p_0 = 1, p_1 = p_2 = 0$.
\item $P_0(t) = p_0 P_{00}(0, t)$: State-$0$ population occupancy probability at time $t$.
\item $P_1(t) = p_0 P_{01}(0, t) + p_1 P_{11}(0, t)$: State-$1$ population occupancy probability at time $t$.
\item $v_j(t) = P(T > \tau \mid \mathcal{S}(t) = j)$: State-specific residual survival probability to $\tau$.
  \item $v_j(u, t) = P_{j0}(u, t) + P_{j1}(u, t)$: State-specific residual survival probability.
  \item $V_j(u, \tau) = \int_u^\tau v_j(u, s)\,ds$: State-specific residual RMST over $[u, \tau]$.
\end{itemize}

\subsection{Counting processes, risk processes, and filtrations}

\begin{itemize}[leftmargin=0em,itemsep=-0.1em]
  \item Full-data at-risk processes:
  \[
Y_0^*(t) = I(t \le R, t \le T), \quad Y_1^*(t) = I(R < t \le T), \quad Y_T^*(t) = Y_0^*(t) + Y_1^*(t) = I(t \le T).
\]
  \item Full-data transition counting processes:
  \[
N_{01}^*(t) = I(R \le t, R < T), \quad N_{02}^*(t) = I(T \le t, T < R), \quad N_{12}^*(t) = I(R < T \le t).
\]
  \item Observed at-risk and counting processes, stopped at $\tilde T = T \wedge C$:
  \[
Y_j(t) = Y_j^*(t) I(C \ge t), \;\; Y(t) = Y_T^*(t) I(C \ge t), \;\; N_{jk}(t) = N_{jk}^*(t \wedge \tilde T), \;\; N_T(t) = N_T^*(t \wedge \tilde T).
\]
  \item Full data and observed event filtrations:
  \begin{itemize}[label=,]
    \item $\mathcal{F}^\bullet(t) = \sigma\{N_T^*(s) : s \le t\}$: Marginal death filtration.
    \item $\mathcal{F}^\blacklozenge(t) = \sigma\{N_T^*(s), N_R^*(s) : s \le t\}$: Augmented illness--death event filtration.
    \item $\mathcal{F}^\circ(t) = \mathcal{F}^\bullet(t \wedge \tilde T)$: Observed marginal filtration.
    \item $\mathcal{F}^\lozenge(t) = \mathcal{F}^\blacklozenge(t \wedge \tilde T)$: Observed recurrence-augmented filtration.
  \end{itemize}
  \item Full data and observed transition data martingales:
  \begin{itemize}[label=,]
    \item $M_{jk}^\blacklozenge(t) = N_{jk}^*(t) - \int_0^t Y_j^*(s)\lambda_{jk}(s)\,ds$: Full-data transition martingale.
    \item $M_T^\bullet(t) = N_T^*(t) - \int_0^t Y_T^*(s)\lambda_T(s)\,ds$: Full-data marginal death martingale.
    \item $M_\ell(t) = N_\ell(t) - \int_0^t Y_{j(\ell)}(s)\,dA_\ell(s)$: Observed transition martingale for $\ell \in \cJ$.
    \item $M_C(t) = I(C \le t, C \le T) - \int_0^t I(\tilde T \ge s)\lambda_C(s)\,ds$: Censoring martingale.
  \end{itemize}
  \item Estimand information projection martingales and contrast:
    \begin{align*}
      M_\theta^\blacklozenge(t) &= E\{I(T > \tau) - \theta \mid \mathcal{F}^\blacklozenge(t)\}, \qquad M_\theta^\bullet(t) = E\{I(T > \tau) - \theta \mid \mathcal{F}^\bullet(t)\},
      \\
      \Delta_\theta(t) &= M_\theta^\blacklozenge(t) - M_\theta^\bullet(t) = \{v_0(t) - v(t)\}Y_0^*(t+) + \{v_1(t) - v(t)\}Y_1^*(t+).
    \end{align*}
\end{itemize}
\subsection{Influence functions and estimators}

\begin{itemize}[leftmargin=0em,itemsep=-0.1em]
  \item $\phi^*(T) = I(T > \tau) - \theta$: Full-data IF for $\theta = P(T > \tau)$.
  \item $\phi^\Delta(\tilde T, \Delta)$: Missing-outcome IF.
  \item $\phi^\circ(\tilde T, \Delta)$: Death observed-data EIF.
  \item $\phi^\lozenge(\tilde T, \Delta, \tilde R, \Gamma)$: Death and recurrence observed-data EIF.
  \item $\phi_\mu^\lozenge(O) = \sum_{\ell \in \cJ}\int_0^\tau H_\ell^\lozenge(u)\,dM_\ell(u)$: EIF for RMST in the no-covariate model, where:
    \[
      H_{01}^\lozenge(u) = \frac{V_1(u, \tau) - V_0(u, \tau)}{G(u-)}, \quad H_{02}^\lozenge(u) = -\frac{V_0(u, \tau)}{G(u-)}, \quad H_{12}^\lozenge(u) = -\frac{V_1(u, \tau)}{G(u-)}.
    \]
  \item $\widehat A_{jk}^{\AJ}(t) = \int_0^t \frac{I\{\bar Y_j(u) > 0\}}{\bar Y_j(u)}\,d\bar N_{jk}(u)$: Nelson--Aalen cumulative hazard estimator.
  \item $\widehat\theta(t), \widehat\mu^{\AJ}_\tau$: Aalen--Johansen product-integral survival and RMST estimators.
  \item $\widehat\beta_\ell, \widehat\Lambda_{0,\ell}$: Cox coefficient and Breslow baseline hazard estimator.
  \item $\widehat\mu_{n,m}^{\Cox,\plug}(\tau)$: Cox--Breslow plug-in RMST estimator.
  \item $\widehat\mu_{n,m}^{\plug}(\tau), \widehat\mu_N^{\rm cf,plug}(\tau)$: Generic conditional hazard plug-in and cross-fitted plug-in estimators.
  \item $\widehat\mu_{n,m}^{\os}(\tau), \widehat\mu_N^{\rm cf,os}(\tau)$: Hazard-induced one-step and cross-fitted one-step estimators.
  \item $\widehat H_{\ell,n}(u, X) = \frac{\widehat V_{k(\ell),n}(u, \tau \mid X) - \widehat V_{j(\ell),n}(u, \tau \mid X)}{\widehat G_n(u- \mid X)}$: Hazard-induced one-step residual weights.
\end{itemize}

\subsection{Cross-fitting, Error metrics, and convergence rates}

\begin{itemize}[leftmargin=0em,itemsep=-0.1em]
  \item $I_1, \dots, I_K$: Fixed $K$-fold partition of the $N$-sample indices, $m_k = |I_k|$, $N = \sum_{k=1}^K m_k$.
  \item $\mathcal{D}_{-k}$: Training sample omitting fold $k$.
  \item $\mathscr{A}_\tau$: Class of admissible finite-variation cumulative hazard triplets on $[0, \tau]$;
  \item $\mathscr{A}_\tau(M)$ bounds total variation by $M$.
  \item $\kappa_{\rm PI}(A, B) = \sup_{0 \le s \le t \le \tau}\|P_B(s, t) - P_A(s, t)\|_\infty$: Uniform product-integral discrepancy metric.
  \item $\omega_{\rm PI}(A, B)$: Second-order Duhamel linearization remainder of the product integral.
  \item $D_{k,\ell}(u \mid x) = \widehat A_\ell^{(-k)}(u \mid x) - A_{\ell,0}(u \mid x)$: Conditional cumulative hazard error in fold $k$.
  \item $\delta_{A,-k}(x) = \max_{\ell \in \cJ} \sup_{0 \le u \le \tau}|D_{k,\ell}(u \mid x)|$: Uniform hazard estimation error.
  \item $e_{G,-k}(u, x) = 1 - \frac{G(u- \mid x)}{\widehat G^{(-k)}(u- \mid x)}$: Relative censoring estimation error.
  \item $\eta_{-k}(x) = \sup_{0 \le u \le \tau}|e_{G,-k}(u, x)|$: Uniform relative censoring estimation error.
  \item $a_N, b_N, c_N$: Deterministic convergence rate sequences such that
    \[
      P_X\{\delta_{A,-k}(X)^2\} = O_p(a_N^2), \qquad
          P_X\{\omega_{-k}(X)\} = O_p(b_N), \qquad
          P_X\{\eta_{-k}(X)^2\} = O_p(c_N^2).
    \]
\end{itemize}

\section{Proofs for surrogate-powered efficiency}
\label{app:efficiency-proofs}
We follow the sequence of results in Section~\ref{sec:efficiency}:
full-data efficiency, observed-data influence functions, AJ estimation,
and the efficiency gain.

\subsection{Efficiency with full data}\label{app:full-data-efficiency}
This subsection supplies the semiparametric definitions and proof deferred in
Section~\ref{sec:efficiency}. We use the joint and marginal models for
fully observed $(T,R)$ in Theorem~\ref{thm:no-gain}.

\begin{definition} 
A function $\phi\in L^2_0(P)$ is an influence function (IF) for the parameter $\psi(P)$ at $P$,
relative to a model $\mathcal{M}\equiv\{ P \}$, if:
for every regular one-dimensional submodel
$\{  P^{(\epsilon)} \}_{\epsilon\in(-\eta,\eta)}\subset\mathcal{M}$ with a score function
$s(t,r)\equiv\partial_\epsilon\log f^{(\epsilon)}(t,r)\big|_{\epsilon=0} \in L^2_0(P)$,
the pathwise derivative of $\psi$ at $P$ is given by the inner product of the score with $\phi$:
\begin{align*}
  \partial_\epsilon \psi(P^{(\epsilon)})\big|_{\epsilon=0}
  =
  E \{\phi(T,R) s(T,R)\} , \qquad\forall s\in\mathcal{T}_P(\mathcal{M}).
\end{align*}
An IF $\phi\equiv\phi_{P}(t,r)$ is determined jointly by $\psi$ and $\mathcal{M}$, and is an $L^2_0(P)$ function that varies with $P\in\mathcal{M}$.
Geometrically, $\phi_P$ is the $L^2(P)$ gradient of the functional $\psi$ on $\mathcal{M}$.

\end{definition}

When the model $\mathcal{M}$ is restricted and its tangent space $\mathcal{T}_P(\mathcal{M})$ is a proper subset of $L^2_0(P)$,
the IF is not unique and efficiency is not a trivial question.
The setting of censored data considered here leads to a restricted tangent space for the observed data,
even when the full-data distribution is nonparametric.

\begin{definition} 
The semiparametric efficiency bound for $\psi(P)$ relative to $\mathcal{M}$ is
$\inf_{\phi \in \textsf{IF}(\psi,P;\mathcal{M})} E \phi^2(T,R),$
where $\textsf{IF}(\psi,P;\mathcal{M})$ is the set of all influence functions.
The IF that achieves the bound is called the efficient influence function (EIF).
Geometrically, the EIF is the $L^2(P)$ projection of any IF onto $\mathcal{T}_P(\mathcal{M})$.
\end{definition}

Theorem~\ref{thm:no-gain} identifies the joint-model influence function with its marginal counterpart.
More explicitly, when $P_{R\mid T}$ is unrestricted and $\psi(P)=\psi(P_T)$,
the sets of influence-functions in the joint model $\mathcal M$ and its reduced marginal model $\mathcal M_0$ for $P_T$ coincide as functions of $T$.
Their efficiency bounds therefore coincide.
Hence an unrestricted auxiliary variable supplies no gain in the fully observed experiment.

\begin{proof}[Proof of Theorem~\ref{thm:no-gain}]
We use regular parametric submodels in a dominated model.
Let $\mathcal{T}_R$ be the nuisance tangent space corresponding to perturbations of $P_{R|T}$.
Since the conditional law $P_{R|T}$ is unrestricted,
$$
\mathcal{T}_R = \big\{ s(t,r)\in L^2(P) \ ;\ P[s(T,R)\,|\,T]=0 \text{ a.s. } \big\} .
$$
The parameter $\psi(P)$ is invariant under perturbations in $\mathcal{T}_R$.
Its EIF $\phi_P(t,r)$ therefore satisfies $E[\phi_P(T,R) s(T,R)] = 0$ for all $s \in \mathcal{T}_R$,
and hence $\phi_P\in\mathcal{T}^\perp_R$.
The orthogonal complement of $\mathcal{T}_R$ in $L^2_0(P_{T,R})$ consists of the mean-zero functions of the marginal variable with law $P_T$:
$$
\mathcal{T}^\perp_R = \big\{ g(T)\in L^2(P_T)\ ;\ Eg(T) = 0 \big\} .
$$
Consequently, the EIF in $\mathcal{M}$ for $P_{T,R}$ agrees with the EIF in the reduced model $\mathcal{M}_0$ for $P_T$:
$\phi_{P_T}(T)\equiv \phi_{P_{T,R}}(T,R)$ a.s..
The efficiency bounds for $\psi(P_{T,R})$ in $\mathcal{M}$ and $\mathcal{M}_0$ thus coincide.
\end{proof}

\subsection{Efficient influence functions}\label{app:eif-proofs}
The calculations in this subsection establish the representations in Theorem~\ref{thm:eif-main}.
We first record the score-martingale representation used in the censored-data calculations.

\begin{proof}[Direct verification of \Cref{eq:M-bullet}]
For $t<\tau$, use $v(t)=S(\tau)/S(t)$ and
$\mathrm{d}\{1/S(u)\}=\lambda_T(u)\,\mathrm{d}u/S(u)$ in the compensator decomposition to obtain
\begin{align}
  \label{eq:M-theta-bullet}
    \int_0^t-v(u)\,\mathrm{d}M_T^\bullet(u)
    &=-\frac{S(\tau)}{S(T\wedge t)}N_T^*(t)
    +S(\tau)\left\{\frac1{S(T\wedge t)}-1\right\}\notag
    \\
    &=v(t)Y^*(t+)-\theta
    =E\{Y^*_{\tau+}\mid\mathcal F_t^\bullet\}
      -E\{Y^*_{\tau+}\mid\mathcal F_0^\bullet\}
    =M_\theta^\bullet(t).
  \end{align}
\end{proof}

Next, we show that every score in the tangent space $L^2(P,\mathcal{F}^\bullet)$ has a martingale representation,
and is isometric to a hazard-score function.

\begin{lemma}  \label{lem:martingale-representation}
We ignore censoring $C$ in this lemma, and assume that $\mathcal{F}^\bullet$ is generated only by $N^*_T(t)$.
The tangent space $\mathcal{T}_P\equiv\big\{ g\in L^2(P,\mathcal{F}^\bullet) \ ;\ \mathbb{E}g(T)=0 \big\}$
of density score functions has the following characterization.
Define
\[
\mathcal{T}^M_P=
  \bigg\{
    \int_0^\infty h(t) \;\mathrm{d} M^\bullet_T(t)
    \ ;\
    \mathbb{E}h^2(T)=\int_0^\infty h^2(t) S(t) \lambda(t) \;\mathrm{d}t <\infty
  \bigg\} .
\]
Then $\mathcal{T}^M_P=\{g(T)\ ;\ g\in \mathcal{T}_P\}$.
In particular, the full-data IF of $\theta(P)$ can be expressed as:
\begin{equation} \label{eq:IF-M-star}
  \phi^\bullet(T)
  = I(T>\tau) - \theta
  = \int_0^\infty \big\{ I(s>\tau)-v(s)\big\} \;\mathrm{d} M^\bullet_T(s)
  = \int_0^\tau \;\mathrm{d} M^\bullet_\theta(s) .
\end{equation}
\end{lemma}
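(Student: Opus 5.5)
The plan is to show both inclusions between $\mathcal T^M_P$ and $\{g(T):g\in\mathcal T_P\}$. After that, the displayed representation \eqref{eq:IF-M-star} follows by applying the general representation to $g=\phi^\bullet$ and reading off the integrand from \eqref{eq:M-bullet}.

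For $\mathcal T^M_P\subseteq\mathcal T_P$, take $h$ with $\int h^2S\lambda\,dt<\infty$. Since $N_T^*$ has a single jump at $T$,
\[
\int_0^\infty h\,dM^\bullet_T=h(T)-\int_0^T h(t)\lambda(t)\,dt,
\]
which is a measurable function of $T$, hence $\mathcal F^\bullet$-measurable. The predictable variation of $M_T^\bullet$ is $\int Y_T^*\lambda\,dt$, so the isometry gives
\[
E\Big\{\Big(\int h\,dM^\bullet_T\Big)^2\Big\}=\int h^2 S\lambda\,dt<\infty .
\]
The mean is zero because the stochastic integral of a deterministic, hence predictable, square-integrable integrand is a mean-zero martingale.

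For the reverse inclusion, take $g(T)$ with $Eg=0$ and $Eg^2<\infty$. The natural candidate integrand is the jump size of the Doob martingale $E\{g(T)\mid\mathcal F^\bullet(t)\}$ at $T=t$. Explicitly,
\[
E\{g\mid\mathcal F^\bullet(t)\}=g(T)\,I(T\le t)+m(t)\,I(T>t),\qquad m(t)=E\{g(T)\mid T>t\}=\frac{1}{S(t)}\int_t^\infty g\,dF .
\]
So I set $h(t)=g(t)-m(t)$. The verification is the same computation as the direct proof of \eqref{eq:M-bullet}. Differentiating, $m'(t)=\lambda(t)\{m(t)-g(t)\}$, and $m(0)=Eg=0$. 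Therefore
\[
\int_0^{T\wedge t}h\lambda\,du=-\big\{m(T\wedge t)-m(0)\big\}=-m(T\wedge t),
\]
and hence
\[
\int_0^t h\,dM_T^\bullet=\{g(T)-m(T)\}I(T\le t)+m(T\wedge t),
\]
which equals $E\{g\mid\mathcal F^\bullet(t)\}$. Letting $t\to\infty$ recovers $g(T)$. Square integrability of $h$ against $S\lambda$ then follows from the isometry applied to the $L^2$-bounded martingale, e.g. first for bounded $g$ and then by an $L^2$ limit. This is the likely technical obstacle: justifying the differentiation of $m$ and the limit $t\to\infty$ when $g$ is only in $L^2$. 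I would handle it by truncating $g$ to bounded functions, using the isometry to pass to limits in both spaces, and noting that $S(t)\to0$ makes the tail term $m(t)I(T>t)$ vanish in $L^2$. Continuity of $T$ from Assumption~\ref{ass:time-reg} ensures $dF=S\lambda\,dt$.

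Finally, apply this to $g=\phi^\bullet(T)=I(T>\tau)-\theta$. For $t<\tau$ we have $m(t)=v(t)-\theta$, so $h(t)=-v(t)$, consistent with \eqref{eq:M-bullet}. For $t>\tau$ we have $m(t)=1-\theta$, so $h(t)=0$. Hence in both ranges $h(s)=I(s>\tau)-v(s)$ with the convention $v(s)=1$ for $s>\tau$, which vanishes beyond $\tau$. By \eqref{eq:M-bullet} the resulting integral equals $\int_0^\tau dM_\theta^\bullet$, giving \eqref{eq:IF-M-star}.
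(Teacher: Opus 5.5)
Your argument is correct. The first inclusion and the final identification of the integrand for $\phi^\bullet$ coincide with the paper's, but the reverse inclusion follows a different route.

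\textbf{The paper's route.} It applies the point-process martingale representation theorem of \citet{jacod1975multivariate} to the Doob martingale $M_g(t)=E\{g(T)\mid\mathcal F^\bullet(t)\}$. It then uses the fact that a predictable integrand in the single-jump filtration must have the form $h(t)Y^*(t)$ with $h$ deterministic. Only after that does it identify $h(T)=g(T)-V(T)$, with $V(t)=E\{g(T)\mid T>t\}$, from the jump of $M_g$ at $T$ and the continuity of $V$.

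\textbf{Your route.} You write down the same function $h=g-m$ explicitly. You then verify $\int_0^t h\,dM_T^\bullet=M_g(t)$ directly from $m'=\lambda(m-g)$ and $m(0)=0$, mirroring the direct verification of \eqref{eq:M-bullet}.

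\textbf{Comparison.} Your route is elementary and gives existence and the explicit integrand in one step, without the representation theorem or any claim about the structure of predictable processes. The paper's route is shorter once that theorem is granted. It is also the one that carries over directly to the multivariate illness--death filtration in the proof of \eqref{eq:IF-lorenge}, where an explicit ODE check for each transition would be more cumbersome.

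\textbf{The truncation step is unnecessary.} For $g\in L^1(P_T)$, $m$ is already absolutely continuous on every $[0,s]$ with $S(s)>0$, which justifies the differentiation. Moreover, $M_g$ is an $L^2$-bounded martingale starting at zero whose only jump is $h(T)$, since the compensator is continuous. Hence $E\{h(T)^2\}=E[M_g](\infty)=E\{g(T)^2\}$ directly, which is exactly the paper's isometry step giving $\int h^2S\lambda\,dt<\infty$.
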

\begin{proof}[Proof of \Cref{lem:martingale-representation}]
For $h\in L^2(P_T)$, the martingale isometry and Fubini's theorem give
\[
  E\left\{\int_0^\infty h(t)\,\mathrm{d}M_T^\bullet(t)\right\}^{2}
  =\int_0^\infty h(t)^2S(t)\lambda(t)\,\mathrm{d}t
  =Eh(T)^2<\infty.
\]
The integral is $T$-measurable and has mean zero, proving the first inclusion.
Conversely, for $g\in\mathcal T_P$, the closed martingale
$M_g(t):=E\{g(T)\mid\mathcal F_t^\bullet\}$ is square-integrable, with terminal value $g(T)$.
The point-process representation theorem \cite{jacod1975multivariate} gives
$M_g(t)=\int_0^tH(s)\,\mathrm{d} M_T^\bullet(s)$.
Before the single jump, the event history specifies only that $T\ge t$;
thus $H(t)=h(t)Y^*(t)$ for a deterministic $h$.
Since $M_T^\bullet$ has no increments after $T$, this proves the reverse inclusion.
The isometry also gives $Eh(T)^2=Eg(T)^2<\infty$.

To identify $h$, put $V(t):=E\{g(T)\,|\,T>t\}$.
Strictly prior to $T$, $M_g(t)=V(t)$; at $T$, $M_g$ has a jump and takes the value $M_g(T)=g(T)$.
Absolute continuity of the event-time law makes $V$ continuous on the survival support,
so $\Delta M_g(T)=g(T)-V(T)=h(T)$.
For $g(t)=\phi^*(t)=I(t>\tau)-\theta$, the conditional survival probability
$v(t)$ defined above therefore gives $h(t)=I(t>\tau)-v(t)$.
It equals $-v(t)$ on $[0,\tau]$ and zero after $\tau$, proving \eqref{eq:IF-M-star}.
The hazard-score interpretation of $h$ follows from the likelihood differentiation in \eqref{eq:hazard-score}.
\end{proof}

The following calculation proves the coarsened-to-missingness-observation formula \eqref{eq:IF-Delta} in Theorem~\ref{thm:eif-main}.

With the goal of contrasting semiparametric efficiency of censoring and missingness,
we treat the censored observations $(\tilde T,\Delta=0)$ as missing data.
In other words, we use the censoring indicator $\Delta$ as the (non)missingness indicator.
This ignores the fraction of the population that is censored with mass $P\{C<T\}$.
We work in a fixed arm $A=a$ and drop the superscript $a$ from the notation.
Define
$T^\Delta\coloneqq\Delta \tilde T+(1-\Delta)\emptyset$
to be $\tilde T=T$, when the observed event time is death, and a missing value $\emptyset$, when the observed event time is $\tilde T=C$.
Consider the observed data $V\coloneqq(T^\Delta,\Delta)$.
The missingness mechanism $P\{\Delta=1\,|\,T\}=G(T)$ is MNAR,
and assumed to be known or identifiable from the standard censored data $(\tilde T,\Delta)$.
Below we consider perturbations to $P_C$ and find them to (i) span the observed-law tangent space, (ii) to cancel out in the pathwise derivative of $\theta$, we then ignore variations in $P_C$.

\begin{proof}[Proof of \Cref{eq:IF-Delta}]
Let $F=(T,C)$ denote the full data, with law $P_F$, and let $P_V$ denote the observed-data law.
The full-data factorization is
$P_F(\mathrm{d}t,\mathrm{d}c)=f_{T}(t)\mathrm{d}t\times f_{C}(c)\mathrm{d}c .$
The probability of observing $T$, conditional on $T$, is $P\{\Delta=1\,|\,T\}=P\{C>T\,|\,T\}=G(T).$
Thus $P\{\Delta=0\,|\,T\}=1-G(T)$, and the observed-data law is the continuous--discrete mixture
$$
  P_V(\mathrm{d}t,\delta)
  =
  \Big\{ f_T(t)G(t)\mathrm{d}t \Big\}^\delta \cdot
  \bigg\{ I_\emptyset(t)\int_0^\infty [1-G(s)] f_T(s) \; \mathrm{d}s \bigg\}^{1-\delta} .
$$

Fix a full-data score $s_F(t,c)=s_T(t)+s_C(c)$ for $P_F$,
where $s_T(t)=\partial_\epsilon\big|_{\epsilon=0}\log f^{(\epsilon)}_T(t)$,
and $s_C(c)=\partial_\epsilon\big|_{\epsilon=0}\log f^{(\epsilon)}_C(c)$
correspond to the perturbation $f^{(\epsilon)}_{T}(t)f^{(\epsilon)}_{C}(c)$ of $P_F$.
The induced observed-data score $s_V$ is obtained by differentiating each component of the mixture.
When $\Delta=0$,
\[
s_V(\emptyset,0)=\partial_\epsilon\big|_{\epsilon=0}
  \log \int I(c<t) \;\mathrm{d}P^{(\epsilon)}_{T,C}(t,c)
  =
  \int_0^\infty \int_0^t \big\{s_T(t)+s_C(c)\big\} \frac{I(c<t) f_{T,C}(t,c)}{P\{C<T\}} \;\mathrm{d}c \;\mathrm{d}t ,
\]
and, when $\Delta=1$,
\begin{align*}
s_V(t,1)
  =
  \partial_\epsilon\big|_{\epsilon=0}
  \log f^{(\epsilon)}_T(t) G^{(\epsilon)}(t)
  &=
  s_T(t) + \frac{1}{G(t)} \int_t^\infty s_C(c) f_C(c) \;\mathrm{d} c
  .
\end{align*}
Thus $s_V(t,\delta)= L[s_F]\coloneqq E[s_F(T,C)\,|\,V=(t,\delta)]$ is the conditional expectation of the $P_F$ score,
and the induced tangent space at $P_V$ is
\begin{align}
  \mathcal{T}_V
  &=
  \Big\{ s_V(T^\Delta,\Delta) = E[s_T(T)+s_C(C)| T^\Delta,\Delta ]
  \ ;\
  s_T\in L^2_0(P_T),\ s_C\in L^2_0(P_C) \Big\}, \notag \\
\label{eq:score-SV}
  s_V(t,\delta)
  &=
  \delta\cdot \big[s_T(t) + \tilde s_C(t)\big] + (1-\delta)\cdot E[s_T(T)+s_C(C) \,|\, C<T]
\end{align}
where $\tilde s_C(t) =  E[s_C(C) \,|\, C>t] $.
The missing-not-at-random mechanism $\Delta$ gives $s_V(\emptyset,0)\neq 0$.

The functional
\begin{equation}
\label{eq:observed-law-functional}
  \theta(P_V)
  :=
  \int \frac{\delta I(t>\tau)}{G(t)} \;\mathrm{d}P_V(t,\delta)
  =
  \int \frac{I(t>\tau)}{G(t)} f_T(t)G(t) \;\mathrm{d}t
\end{equation}
identifies $\theta(P_F)$. Along a perturbation $P^{(\epsilon)}_F$ with score $s_F$,
\begin{align*}
\partial_\epsilon\theta(P^{(\epsilon)}_V)
  &=
  \int \frac{\delta I(t>\tau)}{G(t)} \{ s_T(t)+\tilde s_C(t) \} \;\mathrm{d}P_V(t,\delta)
    +
    \int \frac{\delta I(t>\tau)}{G(t)} \{ -\tilde s_C(t) \}  \;\mathrm{d}P_V(t,\delta) .
\end{align*}
The $\tilde s_C$ terms cancel. Under the maintained independence assumption,
$\theta(P_V)$ is therefore invariant under perturbations of $G$.

The EIF of $\theta(P_V)$ is a $P_V$-score of the form
$\phi_V(t,\delta)\equiv\delta\cdot \phi_T(t) + (1-\delta)\cdot \phi_0$
for a $P_T$-score $\phi_T$ and $\phi_0\coloneqq{E}[\phi_T(T) \,|\, C<T]$.
Iterated expectation gives $P_V\phi_V=P_T\phi_T=0$.
On $\mathcal{T}_V$, the $L^2(P_V)$ inner product of pairs of scores of the form \eqref{eq:score-SV} is
\begin{align} \label{eq:observed-law-product}
  \langle u_V, v_V \rangle_{P_V}
  &=
  \int \big\{u_T(t)+\tilde u_C(t)\big\}\big\{v_T(t)+\tilde v_C(t)\big\} f_T(t) G(t) \;\mathrm{d} t
  \\
  &\hspace{1cm} + P\{C<T\} \cdot {E}[u_F(T,C)\,|\,C<T]\cdot {E}[v_F(T,C)\,|\,C<T] . \notag
\end{align}
The score components $L[s_T]$ and $L[s_C]$ are not orthogonal.
In particular, the only $s\in\mathcal{T}_V$ orthogonal to $E[s_C(C)\,|\,V]$ is the zero score.
Thus the missing-not-at-random model requires knowledge of $G$ external to the observed $V$ data for identification.

We now restrict attention to $\mathcal{T}_T$, the perturbations of $P_T$. We seek
$\phi_V\in\mathcal{T}_V$ of the form
$\phi_V(t,\delta)=\delta \phi_T(t) + (1-\delta)\phi_0$,
$\phi_0 = \mathbb{E}[\phi_T(T)\,|\,C<T]$, $\phi_T\in L^2_0(P_T)$,
such that, for every $s_T\in \mathcal{T}_F$ inducing \eqref{eq:score-SV},
\[
  P_F\{\phi_Fs_T\} = P_V\{\phi_Vs_V\} ,
  \qquad \forall  s_T\in\mathcal{T}_T,
  \qquad \phi_F(t)=I(t>\tau)-\theta .
\]
The form \eqref{eq:score-SV} does not require $\phi_V$ to be the image of
$\phi_F$; in particular, $\phi_T$ need not equal $\phi_F$.
By \cref{eq:observed-law-functional},
\begin{equation} \label{eq:IF-phiF}
  P_F\{\phi_Fs_F\}
  =
   \int \phi_F(t) s_T(t) \;\mathrm{d}P_F
  =
  \int \frac{\phi_F(t)}{G(t)} s_T(t) G(t) \;\mathrm{d}P_T(t)
\end{equation}
whereas the inner product in \cref{eq:observed-law-product} gives
\begin{align}
  P_V\{\phi_Vs_V\}
  &=
  \int \phi_T(t) s_T(t) G(t)  \;\mathrm{d}P_T(t)
  + \phi_0 \cdot \int
  s_T(t)[1-G(t)]
  \;\mathrm{d}P_T(t) \notag \\
\label{eq:IF-phiV}
  &=
  \int
    \Big\{ \phi_T(t)G(t)
    +
    \phi_0(1-G(t))
    \Big\} s_T(t)
  \;\mathrm{d}P_T(t),
  \qquad\qquad \forall  s_T\in\mathcal{T}_T.
\end{align}
Matching the integrands in \cref{eq:IF-phiF} and \cref{eq:IF-phiV} yields
$\phi_F(t) = \Big\{ \phi_T(t)G(t) + \phi_0 (1-G(t)) \Big\}$, equivalently $\phi_T(t) = \frac{\phi_F(t)}{G(t)} - \frac{\phi_0(1-G(t))}{G(t)}$.
Substituting the preceding expression for $\phi_T$ into
$\phi_0=E\{\phi_T(T)\mid C<T\}$ yields
\[
\phi_0\int\frac{1-G(t)}{G(t)}\,\mathrm{d}P_T(t)
=\int\phi_F(t)\frac{1-G(t)}{G(t)}\,\mathrm{d}P_T(t).
\]
Thus $\phi_0=\int\phi_F(t)\omega(t)\,\mathrm{d}P_T(t)$,
where $\omega(t)\coloneqq\frac{(1-G(t))/G(t)}{\int(1-G(t))/G(t)\,\mathrm{d}P_T(t)}$.
The centering constant is thus a tilted expectation of the full-data IF.
Substitution into the expression for $\phi_T$ gives
\begin{align}
  \phi_T(t)
  &=
  \frac{1}{G(t)} \big\{ \phi_F(t) - \phi_0(1-G(t)) \big\}
  =
  \frac{1}{G(t)} \big\{ \phi_F(t) - \phi_0 \big\} + \phi_0
  \notag \\
  \phi_V(t,\delta)
  &=
  \delta \phi_T(t) + (1-\delta)\phi_0
  =
  \frac{\delta}{G(t)} \big\{ \phi_F(t)-\phi_0\big\} + \phi_0
  =
  \frac{\delta}{G(t)}\phi_F(t) + \Big\{ 1-\frac{\delta}{G(t)} \Big\}\phi_0
  \notag \\
  &=
  \phi_F(t) + \Big\{ \frac{\delta}{G(t)}-1 \Big\} \big\{ \phi_F(t)-\phi_0\big\} .
  \label{eq:IF-phiVortho}
\end{align}
Finally, the decomposition in \cref{eq:IF-phiVortho} is orthogonal:
conditioning the product of its two terms on $T$ gives zero because
$E\{\Delta/G(T)-1\mid T\}=0$.

\end{proof}

We work in a fixed arm $a$ and drop the superscripts $a$ from the notation.

\begin{proof}[Proof of \Cref{eq:IF-circ}]
Let $F=(T,C)$ and $O=(\tilde T,\Delta)$, and work under \Cref{ass:time-reg,ass:C-ignore}.
For this proof, let $\mathcal F_t^\bullet=\sigma\{N_T^*(s),N_C^*(s):0\le s\le t\}$ be the joint full-data filtration,
and write $\bm N(t)=(N(t),N_C(t))$ and $Y(t)=I(\tilde T\ge t)$.
The full-data and observed-data likelihoods, \cite{jacod1975multivariate}, are
\begin{align*}
  L^\bullet(\bm N^*)
  &=
  \bigg\{\prod_{0<t\le\tau_0}\lambda(t)^{\mathrm{d}N^*_T(t)} \lambda_C(t)^{\mathrm{d}N^*_C(t)}\bigg\}
  \exp\bigg(-\int_0^{\tau_0}\{\lambda(t)Y^*_T+\lambda_C(t)Y^*_C(t)\}\,\mathrm{d}t\bigg)
  \\
  L^\circ(\bm N)
  &=
  \bigg\{\prod_{0<t\le\tau_0}\lambda(t)^{\mathrm{d}N(t)} \lambda_C(t)^{\mathrm{d}N_C(t)}\bigg\}
  \exp\bigg(-\int_0^{\tau_0}\{\lambda(t)+\lambda_C(t)\}Y(t)\,\mathrm{d}t\bigg).
\end{align*}
For a regular perturbation of the death hazard, define
\begin{equation} \label{eq:hazard-score}
  h(t)\coloneqq\partial_\epsilon\big|_{\epsilon=0}\log\lambda^{(\epsilon)}(t).
\end{equation}
Differentiating the log-likelihood gives the death score
$s^\circ(O)=\partial_\epsilon\log L^{\circ(\epsilon)}(\bm N) =\int_0^{\tau_0}h(t)\,\mathrm{d}M^\circ(t)$.
The corresponding full-data score is $s^\bullet(F)=\int_0^{\tau_0}h(t)\,\mathrm{d}M^\bullet(t)$;
the same calculation applies to the censoring hazard.
Thus the full-data and observed-data tangent spaces have the decompositions
$\mathcal T^\bullet=\mathcal T^\bullet_T\otimes\mathcal T^\bullet_C$ and
$\mathcal T^\circ=\mathcal T^\circ_T\otimes\mathcal T^\circ_C$, respectively, with
\[
\mathcal T^\circ
=\bigg\{
  \int_0^{\tau_0}h_T(t)\,\mathrm{d}M^\circ(t)
  +\int_0^{\tau_0}h_C(t)\,\mathrm{d}M_C^\circ(t) \;:\;
  h_T\in L^2(P_T),\ h_C\in L^2(P_C)
\bigg\}.
\]
The score operator $L[s^\bullet]=E\{ s^\bullet(F)\,|\,O\}$ replaces each full-data martingale integral by its observed-data counterpart.
Absence of simultaneous event and censoring jumps makes the two score components orthogonal.
Perturbing the censoring law does not change the target, although the influence function may depend on that law.

Differentiating $\theta=S(\tau)$ along a death-hazard perturbation gives
\begin{equation} \label{eq:IF-h-product}
  \left.\partial_\epsilon\theta(P^{\circ(\epsilon)})\right|_{\epsilon=0}
  =-S(\tau)\int_0^\tau h(t)\lambda(t)\,\mathrm{d}t.
\end{equation}
For two event-hazard scores, the martingale isometry gives
\begin{equation}
\label{eq:M-isometry}
  E\big\{s^\circ_1(O)s^\circ_2(O)\big\}
  =E\int_0^{\tau_0}h_1(t)h_2(t)Y(t)\lambda(t)\,\mathrm{d}t.
\end{equation}
Since $E\{Y(t)\}=S(t)G(t)$ and $v(t)=S(\tau)/S(t)$ for $t\le\tau$,
matching \eqref{eq:IF-h-product} with \eqref{eq:M-isometry} gives
$b(t)=-v(t)/G(t)$ on $[0,\tau]$ and zero thereafter.
This representer lies in the event-score tangent space and is orthogonal to the censoring scores.
Hence
\begin{equation} \label{eq:IF-Va-M}
  \phi^\circ(O)=\int_0^\tau\frac{-v(t)}{G(t)}\,\mathrm{d}M^\circ(t)
  =\int_0^\tau\frac{Y_C^*(t)}{G(t)}\,\mathrm{d}M_\theta^\bullet(t),
\end{equation}
where the last equality uses stopping at censoring and \eqref{eq:M-theta-bullet}.

For the augmentation representations, put $U(t)=I(C>t)/G(t)$ and $V(t)=M_\theta^\bullet(t)$.
The product rule is
\begin{equation} \label{eq:UV-integration}
  \mathrm{d}[UV](t)=U(t-)\mathrm{d}V(t)+V(t-)\mathrm{d}U(t)
  +\mathrm{d}[U,V](t).
\end{equation}
Here
$\mathrm{d}U(t)=I(C\ge t)\,\mathrm{d}\Lambda_C(t)/G(t) -\mathrm{d}N_C^*(t)/G(t)=-\mathrm{d}M_C^\bullet(t)/G(t)$ and
$\mathrm{d}V(t)=-v(t)\,\mathrm{d}M^\bullet(t)$.
The cross-variation vanishes because the event and censoring processes have no simultaneous jumps.
Therefore
\begin{equation} \label{eq:UV-formula}
  \mathrm{d}\left\{\frac{I(C>t)M_\theta^\bullet(t)}{G(t)}\right\}
  =\frac{I(C\ge t)}{G(t)}\,\mathrm{d}M_\theta^\bullet(t)
  -\frac{M_\theta^\bullet(t-)}{G(t)}\,\mathrm{d}M_C^\bullet(t).
\end{equation}
The initial product is zero. At $\tilde T\wedge\tau$, it is
$I(C>\tilde T\wedge\tau)\phi^*(\tilde T)/G(\tilde T\wedge\tau)$:
the indicator removes the censored cases in which the score martingale has not yet reached its terminal value.
Integrating \eqref{eq:UV-formula} to this endpoint and setting
$\Delta_{C,\tau}=I(C>\tilde T\wedge\tau)$ yields
\begin{equation} \label{eq:IF-circ-two}
  \phi^\circ(O)=\frac{\Delta_{C,\tau}}{G(\tilde T\wedge\tau)}\phi^*(\tilde T)
  +\int_0^{\tilde T\wedge\tau}
  \frac{M_\theta^\bullet(t-)}{G(t)}\,\mathrm{d}M_C^\bullet(t).
\end{equation}
The stopped interval of integration is essential for $\mathcal{F}^\circ$-measurability:
after $T$ the centered score martingale is constant but need not be zero,
so the full-data censoring integral without stopping need not be a function of $O$.

We develop a second augmented IF representation.
Use the product identity \eqref{eq:UV-integration} with
$V(t)=M_\theta^\bullet(t)+\theta=v(t)Y_T^*(t+)$.
The differentials are unchanged, while the boundary products at $0$ and
$\tau$ are $\theta$ and $I(C>\tau)I(T>\tau)/G(\tau)$.
Thus
\begin{equation} \label{eq:UV-MMC}
  \frac{I(C>\tau)I(T>\tau)}{G(\tau)}-\theta
  =\int_0^\tau\frac{-v(t)Y_C^*(t)}{G(t)}\,\mathrm{d}M^\bullet(t)
  -\int_0^\tau\frac{v(t)Y_T^*(t)}{G(t)}\,\mathrm{d}M_C^\bullet(t).
\end{equation}
Substitution of \eqref{eq:IF-Va-M} gives
\[
  \phi^\circ(O)=\frac{I(\tilde T>\tau)}{G(\tau)}-\theta
  +\int_0^\tau\frac{M_\theta^\bullet(t-)+\theta}{G(t)}\,\mathrm{d}M_C^\circ(t).
\]
Here $M_\theta^\bullet(t-)+\theta=v(t)Y_T^*(t)$ vanishes after death,
which permits the observed censoring martingale increments on $[0,\tau]$.

Finally, \eqref{eq:M-isometry} and independence of $T,C$ give
\[
\mathbb V\{\phi^\circ(O)\}
=\int_0^\tau\frac{v(t)^2S(t)G(t)\lambda(t)}{G(t)^2}\,\mathrm{d}t
=\theta^2\int_0^\tau\frac{\lambda(t)}{S(t)G(t)}\,\mathrm{d}t.
\]
The two martingale terms in \eqref{eq:UV-MMC} are orthogonal, and the
isometry gives the censoring term variance
$\theta^2\int_0^\tau\lambda_C(t)/\{S(t)G(t)\}\,\mathrm{d}t$.
Since $P(\tilde T>\tau)=S(\tau)G(\tau)$, the variance of its left side is
$\theta/G(\tau)-\theta^2$.
Subtracting proves the second expression:
\[
  \mathbb V\{\phi^\circ(O)\}
  =\frac{\theta}{G(\tau)}-\theta^2
  -\theta^2\int_0^\tau\frac{\lambda_C(t)}{S(t)G(t)}\,\mathrm{d}t.
\]
\end{proof}

\begin{proof}[Proof of \Cref{eq:IF-lorenge}]
Let $F=(T,R,C)$ and $O=(\tilde T,\Delta,\tilde R,\Gamma)$ denote the full and observed data; let $\bm{N^*}$ and $\bm{N}$ denote the corresponding transition processes.
The superscripts $\blacklozenge$ and $\lozenge$ denote full-data and observed-data regimes.

The observed-data likelihood, \cite{jacod1975multivariate}, is
\begin{align*}
L^\lozenge(O)&=
  \bigg\{ \prod_{0<t<\tau_0}
    \lambda_{01}(t)^{\mathrm{d}N_{01}(t)}
    \lambda_{02}(t)^{\mathrm{d}N_{02}(t)}
    \lambda_{12}(t)^{\mathrm{d}N_{12}(t)}
    \lambda_C(t)^{\mathrm{d}N_C(t)}
  \bigg\} \\
&\times
  \exp\bigg(
    -\int_0^{\tau_0}
    \{\lambda_{01}(t)+\lambda_{02}(t) \}Y_0(t)
    + \lambda_{12}(t)Y_1(t)
    + \lambda_C(t)Y_C(t)
    \;\mathrm{d} t
  \bigg) .
\end{align*}
Differentiating this likelihood, and its full-data counterpart with
uncensored risk sets, gives the score representations below.
The corresponding scores admit the martingale representations
\begin{equation} \label{eq:T-mrt}
  s^\blacklozenge_h(F)  = \sum_{jk\in\{01,02,12,C\}} \int_0^{\tau_0} h_{jk}(t) \;\mathrm{d} M^\blacklozenge_{jk}(t),
  \qquad
  s^\lozenge(O) = \sum_{jk\in\{01,02,12,C\}} \int_0^{\tau_0} h_{jk}(t) \;\mathrm{d} M^{\lozenge}_{jk}(t)
\end{equation}
where $h_{jk}(t)=\partial_\epsilon\big|_{\epsilon=0} \log \lambda^{(\epsilon)}_{jk}(t)$ are the hazard scores.
The tangent spaces factorize as
$\mathcal{T}^\blacklozenge = \mathcal{T}^\blacklozenge_{01} \otimes \mathcal{T}^\blacklozenge_{02} \otimes \mathcal{T}^\blacklozenge_{12} \otimes \mathcal{T}^\blacklozenge_C$, and
$\mathcal{T}^\lozenge= \mathcal{T}^\lozenge_{01} \otimes \mathcal{T}^\lozenge_{02} \otimes \mathcal{T}^\lozenge_{12} \otimes \mathcal{T}^\lozenge_C$
with $\mathcal{T}^\blacklozenge_{jk}=\big\{\int_0^{\tau_0} h(t) \;\mathrm{d}M^\blacklozenge_{jk} \ ;\ h\in L^2(P) \big\}$
and $\mathcal{T}^\lozenge_{jk}=\big\{\int_0^{\tau_0} h(t) \;\mathrm{d}M^\lozenge_{jk} \ ;\ h\in L^2(P) \big\}$.
The score operator $L:\mathcal{T}^\blacklozenge\to\mathcal{T}^\lozenge$
satisfies $s^\lozenge(O)=L[s^\blacklozenge]=\mathbb{E}[s^\blacklozenge(F)|O]$.
This follows either from the martingale representations in $\mathcal{F}^\blacklozenge$ and
$\mathcal{F}^\lozenge$ or from the martingale property of $M^\blacklozenge$ stopped at $C$:
\[
E\bigg[
  \int_0^C h_{jk}(t) \;\mathrm{d}M^\blacklozenge_{jk} +
  \int_C^{\tau_0} h_{jk}(t) \;\mathrm{d}M^\blacklozenge_{jk}
  \;\bigg|\; \mathcal{F}^\lozenge(\tau_0)
  \bigg]
  =
  \int_0^C h_{jk}(t) \;\mathrm{d}M^\blacklozenge_{jk} + 0
  =
  \int_0^{\tau_0} h_{jk}(t) \;\mathrm{d}M^\lozenge_{jk} .
\]

The full-data influence function for $\theta(P^\blacklozenge)$ is $\phi^*(t)=I(t>\tau)-\theta$.
We obtain the observed-data influence function by equating the Riesz representations of
the pathwise derivative of $\theta$ under perturbations of $P^\blacklozenge$ and $P^\lozenge$.
This determines $\phi^\lozenge$ for the functional $\theta(P^\lozenge)$ of $P^\lozenge$.
First express $\phi^*$ as a stochastic integral in $\mathcal{F}^\blacklozenge$ and apply the martingale isometry.
Define
$M^\blacklozenge_\theta(t) \coloneqq E[\phi^*(T)|\mathcal{F}^\blacklozenge(t)] = E[I(T>\tau)-\theta|\mathcal{F}^\blacklozenge(t)]$
which is a closed $L^2(P,\mathcal{F}^\blacklozenge)$ martingale.
It is the centered conditional expectation of the event $(T>\tau)$ given $\mathcal{F}^\blacklozenge(t)$.
At $t=0$, $M^\blacklozenge_\theta(0)=0$;
at $t=\tau$, $I(T>\tau)-\theta$ is $\mathcal{F}^\blacklozenge(\tau)$-measurable.
$M^\blacklozenge_\theta(t)$ is the information available for $\theta$ at $t$.

Representation theorem for multivariate counting processes
\cite{jacod1975multivariate} gives
\[
M^\blacklozenge_\theta(t) = \sum_{jk\in\{01,02,12,C\}} \int_0^t H^\blacklozenge_{jk}(s) \;\mathrm{d} M^\blacklozenge_{jk}(s)
\]
for predictable $L^2(P,\mathcal{F}^\blacklozenge)$ processes $H^\blacklozenge_{jk}$, unique up to indistinguishability.
To identify the integrands of $\phi^\lozenge$, consider the jumps of $M^\blacklozenge_\theta(t)$:
\[
\Delta M^\blacklozenge_\theta(t) \coloneqq M^\blacklozenge_\theta(t) - M^\blacklozenge_\theta(t-)
  =
  H^\blacklozenge_{jk}(t)\Big[\Delta N^*_{jk} - \lambda_{jk}(t)Y^*_{jk}(t)\cdot 0 \Big]
  \equiv
  H^\blacklozenge_{jk}(t)\Delta N^*_{jk} .
\]
Here the summation $\sum_{jk}$ is suppressed.
The definition of the conditional-expectation martingale also gives the jump $\Delta M^\blacklozenge_\theta(t)$ as
$\Delta M^\blacklozenge_\theta(t)=E\big[I(T>\tau)\,|\,\mathcal{F}^\blacklozenge(t)\big] - E\big[I(T>\tau)\,|\,\mathcal{F}^\blacklozenge(t-)\big]$
corresponding to a transition $0\to1,\ 0\to2,\ 1\to2$ or $j\to C$.
For a transition $j\to C$, independence
$N^*_C(t)\indep N^*_T(t)\,|\,N^*_R(t-)$ implies that a jump of $N^*_C(t)$ leaves the conditional survival score unchanged,
so $H^\blacklozenge_C(t)\cdot 1= v_j(t)-v_j(t-)=0$.
For a transition $j\to k$, the score at $t-$ in state $S(t-)=j$ is $v_j(t-)=v_j(t)$ by continuity of $v_j(t)$.
At $t$ in state $S(t)=k$, it equals $v_k(t)$.
Hence $H^\blacklozenge_{jk}(t)\cdot 1 = v_k(t)-v_j(t-)=v_k(t)-v_j(t).$
Since $v_2(t)=0$, the conditional-expectation martingale has the representation
\begin{align} \label{eq:M-theta}
  M^\blacklozenge_\theta(t)
  &=
  \int_0^t \{ v_1(s)-v_0(s) \}Y^*_0(s) \;\mathrm{d} M^*_{01}(s)
  \\
  &\hspace{1cm}+
  \int_0^t -v_0(s)Y^*_0(s) \;\mathrm{d} M^*_{02}(s)
  +
  \int_0^t -v_1(s)Y^*_1(s) \;\mathrm{d} M^*_{12}(s) \notag .
\end{align}
Applying Riesz' identity and martingale isometry to $M^\blacklozenge_\theta$,
using \cref{eq:T-mrt} and \cref{eq:M-theta}, gives
\begin{align*}
  \partial_\epsilon \theta(P^{\blacklozenge(\epsilon)})
  &=
  E\big\{M^\blacklozenge_\theta(\tau_0)\cdot s^\blacklozenge_h(F)\big\}
  =
  \sum_{jk}E \int_0^\tau H^\blacklozenge_{jk}(t) h_{jk}(t) \lambda_{jk}(t)Y^*_j(t)  \;\mathrm{d}t  .
\end{align*}
Thus \cref{eq:M-theta} is the representation of the full-data influence function
$\phi^*$ in terms of the transition counting processes.

We next equate the Riesz representation of $D\theta(P^\lozenge)$ in
$L^2(P^\lozenge, \mathcal{F}^\lozenge)$ with its full-data counterpart to identify $\phi^\lozenge$.
The functional $\theta(P^\lozenge)$ is identified from the observed law,
whose tangent space $\mathcal{T}^\lozenge$ is given in \cref{eq:T-mrt}.
For a perturbation $P^{\blacklozenge(\epsilon)}$ with hazard score $h=h_{01}+h_{02}+h_{12}$,
the score representations under $P^\lozenge$ and $P^\blacklozenge$ give
\begin{align*}
  \partial_\epsilon \theta(P^{\lozenge(\epsilon)})
  &=
  E\big\{ \phi^\lozenge(O) s^\lozenge_h(O) \big\}
  =
  \sum_{jk}E \int_0^\tau H^\lozenge_{jk}(t) h_{jk}(t) \lambda_{jk}(t)Y_j(t)  \;\mathrm{d}t \\
  &=
  E\big\{\phi^\blacklozenge(F) s^\blacklozenge_h(F) \big\}
  =
  \sum_{jk}E \int_0^\tau H^\blacklozenge_{jk}(t) h_{jk}(t) \lambda_{jk}(t)Y^*_j(t)  \;\mathrm{d}t
  = \partial_\epsilon \theta(P^{\blacklozenge(\epsilon)}) .
\end{align*}
It follows that
\begin{align}
  H^\lozenge_{jk}(t)
  &=
  \frac{ H^\blacklozenge_{jk}(t) E Y^*_j(t) }{ E Y_j(t) }
  =
  \frac{ H^\blacklozenge_{jk}(t)}{G(t)}, \notag \\
  \phi^\lozenge(O)
  &=
  \sum_{jk} \int_0^\tau \frac{H^\blacklozenge_{jk}(t)}{G(t)} Y^*_C(t) \;\mathrm{d}M^\blacklozenge_{jk}(t)
  \label{eq:EIF-M-theta}
  =
  \int_0^\tau \frac{Y^*_C(t)}{G(t)} \;\mathrm{d}M^\blacklozenge_\theta(t) .
\end{align}
In $\phi^\lozenge$, the integrand is the change in conditional survival at a transition $j\to k$.
Censoring reduces the risk sets $Y^*_j(t)$,
and the observed transitions receive inverse-observation-probability weights.

Apply the product identity \eqref{eq:UV-integration} with
$U(t)=Y_C^*(t+)/G(t)$ and $Z(t)=U(t)M_\theta^\blacklozenge(t)$.
The inverse-censoring differential is the one used in \eqref{eq:UV-formula},
now in the augmented filtration.
Joint absolute continuity excludes common censoring and transition jumps,
so $[U,M_\theta^\blacklozenge]=0$.
Hence
\begin{equation} \label{eq:UM-theta}
  \mathrm{d}Z(t) = \frac{Y^*_C(t)}{G(t)}\mathrm{d}M^\blacklozenge_\theta(t)
  -
  \frac{M^\blacklozenge_\theta(t-)}{G(t)}\mathrm{d}M^\blacklozenge_C(t) .
\end{equation}
Integrating \cref{eq:UM-theta} from
$t=0$ to $t=T\wedge\tau$ gives $\int_0^{T\wedge\tau} \;\mathrm{d}Z(t)=Z(T\wedge\tau)-Z(0)$ on the left.
At $t=0$, the boundary value is $Z(0)=1/1\cdot M^*_\theta(0)=0$; at $t=T\wedge\tau$,
it is $Z(T\wedge\tau) = \frac{\Delta_{C,\tau}}{G(T\wedge\tau)}[I(T>\tau)-\theta]$,
with $\Delta_{C,\tau}=I(C>T\wedge\tau)$,
where $\phi^*$ is observed only when $\Delta_{C,\tau}$ holds and is weighted by the inverse probability of that event.
In the integral $\int_0^{T\wedge\tau}$ on the right of \cref{eq:UM-theta},
the first term is $\phi^\lozenge$ from \cref{eq:EIF-M-theta}:
the martingale $M^\blacklozenge_\theta$ is constant after $T$, so stopping at $T\wedge\tau$ or $\tau$ is equivalent.
The second term satisfies
\[
  \int_0^{T\wedge\tau} \frac{M^\blacklozenge_\theta(t-)}{G(t)} \;\mathrm{d}M^\blacklozenge_C(t)
  =
  \int_0^{\tau} \frac{M^\blacklozenge_\theta(t-)}{G(t)} Y^*_T(t) \;\mathrm{d}M^\blacklozenge_C(t)
  =
  \int_0^{\tau} \frac{M^\blacklozenge_\theta(t-)}{G(t)} \;\mathrm{d}M^\lozenge_C(t) .
\]
Substitution in \cref{eq:UM-theta} gives the observable stopped representation
\begin{equation} \label{eq:IF-lozenge-two}
  \phi^\lozenge(O)=
  \frac{\Delta_{C,\tau}}{G(\tilde T\wedge\tau)}\phi^*(\tilde T)
  +\int_0^{\tilde T\wedge\tau}
  \frac{M_\theta^\blacklozenge(t-)}{G(t)}\,\mathrm{d}M_C^\blacklozenge(t).
\end{equation}
This proves both equalities in \cref{eq:IF-lorenge}.
We obtain a third representation.

Reuse the same product identity, now with $U(t)=Y^*_C(t+)/G(t)$ and
\[
  V(t)=E\{I(T>\tau)\,|\,\mathcal{F}^\blacklozenge(t)\}
  =
  Y^*_0(t+)v_0(t)+Y^*_1(t+)v_1(t)
  =
  M^\blacklozenge_\theta(t)+\theta
\]
on $[0,\tau]$. The differentials are unchanged, but the boundary terms give
$$
\frac{I(C>\tau)I(T>\tau)}{G(\tau)} - \theta = \int_0^\tau \frac{Y^*_C(t)}{G(t)} \;\mathrm{d}M^\blacklozenge_\theta(t) - \int_0^\tau V(t-) \;\frac{\mathrm{d} M^\blacklozenge_C(t)}{G(t)}
.
$$
Substituting $\phi^\lozenge$ from \cref{eq:UM-theta} and the definition of
$V(t)$ yields
\begin{align*}
  \phi^\lozenge(O)
  &=
  \frac{I(\tilde T>\tau)}{G(\tau)} - \theta
  +
  \int_0^\tau \big\{ M^\blacklozenge_\theta(t-)+\theta\big\}  \;\frac{\mathrm{d} M^\blacklozenge_C(t)}{G(t)} \\
  &=
  \frac{I(\tilde T>\tau)}{G(\tau)} - \theta
  +
  \int_0^{\tau} \big\{ v_0(t)Y^*_0(t)+v_1(t)Y^*_1(t) \big\} \; \frac{\mathrm{d}M^\blacklozenge_C(t)}{G(t)} .
  \end{align*}

\end{proof}

\subsection{No-covariate AJ estimation}
\label{sec:setup-notation}
This subsection supplies the proof of Theorem~\ref{thm:aj-integrated-main}.
We first establish the deterministic product-integral identities used in both the no-covariate and covariate-adjusted arguments.

We use the transition-hazard matrix and product integral in
\eqref{eq:matrix-product-integral}.
When its generating hazard law must be specified,
$P_A(s,t)$ denotes the matrix written $\mathbf P(s,t)$ in the main text.
Let $\ezero^\top=(1,0,0)$ and $\alive=(1,1,0)^\top$.
For an admissible law $A$, write
$\Psi_t(A)=\ezero^\top P_A(0,t)\alive$ and
$\mu_A(\tau)=\int_0^\tau\Psi_t(A)\,dt$.
Their conditional versions are denoted by $S_A(t\mid x)$ and $\mu_A(\tau\mid x)$,
and $\mu_A(x)$ abbreviates $\mu_A(\tau\mid x)$.
Thus the standardized functional underlying \eqref{eq:os-est-integrated-main} is
$\mu_\tau(A)=P_X\{\mu_A(\tau\mid X)\}$.
As in Section~\ref{sec:covariates}, $P$ and $P_X$ agree on functions of $X$.
For $t\le\tau$, $\Aspace_t$ denotes the admissible hazard class of that section on $[0,t]$: its coordinates are finite-valued,
nondecreasing and cadlag, vanish at zero, and have outgoing row increments at most one.
In particular, $\Aspace_\tau$ denotes the class written $\mathscr A_\tau$ in the main text.

The following norm conventions are used in the proofs.
For vectors, $\|x\|$ is Euclidean and $\|x\|_\infty=\max_j|x_j|$.
For matrices, $\|M\|$ is the spectral norm and
$
    \|M\|_\infty=\max_i\sum_j |M_{ij}|
$
is the row-sum norm.
Fixed-order tensors are identified with stacked vectors.
For scalar $f$ on $[0,t]$,
$
    \|f\|_{\infty,t}:=\sup_{0\le u\le t}|f(u)|,
  $ $
    \|f\|_\infty:=\|f\|_{\infty,\tau}.
$
For measurable scalar, vector, matrix, or tensor
$f$ and probability measure $Q$,
$
    \|f\|_{L^2(Q)}=\|f\|_{Q,2}:=\{Q\|f\|^2\}^{1/2}.
$ For a scalar cadlag function $f$ on $[a,b]$, define
$\PathVar(f;[a,b]) = \sup_{a=t_0<\cdots<t_m=b}\sum_{r=1}^m |f(t_r)-f(t_{r-1})|.$
For a matrix-valued function, $\PathVar$ denotes the sum of the path variations of
its scalar entries.

For a signed bounded-variation triplet
$D=(D_{01},D_{02},D_{12})$,
$\norm{D}_{\infty,t} := \sum_{\ell\in\cJ}\sup_{0\le u\le t}|D_\ell(u)|$, $\norm{D}_{\infty}:=\norm{D}_{\infty,\tau}$, $\norm{D}_{\Delta,t} := \sum_{\ell\in\cJ}\sup_{0<u\le t}|\Delta D_\ell(u)|$.
For a nonnegative hazard triplet, we use a shorthand notation
$
    |A|_t:=A_{01}(t)+A_{02}(t)+A_{12}(t).
$
For finite-variation scalar $F$, $dF$ is its Stieltjes measure, $\Delta F(u)=F(u)-F(u-)$, and $|F|_{\TV,t}:=|F|_{\mathrm{TV},t}:=|dF|([0,t])$.
The notation $|F|([a,b])$ also means $|dF|([a,b])$.
For signed triplets,
$|D|_{\TV,t}:=\sum_{\ell\in\cJ}|D_\ell|_{\TV,t}$,
$[D]_{\Delta,2,t} := \sum_{\ell\in\cJ}\sum_{0<u\le t}\{\Delta D_\ell(u)\}^2$.
For a matrix-valued signed measure $d\mathbf D$, $\|d\mathbf D\|_\infty:=\max_i\sum_j |d\mathbf D_{ij}|$ denotes the row-sum absolute measure.

We distinguish hazard errors from stochastic-integral weights.
Throughout the appendix, $D$ denotes a signed cumulative-hazard difference,
typically $D=B-A$ or
$D_{k,\ell}=\widehat A_{\ell}^{(-k)}-A_{\ell,0}$.  Boldface
$\mathbf D$ denotes the associated transition-hazard matrix difference.
The letter $H$ is reserved for predictable stochastic-integral weights, such
as $H_{\ell,A}$, $H_{\ell,0}$, and $\widehat H_{\ell,n}$.

Source and destination indices $j(\ell),k(\ell)$ are those of Section~\ref{sec:efficiency}.
Constants denoted by $C$, $C_r$, or
$C(M,\tau)$ may change from line to line and  are independent of  $n,m,u$, unless
explicitly stated.

For $A\in\Aspace_{\tau}$,
all jump matrices $I+\Delta\mathbf A(u)$ are row-stochastic, hence
\begin{equation}
\label{eq:row-stoch}
\norm{P_A(s,t)}_\infty=1,
\qquad 0\le s\le t\le \tau.
\end{equation}
The symbols $P_j^A,v_j^A,V_j^A$ denote the state-occupation and
residual-survival quantities of Section~\ref{subsec:estimation}, evaluated
under $A$ and, when applicable, conditionally on $X=x$.
In particular, $P_j^A(u\mid x)=P^A_{0j}(0,u\mid x)$.
At $A_0$, the superscript may be suppressed; a subscript $0$ on $V_j$
likewise denotes the true law. Row-stochasticity gives
\begin{equation}
\label{eq:v-bounds}
0\le v^A_j(u,t\mid x)\le1,
\qquad
0\le V^A_j(u,\tau\mid x)\le \tau.
\end{equation}

We use the product-integral moduli defined in
Section~\ref{sec:covariates}, with the norm conventions of
this subsection. For conditional hazards, the identities below are applied
at each covariate value; the same results will be used in
Section~\ref{app:covariate-proofs}.

\begin{lemma}
\label{lem:product-integral-expansion}
Let $A,B\in\Aspace_{\tau}$, $D=B-A$, and
$0\le s\le t\le\tau$.  Then
\begin{equation}
\label{eq:duhamel}
P_B(s,t)-P_A(s,t)=
\int_{(s,t]}P_A(s,u-)\,d\mathbf D(u)\,P_B(u,t),
\end{equation}
and also the left-hand variation-of-constants form
\begin{equation}
\label{eq:duhamel-left}
P_B(s,t)-P_A(s,t)=
\int_{(s,t]}P_B(s,u-)\,d\mathbf D(u)\,P_A(u,t).
\end{equation}
Moreover,
\begin{align}
\label{eq:product-integral-second-order}
P_B(s,t)-P_A(s,t)
&=
\int_{(s,t]}P_A(s,u-)\,d\mathbf D(u)\,P_A(u,t)
+R_{A,B}^{\mathrm{PI}}(s,t), \\
\label{eq:product-integral-remainder}
R_{A,B}^{\mathrm{PI}}(s,t)
&=
\int_{(s,t]}\int_{(u,t]}
P_A(s,u-)\,d\mathbf D(u)\,P_A(u,v-)\,d\mathbf D(v)\,P_B(v,t).
\end{align}
Moreover, if
\begin{equation}
\label{eq:product-integral-mass-bound}
|A|_{\tau}\vee |B|_{\tau}\le M<\infty,
\end{equation}
then there is a finite constant $C_{\mathrm{PI}}(M,\tau)$ such that
\begin{equation}
\label{eq:product-integral-lipschitz}
\kappa_{\mathrm{PI}}(A,B)
\le C_{\mathrm{PI}}(M,\tau)\norm{D}_{\infty,\tau}.
\end{equation}

\end{lemma}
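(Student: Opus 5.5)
The proof works with the standard product-integral calculus on the admissible class $\Aspace_\tau$. Finite variation of the coordinates makes every product integral below well defined. Row-stochasticity of the jump matrices gives \eqref{eq:row-stoch} for both $P_A$ and $P_B$.

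\emph{The Duhamel identities.} For \eqref{eq:duhamel}, fix $s$ and $t$ and consider $u\mapsto P_A(s,u)P_B(u,t)$ on $[s,t]$. Its value at $u=s$ is $P_B(s,t)$ and at $u=t$ is $P_A(s,t)$. The forward equation $P_A(s,du)=P_A(s,u-)\,d\mathbf A(u)$ and the backward equation $P_B(du,t)=-d\mathbf B(u)\,P_B(u,t)$ hold in the Stieltjes sense, including at jumps, since $P(u-,u)=I+\Delta\mathbf A(u)$. Apply the Stieltjes product rule for one left-limit and one right-limit factor. The jump cross terms are absorbed exactly by using $P_A(s,u-)$ on the left and $P_B(u,t)$ on the right, so the differential is $P_A(s,u-)\,\{d\mathbf A(u)-d\mathbf B(u)\}\,P_B(u,t)$. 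Integrating over $(s,t]$ and changing sign gives \eqref{eq:duhamel}. The symmetric argument applied to $u\mapsto P_B(s,u)P_A(u,t)$ gives \eqref{eq:duhamel-left}.

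A cleaner alternative checks both identities on step-function hazards, where they are telescoping identities for finite products. One then passes to the limit along partitions using the standard continuity of product integrals in $\norm{\cdot}_{\infty}$ under bounded variation (Gill--Johansen). I expect this limiting or bookkeeping step at jumps to be the only delicate point, and I would cite Gill and Johansen for it rather than redo it.

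\emph{Second-order expansion.} In \eqref{eq:duhamel}, write $P_B(u,t)=P_A(u,t)+\{P_B(u,t)-P_A(u,t)\}$. Expand the bracket using \eqref{eq:duhamel} again, on the interval $(u,t]$:
\[
P_B(u,t)-P_A(u,t)=\int_{(u,t]}P_A(u,v-)\,d\mathbf D(v)\,P_B(v,t).
\]
Substituting yields \eqref{eq:product-integral-second-order} with remainder exactly \eqref{eq:product-integral-remainder}. Fubini for finite signed matrix measures justifies the iterated integral, because all integrands are bounded by \eqref{eq:row-stoch}.

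\emph{Lipschitz bound.} Under \eqref{eq:product-integral-mass-bound}, a bound via the total variation of $d\mathbf D$ is not enough, since $\kappa_{\rm PI}$ must be controlled by the sup-norm of $D$. Instead, integrate by parts in \eqref{eq:duhamel}. The map $u\mapsto P_A(s,u-)$ has finite variation, bounded by $C(M)$ because $\norm{P_A}_\infty=1$ and $\|d\mathbf A\|_\infty$ has mass at most $2M$. The same holds for $u\mapsto P_B(u,t)$. Integration by parts moves $d\mathbf D$ onto these factors: boundary terms are bounded by $2\norm{D}_{\infty,\tau}$, and the integral terms by $\norm{D}_{\infty,\tau}$ times the variations $|d_uP_A(s,u-)|$ and $|d_uP_B(u,t)|$, each at most $C(M)$. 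Any jump cross terms from common jumps of $A$, $B$ and $D$ are bounded the same way, since $|\Delta D|\le 2\norm{D}_\infty$ and their sum is controlled by the total jump mass $\le 2M$.

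Taking the supremum over $s\le t$ then gives \eqref{eq:product-integral-lipschitz} with $C_{\mathrm{PI}}(M,\tau)$ depending only on $M$ and the fixed $3\times3$ dimension. Bounding these cross terms carefully is the main technical care in this part.
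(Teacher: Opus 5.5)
Your proposal is correct and follows the paper's structure. You prove Duhamel's identity, iterate it once to get the second-order remainder, and then use summation by parts to move the increments of $D$ onto the factors $P_A(s,u-)$ and $P_B(u,t)$, whose variation is $O(M)$, so that only $\norm{D}_{\infty,\tau}$ enters.

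The difference is the setting. The paper works on finite grids: it uses finite telescoping, then discrete Abel summation with the coefficient-variation bound $\norm{C_a-C_{a+1}}_\infty\le C_0\{\norm{G_{a+1}-I}_\infty+\norm{F_a-I}_\infty\}$, and passes to the limit. You work directly in continuous time via the forward/backward equations. There, if you center $\mathbf D$ at $s$ and pair $\mathbf D(u)$ with $d_uP_A(s,u)$ and $\mathbf D(u-)$ with $d_uP_B(u,t)$, the integration-by-parts formula is exact, so the jump cross terms you were worried about never arise.
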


\begin{proof} [Proof of Lemma \ref{lem:product-integral-expansion}]
First suppose that $A$ and $B$ are step functions on a common grid
$s=t_0<\cdots<t_q=t$. Set $F_r=I+\Delta_r\mathbf A$,
$G_r=I+\Delta_r\mathbf B$, and $K_r=G_r-F_r=\Delta_r\mathbf D$;
write $F_{a:b}=F_a\cdots F_b$ and $G_{a:b}=G_a\cdots G_b$, with an empty
product equal to $I$. Telescoping gives
\begin{equation}
\label{eq:finite-telescope}
G_{1:q}-F_{1:q}=\sum_{a=1}^qF_{1:a-1}K_aG_{a+1:q}.
\end{equation}
Take refining partitions whose mesh tends to zero and which contain all
jumps larger than the inverse partition index. The finite-variation
construction gives uniform convergence of the Euler products, while the
sum in \eqref{eq:finite-telescope} converges to the Stieltjes integral
with factors $P_A(s,u-)$ and $P_B(u,t)$. This proves
\eqref{eq:duhamel}, with the displayed left-limit convention.
Interchanging $A$ and $B$ gives \eqref{eq:duhamel-left}.
Adding and subtracting $P_A(u,t)$ in \eqref{eq:duhamel}, and applying
the same identity on $(u,t]$ to the difference $P_B(u,t)-P_A(u,t)$,
gives \eqref{eq:product-integral-second-order}--\eqref{eq:product-integral-remainder}.

For the modulus bound, first note that the signed matrix measure
\[
d\mathbf D(u)=
\begin{pmatrix}
-(dD_{01}+dD_{02}) & dD_{01} & dD_{02}\\
0&-dD_{12}&dD_{12}\\
0&0&0
\end{pmatrix},
\]
satisfies the row-wise absolute-measure bound
$\norm{d\mathbf D(u)}_\infty \le 2\,d|D_{01}|(u)+2\,d|D_{02}|(u)+2\,d|D_{12}|(u) =2\,dV_D(u)$,
where $dV_D=d|D_{01}|+d|D_{02}|+d|D_{12}|$. Assume now \eqref{eq:product-integral-mass-bound}. By the preceding
Euler-product approximation, it suffices to obtain a bound uniform over
finite grids. Each scalar entry of $G_{1:q}-F_{1:q}$ in
\eqref{eq:finite-telescope} is a finite sum of terms
\begin{equation}
\label{eq:product-integral-scalar-sum}
    \sum_{a=1}^q c_a x_a,
\end{equation}
Here $x_a$ is one of $\Delta D_{01}(t_a)$, $\Delta D_{02}(t_a)$, or $\Delta D_{12}(t_a)$, and $c_a$ is a
scalar entry of
\begin{equation}
\label{eq:product-integral-coefficient}
    C_a=F_{1:a-1}E_rG_{a+1:q},
\end{equation}
where $E_r$ ranges over the fixed elementary matrices associated with the
three transitions. Set $X_a=\sum_{b=1}^a x_b, \qquad E_x=\max_{0\le a\le q}|X_a|.$ Since $x_a$ is an increment of a
component of $D$ over $[s,t]$, we have $X_a=D_\ell(t_a)-D_\ell(s)$ and hence
\begin{equation}
\label{eq:partial-sum-bound}
    E_x\le 2\norm{D}_{\infty,\tau}.
\end{equation}
The factor two is absorbed into the final constant. With $c_{q+1}=0$, Abel
summation gives
\begin{equation}
\label{eq:abel-summation}
\sum_{a=1}^q c_a x_a
=\sum_{a=1}^q c_a(X_a-X_{a-1})
=\sum_{a=1}^q(c_a-c_{a+1})X_a,
\end{equation}
and therefore
\begin{equation}
\label{eq:abel-summation-bound}
\left|\sum_{a=1}^q c_a x_a\right|
\le E_x\left\{|c_q|+\sum_{a=1}^{q-1}|c_a-c_{a+1}|\right\}.
\end{equation}

It remains to bound the coefficient variation. Write $C_a$ for the matrix
in \eqref{eq:product-integral-coefficient}. Row-stochasticity of the $F$- and
$G$-products implies
\begin{equation}
\label{eq:coefficient-terminal-bound}
\norm{C_q}_\infty=\norm{F_{1:q-1}E_r}_\infty\le C_0,
\end{equation}
where $C_0$ depends only on the fixed elementary matrices. For $a=1,\ldots,q-1$,
\begin{align}
\label{eq:coefficient-increment}
C_a-C_{a+1}
&=F_{1:a-1}E_rG_{a+1:q}-F_{1:a}E_rG_{a+2:q} \\
&=F_{1:a-1}\{E_rG_{a+1}-F_aE_r\}G_{a+2:q} \notag \\
&=F_{1:a-1}\{E_r(G_{a+1}-I)-(F_a-I)E_r\}G_{a+2:q}. \notag
\end{align}
Row-stochasticity and boundedness of the elementary matrices then give
\begin{equation}
\label{eq:coefficient-increment-bound}
\norm{C_a-C_{a+1}}_\infty
\le C_0\left\{\norm{G_{a+1}-I}_\infty+\norm{F_a-I}_\infty\right\}.
\end{equation}
The one-step factors satisfy
\begin{equation}
\label{eq:product-factor-bound}
\norm{F_a-I}_\infty\le 2\{\Delta A_{01}(t_a)+\Delta A_{02}(t_a)+\Delta A_{12}(t_a)\},
\end{equation}
with the corresponding bound for $G_{a+1}-I$ obtained with $B$
in place of $A$. Summing \eqref{eq:coefficient-increment-bound} over $a$ and using
$|A|_{\tau}\vee |B|_{\tau}\le M$ yields
\begin{equation}
\label{eq:coefficient-variation-bound}
    \norm{C_q}_\infty+
    \sum_{a=1}^{q-1}\norm{C_a-C_{a+1}}_\infty
    \le C(M,\tau).
\end{equation}
This also bounds the variation of each scalar entry $c_a$ of $C_a$.
Combining \eqref{eq:product-integral-scalar-sum}--\eqref{eq:coefficient-variation-bound}
and summing over the finitely many matrix entries and transition coordinates,
we obtain
$\norm{P_B(s,t)-P_A(s,t)}_\infty \le C_{\mathrm{PI}}(M,\tau)\norm{D}_{\infty,\tau}$.
Taking the supremum over $0\le s\le t\le\tau$ proves \eqref{eq:product-integral-lipschitz}
on the grid. The constants are independent of the grid, so the preceding
approximation extends the bound to general cadlag finite-variation hazards.

\end{proof}

\begin{lemma}
\label{lem:rmst-hazard-derivative}
Let $A,B\in\Aspace_{\tau}$ and $D=B-A$.  Write
$M_{\tau}(A):=\mu_A(\tau).$
Throughout the paper,
$\dotmu_{\tau,A}[D]$ denotes the first-order pathwise derivative of the
deterministic RMST map $M_{\tau}$ at the hazard triplet $A$, evaluated in
the signed finite-variation direction $D$.  Equivalently, if
$A_\varepsilon=A+\varepsilon D$ is an admissible finite-variation path for small
$\varepsilon$, then
$\dotmu_{\tau,A}[D] =\left.\frac{d}{d\varepsilon}M_{\tau}(A_\varepsilon) \right|_{\varepsilon=0}.$
The paper uses the following explicit Stieltjes representation as the definition of
this derivative for all signed finite-variation directions $D$:
\begin{align}
\label{eq:rmst-pathwise-derivative}
\dotmu_{\tau,A}[D]
&=
\int_{(0,\tau]}
P^A_0(u-)
\{V^A_1(u,\tau)-V^A_0(u,\tau)\}
\,dD_{01}(u) \\
&\quad-
\int_{(0,\tau]}
P^A_0(u-)V^A_0(u,\tau)\,dD_{02}(u) \notag \\
&\quad-
\int_{(0,\tau]}
P^A_1(u-)V^A_1(u,\tau)\,dD_{12}(u). \notag
\end{align}
Thus $\dotmu_{\tau,A}[D]$ is a scalar linear functional of the three signed
hazard increments $(dD_{01},dD_{02},dD_{12})$.  It is the linear term in the RMST
expansion
\begin{equation}
\label{eq:rmst-first-order-expansion}
\mu_B(\tau)-\mu_A(\tau)=\dotmu_{\tau,A}[D]+r_{A,B},
\end{equation}
with remainder satisfying
$|r_{A,B}|\le \tau\omega_{\mathrm{PI}}(A,B)$.
Also,
$|\mu_B(\tau)-\mu_A(\tau)| \le 2\tau\kappa_{\mathrm{PI}}(A,B).$
\end{lemma}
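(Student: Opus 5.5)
The approach is to derive everything from the second-order Duhamel expansion of Lemma~\ref{lem:product-integral-expansion}, applied to the functional $\Psi_t(A)=\ezero^\top P_A(0,t)\alive$, and then integrate over $t\in[0,\tau]$.

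\emph{Step 1: linearize $\Psi_t$.} Applying \eqref{eq:product-integral-second-order} with $s=0$ gives
\[
\Psi_t(B)-\Psi_t(A)=\int_{(0,t]}\ezero^\top P_A(0,u-)\,d\mathbf D(u)\,P_A(u,t)\alive+\ezero^\top R^{\mathrm{PI}}_{A,B}(0,t)\alive .
\]
The row vector $\ezero^\top P_A(0,u-)$ is $(P^A_0(u-),P^A_1(u-),P^A_2(u-))$. The column vector $P_A(u,t)\alive$ has entries $v^A_0(u,t)$, $v^A_1(u,t)$ and $0$. The matrix $d\mathbf D(u)$ has the row structure written out in the proof of Lemma~\ref{lem:product-integral-expansion}. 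Expanding the bilinear form entrywise therefore yields
\[
P^A_0(u-)\{v^A_1-v^A_0\}\,dD_{01}\;-\;P^A_0(u-)v^A_0\,dD_{02}\;-\;P^A_1(u-)v^A_1\,dD_{12}.
\]
Row~$2$ vanishes, and so does the state-$2$ entry of the column vector.

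\emph{Step 2: integrate over $t$.} Integrate in $t$ over $[0,\tau]$ and exchange the order of integration by Fubini. Since $u\le t$, the $t$-integral of $v^A_j(u,t)$ runs over $t\in[u,\tau]$ and equals $V^A_j(u,\tau)$. This reproduces exactly the Stieltjes representation \eqref{eq:rmst-pathwise-derivative}, and identifies \eqref{eq:rmst-first-order-expansion} with $r_{A,B}=\int_0^\tau \ezero^\top R^{\mathrm{PI}}_{A,B}(0,t)\alive\,dt$.

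Fubini is justified because $|d\mathbf D|$ is a finite measure, being the difference of two finite-variation triplets, and all integrands are bounded by $1$ by \eqref{eq:v-bounds}. The same Fubini argument applied along $A_\varepsilon=A+\varepsilon D$ shows that this linear term is the $\varepsilon$-derivative. The remainder is quadratic in $\varepsilon$: the double integral \eqref{eq:product-integral-remainder} with $D$ replaced by $\varepsilon D$ carries a factor $\varepsilon^2$, and it is bounded uniformly because $P_{A_\varepsilon}$ is row-stochastic. This justifies calling \eqref{eq:rmst-pathwise-derivative} the pathwise derivative.

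\emph{Step 3: bound the remainder.} By definition, $R^{\mathrm{PI}}_{A,B}(0,t)=P_B(0,t)-P_A(0,t)-\int P_A\,d\mathbf D\,P_A$, so $\|R^{\mathrm{PI}}_{A,B}(0,t)\|_\infty\le\omega_{\mathrm{PI}}(A,B)$. Since $\|\ezero\|_1=1$ and $\|\alive\|_\infty=1$, the scalar $|\ezero^\top R\,\alive|$ is at most the row-sum norm of $R$. Integrating over $[0,\tau]$ then gives $|r_{A,B}|\le\tau\,\omega_{\mathrm{PI}}(A,B)$.

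\emph{Step 4: the Lipschitz bound.} Write
\[
|\mu_B(\tau)-\mu_A(\tau)|\le\int_0^\tau|\ezero^\top\{P_B-P_A\}(0,t)\alive|\,dt .
\]
The integrand is at most the first-row absolute sum of $P_B-P_A$, which is at most $\kappa_{\mathrm{PI}}(A,B)$. This gives $\tau\kappa_{\mathrm{PI}}$ directly, which is stronger than the stated $2\tau\kappa_{\mathrm{PI}}$. The factor $2$ leaves slack if $\kappa$ is instead read entrywise and summed over the two alive columns.

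\emph{Main obstacle.} The delicate point is the left-limit convention $P_A(0,u-)$ at common jump times of $A$ and $D$. The jump-time contributions must land on $P^A_j(u-)$ and not on $P^A_j(u)$. I would verify this on a finite grid using the telescoping identity \eqref{eq:finite-telescope} and pass to the limit exactly as in the proof of Lemma~\ref{lem:product-integral-expansion}.
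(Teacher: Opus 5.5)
Your proposal is correct and follows the paper's proof essentially step for step: the second-order Duhamel expansion of $\ezero^\top P_A(0,t)\alive$, entrywise expansion of the bilinear form against $d\mathbf D$, Fubini in $t$ to produce $V^A_j(u,\tau)$, and the bounds via $\omega_{\mathrm{PI}}$ and $\kappa_{\mathrm{PI}}$. Your constant $\tau$ in the last bound, in place of $2\tau$, is valid under the paper's row-sum norm convention, so the paper's factor $2$ is slack. Your $\varepsilon^2$ argument for the pathwise-derivative interpretation is a correct addition; the paper simply takes that representation as the definition.
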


\begin{proof} [Proof of Lemma \ref{lem:rmst-hazard-derivative}]
For $t\in[0,\tau]$, Lemma~\ref{lem:product-integral-expansion} gives
\begin{align}
\label{eq:survival-first-order-expansion}
\Psi_t(B)-\Psi_t(A)
&=\ezero^\top\int_{(0,t]}P_A(0,u-)\,d\mathbf D(u)P_A(u,t)\alive+\rho_t,\\
\label{eq:survival-remainder-bound}
|\rho_t|&\le\omega_{\mathrm{PI}}(A,B).
\end{align}
Write $P_j^A(u-)=P_{0j}^A(0,u-)$ and
$v^A(u,t)=P_A(u,t)\alive=(v_0^A(u,t),v_1^A(u,t),0)^\top$.
Multiplying the transition-hazard matrix by this vector yields
\begin{align}
\label{eq:survival-derivative}
D\Psi_{t,A}[D]
&:=\ezero^\top\int_{(0,t]}P_A(0,u-)\,d\mathbf D(u)P_A(u,t)\alive\notag\\
&=\int_{(0,t]}P_0^A(u-)\{v_1^A(u,t)-v_0^A(u,t)\}\,dD_{01}(u)\notag\\
&\quad-\int_{(0,t]}P_0^A(u-)v_0^A(u,t)\,dD_{02}(u)
      -\int_{(0,t]}P_1^A(u-)v_1^A(u,t)\,dD_{12}(u).
\end{align}
Each scalar coefficient has absolute value at most one by
\eqref{eq:v-bounds}, and
\begin{equation}
\label{eq:rmst-fubini-bound}
\sum_{\ell\in\cJ}\int_0^\tau\int_{(0,t]}d|D_\ell|(u)\,dt
=\sum_{\ell\in\cJ}\int_{(0,\tau]}(\tau-u)\,d|D_\ell|(u)
\le\tau\sum_{\ell\in\cJ}|D_\ell|([0,\tau])<\infty.
\end{equation}
Fubini's theorem for finite signed measures therefore permits integration
of \eqref{eq:survival-derivative} in $t$. The inner integrals are $V_1^A-V_0^A$,
$V_0^A$, and $V_1^A$, respectively, proving
\eqref{eq:rmst-pathwise-derivative}.
Integrating \eqref{eq:survival-first-order-expansion} gives
$r_{A,B}=\int_0^\tau\rho_t\,dt$ and
$|r_{A,B}|\le\tau\omega_{\mathrm{PI}}(A,B)$.
Finally,
$|\Psi_t(B)-\Psi_t(A)|\le2\|P_B(0,t)-P_A(0,t)\|_\infty$,
so integration proves
\begin{equation}
\label{eq:rmst-product-integral-bound}
|\mu_B-\mu_A|\le2\tau\kappa_{\mathrm{PI}}(A,B).
\end{equation}
\end{proof}

We use the Nelson--Aalen and AJ estimators of
Section~\ref{subsec:estimation} and~\eqref{eq:AJ}, under
Assumption~\ref{ass:aj-main-integrated}. The averages $\bar N_\ell,\bar Y_j$
and the population source risks $q_j$ have the meanings given there.
For the proofs, put $J_j(u)=\1\{\bar Y_j(u)>0\}$ and
$\bar M_{\ell,0}=n^{-1}\sum_iM_{\ell,0,i}$, retaining the zero-risk ratio
convention of the main text. Write
$\widehat\mu^{\AJ}=\widehat\mu_\tau^{\textsf{AJ}}$ and $\mu_0=\mu_\tau$.
The notation $H_{\ell,0}$ denotes the true RMST weight
$H_\ell^\lozenge$ in \eqref{eq:main-nocov-rmst-H}; the corresponding
influence function is \eqref{eq:main-nocov-rmst-eif}.

For the proofs in this appendix, define the transition matrices
\[
\mathbf B_{01}=\begin{pmatrix}-1&1&0\\0&0&0\\0&0&0\end{pmatrix},\qquad
\mathbf B_{02}=\begin{pmatrix}-1&0&1\\0&0&0\\0&0&0\end{pmatrix},\qquad
\mathbf B_{12}=\begin{pmatrix}0&0&0\\0&-1&1\\0&0&0\end{pmatrix}.
\]
Thus $d\mathbf H(u)=\sum_{\ell\in\cJ}\mathbf B_\ell dH_\ell(u)$.  The
row-stochasticity bound \eqref{eq:row-stoch} and censoring positivity imply
\begin{equation}
\label{eq:rmst-weight-bound}
    H_{\max}:=
    \max_{\ell\in\cJ}\sup_{0\le u\le\tau}|H_{\ell,0}(u)|
    \le 2\tau/g_0<\infty .
\end{equation}

\begin{lemma}
\label{lem:aj-jump-regularity}
Under Assumptions~\ref{ass:time-reg}, \ref{ass:C-ignore}, \ref{ass:Markov-state}, and~\ref{ass:aj-main-integrated}, let $R_i$ be the latent recurrence time and $T_i$
the latent terminal event time.  Define the transition times
\[
S_{01,i}=\begin{cases}R_i, & R_i<T_i,\\ \infty, & R_i\ge T_i,\end{cases}
\qquad
S_{02,i}=\begin{cases}T_i, & T_i<R_i,\\ \infty, & T_i\ge R_i,\end{cases}
\qquad
S_{12,i}=\begin{cases}T_i, & R_i<T_i,\\ \infty, & R_i\ge T_i.\end{cases}
\]
Then the following hold.
\begin{enumerate}[label=\textnormal{(\roman*)}]
\item The true transition compensators are continuous:
$\Delta A_{\ell,0}(u)=0, \qquad 0<u\le\tau, \qquad \ell\in\cJ .$
\item The total transition-hazard masses are finite and
$A_0\in\Aspace_{\tau}$.
\item The Nelson--Aalen estimator is row-admissible pathwise:
$\widehat A^{\AJ}\in\Aspace_{\tau}$ for every sample for which the displayed
risk-set ratios are defined with the convention $J_j/\bar Y_j=0$ on
$\{\bar Y_j=0\}$.
\item There are no common calendar-time observed transition jumps in the sample:
\[
J_n^{\max}
    :=\max_{0<u\le\tau}
      \sum_{i=1}^n\sum_{\ell\in\cJ}\Delta N_{\ell,i}(u)
    \le 1
    \qquad\textnormal{almost surely}.
\]
Consequently $J_n^{\max}=O(1)=o(\sqrt n)$ almost surely.
\end{enumerate}
\end{lemma}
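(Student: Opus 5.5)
We verify the four claims in order. Each uses only absolute continuity from Assumption~\ref{ass:time-reg}, independence of censoring from Assumption~\ref{ass:C-ignore}, and the structure of the Nelson--Aalen ratios.

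\emph{Part (i).} The transition compensators are $A_{\ell,0}(u)=\int_0^u\lambda_\ell(s)\,ds$. The hazards are defined through the joint density of $(T,R)$ on $[0,\tau_0)^2$, and $\tau<\tau_0$. Concretely, $\lambda_{01}$ and $\lambda_{02}$ are the cause-specific sub-density hazards of $R\wedge T$ divided by $P(R\wedge T>u)$, and $\lambda_{12}(u)$ comes from the conditional law of $T$ given $R<u\le T$. Each cumulative hazard is therefore an integral of a Lebesgue density, hence continuous on $(0,\tau]$. Where the denominator $q_{j(\ell)}$ vanishes, Assumption~\ref{ass:aj-main-integrated}\ref{ajmain:support} says $dA_\ell$ puts no mass there, so no atoms arise.

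\emph{Part (ii).} Finiteness of $A_{\ell,0}(\tau)$ follows from \ref{ajmain:inverse}. Since $q_j\le1$, we have $A_{\ell,0}(\tau)\le\int I\{q_j>0\}q_j^{-1}\,dA_\ell<\infty$, again using \ref{ajmain:support} to discard the set $\{q_j=0\}$. Admissibility requires:
\begin{itemize}
\item nondecreasing, c\`adl\`ag coordinates vanishing at zero, which are immediate;
\item outgoing row increments at most one, which is trivial because by (i) all increments $\Delta A_{\ell,0}$ are zero.
\end{itemize}

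\emph{Part (iii).} This is pathwise bookkeeping. The estimator $\widehat A^{\AJ}_{jk}$ is a nondecreasing step function vanishing at $0$, with finitely many jumps. At a jump time $u$ with $\bar Y_j(u)>0$, the row-$0$ increment is
\[
\frac{\Delta\bar N_{01}(u)+\Delta\bar N_{02}(u)}{\bar Y_0(u)}.
\]
Each subject contributing a $0\to k$ jump at $u$ is in state $0$ and at risk at $u$, so $Y_{0,i}(u)=1$. Hence the numerator count is at most the number at risk, and the ratio is at most $1$. Row $1$ is handled identically, and on $\{\bar Y_j=0\}$ the convention gives a zero increment.

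\emph{Part (iv).} This is the only step needing a probabilistic argument, and I expect it to be the main (mild) obstacle. I would enumerate the pairs of possible simultaneous jumps and show each has probability zero.
\begin{itemize}
\item \emph{Across subjects $i\ne i'$.} A common jump requires $S_{\ell,i}=S_{\ell',i'}\le\tau$. The samples are i.i.d., and each $S_{\ell,i}$ restricted to $[0,\tau]$ has an absolutely continuous law: $T$ is continuous, and $R$ has a density on $[0,\tau_0)$ with its only atom at $\tau_0>\tau$. Conditioning on one variable and using independence, the probability of a tie is $\int P(S_{\ell',i'}=s)\,dP_{S_{\ell,i}}(s)=0$.
\item \emph{Within a subject.} $S_{01,i}$ and $S_{12,i}$ could coincide only if $R_i=T_i<\tau_0$, which is excluded: the joint law has a density on $[0,\tau_0)^2$, and the diagonal has measure zero. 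The times $S_{01,i}$ and $S_{02,i}$ cannot both be finite.
\item \emph{Censoring.} Censoring does not create jumps, because observed counts are $N_\ell=N^*_\ell(\cdot\wedge\tilde T)$.
\end{itemize}
A finite union of null events gives $J_n^{\max}\le1$ almost surely, and the bound $O(1)=o(\sqrt n)$ is then immediate.
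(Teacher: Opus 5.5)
Your proof is correct, and for (ii)--(iv) it is the paper's argument. Finite mass comes from $q_j\le1$ with (AJ1)--(AJ2), and admissibility of $A_0$ from continuity. The Nelson--Aalen row bound holds because each at-risk subject contributes at most one outgoing jump. Finally, $J_n^{\max}\le1$ follows from no within-subject coincidences, atomless transition times across independent subjects, and censoring only deleting jumps. You omit the pair $S_{02,i},S_{12,i}$, but it is excluded like $S_{01,i},S_{02,i}$, since one needs $T_i<R_i$ and the other $R_i<T_i$. The same strict inequalities make $S_{01,i}=S_{12,i}<\infty$ impossible outright, so the null-diagonal step is not needed there.

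Where you differ is (i). You read continuity off $A_{\ell,0}=\int_0^{\cdot}\lambda_\ell(s)\,ds$, identifying $\lambda_\ell$ on $\{q_j>0\}$ as a Lebesgue sub-density divided by an occupation probability, and invoking (AJ1) on $\{q_j=0\}$. The paper never uses a density formula for the compensator. Instead, atomless $S_{\ell,i}$ give $0=E\,\Delta N_{\ell,i}(u)=q_j(u)\,\Delta A_{\ell,0}(u)$ via the Doob--Meyer decomposition, and (AJ1) handles $q_j(u)=0$.

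Your version is quicker given the notation $A_{jk}=\int\lambda_{jk}$. However, it makes (i) nearly definitional and leans on joint absolute continuity of $(T,R)$ to certify that the hazards really are densities. The paper's version only needs $\int Y_j\,dA_{\ell,0}$ to be the compensator for some deterministic cumulative hazard in $\Aspace_\tau$, a class that allows atoms. It also extracts (i) from the same no-atom property of the transition times that drives (iv).
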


\begin{proof} [Proof of Lemma \ref{lem:aj-jump-regularity}]
Assumptions~\ref{ass:time-reg}, \ref{ass:C-ignore}, and~\ref{ass:Markov-state}
imply $\Pr(R_i=T_i)=0$. Thus a subject cannot have simultaneous recurrence and
terminal-event jumps. When both events occur, their times are $R_i$ and
$T_i$ with $R_i<T_i$; when death precedes recurrence, the only observed
transition is $0\to2$. Censoring may remove a transition but cannot create one.

The same assumptions exclude ties between subjects: for $i\ne k$ and
$\ell,r\in\cJ$,
$\Pr\{S_{\ell,i}=S_{r,k}<\infty\}=0.$
A finite union over subject and transition pairs gives
$\Pr\{\exists i\ne k,\exists\ell,r,\ S_{\ell,i}=S_{r,k}<\infty\}=0.$
Together with the absence of within-subject simultaneous jumps, this yields
$J_n^{\max}\le1$ almost surely.

For continuity of the compensators, fix $\ell=(j,k)$ and $u\in(0,\tau]$. The absence
of atoms in $S_{\ell,i}$ implies
$\Pr\{\Delta N_{\ell,i}(u)=1\}=0 .$
Taking expectations in the Doob--Meyer decomposition gives
$0=E\{\Delta N_{\ell,i}(u)\} =E\{Y_{j,i}(u)\}\Delta A_{\ell,0}(u) =q_j(u)\Delta A_{\ell,0}(u).$
When $q_j(u)>0$, we obtain $\Delta A_{\ell,0}(u)=0$. When $q_j(u)=0$, the source-support
condition in Assumption~\ref{ass:aj-main-integrated}\ref{ajmain:support}
gives
$\Delta A_{\ell,0}(u) \le \int_0^{\tau}\mathbf 1\{q_j(v)=0\}\,dA_{\ell,0}(v)=0 .$
Hence $A_{\ell,0}$ is continuous. Since $0\le q_j\le1$,
\[
A_{\ell,0}(\tau)
=\int_{(0,\tau]}\mathbf 1\{q_j(u)>0\}\,dA_{\ell,0}(u)
\le
\int_{(0,\tau]}
\frac{\mathbf 1\{q_j(u)>0\}}{q_j(u)}\,dA_{\ell,0}(u)<\infty .
\]
The true hazards therefore have finite mass, and continuity ensures their
row constraints, so $A_0\in\Aspace_{\tau}$.

Finally, fix a time $u$ and source state $j$. The outgoing
Nelson--Aalen increment from row $j$ is the number of transitions from
subjects at risk in state $j$ just before $u$, divided by the
row-$j$ risk-set size, with value zero for an empty risk set. This
ratio lies in $[0,1]$ because each at-risk subject contributes at most one
outgoing transition. Thus the row constraints defining $\Aspace_{\tau}$ hold
pathwise for $\widehat A^{\AJ}$.
\end{proof}

\begin{theorem}
\label{thm:aj-rmst-linearization}
Under Assumption~\ref{ass:aj-main-integrated},
\begin{equation}
\label{eq:aj-rmst-linearization}
\widehat\mu^{AJ}-\mu_0
=
\dotmu_{\tau,A_0}[\widehat A^{\AJ}-A_0]+o_p(n^{-1/2}).
\end{equation}
\end{theorem}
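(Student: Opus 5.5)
The plan is to start from the first-order RMST expansion of Lemma~\ref{lem:rmst-hazard-derivative} with $A=A_0$ and $B=\widehat A^{\AJ}$. Lemma~\ref{lem:aj-jump-regularity}(ii)--(iii) places both in $\Aspace_\tau$, so
\[
\widehat\mu^{\AJ}-\mu_0=\dotmu_{\tau,A_0}[\widehat A^{\AJ}-A_0]+r_{A_0,\widehat A^{\AJ}},
\qquad |r_{A_0,\widehat A^{\AJ}}|\le\tau\,\omega_{\mathrm{PI}}(A_0,\widehat A^{\AJ}).
\]
It therefore suffices to show $\omega_{\mathrm{PI}}(A_0,\widehat A^{\AJ})=o_p(n^{-1/2})$.

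By \eqref{eq:product-integral-remainder}, $\omega_{\mathrm{PI}}$ is controlled by the double Stieltjes integral $\int\!\int_{u<v}P_{A_0}(s,u-)\,d\mathbf D(u)\,P_{A_0}(u,v-)\,d\mathbf D(v)\,P_{\widehat A}(v,t)$, where $D=\widehat A^{\AJ}-A_0$. I would bound it in two steps.

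\emph{Step 1: total mass of $\widehat A^{\AJ}$.} Show $|\widehat A^{\AJ}|_\tau=O_p(1)$. This follows because $E\int J_j\bar Y_j^{-1}d\bar N_\ell$ is comparable to $A_{\ell,0}(\tau)<\infty$. The finiteness uses \ref{ajmain:inverse}, together with the standard bound $E\{J_j/\bar Y_j\}\lesssim 1/(nq_j)$ in the region where $q_j$ is small.

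\emph{Step 2: inner integral.} The inner integral $\int_{(u,t]}P_{A_0}(u,v-)\,d\mathbf D(v)\,P_{\widehat A}(v,t)$ equals $P_{\widehat A}(u,t)-P_{A_0}(u,t)$ by Duhamel \eqref{eq:duhamel}. Its sup-norm is at most $\kappa_{\mathrm{PI}}\le C_{\mathrm{PI}}\|D\|_{\infty,\tau}$ by \eqref{eq:product-integral-lipschitz}.

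Step 2 only yields a bound of the form $|D|_{\TV}\cdot\|D\|_\infty$, and the total variation of $D$ is $O_p(1)$, not small. So I would not bound the outer integral crudely. Instead I would use Abel summation in $u$, exactly as in the proof of Lemma~\ref{lem:product-integral-expansion}. Integrating by parts moves the outer $d\mathbf D(u)$ onto $D$ itself. The coefficient $u\mapsto P_{A_0}(s,u-)\{P_{\widehat A}(u,t)-P_{A_0}(u,t)\}$ has variation $O_p(1)$ times $\|D\|_\infty$ plus $\|D\|_\infty$ times mass terms. This gives $\omega_{\mathrm{PI}}\lesssim \|D\|_{\infty,\tau}^2+\|D\|_{\infty,\tau}\,[D]^{1/2}_{\Delta,2,\tau}$, where the jump term accounts for the atoms of $\widehat A$ (the $P_{\widehat A}$ factor jumps where $D$ does).

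Lemma~\ref{lem:aj-jump-regularity}(iv) gives at most one event per time point. Hence each jump of $\widehat A_\ell$ is $1/(n\bar Y_j)$, and $[D]_{\Delta,2}\le \sum (n\bar Y_j)^{-2}$, which is $o_p(1)$ by the inverse-risk condition. The result is then reduced to showing $\|D\|_{\infty,\tau}=O_p(n^{-1/2})$, or at least $o_p(n^{-1/4})$.

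\emph{Main obstacle: uniform NA rate without positivity.} The hardest step is the uniform Nelson--Aalen rate when there is no uniform positivity. Here I follow the localization sketched after Assumption~\ref{ass:aj-main-integrated}. Write
\[
D_\ell(t)=\int_0^t J_j\bar Y_j^{-1}d\bar M_{\ell,0}-\int_0^t(1-J_j)\,dA_{\ell,0}.
\]
\emph{Region $\{q_j\ge\varepsilon\}$.} Lenglart's inequality with predictable variation $n^{-1}\int J_j\bar Y_j^{-1}dA_\ell$ gives $O_p(n^{-1/2})$, because $\bar Y_j\to q_j$ uniformly there by Glivenko--Cantelli.

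\emph{Region $\{0<q_j<\varepsilon\}$.} The same Lenglart bound gives $n^{1/2}\sup|\cdot|$ of order $\{\int_{\{q_j<\varepsilon\}} q_j^{-1}dA_\ell\}^{1/2}$, uniformly in $n$. This uses $E[J_j/\bar Y_j]\le C/q_j$, which is the delicate binomial inverse-moment estimate. This integral vanishes as $\varepsilon\downarrow0$ by \ref{ajmain:inverse}.

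\emph{Empty risk sets.} The bias term satisfies $E\int(1-J_j)\,dA_\ell=\int(1-q_j)^n\,dA_\ell$. This is $o(n^{-1/2})$ by dominated convergence, since $\sqrt n(1-q)^n\lesssim q^{-1/2}\cdot o(1)$ on $\{q>0\}$ and $dA_\ell$ puts no mass on $\{q_j=0\}$ by \ref{ajmain:support}.

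Combining the regions gives $\|D\|_{\infty,\tau}=O_p(n^{-1/2})$. Hence $\omega_{\mathrm{PI}}=O_p(n^{-1})+o_p(n^{-1/2})$, which proves \eqref{eq:aj-rmst-linearization}.
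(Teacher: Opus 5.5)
\textbf{There is a genuine gap.} Your reduction to $\omega_{\mathrm{PI}}(A_0,\widehat A^{\AJ})=o_p(n^{-1/2})$, the mass bound, and the uniform Nelson--Aalen rate are fine. The paper gets that rate without localizing, directly from $E\langle U_{n\ell}\rangle(\tau)\le Cn^{-1}\int q_j^{-1}dA_{\ell,0}$. The failure is the pathwise Abel-summation bound on the second-order remainder.

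\emph{The variation claim is false.} At each atom $u$ of $\widehat A^{\AJ}$, the map $u\mapsto P_{\widehat A}(u,t)$ jumps by $\Delta\widehat{\mathbf A}(u)P_{\widehat A}(u,t)$. These jumps add up to order $|\widehat A^{\AJ}|_\tau=O_p(1)$, and nothing cancels against the continuous $P_{A_0}(\cdot,t)$. So integration by parts against $D$ returns only $O_p(\|D\|_{\infty,\tau})=O_p(n^{-1/2})$.

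\emph{Refining does not rescue it.} If you peel off the $d\mathbf D$-driven part of that variation, you regenerate an iterated integral in $D$. For a single transition this closes, because $\int D_\ell(u-)\,dD_\ell(u)=\tfrac12\{D_\ell^2-[D_\ell]\}$. In the illness--death model, however, \eqref{eq:product-integral-remainder} contains the cross term $\int\!\int_{u<v}c(u,v)\,dD_{01}(u)\,dD_{12}(v)$, with $c(v,v)=-P^{A_0}_{00}(0,v)P^{\widehat A}_{11}(v,t)\neq0$. Its antisymmetric (L\'evy-area) part is not controlled by $\|D\|_{\infty,\tau}$, $[D]_{\Delta,2,\tau}$ and the mass.

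For example, take hazards bounded below and the admissible continuous directions $D_{01}=\epsilon\sin(\omega u)$, $D_{12}=\epsilon\{1-\cos(\omega u)\}$, $D_{02}=0$, with $\epsilon\omega$ fixed and small. These have bounded mass, no jumps, and $\|D\|_{\infty,\tau}\asymp\epsilon$, yet $\int_0^tD_{01}\,dD_{12}\asymp\epsilon$. So no deterministic inequality $\omega_{\mathrm{PI}}\lesssim\|D\|_{\infty,\tau}^2+\|D\|_{\infty,\tau}[D]^{1/2}_{\Delta,2,\tau}$ can hold; the product integral is not Fr\'echet differentiable in sup norm on bounded-variation balls.

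\textbf{The missing idea is martingale cancellation, which needs a predictable integrand.} The right-hand form \eqref{eq:product-integral-remainder} you start from carries the anticipating factor $P_{\widehat A}(v,t)$, so it blocks this. The paper instead applies the left-hand identity \eqref{eq:duhamel-left} with $A=A_0$ and $B=\widehat A^{\AJ}$. After integrating in $t$ this gives exactly
\[
\widehat\mu^{\AJ}-\mu_0=\sum_\ell\int L_{n\ell}\,dD_{n\ell},
\qquad
L_{n\ell}(u)=\int_u^\tau e_0^\top P_{\widehat A}(0,u-)\mathbf B_\ell P_{A_0}(u,t)a_{\mathrm{surv}}\,dt ,
\]
and $L_{n\ell}$ is predictable. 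Subtracting the derivative, which has the same form with $P_{A_0}(0,u-)$, leaves $\sum_\ell\int(L_{n\ell}-L_{0\ell})\,dD_{n\ell}$ with $\sup_u|L_{n\ell}-L_{0\ell}|\le C\kappa_n=O_p(n^{-1/2})$.

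Now split $D_{n\ell}=U_{n\ell}+Z_{n\ell}$. The martingale part has predictable variation at most $n^{-1}\sup_u|L_{n\ell}-L_{0\ell}|^2\int J_j\bar Y_j^{-1}dA_{\ell,0}=O_p(n^{-2})$, so it is $O_p(n^{-1})$ by Lenglart's inequality. The empty-risk-set part is $O_p(n^{-1/2})\cdot O_p(n^{-1})$. No bound on $\omega_{\mathrm{PI}}$ is ever needed.
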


\begin{proof}[Proof of Theorem~\ref{thm:aj-rmst-linearization}]
We use the left-hand product-integral identity to express the RMST remainder
as a stochastic integral with a predictable integrand, and then bound its
Doob--Meyer components. Set
$\widehat{\mathbf A}^{\AJ}(t)=\sum_{\ell\in\cJ}\mathbf B_\ell \widehat A^{\AJ}_\ell(t)$, $\mathbf A_0(t)=\sum_{\ell\in\cJ}\mathbf B_\ell A_{\ell,0}(t)$, $\mathbf D_n=\widehat{\mathbf A}^{\AJ}-\mathbf A_0$.
For a transition $\ell$, write $j=j(\ell)$ and
$D_{n\ell}(t)=\widehat A^{\AJ}_\ell(t)-A_{\ell,0}(t).$
The decomposition $d\bar N_\ell=d\bar M_{\ell,0}+\bar Y_j\,dA_{\ell,0}$ gives
\begin{equation}
\label{eq:na-error-decomposition}
D_{n\ell}(t)=U_{n\ell}(t)+Z_{n\ell}(t),
\end{equation}
where
$U_{n\ell}(t)=\int_{(0,t]}\frac{J_j(u)}{\bar Y_j(u)}\,d\bar M_{\ell,0}(u)$, $Z_{n\ell}(t)=\int_{(0,t]}\{J_j(u)-1\}\,dA_{\ell,0}(u)$.
and $J_j/\bar Y_j=0$ on $\{\bar Y_j=0\}$.

We first establish the bounds
\begin{equation}
\label{eq:na-sup-rate}
    \max_{\ell\in\cJ}\sup_{t\le\tau}|D_{n\ell}(t)|=O_p(n^{-1/2}),
\end{equation}
and
\begin{equation}
\label{eq:zero-risk-variation}
    \max_{\ell\in\cJ}|Z_{n\ell}|(\tau)=O_p(n^{-1}).
\end{equation}
Since $Z_{n\ell}$ is nonincreasing,
$|Z_{n\ell}|(\tau)=\int_{(0,\tau]} \1\{\bar Y_j(u)=0\}\,dA_{\ell,0}(u).$
Consequently,
\begin{align}
\label{eq:zero-risk-expectation-bound}
E|Z_{n\ell}|(\tau)
&\le \int_{(0,\tau]}P\{\bar Y_j(u)=0\}\,dA_{\ell,0}(u) \\
&\le \int_{(0,\tau]}e^{-nq_j(u)}\,dA_{\ell,0}(u)
\le \frac{1}{n}\int_{(0,\tau]}
\frac{\1\{q_j(u)>0\}}{q_j(u)}\,dA_{\ell,0}(u)=O(n^{-1}), \notag
\end{align}
where Assumption~\ref{ass:aj-main-integrated} excludes hazard mass on
$\{q_j=0\}$. The predictable quadratic variation of the martingale component
satisfies
$\langle U_{n\ell}\rangle(t) \le \frac{1}{n}\int_{(0,t]}\frac{J_j(u)}{\bar Y_j(u)}\,dA_{\ell,0}(u)$.
For $B\sim\mathrm{Bin}(n,q)$,
\begin{equation}
\label{eq:binomial-inverse-bound}
    E\left\{\frac{\1(B>0)}{B/n}\right\}
    =nE\left\{\frac{\1(B>0)}{B}\right\}
    \le \frac{C}{q},
    \qquad 0<q\le1,
\end{equation}
with a universal constant $C$. Thus
$E\langle U_{n\ell}\rangle(\tau) \le \frac{C}{n}\int_{(0,\tau]} \frac{\1\{q_j(u)>0\}}{q_j(u)}\,dA_{\ell,0}(u)=O(n^{-1}).$
Doob's $L_2$ inequality now yields
$E\sup_{t\le\tau}|U_{n\ell}(t)|^2 \le 4E\langle U_{n\ell}\rangle(\tau)=O(n^{-1}).$
Together with \eqref{eq:zero-risk-expectation-bound}, this establishes
\eqref{eq:na-sup-rate} and implies
\begin{equation}
\label{eq:aj-hazard-mass-bound}
    |\widehat A^{\AJ}|_{\tau}=O_p(1).
\end{equation}

Apply \eqref{eq:duhamel-left} with $A=A_0$, $B=\widehat A^{\AJ}$, and $H=\widehat A^{\AJ}-A_0$.
Multiplication by $\ezero^\top$ and $\alive$, followed by integration in
$t$ and Stieltjes--Fubini, gives
\begin{equation}
\label{eq:aj-exact-rmst-identity}
\widehat\mu^{\AJ}-\mu_0
=\sum_{\ell\in\cJ}\int_{(0,\tau]}L_{n\ell}(u)\,dD_{n\ell}(u),
\end{equation}
where
$L_{n\ell}(u) =\int_u^{\tau} \ezero^\top P_{\widehat A^{\AJ}}(0,u-)\mathbf B_\ell P_{A_0}(u,t)\alive\,dt .$
Define the deterministic coefficient
$L_{0\ell}(u) =\int_u^{\tau} \ezero^\top P_{A_0}(0,u-)\mathbf B_\ell P_{A_0}(u,t)\alive\,dt .$
Lemma~\ref{lem:rmst-hazard-derivative} gives
\begin{equation}
\label{eq:aj-rmst-derivative}
\dotmu_{\tau,A_0}[\widehat A^{\AJ}-A_0]
=\sum_{\ell\in\cJ}\int_{(0,\tau]}L_{0\ell}(u)\,dD_{n\ell}(u).
\end{equation}
Subtracting \eqref{eq:aj-rmst-derivative} from
\eqref{eq:aj-exact-rmst-identity}, we obtain
\begin{equation}
\label{eq:aj-predictable-remainder}
\widehat\mu^{\AJ}-\mu_0-\dotmu_{\tau,A_0}[\widehat A^{\AJ}-A_0]
=\sum_{\ell\in\cJ}\int_{(0,\tau]}\Delta L_{n\ell}(u)\,dD_{n\ell}(u),
\end{equation}
where $\Delta L_{n\ell}=L_{n\ell}-L_{0\ell}$.

To bound the coefficient error, let
$\kappa_n=\sup_{0\le u\le\tau} \|P_{\widehat A^{\AJ}}(0,u)-P_{A_0}(0,u)\|_\infty .$
By \eqref{eq:row-stoch} and $\|\alive\|_\infty=1$,
\begin{equation}
\label{eq:aj-coefficient-error-bound}
    \max_{\ell\in\cJ}\sup_{u\le\tau}|\Delta L_{n\ell}(u)|
    \le C\tau \kappa_n .
\end{equation}
On $\{|A_0|_{\tau}\vee |\widehat A^{\AJ}|_{\tau}
\le M\}$, Lemma~\ref{lem:product-integral-expansion} implies
$\kappa_n\le C_M\max_{\ell\in\cJ}\sup_{u\le\tau}|D_{n\ell}(u)|.$
Combining this with \eqref{eq:na-sup-rate} and
\eqref{eq:aj-hazard-mass-bound} yields
\begin{equation}
\label{eq:aj-product-integral-rate}
    \kappa_n=O_p(n^{-1/2}),
    \qquad
    \max_{\ell}\sup_{u}|\Delta L_{n\ell}(u)|=O_p(n^{-1/2}).
\end{equation}
The decomposition \eqref{eq:na-error-decomposition} separates the remainder in
\eqref{eq:aj-predictable-remainder} into
$R_{U,n}=\sum_{\ell}\int\Delta L_{n\ell}\,dU_{n\ell}$, $R_{X,n}=\sum_{\ell}\int\Delta L_{n\ell}\,dZ_{n\ell}$.
By \eqref{eq:aj-coefficient-error-bound} and \eqref{eq:zero-risk-variation},
\begin{equation}
\label{eq:aj-zero-risk-remainder}
|R_{X,n}|
\le \sum_{\ell}\sup_u|\Delta L_{n\ell}(u)|\,|Z_{n\ell}|(\tau)
=O_p(n^{-1/2})O_p(n^{-1})=O_p(n^{-3/2}).
\end{equation}

For each $\ell$, the process
$M_{n\ell}^{\nabla}(t)=\int_{(0,t]}\Delta L_{n\ell}(u)\,dU_{n\ell}(u)$
has a predictable integrand because $\Delta L_{n\ell}(u)$ uses only
$P_{\widehat A^{\AJ}}(0,u-)$ and deterministic product integrals under $A_0$.
It is therefore a square-integrable martingale with predictable quadratic variation
bounded by

\[
\langle M_{n\ell}^{\nabla}\rangle(\tau) \le \frac1n\int_{(0,\tau]} \Delta L_{n\ell}(u)^2\frac{J_j(u)}{\bar Y_j(u)}\,dA_{\ell,0}(u).
\]

Set
$I_{n\ell}=\int_{(0,\tau]} \frac{J_j(u)}{\bar Y_j(u)}\,dA_{\ell,0}(u).$
The binomial bound \eqref{eq:binomial-inverse-bound} gives
$E I_{n\ell}\le C\int_{(0,\tau]} \frac{\1\{q_j(u)>0\}}{q_j(u)}\,dA_{\ell,0}(u)<\infty,$
and hence $I_{n\ell}=O_p(1)$. Therefore \eqref{eq:aj-coefficient-error-bound} and
\eqref{eq:aj-product-integral-rate} imply
\begin{equation}
\label{eq:aj-remainder-bracket}
\langle M_{n\ell}^{\nabla}\rangle(\tau)
\le \frac{C}{n}
\left(\sup_u|\Delta L_{n\ell}(u)|^2\right) I_{n\ell}
=O_p(n^{-2}).
\end{equation}
Lenglart's inequality and \eqref{eq:aj-remainder-bracket} give
$M_{n\ell}^{\nabla}(\tau)=O_p(n^{-1})$.
There are only three transitions, so $R_{U,n}=O_p(n^{-1})$. Together with
\eqref{eq:aj-zero-risk-remainder}, this gives
\begin{equation}
\label{eq:aj-rmst-remainder}
\widehat\mu^{\AJ}-\mu_0-\dotmu_{\tau,A_0}[\widehat A^{\AJ}-A_0]
=O_p(n^{-1})=o_p(n^{-1/2}).
\end{equation}
The expansion \eqref{eq:aj-rmst-linearization} follows. Comparing
\eqref{eq:aj-rmst-remainder} with the exact expansion in
Lemma~\ref{lem:rmst-hazard-derivative} gives the equivalent assertion about the
product-integral remainder.

\end{proof}

\begin{lemma}
\label{lem:weighted-na-linearization}
Fix a transition $\ell=(j,k)$.  Let $h_\ell$ be a bounded deterministic function
on $[0,\tau]$, and define
$L_\ell(u)=q_j(u)h_\ell(u).$
Under Assumption~\ref{ass:aj-main-integrated},
\begin{equation}
\label{eq:weighted-na-linearization}
\int_0^{\tau}
L_\ell(u)\,d\{\widehat A_\ell^{\AJ}-A_{\ell,0}\}(u)
=
\int_0^{\tau}h_\ell(u)\,d\bar M_{\ell,0}(u)
+
O_p(n^{-1}).
\end{equation}
No lower bound on $q_j$ is required.
\end{lemma}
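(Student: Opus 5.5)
We use the Nelson--Aalen error decomposition \eqref{eq:na-error-decomposition} from the proof of Theorem~\ref{thm:aj-rmst-linearization}, $\widehat A_\ell^{\AJ}-A_{\ell,0}=U_{n\ell}+Z_{n\ell}$. This splits the left side of \eqref{eq:weighted-na-linearization} into a martingale part $\int L_\ell\,dU_{n\ell}$ and a zero-risk part $\int L_\ell\,dZ_{n\ell}$. The zero-risk part is immediate. Since $|L_\ell|\le \|h_\ell\|_\infty$ (because $q_j\le1$), \eqref{eq:zero-risk-variation} gives $|\int L_\ell\,dZ_{n\ell}|\le \|h_\ell\|_\infty|Z_{n\ell}|(\tau)=O_p(n^{-1})$.

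For the martingale part, write $dU_{n\ell}=J_j\bar Y_j^{-1}\,d\bar M_{\ell,0}$ and compare the integrand with the target:
\[
\int_0^\tau L_\ell\,dU_{n\ell}-\int_0^\tau h_\ell\,d\bar M_{\ell,0}
=\int_0^\tau h_\ell(u)\Big\{\frac{J_j(u)q_j(u)}{\bar Y_j(u)}-1\Big\}\,d\bar M_{\ell,0}(u).
\]
The integrand is predictable. It is built from $\bar Y_j(u)$, which is left-continuous, and from deterministic functions. So the difference is a martingale $W_n(t)$ with
\[
\langle W_n\rangle(\tau)=\frac1n\int_0^\tau h_\ell^2\Big\{\frac{J_jq_j}{\bar Y_j}-1\Big\}^2\bar Y_j\,dA_{\ell,0}.
\]
Next take expectations pointwise in $u$. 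Put $B=n\bar Y_j(u)\sim\mathrm{Bin}(n,q_j(u))$, which holds because the subjects are i.i.d. We need a binomial bound of the form
\[
E\Big[\Big(\frac{\1(B>0)nq}{B}-1\Big)^2\frac Bn\Big]\le \frac{C}{n}+q\,P(B=0)\le \frac{C'}{n}.
\]
On $\{B>0\}$ the bracket equals $(nq-B)^2/(nB)$. Its expectation is of order $q(1-q)\cdot E[\1(B>0)/B]\lesssim (1-q)/n$ by \eqref{eq:binomial-inverse-bound}, once the inverse moment is bounded more sharply by $C/(nq)$. Alternatively we can split at $B\ge nq/2$ and use a Chernoff bound on the complement. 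On $\{B=0\}$ the bracket is $0$ times $B/n=0$. Hence $E\langle W_n\rangle(\tau)\le Cn^{-2}\|h_\ell\|_\infty^2A_{\ell,0}(\tau)$, and $A_{\ell,0}(\tau)<\infty$ by Lemma~\ref{lem:aj-jump-regularity}(ii). Lenglart's inequality, or Doob's inequality, then gives $W_n(\tau)=O_p(n^{-1})$. No lower bound on $q_j$ enters, because the weight $q_j$ in $L_\ell$ exactly cancels the inverse risk probability.

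The main obstacle is the binomial moment $E[(nq-B)^2\1(B>0)/B]\le C$, uniformly in $q\in(0,1]$ and $n$. I would prove it by splitting into $\{B\ge nq/2\}$, where the expression is at most $2\,\Var(B)/(nq)\le 2$, and $\{0<B<nq/2\}$, where it is at most $(nq)^2P(B<nq/2)\le (nq)^2e^{-nq/8}\le C$. This pointwise bound, integrated against the finite measure $h_\ell^2\,dA_{\ell,0}$, yields the $O(n^{-2})$ control of $E\langle W_n\rangle(\tau)$. Combining the martingale and zero-risk parts proves \eqref{eq:weighted-na-linearization}.
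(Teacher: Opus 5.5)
Your proposal is correct and follows the paper's proof essentially step for step. You use the same split into the predictable-integrand martingale $\int h_\ell\{q_jJ_j/\bar Y_j-1\}\,d\bar M_{\ell,0}$ and the empty-risk-set term, and you control the former through its predictable quadratic variation and the uniform binomial moment $E[(nq-B)^2\1(B>0)/B]\le C$ (split at $nq/2$ plus Chernoff). The only differences are minor: you bound the empty-risk-set term by reusing \eqref{eq:zero-risk-variation}, which relies on \ref{ajmain:inverse}, whereas the paper keeps the factor $q_j$ and uses $q_je^{-nq_j}\le(en)^{-1}$, needing only $A_{\ell,0}(\tau)<\infty$; and your split at $nq/2$ already covers $nq<2$, so the paper's separate small-$\mu$ case is not needed.
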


\begin{proof} [Proof of Lemma \ref{lem:weighted-na-linearization}]
The Doob--Meyer decomposition
$d\bar N_\ell(u)=d\bar M_{\ell,0}(u)+\bar Y_j(u)dA_{\ell,0}(u),$
and the convention $J_j/\bar Y_j=0$ on $\bar Y_j=0$ give
$d\{\widehat A_\ell^{\AJ}-A_{\ell,0}\}(u) = \frac{J_j(u)}{\bar Y_j(u)}d\bar M_{\ell,0}(u) +\{J_j(u)-1\}dA_{\ell,0}(u).$
Multiplication by $L_\ell=q_jh_\ell$ and addition and subtraction of $\int h_\ell d\bar M_{\ell,0}$ yield
\begin{align*}
\int L_\ell d(\widehat A_\ell^{\AJ}-A_{\ell,0})
&=
\int h_\ell d\bar M_{\ell,0}+R_{1,n}+R_{2,n}, \\
R_{1,n}
&=
\int h_\ell(u)
\left\{\frac{q_j(u)J_j(u)}{\bar Y_j(u)}-1\right\}d\bar M_{\ell,0}(u), \\
R_{2,n}
&=
-\int q_j(u)h_\ell(u)\1\{\bar Y_j(u)=0\}dA_{\ell,0}(u).
\end{align*}
For $R_{2,n}$, boundedness $\|h_\ell\|_\infty<\infty$ implies
$E|R_{2,n}| \le \|h_\ell\|_\infty \int_0^{\tau}q_j(u)\Pr\{\bar Y_j(u)=0\}dA_{\ell,0}(u).$
At each fixed $u$, $K_u=n\bar Y_j(u)\sim \mathrm{Binomial}(n,q_j(u))$, and therefore $\Pr\{\bar Y_j(u)=0\}=\{1-q_j(u)\}^n\le e^{-nq_j(u)}$. Since $x e^{-nx}\le(en)^{-1}$,
$E|R_{2,n}| \le \frac{\|h_\ell\|_\infty}{en}A_{\ell,0}(\tau)=O(n^{-1}),$
so $R_{2,n}=O_p(n^{-1})$.

For $R_{1,n}$, the predictable quadratic variation satisfies
\begin{align*}
\langle R_{1,n}\rangle(\tau)
&\le
\frac1{n^2}\sum_{i=1}^n\int_0^{\tau}
 h_\ell(u)^2
\left\{\frac{q_j(u)J_j(u)}{\bar Y_j(u)}-1\right\}^2
Y_{j,i}(u)dA_{\ell,0}(u) \\
&=
\frac1n\int_0^{\tau}
 h_\ell(u)^2
\left\{\frac{q_j(u)J_j(u)}{\bar Y_j(u)}-1\right\}^2
\bar Y_j(u)dA_{\ell,0}(u).
\end{align*}
The integrand vanishes on $\{\bar Y_j(u)=0\}$, whereas on $\{\bar Y_j(u)>0\}$,
$\left\{\frac{q_jJ_j}{\bar Y_j}-1\right\}^2\bar Y_j = \frac{\{q_j-\bar Y_j\}^2}{\bar Y_j}.$
Thus
\begin{equation}
E\langle R_{1,n}\rangle(\tau)
 \le
\frac{\|h_\ell\|_\infty^2}{n}
\int_0^{\tau}
E\left[
\frac{\{q_j(u)-\bar Y_j(u)\}^2}{\bar Y_j(u)}\1\{\bar Y_j(u)>0\}
\right]
dA_{\ell,0}(u).\label{eq:weighted-na-bracket-bound}
\end{equation}
Let $K\sim\mathrm{Binomial}(n,q)$ and $\mu=nq$. Then
$E\left[ \frac{\{q-K/n\}^2}{K/n}\1\{K>0\} \right] = \frac1nE\left[\frac{(K-\mu)^2}{K}\1\{K>0\}\right] \le \frac{C}{n},$
where the last inequality follows by splitting on $\{K\ge \mu/2\}$, applying
Chernoff's bound on $\{K<\mu/2\}$, and handling $\mu<2$ using $(K-\mu)^2K^{-1}\1(K>0)\le2K+2\mu^2\1(K>0)$. Substitution in
\eqref{eq:weighted-na-bracket-bound} gives
$E\langle R_{1,n}\rangle(\tau) \le \frac{C\|h_\ell\|_\infty^2}{n^2}A_{\ell,0}(\tau)=O(n^{-2}).$
Doob's $L_2$ inequality implies $R_{1,n}=O_p(n^{-1})$. Combining this with $R_{2,n}=O_p(n^{-1})$
proves \eqref{eq:weighted-na-linearization}.

\end{proof}

We now prove Theorem~\ref{thm:aj-integrated-main}. Its efficiency conclusion
is discussed separately from the asymptotic linear representation and
normal limit.

\begin{proof}[Proof of Theorem~\ref{thm:aj-integrated-main}]
In this proof, $\phi^\lozenge$ abbreviates the RMST influence function
$\phi_\mu^\lozenge$ in \eqref{eq:main-nocov-rmst-eif}.
By Theorem~\ref{thm:aj-rmst-linearization},
\begin{equation}
\label{eq:aj-linearization-step}
\widehat\mu^{\AJ}-\mu_0
=\dotmu_{\tau,A_0}[\widehat A^{\AJ}-A_0]+o_p(n^{-1/2}).
\end{equation}
The derivative coefficients in \eqref{eq:rmst-pathwise-derivative}
satisfy $L_\ell(u)=q_{j(\ell)}(u)H_{\ell,0}(u)$, because
$q_j(u)=P_j(u-)G(u-)$.
The functions $H_{\ell,0}$ in
\eqref{eq:main-nocov-rmst-H}
are bounded by \eqref{eq:rmst-weight-bound}. Thus
Lemma~\ref{lem:weighted-na-linearization}, applied to each transition, gives
\begin{equation}
\label{eq:aj-weighted-na-step}
\int_0^\tau L_\ell(u)\,d\{\widehat A_\ell^{\AJ}-A_{\ell,0}\}(u)
=\int_0^\tau H_{\ell,0}(u)\,d\bar M_{\ell,0}(u)+O_p(n^{-1}).
\end{equation}
Summing over the three transitions in \eqref{eq:aj-linearization-step}
therefore yields
\begin{equation}
\label{eq:aj-influence-expansion}
\widehat\mu^{\AJ}-\mu_0
=\sum_{\ell\in\cJ}\int_0^\tau H_{\ell,0}(u)\,d\bar M_{\ell,0}(u)
 +o_p(n^{-1/2}).
\end{equation}
The leading term equals $\Pn\phi^\lozenge$ since
$\bar M_{\ell,0}=n^{-1}\sum_iM_{\ell,0,i}$. Its expectation is zero by
boundedness and predictability of $H_{\ell,0}$ and square integrability of
$M_{\ell,0}$. This proves the asymptotic linear representation in Theorem~\ref{thm:aj-integrated-main}.

For the normal limit, suppose
$0<\sigma_0^2=\Var\{\phi^\lozenge(O)\}<\infty$ and define

\[
Z_n(t)=n^{-1/2}\sum_{i=1}^n\sum_{\ell\in\cJ}
\int_0^tH_{\ell,0}(u)\,dM_{\ell,0,i}(u),\qquad 0\le t\le\tau.
\]

This is a square-integrable martingale in the pooled subject filtration, with
\begin{equation}
\label{eq:aj-martingale-terminal}
Z_n(\tau)=\sqrt n(\Pn-P)\phi^\lozenge(O).
\end{equation}
Lemma~\ref{lem:aj-jump-regularity} gives continuous
compensators and excludes simultaneous transition jumps. Independence
across subjects and the resulting zero cross-variations imply
$\langle Z_n\rangle(t)=n^{-1}\sum_iR_i(t)$, where
$R_i(t)=\sum_{\ell\in\cJ}\int_0^tH_{\ell,0}(u)^2Y_{j(\ell),i}(u)\,dA_{\ell,0}(u)$.
The variables $R_i(\tau)$ are i.i.d. and integrable because the coefficients
are bounded and the hazard masses are finite. The law of large numbers and
the single-subject martingale isometry give
\begin{equation}
\label{eq:aj-bracket-limit}
\langle Z_n\rangle(\tau)\longrightarrow_p ER_1(\tau)
=\Var\{\phi^\lozenge(O)\}=\sigma_0^2.
\end{equation}
Moreover, the same lemma gives
$J_n^{\max}:=\max_{0<u\le\tau}\sum_{i,\ell}\Delta N_{\ell,i}(u)\le1$
almost surely. Continuity of the compensators therefore yields
\begin{equation}
\label{eq:aj-jump-bound}
\sup_{0<u\le\tau}|\Delta Z_n(u)|
\le H_{\max}J_n^{\max}/\sqrt n
\le H_{\max}/\sqrt n\longrightarrow0
\qquad\text{almost surely}.
\end{equation}
Rebolledo's martingale central limit theorem now gives
$Z_n(\tau)\rightsquigarrow N(0,\sigma_0^2)$.
Together with \eqref{eq:aj-martingale-terminal}, the asymptotic linear representation
just established, and Slutsky's theorem, this proves the normal limit in
Theorem~\ref{thm:aj-integrated-main}.

If the separate semiparametric calculation identifies $\phi^\lozenge$ as
the canonical gradient of the nonparametric censored illness--death model,
the efficiency bound is $P(\phi^\lozenge)^2=\sigma_0^2$, using the centering
established above. The efficiency conclusion is conditional on that
canonical-gradient identification.
\end{proof}

\subsection{Efficiency gain}\label{app:efficiency-gain}
We verify the efficiency gain from $R$ in \eqref{eq:gain} in Section~\ref{subsec:gain-snapshot} using the
influence functions in Theorem~\ref{thm:eif-main}.
Use the stopped representations \cref{eq:IF-circ-two,eq:IF-lozenge-two}.
To compare them, decompose $\phi^\circ$ using \cref{eq:IF-circ-two} and
\cref{eq:IF-lozenge-two}.
Under censoring ignorability in \Cref{ass:C-ignore},
$M^\bullet_C\equiv M^\blacklozenge_C$ is a martingale in both $\mathcal{F}^\blacklozenge$ and $\mathcal{F}^\bullet$.
Thus
$$
\phi^\circ(\tilde T,\Delta)
= \phi^\lozenge(\tilde T,\Delta,\tilde R,\Gamma)
+ \int_0^\tau I(\tilde T>t) \big\{ M^\bullet_{\theta}(t-)-M^\blacklozenge_\theta(t-)\big\} \;\frac{\mathrm{d} M_C^\blacklozenge(t)}{G(t)} .
$$
This decomposition is orthogonal because $\langle M^\blacklozenge_\theta,M^\blacklozenge_C \rangle\equiv0$ in $\mathcal{F}^\blacklozenge$ under
independence of $C$ and $T,R$.
Allowing $C$ and $T$ to be dynamically confounded through $I(R\ge t)$ would invalidate this orthogonality and the consistency of the marginal estimator.

Consequently, $\mathbb{V}\{\phi^\lozenge - \phi^\circ\}=\mathbb{V}\{\phi^\circ\}-\mathbb{V}\{\phi^\lozenge\}$.
The martingale isometry gives
\begin{align*}
  \mathbb{V}\big\{\phi^\lozenge - \phi^\circ\big\}
  &=
  E \int_0^\tau Y^*_T(t)Y^*_C(t)
  \big\{ [M^\bullet_{\theta}(t-)-M^\blacklozenge_\theta(t-) \big\}^2
  \frac{\lambda_C(t)}{G(t)^2}
  \;\mathrm{d} t \\
  &=
  \int_0^\tau E \big[ I(T\ge t) \big\{ M^\bullet_{\theta}(t-)-M^\blacklozenge_\theta(t-)\big\}^2 \big]
  \frac{\lambda_C(t)}{G(t)}
  \;\mathrm{d} t
\end{align*}
The factorization required in this calculation concerns
$
E\big[Y^*_T(t)Y^*_C(t) \big\{M^\bullet_\theta(t^-)-M^\blacklozenge_{\theta}(t^-)\big\}^2 \big].
$
In the no-covariate model considered here, Assumption~\ref{ass:C-ignore}
supplies the full independence $T,R\indep C$ needed for this factorization.
The calculation does not assert validity under time-pointwise independence.
Conditionally on $T\ge t$, $M_\theta^\bullet(t-)$ is the conditional mean of
$M_\theta^\blacklozenge(t-)$, and is constant on that event.
Thus the expectation in the preceding integral equals
$S(t)\mathbb V\{M_\theta^\blacklozenge(t-)-M_\theta^\bullet(t-)\mid T\ge t\}$.
The efficiency gain is therefore
\[
  \mathbb{V}\big\{\phi^\circ-\phi^\lozenge\big\}
   =
  \int_0^\tau
  \frac{1}{G(t)} \cdot \lambda_C(t)S_T(t) \cdot \mathbb{V}\big\{M^\blacklozenge_\theta(t^-)-M^\bullet_{\theta}(t^-)|T\ge t\big\}
  \;\mathrm{d} t .
\]

\section{Proofs for covariate-adjusted estimation}
\label{app:covariate-proofs}
The product-integral and norm conventions are those of
Section~\ref{sec:setup-notation}.

We use the single-split construction of
\eqref{eq:os-est-integrated-main}: $\calD_{\mathrm{tr},n}$ is the training
sample and $O_1^{\mathrm{ev}},\ldots,O_m^{\mathrm{ev}}$ are the independent
evaluation observations. The notation $\Pev$ abbreviates the main-text
$\mathbb P_{{\rm ev},m}$. In a cross-fitting application these samples are,
respectively, the complement of $I_k$ and $I_k$, as in
\eqref{eq:crossfit-plugin-os-main}.
We write $\widehat S_n=S_{\widehat A_n}$ and
$\widehat\mu_n=\mu_{\widehat A_n}$ for the fitted quantities already defined
in Section~\ref{sec:covariates}; their plug-in average is the first estimator
in \eqref{eq:os-est-integrated-main}.
The symbol $\Ptr$ denotes the empirical average over the training sample.

Using the derivative of Lemma~\ref{lem:rmst-hazard-derivative}, the pointwise and standardized derivative terms are
\begin{align*}
\dotmu_{\tau,A_0}[\widehat A_n-A_0](x)
  &:=
  \dotmu_{\tau,A_0(\cdot\mid x)} [\widehat A_n(\cdot\mid x)-A_0(\cdot\mid x)], \\
\dotmu_{\tau,A_0}^{\mathrm{std}}[\widehat A_n-A_0]
  &:=
  P_X\{\dotmu_{\tau,A_0}[\widehat A_n-A_0](X)\}.
\end{align*}

For the independent evaluation sample, let $h$ be a training-sample-measurable
function of $X$ with $P_Xh^2<\infty$. Conditional independence gives
$E\{(\Pev-P_X)h\mid\calD_{\mathrm{tr},n}\}=0$ and
\begin{equation}
\label{eq:conditional-evaluation-variance}
E\!\left[\left.\{(\Pev-P_X)h\}^{2}\right|\calD_{\mathrm{tr},n}\right]
=m^{-1}\{P_Xh^2-(P_Xh)^2\}\le m^{-1}P_Xh^2.
\end{equation}
The same argument applies conditionally on each fold's training sample.
This elementary calculation is used in both subsections below.

\subsection{Cox--Breslow estimation}
\label{sec:cox-breslow-details}

We use the transition model \eqref{eq:cox-main-transition-model}, its
source-risk function $s_{\ell,0}$, and
Assumption~\ref{ass:cox-main-primitives}.
The fitted hazards displayed in Section~\ref{sec:covariates} are the raw
Cox--Breslow hazards. Below these are denoted by $\bar A_n$, while
$\widehat A_n$ denotes their row-stabilized version.

Let $\Theta_\ell=\{\beta\in\mathbb R^{d_\ell}:\|\beta\|\le R_\ell\}$.  For
$\beta\in\Theta_\ell$, define only the empirical denominator needed for the
partial likelihood,
$S_{\ell,n}^{(0)}(\beta,u)=n^{-1}\sum_{i=1}^n Y_{j,i}(u)e^{\beta^\top X_i}$, $J_{\ell,n}(u)=\1\{S_{\ell,n}^{(0)}(\beta,u)>0\}$.
Since the exponential weights are strictly positive, $J_{\ell,n}$ does not depend
on $\beta$.  The transition-specific average partial log-likelihood is
$\ell_{\ell,n}(\beta) = n^{-1}\sum_{i=1}^n\int_0^{\tau} \{\beta^\top X_i-\log S_{\ell,n}^{(0)}(\beta,u)\} dN_{\ell,i}(u),$
where the integrand is evaluated only at event times.  Its negative
$Q_{\ell,n}(\beta):=-\ell_{\ell,n}(\beta), \qquad \beta\in\Theta_\ell,$
is convex.  Define
\begin{equation}
\label{eq:cox-beta-hat-def}
    \widehat\beta_\ell\in\arg\min_{\beta\in\Theta_\ell}Q_{\ell,n}(\beta),
\end{equation}
with a fixed deterministic measurable tie-breaking rule.  Lemma~\ref{lem:cox-root-consistency}
shows that, with probability tending to one, this minimizer is unique, interior,
solves the Cox score equation, and is $n^{-1/2}$-consistent.

The predictable-support Breslow estimator is
$\widehat\Lambda_{0,\ell}(t)= \int_0^t \frac{J_{\ell,n}(u)} {S_{\ell,n}^{(0)}(\widehat\beta_\ell,u)} \,\Ptr dN_\ell(u).$
For covariate value $x$, set
$d\bar A_{\ell,n}(u\mid x) =e^{\widehat\beta_\ell^\top x}\,d\widehat\Lambda_{0,\ell}(u)$, $dA_{\ell,0}(u\mid x) =e^{\beta_{\ell,0}^\top x}\,d\Lambda_{0,\ell,0}(u)$.
The raw empirical cumulative hazards $\bar A_n$ are made row-admissible by the
following deterministic stabilization.  For row zero, write
$x_{0k}(u,x)=\Delta\bar A_{0k,n}(u\mid x)$, $k=1,2$, and
$x_0=x_{01}+x_{02}$.  If $x_0>0$, set
$\Delta\widehat A_{0k,n}(u\mid x) =\frac{x_{0k}(u,x)}{x_0(u,x)}\{1-e^{-x_0(u,x)}\}, \qquad k=1,2,$
and set both row-zero increments equal to zero if $x_0=0$.  For row one, set
$\Delta\widehat A_{12,n}(u\mid x) =1-\exp\{-\Delta\bar A_{12,n}(u\mid x)\}.$
Between empirical jumps $\widehat A_n$ is constant.

For the Cox proofs, we use the following proof-local notation.  For $r=0,1,2,3$,
$S_{\ell,n}^{(r)}(\beta,u)=n^{-1}\sum_{i=1}^nY_{j(\ell),i}(u)e^{\beta^\top X_i}X_i^{\otimes r}$, $s_\ell^{(r)}(\beta,u)=E\{Y_{j(\ell)}(u)e^{\beta^\top X}X^{\otimes r}\}$.
Let
$j_{\ell,0}(u)=\1\{s_{\ell,0}(u)>0\}$, $\bar Z_{\ell,n}(\beta,u)= \frac{J_{\ell,n}(u)S_{\ell,n}^{(1)}(\beta,u)}{S_{\ell,n}^{(0)}(\beta,u)}$, $\bar z_\ell(\beta,u)= \frac{j_{\ell,0}(u)s_\ell^{(1)}(\beta,u)}{s_\ell^{(0)}(\beta,u)}$,
with ratios equal to zero when the denominator is zero, and
\[
V_{\ell,n}(\beta,u)=
\frac{J_{\ell,n}(u)S_{\ell,n}^{(2)}(\beta,u)}{S_{\ell,n}^{(0)}(\beta,u)}
-\bar Z_{\ell,n}(\beta,u)^{\otimes2},
\qquad
V_\ell(\beta,u)=
\frac{j_{\ell,0}(u)s_\ell^{(2)}(\beta,u)}{s_\ell^{(0)}(\beta,u)}
-\bar z_\ell(\beta,u)^{\otimes2}.
\]
The empirical and population scores are
$U_{\ell,n}(\beta)=n^{-1}\sum_{i=1}^n\int_0^{\tau} \{X_i-\bar Z_{\ell,n}(\beta,u)\}\,dN_{\ell,i}(u)$, $U_\ell(\beta)=E\int_0^{\tau}\{X-\bar z_\ell(\beta,u)\}\,dN_\ell(u)$,
and
\[
\dot U_{\ell,n}(\beta)=-\Ptr\int_0^{\tau}V_{\ell,n}(\beta,u)\,dN_\ell(u),
\qquad
\dot U_\ell(\beta)=-P\int_0^{\tau}V_\ell(\beta,u)\,dN_\ell(u).
\]
Finally, define
$\mathcal U_{\ell,n}:= \sup_{\beta\in\mathcal B_\ell}\|U_{\ell,n}(\beta)-U_\ell(\beta)\|$, $\mathcal D_{\ell,n}:= \sup_{\beta\in\mathcal B_\ell} \|\dot U_{\ell,n}(\beta)-\dot U_\ell(\beta)\|$,
where $\mathcal B_\ell$ denotes the local neighborhood introduced in the proofs.

\begin{definition}
\label{def:cox-martingale}
For a fixed transition $\ell=(j,k)$, define the subject-level Cox martingale
$M_{\ell,0,i}^{\beta}(t) :=N_{\ell,i}(t) -\int_0^tY_{j,i}(u)e^{\beta_{\ell,0}^\top X_i} d\Lambda_{0,\ell,0}(u)$
at the true parameter.
Its training average is
$\bar M_{\ell,0}^{\beta}(t) :=\Ptr M_{\ell,0}^{\beta}(t) =n^{-1}\sum_{i=1}^nM_{\ell,0,i}^{\beta}(t),$
and hence, in differential notation,
\begin{equation}
\label{eq:cox-average-martingale}
    d\bar M_{\ell,0}^{\beta}(u)
    :=\Ptr dN_\ell(u)-S_{\ell,n}^{(0)}(\beta_{\ell,0},u)
      d\Lambda_{0,\ell,0}(u).
\end{equation}
Here $\Ptr dN_\ell(u):=n^{-1}\sum_i dN_{\ell,i}(u)$.
\end{definition}

\begin{lemma}
\label{lem:cox-no-ties}
Under Assumptions~\ref{ass:time-reg}, \ref{ass:C-ignore}, \ref{ass:Markov-state}, and~\ref{ass:cox-main-primitives}, for each
transition $\ell\in\cJ$,
\[
\mathcal T_{\ell,n}:=
\max_{0<u\le\tau}n^{-1}\sum_{i=1}^n\Delta N_{\ell,i}(u)
\le n^{-1}
\qquad\textnormal{almost surely}.
\]
Moreover the baseline cumulative hazard is finite and continuous:
$\Lambda_{0,\ell,0}(\tau)<\infty$, and $\Delta\Lambda_{0,\ell,0}(u)=0$ for $0<u\le\tau$.
\end{lemma}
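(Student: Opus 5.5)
The plan mirrors the proof of Lemma~\ref{lem:aj-jump-regularity}, adapted to the Cox setting. We handle the tie bound and the baseline regularity separately. \emph{No ties.} For each subject define the latent transition times $S_{\ell,i}$ exactly as in Lemma~\ref{lem:aj-jump-regularity}. An observed jump of $N_{\ell,i}$ can occur only at $S_{\ell,i}$, and only when $S_{\ell,i}\le C_i$. Under Assumption~\ref{ass:time-reg}, Assumption~\ref{ass:C-ignore} (applied conditionally on $X$ and hence unconditionally) and Assumption~\ref{ass:Markov-state}, the law of $(T_i,R_i)$ on $[0,\tau_0)^2$ is absolutely continuous. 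Hence each $S_{\ell,i}$, restricted to $(0,\tau]$, has no atoms. For $i\ne k$, independence of subjects and atomlessness give
\[
\Pr\{S_{\ell,i}=S_{\ell,k}\le\tau\}=\int\Pr\{S_{\ell,k}=s\}\,dP_{S_{\ell,i}}(s)=0 .
\]
A finite union over pairs shows that, almost surely, at most one subject has a $\ell$-jump at any $u\in(0,\tau]$. Each $N_{\ell,i}$ jumps at most once. Therefore
\[
n^{-1}\sum_i\Delta N_{\ell,i}(u)\le n^{-1}
\]
for all $u$ simultaneously, almost surely. The restriction $u\le\tau<\tau_0$ matters here: the permitted point masses at $\tau_0$ for $R$ and $C$ lie outside the window.

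\emph{Continuity of $\Lambda_{0,\ell,0}$.} Take expectations in the Doob--Meyer decomposition implied by model \eqref{eq:cox-main-transition-model}:
\[
0=E\{\Delta N_{\ell}(u)\}=E\{Y_j(u)e^{\beta_{\ell,0}^\top X}\}\,\Delta\Lambda_{0,\ell,0}(u)=s_{\ell,0}(u)\,\Delta\Lambda_{0,\ell,0}(u).
\]
The first equality uses the no-atom property already established. So $\Delta\Lambda_{0,\ell,0}(u)=0$ whenever $s_{\ell,0}(u)>0$. On $\{s_{\ell,0}=0\}$, the first half of \eqref{eq:cox-main-source-support} forces zero mass, and in particular zero jumps.

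\emph{Finiteness of $\Lambda_{0,\ell,0}$.} By \ref{cxmain:bounded}, $e^{\beta_{\ell,0}^\top X}\le e^{R_\ell K}$, so $s_{\ell,0}(u)\le e^{R_\ell K}$. Combining this with the support condition,
\[
\Lambda_{0,\ell,0}(\tau)=\int I\{s_{\ell,0}>0\}\,d\Lambda_{0,\ell,0}\le e^{R_\ell K}\int\frac{I\{s_{\ell,0}>0\}}{s_{\ell,0}}\,d\Lambda_{0,\ell,0}<\infty,
\]
and the right-hand side is finite by the second half of \eqref{eq:cox-main-source-support}.

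\emph{Main obstacle.} The delicate step is justifying that $S_{\ell,i}$ has no atoms on $(0,\tau]$. Joint absolute continuity is assumed only on $[0,\tau_0)^3$, and $R$ may have mass at $\tau_0$. I would argue directly: $\{S_{01,i}=s\}\subset\{R_i=s\}$, while $\{S_{02,i}=s\}$ and $\{S_{12,i}=s\}$ are both contained in $\{T_i=s\}$. For $s<\tau_0$, each of these events is null by absolute continuity of the marginals on $[0,\tau_0)$. Censoring enters only through the stopping $\{S_{\ell,i}\le C_i\}$, which can delete jumps but cannot create new ones.
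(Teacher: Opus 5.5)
Your proposal is correct and follows essentially the same route as the paper. The no-ties step rests on atomless transition times and independent subjects; the paper simply defers to the proof of Lemma~\ref{lem:aj-jump-regularity}, argued conditionally on covariates. Continuity comes from $0=E\{\Delta N_\ell(u)\}=s_{\ell,0}(u)\Delta\Lambda_{0,\ell,0}(u)$ together with the source-support condition, and finiteness from $s_{\ell,0}\le e^{\|\beta_{\ell,0}\|K}$ plus the finite inverse-risk integral. Your explicit check that the permitted atoms at $\tau_0$ fall outside $(0,\tau]$ is a useful clarification the paper leaves implicit.
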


\begin{proof} [Proof of Lemma \ref{lem:cox-no-ties}]
The path-level no-ties argument in the proof of
Lemma~\ref{lem:aj-jump-regularity} applies under the same
standing continuity and independent-sampling assumptions, conditionally on
the covariates and then after taking expectations. Censoring can remove
but cannot create a transition jump. Thus at most one subject contributes
a transition-$\ell$ jump at a calendar time, and
$\mathcal T_{\ell,n}\le n^{-1}$ almost surely.

To establish continuity of the baseline hazard, fix $u\in(0,\tau]$. The time
$S_\ell$ has no conditional atom and hence no unconditional atom, giving
$\Pr\{\Delta N_\ell(u)=1\}=0$. The Cox model implies
$E\{\Delta N_\ell(u)\} =E\{Y_j(u)e^{\beta_{\ell,0}^{\top}X}\}\Delta\Lambda_{0,\ell,0}(u) =s_{\ell,0}(u)\Delta\Lambda_{0,\ell,0}(u).$
When $s_{\ell,0}(u)>0$, this gives $\Delta\Lambda_{0,\ell,0}(u)=0$. When $s_{\ell,0}(u)=0$, the source-support
convention \eqref{eq:cox-main-source-support} gives
$\Delta\Lambda_{0,\ell,0}(u) \le \int_{(0,\tau]}\mathbf 1\{s_{\ell,0}(v)=0\} d\Lambda_{0,\ell,0}(v)=0 .$
Thus $\Lambda_{0,\ell,0}$ is continuous. Bounded covariates also imply
$0\le s_{\ell,0}(u) =E\{Y_j(u)e^{\beta_{\ell,0}^{\top}X}\} \le e^{\|\beta_{\ell,0}\|K}=:C_\ell .$
The source-support convention and the finite inverse-risk condition now give
\[
\Lambda_{0,\ell,0}(\tau)
=\int_{(0,\tau]}\mathbf 1\{s_{\ell,0}(u)>0\}
  d\Lambda_{0,\ell,0}(u)
\le
C_\ell
\int_{(0,\tau]}
\frac{\mathbf 1\{s_{\ell,0}(u)>0\}}{s_{\ell,0}(u)}
  d\Lambda_{0,\ell,0}(u)<\infty .
\]
as required.
\end{proof}

\begin{lemma}
\label{lem:cox-risk-regularity}
Fix a transition $\ell=(j,k)$, and let
$s_\ell^{(r)}(\beta,u)=E\{Y_j(u)e^{\beta^\top X}X^{\otimes r}\}, \qquad r=0,1,2.$
Let $\mathcal B_\ell$ be a bounded neighborhood of $\beta_{\ell,0}$, and set
$R_\ell=\sup_{\beta\in\mathcal B_\ell}\|\beta\|$.    If, in addition,
$P\{Y_j(u_m)\ne Y_j(u)\}\to0$ whenever $u_m\to u$, then the risk functions are
also continuous in $u$ .
\begin{enumerate}[label=\textnormal{(\alph*)}]
\item If $\|X\|\le K$ a.s., then for $r=0,1,2$,
\begin{equation}
\label{eq:cox-risk-envelope}
    \sup_{(\beta,u)\in\mathcal B_\ell\times[0,\tau]}
    \|s_\ell^{(r)}(\beta,u)\|
    \le e^{R_\ell K}K^r <\infty,
\end{equation}
and for $r=0,1,2$,
\begin{equation}
\label{eq:cox-risk-lipschitz}
    \sup_{u\le\tau}
    \|s_\ell^{(r)}(\beta,u)-s_\ell^{(r)}(\beta',u)\|
    \le e^{R_\ell K}K^{r+1}\|\beta-\beta'\|,
    \qquad \beta,\beta'\in\mathcal B_\ell .
\end{equation}
Moreover, for every fixed $u$, $\beta\mapsto s_\ell^{(r)}(\beta,u)$ is
continuous on $\mathcal B_\ell$.  If the optional at-risk path continuity stated
above holds, then $(\beta,u)\mapsto s_\ell^{(r)}(\beta,u)$ is continuous on
$\mathcal B_\ell\times[0,\tau]$.

\item If instead the Cox dimension is fixed and $X$ is sub-Gaussian, then the same
conclusions hold with deterministic finite constants
\begin{equation}
\label{eq:cox-risk-subgaussian-envelope}
    C_{r,\ell}:=E\{e^{R_\ell\|X\|}\|X\|^r\}<\infty,
    \qquad
    L_{r,\ell}:=E\{e^{R_\ell\|X\|}\|X\|^{r+1}\}<\infty,
\end{equation}
in place of $e^{R_\ell K}K^r$ and $e^{R_\ell K}K^{r+1}$, respectively.
That is,
$
\sup_{(\beta,u)}\|s_\ell^{(r)}(\beta,u)\|\le C_{r,\ell} ,
$
and
$
\sup_u\|s_\ell^{(r)}(\beta,u)-s_\ell^{(r)}(\beta',u)\| \le L_{r,\ell}\|\beta-\beta'\| .
$
The fixed-$u$ $\beta$-continuity conclusion again holds, and the joint
$(\beta,u)$-continuity conclusion holds under the optional at-risk path
continuity stated above.
\end{enumerate}

\end{lemma}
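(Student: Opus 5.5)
The statement is elementary, so I would prove it directly from the definitions $s_\ell^{(r)}(\beta,u)=E\{Y_j(u)e^{\beta^\top X}X^{\otimes r}\}$, using dominated convergence and a mean-value bound. The plan is to handle part (a) first and then reuse the same argument for part (b), swapping the deterministic envelope for an integrable one.

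\emph{Envelope.} For \eqref{eq:cox-risk-envelope}, note that $0\le Y_j(u)\le1$, and that $|\beta^\top X|\le R_\ell K$ for $\beta\in\mathcal B_\ell$ by Cauchy--Schwarz. Hence $\|Y_j(u)e^{\beta^\top X}X^{\otimes r}\|\le e^{R_\ell K}K^r$ almost surely, since stacked-vector tensor norms satisfy $\|X^{\otimes r}\|=\|X\|^r$. Taking expectations and using $\|Ef\|\le E\|f\|$ gives the bound uniformly in $(\beta,u)$.

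\emph{Lipschitz bound.} For \eqref{eq:cox-risk-lipschitz}, apply the mean-value inequality to the scalar map $\beta\mapsto e^{\beta^\top X}$ along the segment from $\beta'$ to $\beta$. I would enlarge $\mathcal B_\ell$ to its convex hull if needed, noting that this leaves $R_\ell$ unchanged since the norm is convex. This gives
\[
|e^{\beta^\top X}-e^{\beta'^\top X}|\le e^{R_\ell K}\|X\|\,\|\beta-\beta'\|.
\]
Multiplying by $Y_j(u)\|X\|^r$ and taking expectations yields the constant $e^{R_\ell K}K^{r+1}$, uniformly in $u$. Fixed-$u$ continuity in $\beta$ follows immediately from this bound.

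\emph{Joint continuity.} Under the optional at-risk path condition, take $(\beta_m,u_m)\to(\beta,u)$ and split
\[
s^{(r)}_\ell(\beta_m,u_m)-s^{(r)}_\ell(\beta,u)=\{s^{(r)}_\ell(\beta_m,u_m)-s^{(r)}_\ell(\beta,u_m)\}+\{s^{(r)}_\ell(\beta,u_m)-s^{(r)}_\ell(\beta,u)\}.
\]
The first bracket is bounded by the Lipschitz estimate, which is uniform in $u$. The second is bounded by
\[
E\{|Y_j(u_m)-Y_j(u)|\,e^{R_\ell K}K^r\}\le e^{R_\ell K}K^r\,P\{Y_j(u_m)\ne Y_j(u)\}\to0,
\]
because $Y_j$ is indicator-valued.

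\emph{Part (b).} The same chain of inequalities holds pointwise with the random envelopes $e^{R_\ell\|X\|}\|X\|^r$ and $e^{R_\ell\|X\|}\|X\|^{r+1}$ in place of the constants. The only new ingredient, and the step I expect to need most care, is showing that $C_{r,\ell}$ and $L_{r,\ell}$ are finite. In fixed dimension, sub-Gaussianity gives $P(\|X\|>t)\le 2d\,e^{-ct^2}$, so all exponential moments $E e^{s\|X\|}$ are finite. Then $\|X\|^{q}\le C_q e^{\|X\|}$ gives $E\{e^{R_\ell\|X\|}\|X\|^q\}\le C_q\,E e^{(R_\ell+1)\|X\|}<\infty$.

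For joint continuity in part (b), the second bracket is no longer controlled by a constant times a probability. Instead, dominated convergence applies: the integrand $|Y_j(u_m)-Y_j(u)|\,e^{R_\ell\|X\|}\|X\|^r$ is dominated by the integrable envelope and tends to zero in probability. Alternatively, Cauchy--Schwarz gives the bound $\{E e^{2R_\ell\|X\|}\|X\|^{2r}\}^{1/2}P\{Y_j(u_m)\ne Y_j(u)\}^{1/2}\to0$. The first bracket is handled exactly as in part (a), using the constant $L_{r,\ell}$.
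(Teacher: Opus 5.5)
Your proposal is correct and follows essentially the same route as the paper's proof: a direct envelope bound, the mean-value inequality for the Lipschitz estimate, the same two-term split for joint continuity, and finiteness of the exponential moments in the sub-Gaussian case. The differences are cosmetic. You obtain fixed-$u$ continuity directly from the Lipschitz bound rather than by dominated convergence. You also verify $C_{r,\ell},L_{r,\ell}<\infty$ via tail bounds and $\|X\|^q\le C_q e^{\|X\|}$, whereas the paper uses $E\exp\{\eta\|X\|^2\}<\infty$ together with $R_\ell\|X\|\le\eta\|X\|^2+R_\ell^2/(4\eta)$.
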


\begin{proof} [Proof of Lemma \ref{lem:cox-risk-regularity}]
Suppose first that $\|X\|\le K$. The bounds $0\le Y_j(u)\le1$ and $\|\beta\|\le R_\ell$ on $\mathcal B_\ell$
imply
$\|Y_j(u)e^{\beta^\top X}X^{\otimes r}\| \le e^{R_\ell K}K^r.$
Taking expectations proves \eqref{eq:cox-risk-envelope}.
For $\beta,\beta'\in\mathcal B_\ell$, the mean-value theorem gives
$|e^{\beta^\top X}-e^{\beta'^\top X}| \le e^{R_\ell K}|X^\top(\beta-\beta')| \le e^{R_\ell K}K\|\beta-\beta'\|.$
Multiplication by $Y_j(u)\|X\|^r$, followed by expectation and the bound $\|X\|\le K$,
proves \eqref{eq:cox-risk-lipschitz}.

At fixed $u$, convergence $\beta_m\to\beta$ implies
$Y_j(u)e^{\beta_m^\top X}X^{\otimes r} \to Y_j(u)e^{\beta^\top X}X^{\otimes r}$
almost surely. Moreover, $e^{R_\ell K}K^r$ permits dominated convergence, giving
continuity at fixed $u$ in $\beta$. Under the optional at-risk path
continuity condition, let $(\beta_m,u_m)\to(\beta,u)$. Decomposing the difference into a term
containing $Y_j(u_m)-Y_j(u)$ and a term containing $e^{\beta_m^\top X}-e^{\beta^\top X}$ gives joint continuity
in $(\beta,u)$ by the same argument.

In the sub-Gaussian case, fixed dimension implies a Gaussian tail for
$\|X\|$. For sufficiently small $\eta>0$, we therefore have $E\exp\{\eta\|X\|^2\}<\infty$.
The inequality $R_\ell\|X\|\le \eta\|X\|^2+R_\ell^2/(4\eta)$ implies $E\{e^{R_\ell\|X\|}\|X\|^m\}<\infty$ for every fixed integer $m$,
so the constants in \eqref{eq:cox-risk-subgaussian-envelope} are finite.
The  bounds follow respectively from
$\|Y_j(u)e^{\beta^\top X}X^{\otimes r}\| \le e^{R_\ell\|X\|}\|X\|^r$
and
$|e^{\beta^\top X}-e^{\beta'^\top X}|\, \|X\|^r \le e^{R_\ell\|X\|}\|X\|^{r+1}\|\beta-\beta'\|.$
At fixed $u$, continuity in $\beta$ follows by dominated convergence
with  $e^{R_\ell\|X\|}\|X\|^r$. Under the optional at-risk path continuity condition,
the preceding decomposition also gives joint continuity in $(\beta,u)$:
its first term converges to zero in probability and is uniformly integrable.
\end{proof}

\begin{lemma}
\label{lem:cox-population-score}
Fix a transition $\ell=(j,k)$.  Suppose the correct Cox model
\eqref{eq:cox-main-transition-model} holds, the baseline cumulative hazard is finite, and the
risk functions have the boundedness, measurability, and fixed-$u$
$\beta$-continuity properties proved in
Lemma~\ref{lem:cox-risk-regularity}.  Suppose also that the minimal information
condition Assumption~\ref{ass:cox-main-primitives}\ref{cxmain:information} holds.  Then:
\begin{enumerate}[label=\textnormal{(\roman*)}]
\item the population score is centered at the truth,
$U_\ell(\beta_{\ell,0})=0;$
\item the population score is differentiable on a possibly smaller open convex
neighborhood $\mathcal B_\ell$ of $\beta_{\ell,0}$, and
\begin{equation}
\label{eq:cox-population-score-derivative}
    \dot U_\ell(\beta)
    =-
    \int_{(0,\tau]}
    V_\ell(\beta,u)\,s_{\ell,0}(u)
    \,d\Lambda_{0,\ell,0}(u);
\end{equation}
\item consequently,
$-\dot U_\ell(\beta_{\ell,0})=I_\ell,$
where $I_\ell$ is the matrix in Assumption~\ref{ass:cox-main-primitives}\ref{cxmain:information};
\item  and
\begin{equation}
\label{eq:cox-local-information-bound}
    \sup_{\beta\in\mathcal B_\ell}
    \|\dot U_\ell(\beta)-\dot U_\ell(\beta_{\ell,0})\|
    \le \iota_\ell/4 .
\end{equation}
\end{enumerate}
\end{lemma}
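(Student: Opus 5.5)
For part (i), I will use the Doob--Meyer decomposition under the correctly specified model \eqref{eq:cox-main-transition-model}. It gives $E\{dN_\ell(u)\mid X,\mathcal F_{u-}\}=Y_j(u)e^{\beta_{\ell,0}^\top X}d\Lambda_{0,\ell,0}(u)$. Substituting into the definition of $U_\ell$ and using Fubini, which is justified by boundedness of $X$ and $\Lambda_{0,\ell,0}(\tau)<\infty$ from Lemma~\ref{lem:cox-no-ties}, yields
\[
U_\ell(\beta)=\int_{(0,\tau]}\bigl\{s^{(1)}_\ell(\beta_{\ell,0},u)-\bar z_\ell(\beta,u)s_{\ell,0}(u)\bigr\}\,d\Lambda_{0,\ell,0}(u).
\]
At $\beta=\beta_{\ell,0}$ the integrand equals $s^{(1)}_\ell(\beta_{\ell,0},u)\{1-j_{\ell,0}(u)\}$. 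This term vanishes wherever $s_{\ell,0}(u)>0$. On the set $\{s_{\ell,0}=0\}$, the source-support condition \eqref{eq:cox-main-source-support} says that $d\Lambda_{0,\ell,0}$ has no mass there, so the integral is zero. This proves (i).

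For part (ii), I will use the same representation. On $\{s_{\ell,0}>0\}$ we have $s_\ell^{(0)}(\beta,u)>0$ for every $\beta$, because the exponential weights are positive. So $\bar z_\ell(\beta,u)=s^{(1)}_\ell/s^{(0)}_\ell$ is a smooth ratio there.

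Its $\beta$-derivative is $s^{(2)}_\ell/s^{(0)}_\ell-\bar z_\ell^{\otimes 2}=V_\ell(\beta,u)$. This follows by differentiating under the expectation, with domination by $e^{R_\ell K}K^{r+1}$ from Lemma~\ref{lem:cox-risk-regularity}. The derivative is uniformly bounded by $2K^2$, since $V_\ell$ is the covariance of $X$ under a tilted law supported in the ball of radius $K$.

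Dominated differentiation inside the $d\Lambda_{0,\ell,0}$ integral, with the integrable envelope $2K^2 s_{\ell,0}(u)$, then gives \eqref{eq:cox-population-score-derivative} on any bounded open convex neighborhood. Part (iii) is immediate by evaluating at $\beta_{\ell,0}$ and comparing with \ref{cxmain:information}. I take $\iota_\ell=\lambda_{\min}(I_\ell)>0$.

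For part (iv), I need equicontinuity of $\beta\mapsto V_\ell(\beta,u)$ integrated against $s_{\ell,0}\,d\Lambda_{0,\ell,0}$. Lemma~\ref{lem:cox-risk-regularity} gives Lipschitz bounds on $s^{(r)}_\ell$ in $\beta$, uniformly in $u$. The difficulty is that dividing by $s^{(0)}_\ell(\beta,u)$ could destroy this when $s_{\ell,0}(u)$ is small, because no uniform lower bound on the risk is assumed. I expect this to be the main obstacle.

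I will avoid the division by using the tilted-covariance form directly. Write $V_\ell(\beta,u)$ as $\mathrm{Cov}_{Q_{\beta,u}}(X)$ with $dQ_{\beta,u}\propto Y_j(u)e^{\beta^\top X}dP$. For $\|\beta-\beta'\|\le\rho$, the likelihood ratio $dQ_{\beta',u}/dQ_{\beta,u}$ lies in $[e^{-2K\rho},e^{2K\rho}]$ regardless of $u$. Hence the difference of second moments and means under the two laws is at most $C K^2(e^{4K\rho}-1)$, uniformly in $u$, including for $u$ with tiny $s_{\ell,0}(u)$.

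Integrating this bound against the finite measure $s_{\ell,0}\,d\Lambda_{0,\ell,0}$ gives $\|\dot U_\ell(\beta)-\dot U_\ell(\beta_{\ell,0})\|\le CK^2(e^{4K\rho}-1)\,C_\ell\Lambda_{0,\ell,0}(\tau)$. I then choose $\rho$ small enough that this is at most $\iota_\ell/4$. The neighborhood $\mathcal B_\ell$ is the open ball of radius $\rho$ intersected with the original neighborhood, which keeps it open and convex.
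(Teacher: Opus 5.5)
Your proposal is correct. Parts (i)--(iii) match the paper's proof step for step:
\begin{itemize}
\item the compensator/Fubini representation of $U_\ell(\beta)$;
\item the source-support condition disposing of $\{s_{\ell,0}=0\}$;
\item the risk-set covariance $V_\ell$ as the $\beta$-derivative of $\bar z_\ell$, with the uniform bound $2K^2$;
\item dominated differentiation against $s_{\ell,0}\,d\Lambda_{0,\ell,0}$.
\end{itemize}

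Your route differs in (iv). The paper argues qualitatively. Lemma~\ref{lem:cox-risk-regularity} gives continuity of $\beta\mapsto V_\ell(\beta,u)$ at each fixed $u$ with $s_{\ell,0}(u)>0$. Combined with $\|V_\ell\|\le 2K^2$ and $\int s_{\ell,0}\,d\Lambda_{0,\ell,0}<\infty$, dominated convergence yields continuity of $\dot U_\ell$ at $\beta_{\ell,0}$. Hence some radius $\delta_\ell$ gives the $\iota_\ell/4$ bound.

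That argument never needs a modulus of continuity that is uniform in $u$. So the small-risk-set difficulty you identify as the main obstacle does not arise on the paper's route: pointwise continuity plus a uniform envelope suffices.

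Your tilted-law argument instead uses that $dQ_{\beta',u}/dQ_{\beta,u}\in[e^{-2K\rho},e^{2K\rho}]$ independently of $u$. This yields a uniform-in-$u$ modulus for $V_\ell$. It costs a little more work but buys three things:
\begin{itemize}
\item an explicit radius in terms of $K$, $\|\beta_{\ell,0}\|$, $\Lambda_{0,\ell,0}(\tau)$ and $\iota_\ell$;
\item a quantitative local modulus for $\dot U_\ell$, not just continuity at a point;
\item no reliance on the fixed-$u$ continuity conclusion of Lemma~\ref{lem:cox-risk-regularity}.
\end{itemize}
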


\begin{proof} [Proof of Lemma \ref{lem:cox-population-score}]
Write $\beta_0=\beta_{\ell,0}$, $\Lambda_0=\Lambda_{0,\ell,0}$, $s^{(r)}=s_\ell^{(r)}$, $s_0=s_{\ell,0}$, $\bar x=\bar z_\ell$, and $V=V_\ell$.
Correct specification of the Cox model gives
$E\{dN_\ell(u)\mid X,\mathcal F_{u-}\} =Y_j(u)e^{\beta_0^\top X}\,d\Lambda_0(u).$
Fubini's theorem and finiteness of the hazard mass yield
\begin{align}
U_\ell(\beta)
&=E\int_0^{\tau}\{X-\bar x(\beta,u)\}\,dN_\ell(u) \notag \\
&=\int_{(0,\tau]}
   E\big[\{X-\bar x(\beta,u)\}Y_j(u)e^{\beta_0^\top X}\big]
   \,d\Lambda_0(u) \notag \\
&=\int_{(0,\tau]}
   \{s^{(1)}(\beta_0,u)-\bar x(\beta,u)s^{(0)}(\beta_0,u)\}
   \,d\Lambda_0(u).                         \label{eq:cox-population-score}
\end{align}
On $s_0(u)>0$, $\bar x(\beta_0,u)=s^{(1)}(\beta_0,u)/s^{(0)}(\beta_0,u)$; on $s_0(u)=0$, the support convention
\eqref{eq:cox-main-source-support} excludes $d\Lambda_0$-mass.
It follows from \eqref{eq:cox-population-score} that $U_\ell(\beta_0)=0$.

For fixed $u$, differentiation of $\bar x(\beta,u)=s^{(1)}(\beta,u)/s^{(0)}(\beta,u)$ on the source support gives
the risk-set covariance
$\partial_\beta \bar x(\beta,u) =\frac{s^{(2)}(\beta,u)}{s^{(0)}(\beta,u)} -\bar x(\beta,u)^{\otimes2} =V(\beta,u),$
with both sides set to zero off the support. Since $\|X\|\le K$ in the main
Cox theorem, $\|V(\beta,u)\|\le 2K^2$. Dominated convergence therefore justifies
differentiation under the integral in \eqref{eq:cox-population-score},
proving \eqref{eq:cox-population-score-derivative}. Evaluation at $\beta_0$ gives
$-\dot U_\ell(\beta_0) =\int_{(0,\tau]}V(\beta_0,u)s_0(u)\,d\Lambda_0(u) =I_\ell,$
as required.

For local invertibility, Lemma~\ref{lem:cox-risk-regularity} gives
boundedness and continuity at fixed $u$ in $\beta$ of $s^{(r)}(\beta,u)$ for
$r=0,1,2$. On $s_0(u)>0$, the map
$V(\beta,u) =\frac{s^{(2)}(\beta,u)}{s^{(0)}(\beta,u)} -\bar x(\beta,u)^{\otimes2}$
is continuous in $\beta$ at fixed $u$. Off this support, the
integrand in \eqref{eq:cox-population-score-derivative} is zero, and
\eqref{eq:cox-main-source-support} excludes $d\Lambda_0$-mass.
The bound $\|X\|\le K$ implies $\|V(\beta,u)\|\le2K^2$ uniformly in $(\beta,u)$.
For $\beta_m\to\beta_0$, dominated convergence thus gives
\[
\begin{aligned}
\|\dot U_\ell(\beta_m)-\dot U_\ell(\beta_0)\|
&\le
\int_{(0,\tau]}
\|V(\beta_m,u)-V(\beta_0,u)\|s_0(u)\,d\Lambda_0(u)\\
&\longrightarrow0,
\end{aligned}
\]
because $s_0(u)\le E e^{\beta_0^\top X}\le e^{\|\beta_0\|K}$ and $\Lambda_0(\tau)<\infty$. There is consequently $\delta_\ell>0$ such that
$\sup_{\|\beta-\beta_0\|\le\delta_\ell} \|\dot U_\ell(\beta)-\dot U_\ell(\beta_0)\| \le \iota_\ell/4 .$
Intersecting the original neighborhood with $\mathcal B_\ell\cap\{\beta:\|\beta-\beta_0\|<\delta_\ell\}$ and, if necessary,
taking a smaller open convex ball about $\beta_0$ proves
\eqref{eq:cox-local-information-bound}. The required local invertibility
therefore follows from continuity of the population derivative and the
information lower bound.
\end{proof}

\begin{lemma}
\label{lem:cox-denominator-bounds}
Let $W_i(u)=Y_{j,i}(u)e^{\beta_0^\top X_i}$,
$S_n(u)=n^{-1}\sum_iW_i(u)$, $s(u)=E\{W_i(u)\}$, and
$J_n(u)=\mathbf 1\{S_n(u)>0\}$.  Suppose $\|X\|\le K$ and
$\|\beta_0\|\le R$.  Then there is a constant $C$, depending only on
$(K,R)$, such that for all $u$ and all $n$,
\begin{align}
\label{eq:cox-denominator-zero}
s(u)\Pr\{S_n(u)=0\}&\le Cn^{-1}, \\
\label{eq:cox-denominator-ratio}
E\left[
S_n(u)\left\{\frac{s(u)J_n(u)}{S_n(u)}-1\right\}^2
\right]&\le Cn^{-1}, \\
\label{eq:cox-denominator-inverse}
E\left[\frac{s(u)J_n(u)}{S_n(u)}\right]&\le C, \\
\label{eq:cox-denominator-inverse-risk}
E\left[\frac{J_n(u)}{S_n(u)}\right]&\le
C\,\frac{\mathbf 1\{s(u)>0\}}{s(u)}, \\
\label{eq:cox-denominator-weighted-ratio}
E\left[
s(u)\left\{\frac{s(u)J_n(u)}{S_n(u)}-1\right\}^2
\right]&\le Cn^{-1} .
\end{align}
Moreover, for each integer $m\ge1$ there is a constant $C_m$, depending
only on $(m,K,R)$, such that
\begin{align}
\label{eq:cox-denominator-moment-ratio}
E\left[\left\{\frac{s(u)J_n(u)}{S_n(u)}\right\}^m\right]&\le C_m, \\
\label{eq:cox-denominator-moment-weighted}
E\left[\frac{s(u)^mJ_n(u)}{S_n(u)^{m-1}}\right]&\le C_m s(u), \\
\label{eq:cox-denominator-moment-inverse}
E\left[\frac{s(u)^mJ_n(u)}{S_n(u)^{m}}\right]&\le C_m\,\mathbf 1\{s(u)>0\}.
\end{align}
\end{lemma}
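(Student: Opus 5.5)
The plan is to reduce everything to moments of a Binomial-type variable. At fixed $u$, the summands $W_i(u)=Y_{j,i}(u)e^{\beta_0^\top X_i}$ are i.i.d. and take values in $[0,B]$, with $B=e^{RK}$; write $\pi(u)=P\{Y_j(u)=1\}$. Since $e^{\beta_0^\top X}\ge e^{-RK}=:b>0$, we have $b\,\pi\le s\le B\,\pi$, and also $S_n\ge b\,K_n/n$ and $S_n\le B\,K_n/n$, where $K_n=\sum_iY_{j,i}(u)\sim\mathrm{Bin}(n,\pi)$. Moreover $\{S_n=0\}=\{K_n=0\}$. So each displayed quantity is comparable, up to constants depending on $(K,R)$, to the corresponding quantity for $K_n/n$ with $q=\pi$. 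After that, the binomial arguments already used in the proofs of Theorem~\ref{thm:aj-rmst-linearization} and Lemma~\ref{lem:weighted-na-linearization} apply.

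In order:
\begin{itemize}
\item For \eqref{eq:cox-denominator-zero}: $s\Pr\{S_n=0\}\le B\pi(1-\pi)^n\le B\pi e^{-n\pi}\le B/(en)$.
\item For \eqref{eq:cox-denominator-inverse} and \eqref{eq:cox-denominator-inverse-risk}: $J_n/S_n\le (n/b)\,\1(K_n>0)/K_n$. The bound \eqref{eq:binomial-inverse-bound} gives $E[\1(K_n>0)\,n/K_n]\le C/\pi\le CB/s$. Multiplying by $s$ gives \eqref{eq:cox-denominator-inverse}.
\item For the moment bounds \eqref{eq:cox-denominator-moment-ratio}, \eqref{eq:cox-denominator-moment-inverse} and \eqref{eq:cox-denominator-moment-weighted}: all reduce to showing $E[\1(K>0)(n\pi/K)^m]\le C_m$, since $s^m/S_n^m\le (B/b)^m(n\pi/K_n)^m$ on $\{K_n>0\}$. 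To prove this, split on $\{K\ge n\pi/2\}$, where the ratio is at most $2^m$. On the complement, use $K\ge1$, so the ratio is at most $(n\pi)^m$, and combine with the Chernoff bound $\Pr\{K<n\pi/2\}\le e^{-n\pi/8}$. Since $\sup_x x^me^{-x/8}<\infty$, the bound follows. For \eqref{eq:cox-denominator-moment-weighted}, write $s^mJ_n/S_n^{m-1}=s\cdot(s/S_n)^{m-1}J_n$ and apply the $(m-1)$ case; for $m=1$ it is trivial.
\item For the ratio bounds \eqref{eq:cox-denominator-ratio} and \eqref{eq:cox-denominator-weighted-ratio}: on $\{S_n>0\}$, $S_n(sJ_n/S_n-1)^2=(s-S_n)^2/S_n$. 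On $\{S_n=0\}$ the term in \eqref{eq:cox-denominator-ratio} is $0$, and the term in \eqref{eq:cox-denominator-weighted-ratio} is $s\Pr\{S_n=0\}$, which is covered by the first item. It remains to bound $E[(s-S_n)^2\1(S_n>0)/S_n]$ and $E[s(s-S_n)^2\1(S_n>0)/S_n^2]$ by $C/n$.
\end{itemize}

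The main obstacle is the last item, because $S_n$ is not binomial, so the Lemma~\ref{lem:weighted-na-linearization} computation does not apply verbatim. I would use Cauchy--Schwarz or a direct split, together with $\Var(S_n)\le E W^2/n\le Bs/n$:
\begin{itemize}
\item On $\{K_n\ge n\pi/2\}$, $S_n\ge b\pi/2\ge (b/2B)\,s$. Hence $(s-S_n)^2/S_n\le C(s-S_n)^2/s$, whose expectation is at most $C\Var(S_n)/s\le C/n$. The weighted version is similar, using $s/S_n^2\le C/s$.
\item On $\{1\le K_n<n\pi/2\}$, which requires $n\pi>2$, bound $(s-S_n)^2/S_n\le 2s^2/S_n+2S_n\le C(n\pi^2+\pi)$. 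Its probability is at most $e^{-n\pi/8}$, so the contribution is at most $Cn^{-1}\sup_x(x^2+x)e^{-x/8}$. For the weighted version, $s(s-S_n)^2/S_n^2\le C\big(s(n\pi)^2+s\big)$, and the same exponential argument gives a bound of order $C(n\pi)^3e^{-n\pi/8}/n$.
\item When $n\pi\le2$, use the crude bounds $(s-S_n)^2/S_n\le 2s^2n/b+2S_n$ and $E S_n=s\le 2B/n$. These make the contribution $O(n^{-1})$, exactly as in the $\mu<2$ case of Lemma~\ref{lem:weighted-na-linearization}. The weighted version uses the moment bound from the third item.
\end{itemize}

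All constants depend only on $(K,R)$ and, for the moment bounds, on $m$, which gives the uniformity in $u$ and $n$.
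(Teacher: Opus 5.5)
Your proposal is correct and follows essentially the same route as the paper. Both proofs reduce to $K_n\sim\mathrm{Bin}(n,q)$ through the comparisons $S_n\asymp K_n/n$ and $s\asymp q$. Both then split into $\{K_n\ge nq/2\}$, the Chernoff-controlled event $\{0<K_n<nq/2\}$ with $nq\ge 2$, and the small-$nq$ regime, and both derive the moment bounds from $E\{(nq/K_n)^m\mathbf 1(K_n>0)\}\le C_m$. The only differences are cosmetic, such as using the $m=2$ moment bound instead of an elementary inequality for the weighted ratio when $nq\le 2$.
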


\begin{proof} [Proof of Lemma \ref{lem:cox-denominator-bounds}]
Let $K_n(u)=\sum_iY_{j,i}(u)$ and $q(u)=P\{Y_j(u)=1\}$.
Bounded covariates and coefficients give constants $0<c<C<\infty$,
depending only on $(K,R)$, such that
\begin{equation}
\label{eq:cox-risk-count-comparison}
cK_n(u)/n\le S_n(u)\le CK_n(u)/n,
\qquad cq(u)\le s(u)\le Cq(u).
\end{equation}
Hence $J_n=\mathbf1\{K_n>0\}$ and $K_n\sim\mathrm{Binomial}(n,q)$.
The case $q=0$ is immediate under the zero-denominator convention.
For $q>0$, $q(1-q)^n\le(en)^{-1}$ proves
\eqref{eq:cox-denominator-zero}.

Suppress $u$ and use the three regimes
$A_1=\{K_n\ge nq/2\}$,
$A_2=\{0<K_n<nq/2,\ nq\ge2\}$, and
$A_3=\{K_n>0,\ nq<2\}$.
The last regime is used when $nq<2$, and the first two when $nq\ge2$.
On $A_1$, \eqref{eq:cox-risk-count-comparison} and
$E(S_n-s)^2\le Cq/n$ give
\[
E\left\{\frac{(S_n-s)^2}{S_n}\mathbf1_{A_1}\right\}
+E\left\{s\frac{(S_n-s)^2}{S_n^2}\mathbf1_{A_1}\right\}
\le Cn^{-1}.
\]
On $A_2$, the two integrands are at most $Cnq^2$ and $Cn^2q^3$,
respectively, while Chernoff's inequality gives $P(A_2)\le e^{-cnq}$.
Their expectations are therefore at most
$Cn^{-1}(nq)^2e^{-cnq}$ and $Cn^{-1}(nq)^3e^{-cnq}$, both bounded by
$Cn^{-1}$. On $A_3$, the elementary inequalities
$(a-b)^2/a\le2a+2b^2/a$ and $(a-b)^2/a^2\le2+2b^2/a^2$
give, respectively,
$\frac{(S_n-s)^2}{S_n}\le C\{K_n/n+nq^2\}$, $s\frac{(S_n-s)^2}{S_n^2}\le C\{q+n^2q^3\}\le Cq$.
Taking expectations and using $nq<2$ bounds both contributions by $Cn^{-1}$.
The first integrand is zero when $S_n=0$; the second ratio expression in
\eqref{eq:cox-denominator-weighted-ratio} then contributes
$sP(S_n=0)\le Cn^{-1}$. This proves
\eqref{eq:cox-denominator-ratio} and \eqref{eq:cox-denominator-weighted-ratio}.

The inverse and higher moments use the same regimes. For a fixed integer
$m\ge1$, on $A_1$ the ratio $nq/K_n$ is at most two. On $A_2$, its
$m$th moment is at most $(nq)^me^{-cnq}$, up to a fixed constant.
On $A_3$, $nq/K_n<2$. Consequently,
\begin{equation}
\label{eq:binomial-ratio-moments}
E\left\{(nq/K_n)^m\mathbf1(K_n>0)\right\}\le C_m,
\qquad
E\left\{\frac{q^m\mathbf1(K_n>0)}{(K_n/n)^{m-1}}\right\}\le C_mq.
\end{equation}
For the second inequality, use
$q^m/(K_n/n)^{m-1}=q(nq/K_n)^{m-1}$ on $K_n>0$;
the same reasoning also gives
$E\{q^m\mathbf1(K_n>0)/(K_n/n)^m\}\le C_m\mathbf1(q>0)$.
Transferring these bounds through \eqref{eq:cox-risk-count-comparison} proves
\eqref{eq:cox-denominator-moment-ratio}--\eqref{eq:cox-denominator-moment-inverse}.
The case $m=1$ yields \eqref{eq:cox-denominator-inverse}; division by
$s$ on its positive support yields \eqref{eq:cox-denominator-inverse-risk}.
\end{proof}

\begin{lemma}
\label{lem:cox-self-normalized-ratios}
Fix a transition $\ell=(j,k)$.  Work under Assumption~\ref{ass:cox-main-primitives}.
Let $\mathcal B_\ell$ be a fixed compact neighborhood of $\beta_{\ell,0}$.
Let $g_\beta(u,x)$ be a scalar or fixed-dimensional vector family satisfying
$\sup_{\beta,u,x}\|g_\beta(u,x)\|\le C_g$, $\sup_{u,x}\|g_\beta(u,x)-g_{\beta'}(u,x)\| \le L_g\|\beta-\beta'\|$,
and assume the corresponding class is bounded Euclidean.  Define
$G_n(\beta,u)=\Ptr\{Y_j(u)e^{\beta^\top X}g_\beta(u,X)\}$, $G(\beta,u)=P\{Y_j(u)e^{\beta^\top X}g_\beta(u,X)\}$,
$\bar g_n(\beta,u)= \frac{J_{\ell,n}(u)G_n(\beta,u)}{S_{\ell,n}^{(0)}(\beta,u)}$, $\bar g(\beta,u)= \frac{\mathbf 1\{s_\ell^{(0)}(\beta,u)>0\}G(\beta,u)} {s_\ell^{(0)}(\beta,u)}$,
with ratios equal to zero when their denominators are zero.  Then, for every fixed
$\beta\in\mathcal B_\ell$,
\begin{align}
\label{eq:selfnorm-compensator-rate}
\int_0^{\tau}
\|\bar g_n(\beta,u)-\bar g(\beta,u)\|
S_{\ell,n}^{(0)}(\beta_{\ell,0},u)d\Lambda_{0,\ell,0}(u)
&=O_p(n^{-1/2}), \\
\label{eq:selfnorm-counting-rate}
\Ptr\int_0^{\tau}
\|\bar g_n(\beta,u)-\bar g(\beta,u)\|dN_\ell(u)
&=O_p(n^{-1/2}).
\end{align}
Moreover, the corresponding suprema over $\beta\in\mathcal B_\ell$ are
$o_p(1)$.  In particular, the fixed-$\beta$ statement applies at
$\beta=\beta_{\ell,0}$, and the uniform $o_p(1)$ statement applies with
$g_\beta(u,x)=z_r$ and with $g_\beta(u,x)=z_rz_s$.
\end{lemma}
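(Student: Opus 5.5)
The plan is to split the ratio error at each fixed $\beta$ into a denominator perturbation and a numerator perturbation. Both denominators $S^{(0)}_{\ell,n}$ and $s^{(0)}_\ell$ are comparable to the risk fraction $K_n(u)/n$ and $q(u)$, by \eqref{eq:cox-risk-count-comparison}. On $\{J_{\ell,n}=1\}$ write
\[
\bar g_n-\bar g=\frac{G_n-G}{S^{(0)}_{\ell,n}}-\bar g\,\frac{S^{(0)}_{\ell,n}-s^{(0)}_\ell}{S^{(0)}_{\ell,n}}.
\]
On $\{J_{\ell,n}=0\}$ the error is $-\bar g$, with $\|\bar g\|\le C_g$. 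Both numerators $G_n-G$ and $S^{(0)}_{\ell,n}-s^{(0)}_\ell$ are centered i.i.d.\ averages of variables bounded by $C\,Y_j(u)$. At fixed $u$ their second moments are therefore at most $Cq(u)/n$.

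For \eqref{eq:selfnorm-compensator-rate}, multiply by $S^{(0)}_{\ell,n}(\beta_{\ell,0},u)$, which is comparable to $S^{(0)}_{\ell,n}(\beta,u)$ up to constants depending on $(K,R_\ell)$. The ratio $S^{(0)}_{\ell,n}(\beta_{\ell,0})/S^{(0)}_{\ell,n}(\beta)$ is thus bounded on $J_{\ell,n}=1$. The integrand is then bounded by $C\{\|G_n-G\|+|S^{(0)}_{\ell,n}-s^{(0)}_\ell|\}$ plus a zero-risk term. Taking expectations and using Cauchy--Schwarz gives a pointwise bound $C\sqrt{q(u)/n}$ for the main term. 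The zero-risk term is $C_g s(u)\Pr\{S_n(u)=0\}$ after expectation, which \eqref{eq:cox-denominator-zero} bounds by $Cn^{-1}$.

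The main obstacle is integrability against $d\Lambda_{0,\ell,0}$ without a lower bound on $q$. I would write $\sqrt{q/n}=n^{-1/2}q\cdot q^{-1/2}$ and use $q^{1/2}\le 1$, which gives the crude bound $n^{-1/2}\Lambda_{0,\ell,0}(\tau)$. This bound is finite by Lemma~\ref{lem:cox-no-ties}, so Markov's inequality yields $O_p(n^{-1/2})$. For \eqref{eq:selfnorm-counting-rate}, I would decompose $\Ptr dN_\ell=d\bar M^\beta_{\ell,0}+S^{(0)}_{\ell,n}(\beta_{\ell,0})d\Lambda_{0,\ell,0}$. The compensator part is exactly \eqref{eq:selfnorm-compensator-rate}. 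The integrand $\|\bar g_n-\bar g\|$ is predictable, since it depends on the risk sets at $u-$, and bounded by $2C_g$, with the $J_n=0$ convention keeping it bounded. Its integral against $d\bar M^\beta_{\ell,0}$ is then a martingale with bracket at most $n^{-1}\int 4C_g^2S^{(0)}_{\ell,n}d\Lambda_{0,\ell,0}=O_p(n^{-1})$. Lenglart's inequality gives $O_p(n^{-1/2})$ for that term.

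For the uniform $o_p(1)$ statement, I would use the bounded Euclidean assumption on $\{Y_j(u)e^{\beta^\top x}g_\beta(u,x)\}$ and on $\{Y_j(u)e^{\beta^\top x}\}$, indexed by $(\beta,u)$; these classes are Glivenko--Cantelli. Hence $\sup_{\beta,u}\|G_n-G\|$ and $\sup_{\beta,u}|S^{(0)}_{\ell,n}-s^{(0)}_\ell|$ are $o_p(1)$. Where $s^{(0)}_\ell\ge\varepsilon$, uniform convergence of the ratio follows directly. On $\{s^{(0)}_\ell<\varepsilon\}$, the integrand times $S^{(0)}_{\ell,n}(\beta_{\ell,0})$ is bounded by $2C_gS^{(0)}_{\ell,n}(\beta_{\ell,0})$, uniformly in $\beta$. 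That term is $O_p(1)\cdot\int I\{0<s_{\ell,0}<C\varepsilon\}\,s_{\ell,0}\,d\Lambda_{0,\ell,0}$ plus $o_p(1)$. This vanishes as $\varepsilon\downarrow0$ by dominated convergence, together with the support condition \eqref{eq:cox-main-source-support}. Letting $\varepsilon\downarrow0$ slowly gives the uniform compensator statement.

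The counting-process version needs the martingale term uniformly in $\beta$. Lenglart's inequality does not directly give a supremum over $\beta$, so I would handle it by total-variation bounds instead. I would bound $\sup_\beta\Ptr\int\|\bar g_n-\bar g\|dN_\ell$ by the compensator version plus $\Ptr\int_{\{s<\varepsilon\}}2C_g\,dN_\ell$ plus $\sup_{u:s\ge\varepsilon,\beta}\|\bar g_n-\bar g\|\cdot\Ptr N_\ell(\tau)$. Each piece is $o_p(1)$ by the law of large numbers and the preceding step. The specializations $g_\beta=z_r$ and $g_\beta=z_rz_s$ are bounded by $K$ and $K^2$ and do not depend on $\beta$, so they satisfy the hypotheses trivially.
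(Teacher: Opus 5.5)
Your proof is correct, and it takes a genuinely different and more elementary route than the paper's.

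\textbf{Fixed $\beta$.} The paper splits $\bar g_n-\bar g$ into two pieces. The first, $J_{\ell,n}(G_n-G)/S^{(0)}_{\ell,n}$, is handled by a sup-in-$u$ Euclidean maximal inequality. The second, $G\{J_{\ell,n}/S^{(0)}_{\ell,n}-\mathbf 1\{s^{(0)}_\ell>0\}/s^{(0)}_\ell\}$, is handled by the binomial self-normalization bounds of Lemma~\ref{lem:cox-denominator-bounds}. The paper then pushes the martingale part down to $O_p(n^{-1})$ using inverse-risk moments.

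You instead cancel the random denominator against $S^{(0)}_{\ell,n}(\beta_{\ell,0},u)$ through their bounded ratio. After that you need only pointwise variances of order $q(u)/n$ and $\Lambda_{0,\ell,0}(\tau)<\infty$. Your zero-risk term is in fact identically zero, because $S^{(0)}_{\ell,n}(\beta_{\ell,0},u)=0$ on $\{J_{\ell,n}=0\}$. The crude bracket bound, with integrand at most $2C_g$, already gives the $O_p(n^{-1/2})$ the statement asks for.

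\textbf{Uniform claim.} The paper proves stochastic equicontinuity in $\beta$. It uses a Lipschitz envelope $Q_n$ and its moments, an $\varepsilon$-net for the compensator, and Burkholder--Davis--Gundy with dyadic chaining for the martingale. You never need uniform martingale control. You dominate the nonnegative counting integral by $\sup\|\bar g_n-\bar g\|$ over $\{s^{(0)}_\ell\ge\varepsilon\}$ times $\Ptr N_\ell(\tau)\le 1$, plus $2C_g\Ptr\int\mathbf 1\{s_{\ell,0}<C\varepsilon\}\,dN_\ell$. The mean of that last term is at most $C\varepsilon\Lambda_{0,\ell,0}(\tau)$.

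Two small remarks:
\begin{itemize}
\item The last piece is $C\varepsilon+o_p(1)$, not $o_p(1)$, so the $\varepsilon\downarrow0$ step is needed there as well.
\item Your cancellation inequality gives the compensator supremum at $O_p(n^{-1/2})$ directly from the Euclidean maximal inequality over $(\beta,u)$. Truncation is therefore only needed for the counting integral.
\end{itemize}

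\textbf{What each route buys.} The paper's heavier machinery yields explicit moment bounds on $\beta$-increments of the self-normalized martingale. It reuses that technique on shrinking balls in Lemma~\ref{lem:cox-self-normalized-equicontinuity} to get the $O_p(n^{-1})$ rate needed for the Breslow linearization. Your total-variation argument gives only $o_p(1)$ uniformly. That is exactly what this lemma claims and what Lemma~\ref{lem:cox-empirical-score} uses.
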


\begin{proof} [Proof of Lemma \ref{lem:cox-self-normalized-ratios}]
Write $\beta_0=\beta_{\ell,0}$, $S_n(\beta,u)=S_{\ell,n}^{(0)}(\beta,u)$, $s(\beta,u)=s_\ell^{(0)}(\beta,u)$, $s_0(u)=s(\beta_0,u)$, $J=J_{\ell,n}$, and $d\Lambda_0=d\Lambda_{0,\ell,0}$.
Constants may depend on the fixed compact set $\mathcal B_\ell$, but not on
$n,u$ or $\beta$. Bounded covariates give
\begin{equation}
\label{eq:selfnorm-denominator-comparison}
    cS_n(\beta_0,u)\le S_n(\beta,u)\le CS_n(\beta_0,u),
    \qquad
    cs_0(u)\le s(\beta,u)\le Cs_0(u) .
\end{equation}
and $\|G(\beta,u)\|\le C s(\beta,u)$ by boundedness of $g_\beta$.

The denominator estimates needed below hold uniformly in $\beta\in\mathcal B_\ell$:
\begin{align}
\label{eq:selfnorm-ratio-square-bound}
E\left[
S_n(\beta_0,u)
\left\{\frac{s(\beta,u)J(u)}{S_n(\beta,u)}
      -\mathbf 1\{s(\beta,u)>0\}\right\}^{2}
\right]&\le C n^{-1}, \\
\label{eq:selfnorm-ratio-l1-bound}
E\left[
S_n(\beta_0,u)
\left|\frac{s(\beta,u)J(u)}{S_n(\beta,u)}
      -\mathbf 1\{s(\beta,u)>0\}\right|
\right]&\le C n^{-1/2}s_0(u)^{1/2}.
\end{align}
The first follows from \eqref{eq:cox-denominator-ratio}, transferred
from $\beta_0$ to $\beta$ through \eqref{eq:selfnorm-denominator-comparison}.
For the second, Cauchy--Schwarz
with $R_n=s(\beta,u)J/S_n(\beta,u)-\mathbf 1\{s(\beta,u)>0\}$,
gives
$E\{S_n|R_n|\} \le \{ES_n\}^{1/2}\{E(S_nR_n^2)\}^{1/2} \le C s_0(u)^{1/2}n^{-1/2}$.

For fixed $\beta$, decompose
$\bar g_n(\beta,u)-\bar g(\beta,u)=A_n(\beta,u)+B_n(\beta,u),$
with
$A_n(\beta,u)=\frac{J}{S_n(\beta,u)}\{G_n(\beta,u)-G(\beta,u)\}$
and
$B_n(\beta,u)=G(\beta,u) \left\{ \frac{J}{S_n(\beta,u)}- \frac{\mathbf 1\{s(\beta,u)>0\}}{s(\beta,u)} \right\}.$
At fixed $\beta$, the bounded Euclidean maximal inequality yields
$\Delta_{G,n}(\beta):= \sup_{u\le\tau}\|G_n(\beta,u)-G(\beta,u)\|=O_p(n^{-1/2}).$
We first bound the compensator integral at fixed $\beta$. For $A_n$,
\eqref{eq:selfnorm-denominator-comparison} gives
\[
\int\|A_n(\beta,u)\|S_n(\beta_0,u)d\Lambda_0(u)
\le C\Delta_{G,n}(\beta)\int_0^{\tau}J(u)d\Lambda_0(u)
=O_p(n^{-1/2}),
\]
Here $\Lambda_0(\tau)<\infty$ follows from CX2 and bounded covariates: $s_0(u)\le C$ and
$\Lambda_0(\tau)\le C\int \mathbf 1\{s_0(u)>0\}s_0(u)^{-1}\,d\Lambda_0(u)<\infty$. For $B_n$, use $\|G(\beta,u)\|\le Cs(\beta,u)$ and
\eqref{eq:selfnorm-denominator-comparison} to obtain
\[
\|B_n(\beta,u)\|S_n(\beta_0,u)
\le C S_n(\beta_0,u)
\left|\frac{s(\beta,u)J(u)}{S_n(\beta,u)}
      -\mathbf 1\{s(\beta,u)>0\}\right| .
\]
Taking expectations and applying \eqref{eq:selfnorm-ratio-l1-bound}, with
$s_0\le C$ and $\Lambda_0(\tau)<\infty$, gives
\[
E\int\|B_n(\beta,u)\|S_n(\beta_0,u)d\Lambda_0(u)
\le Cn^{-1/2}\int s_0(u)^{1/2}d\Lambda_0(u)=O(n^{-1/2}).
\]
This proves \eqref{eq:selfnorm-compensator-rate}.

For the counting integral at fixed $\beta$, the decomposition
$\Ptr dN_\ell(u)=S_n(\beta_0,u)d\Lambda_0(u)+d\bar M_{\ell,0}^{\beta}(u),$
reduces the remaining argument to the martingale integral with integrand
$f_n(\beta,u)=\|\bar g_n(\beta,u)-\bar g(\beta,u)\|$. The predictable quadratic variation of $\int f_n(\beta,u)d\bar M_{\ell,0}^{\beta}(u)$ is at most
\[
\frac1n\int f_n(\beta,u)^2S_n(\beta_0,u)d\Lambda_0(u).
\]
For $A_n$, this bound is at most
$\frac{C\Delta_{G,n}(\beta)^2}{n} \int_0^{\tau}\frac{J(u)}{S_n(\beta_0,u)}d\Lambda_0(u)=O_p(n^{-2})$,
by \eqref{eq:cox-main-source-support} and
\eqref{eq:cox-denominator-inverse-risk}. For $B_n$,
\eqref{eq:selfnorm-ratio-square-bound} gives
$E\left[\frac1n\int\|B_n(\beta,u)\|^2S_n(\beta_0,u)d\Lambda_0(u)\right] \le Cn^{-2}\Lambda_0(\tau).$
Lenglart's inequality bounds the martingale contribution by $O_p(n^{-1})$.
Together with the compensator bound, this proves
\eqref{eq:selfnorm-counting-rate}.

To obtain the uniform $o_p(1)$ conclusion, we establish stochastic
equicontinuity of the self-normalized component. Differentiating the ratio
and using bounded covariates gives, for $\beta,\beta'\in\mathcal B_\ell$,
\begin{equation}
\label{eq:selfnorm-ratio-lipschitz}
\left|\frac{s(\beta,u)J}{S_n(\beta,u)}-
      \frac{s(\beta',u)J}{S_n(\beta',u)}\right|
\le Q_n(u)\|\beta-\beta'\|,
\end{equation}
where
$Q_n(u)=C\left\{1+\frac{Js_0(u)}{S_n(\beta_0,u)} +\frac{Js_0(u)^2}{S_n(\beta_0,u)^2}\right\}.$
With a larger constant and the Euclidean modulus for $g_\beta$, the same
bound controls each scalar coordinate of $\{\bar g_n(\beta,u)-\bar g(\beta,u)\}-
  \{\bar g_n(\beta',u)-\bar g(\beta',u)\}$, apart from the ordinary
empirical-process increment already considered. The moment estimates
\eqref{eq:cox-denominator-moment-ratio}--\eqref{eq:cox-denominator-moment-inverse}
imply, for every fixed integer $r\ge1$,
\begin{align}
\label{eq:selfnorm-envelope-moment}
E\left[\left\{
\int_0^{\tau}Q_n(u)^2S_n(\beta_0,u)d\Lambda_0(u)
\right\}^r\right]&\le C_r, \\
\label{eq:selfnorm-jump-envelope-moment}
E\int_0^{\tau}Q_n(u)^{2r}S_n(\beta_0,u)d\Lambda_0(u)&\le C_r.
\end{align}
Indeed, expanding the powers of $Q_n$ gives expectations of $s_0(u)^mJ/S_n(\beta_0,u)^{m-1}$
and expectations of $s_0(u)^mJ/S_n(\beta_0,u)^m$, controlled by \eqref{eq:cox-denominator-moment-weighted} and
\eqref{eq:cox-denominator-moment-inverse}.
The remaining integrals are finite because $s_0(u)\le C$ and $\Lambda_0(\tau)<\infty$, by CX2 and bounded covariates.

For the compensator term, \eqref{eq:selfnorm-ratio-lipschitz} and
\eqref{eq:selfnorm-envelope-moment}, together with the ordinary Euclidean
empirical-process modulus, give
\[
\sup_{\beta\ne\beta'}
\frac{
\int \|D_n(\beta,u)-D_n(\beta',u)\|S_n(\beta_0,u)d\Lambda_0(u)}
{\|\beta-\beta'\|}
=O_p(1),
\]
where $D_n(\beta,u)=\bar g_n(\beta,u)-\bar g(\beta,u)$. A fixed $\varepsilon$-net, followed by $n\to\infty$ and then
$\varepsilon\downarrow0$, therefore bounds the compensator supremum by $o_p(1)$.

For a scalar coordinate of the martingale term, put
$M_n(\beta)=\int_0^{\tau}D_n(\beta,u)d\bar M_{\ell,0}^{\beta}(u)$, $\Delta_n(\beta,\beta')=M_n(\beta)-M_n(\beta')$.
For any integer $r>p_\ell+2$, the purely discontinuous Burkholder--Davis--Gundy
inequality\citep{jacod1975multivariate}, the bracket bound, \eqref{eq:selfnorm-ratio-lipschitz}, and
\eqref{eq:selfnorm-envelope-moment} give
\begin{align}
\label{eq:selfnorm-martingale-increment}
E|\Delta_n(\beta,\beta')|^{2r}
&\le
C_r E\{\langle\Delta_n(\beta,\beta')\rangle(\tau)^r\}
+C_rE\sum_{u\le\tau}|\Delta\Delta_n(\beta,\beta';u)|^{2r} \\
&\le C_r n^{-r}\|\beta-\beta'\|^{2r}. \notag
\end{align}
For the jump contribution, Lemma~\ref{lem:cox-no-ties} implies
$\Delta\bar M_{\ell,0}^{\beta}(u)=\Delta\Ptr N_\ell(u)\le n^{-1}$. Hence
\[
E\sum_{u\le\tau}|\Delta\Delta_n(\beta,\beta';u)|^{2r}
\le
C_r n^{-2r+1}\|\beta-\beta'\|^{2r}
E\int Q_n(u)^{2r}S_n(\beta_0,u)d\Lambda_0(u),
\]
which is bounded by the right-hand side of
\eqref{eq:selfnorm-martingale-increment}, since $r\ge1$ and
\eqref{eq:selfnorm-jump-envelope-moment} holds.

Let $T_m$ be a Euclidean $2^{-m}$-net of $\mathcal B_\ell$ with $|T_m|\le C2^{mp_\ell}$.
Chaining from a fixed coarse net and applying Markov's inequality with
\eqref{eq:selfnorm-martingale-increment} gives, for $A>0$,
\[
\Pr\left(
\max_{\text{level }m\text{ pairs}}
|M_n(t)-M_n(s)|>A n^{-1/2}2^{-m/2}
\right)
\le C_rA^{-2r}2^{-m(r-p_\ell)} .
\]
Because $r>p_\ell+2$, the bound is summable in $m$. The finitely many
coarse-net values converge to zero by the fixed-$\beta$ martingale bound,
and the chained increments are $O_p(n^{-1/2})$. Thus
$\sup_{\beta\in\mathcal B_\ell}|M_n(\beta)|=o_p(1).$
Combining martingale and compensator equicontinuity shows that the
counting-integral supremum over $\mathcal B_\ell$ is $o_p(1)$.

\end{proof}

\begin{lemma}
\label{lem:cox-empirical-score}
Under Assumption~\ref{ass:cox-main-primitives}\ref{cxmain:bounded}--\ref{eq:cox-main-source-support}, for each
transition $\ell\in\cJ$,
\begin{equation}
\label{eq:cox-score-at-truth}
    \|U_{\ell,n}(\beta_{\ell,0})\|=O_p(n^{-1/2}),
\end{equation}
and
\begin{equation}
\label{eq:cox-score-derivative-uniform}
    \mathcal D_{\ell,n}:=
    \sup_{\beta\in\mathcal B_\ell}
    \|\dot U_{\ell,n}(\beta)-\dot U_\ell(\beta)\|=o_p(1).
\end{equation}
\end{lemma}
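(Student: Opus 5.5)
The score-at-truth bound \eqref{eq:cox-score-at-truth} comes from a martingale representation, and the uniform derivative bound \eqref{eq:cox-score-derivative-uniform} from Lemma~\ref{lem:cox-self-normalized-ratios}.

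\emph{Score at the truth.} Write $\beta_0=\beta_{\ell,0}$ and substitute the decomposition \eqref{eq:cox-average-martingale}, $\Ptr dN_\ell=d\bar M_{\ell,0}^{\beta}+S_{\ell,n}^{(0)}(\beta_0,u)\,d\Lambda_{0,\ell,0}$, into $U_{\ell,n}(\beta_0)$. The compensator part is
\[
\Ptr\int_0^\tau X\,Y_j(u)e^{\beta_0^\top X}\,d\Lambda_{0,\ell,0}(u)-\int_0^\tau \bar Z_{\ell,n}(\beta_0,u)S_{\ell,n}^{(0)}(\beta_0,u)\,d\Lambda_{0,\ell,0}(u).
\]
It vanishes identically: on $\{J_{\ell,n}=1\}$ we have $\bar Z_{\ell,n}S^{(0)}_{\ell,n}=S^{(1)}_{\ell,n}$, and on $\{J_{\ell,n}=0\}$ both integrands are zero. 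Hence
\[
U_{\ell,n}(\beta_0)=n^{-1}\sum_i\int_0^\tau\{X_i-\bar Z_{\ell,n}(\beta_0,u)\}\,dM^{\beta}_{\ell,0,i}(u).
\]
The integrand is predictable and bounded by $2K$ by \ref{cxmain:bounded}. The predictable variation is therefore at most $4K^2n^{-1}\int_0^\tau S^{(0)}_{\ell,n}(\beta_0,u)\,d\Lambda_{0,\ell,0}(u)$. This has expectation $O(n^{-1})$, since $s_{\ell,0}\le e^{R_\ell K}$ and $\Lambda_{0,\ell,0}(\tau)<\infty$ by Lemma~\ref{lem:cox-no-ties}. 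Lenglart's inequality (or Doob's $L_2$ inequality) applied coordinatewise then gives $O_p(n^{-1/2})$.

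\emph{Uniform derivative.} For $\beta\in\mathcal B_\ell$ split
\[
\dot U_{\ell,n}(\beta)-\dot U_\ell(\beta)=-\Ptr\!\int\{V_{\ell,n}-V_\ell\}(\beta,u)\,dN_\ell(u)-(\Ptr-P)\!\int V_\ell(\beta,u)\,dN_\ell(u).
\]

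\emph{First term.} Write $V_{\ell,n}-V_\ell$ as $(\bar g^{(2)}_n-\bar g^{(2)})-(\bar Z_{\ell,n}^{\otimes2}-\bar z_\ell^{\otimes2})$, with $g_\beta=z_rz_s$ and $g_\beta=z_r$ in the notation of Lemma~\ref{lem:cox-self-normalized-ratios}. Factor the outer-product difference as $(\bar Z_{\ell,n}-\bar z_\ell)\bar Z_{\ell,n}^\top+\bar z_\ell(\bar Z_{\ell,n}-\bar z_\ell)^\top$. Both $\bar Z_{\ell,n}$ and $\bar z_\ell$ are bounded by $K$, being risk-weighted averages of $X$. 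The uniform $o_p(1)$ conclusion of Lemma~\ref{lem:cox-self-normalized-ratios}, which applies to exactly these choices of $g$, then makes the first term $o_p(1)$ uniformly over $\mathcal B_\ell$.

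\emph{Second term.} This is a uniform law of large numbers over the class $\{\int V_\ell(\beta,u)\,dN_\ell(u):\beta\in\mathcal B_\ell\}$ on the compact set $\mathcal B_\ell$. It is dominated by $2K^2N_\ell(\tau)\le 2K^2$. It is continuous in $\beta$ for each path: $V_\ell(\beta,u)$ is continuous in $\beta$ at fixed $u$ by Lemma~\ref{lem:cox-risk-regularity}, the map is in fact Lipschitz in $\beta$ with a bounded constant, because the denominators $s^{(0)}_\ell(\beta,u)$ are comparable to $s_{\ell,0}(u)$ as in \eqref{eq:selfnorm-denominator-comparison}, and dominated convergence applies. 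A bracketing Glivenko--Cantelli argument, or a finite $\varepsilon$-net plus this Lipschitz modulus, yields a uniform $o_p(1)$ bound.

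The main obstacle is the first term near the boundary of the source support. There the empirical risk ratio $J_{\ell,n}/S^{(0)}_{\ell,n}$ is unstable, especially for the post-recurrence state $j=1$ early in follow-up, where there is no uniform positivity. I would not redo that control; I would rely entirely on Lemma~\ref{lem:cox-self-normalized-ratios}, whose moment bounds from Lemma~\ref{lem:cox-denominator-bounds} were built for this situation. What must be checked is only that the functions $z_r$ and $z_rz_s$ satisfy its boundedness, Lipschitz and Euclidean hypotheses, and they do trivially, since they do not depend on $\beta$.
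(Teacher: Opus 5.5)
Your proof is correct. For \eqref{eq:cox-score-derivative-uniform} it is essentially the paper's argument. You use the same split into $(\Ptr-P)\int V_\ell\,dN_\ell$ and $\Ptr\int(V_{\ell,n}-V_\ell)\,dN_\ell$, and you reduce the latter to Lemma~\ref{lem:cox-self-normalized-ratios} with $g=z_rz_s$ and $g=z_r$ via $\|\bar Z_{\ell,n}^{\otimes2}-\bar z_\ell^{\otimes2}\|\le 2K\|\bar Z_{\ell,n}-\bar z_\ell\|$. The only difference is that you prove a uniform LLN for the first piece, where the paper claims a uniform $O_p(n^{-1/2})$; the LLN is all the lemma needs. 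One correction to your justification: the bounded Lipschitz constant comes from the fact that the $\beta$-derivatives of the ratios $s^{(r)}_\ell/s^{(0)}_\ell$ are again risk-weighted moments of $X$, bounded by powers of $K$. Comparability of denominators combined with the absolute bound \eqref{eq:cox-risk-lipschitz} would only give a constant of order $1/s_{\ell,0}(u)$. Nothing hinges on this, since continuity, the envelope and compactness of $\mathcal B_\ell$ already give the uniform LLN.

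For \eqref{eq:cox-score-at-truth} you take a genuinely different route. The paper centers at the population score, using $U_\ell(\beta_{\ell,0})=0$ from Lemma~\ref{lem:cox-population-score}. It then writes $U_{\ell,n}(\beta_{\ell,0})$ as the empirical process $(\Ptr-P)\int\{X-\bar z_\ell(\beta_{\ell,0},u)\}\,dN_\ell$ plus the ratio error $\Ptr\int\{\bar z_\ell-\bar Z_{\ell,n}\}(\beta_{\ell,0},u)\,dN_\ell$. The ratio error is controlled by the fixed-$\beta$ part of Lemma~\ref{lem:cox-self-normalized-ratios}, and hence ultimately by the binomial denominator bounds of Lemma~\ref{lem:cox-denominator-bounds}.

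You instead use the classical Andersen--Gill cancellation: at $\beta_{\ell,0}$ the compensator vanishes identically, so the score is an exact martingale whose integrand is bounded by $2K$ whatever the risk-set size. Strictly, this needs the subject-level decomposition of Definition~\ref{def:cox-martingale} rather than the averaged one, since $X_i$ multiplies $dN_{\ell,i}$; your displayed compensator is nonetheless the right one.

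Your route makes the score bound self-contained. It needs no control of $J_{\ell,n}/S^{(0)}_{\ell,n}$ near the edge of the source support, only $\int s_{\ell,0}\,d\Lambda_{0,\ell,0}<\infty$. Correct specification enters through the compensator rather than through $U_\ell(\beta_{\ell,0})=0$, and the martingale form would also give a CLT for the score directly. The paper's route buys uniformity of method: both claims run through the same self-normalized-ratio lemma, using the population risk mean $\bar z_\ell$ that reappears in Lemma~\ref{lem:breslow-weighted-linearization}.
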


\begin{proof} [Proof of Lemma \ref{lem:cox-empirical-score}]
Fixed dimension allows us to work coordinatewise and take finite maxima.
Suppress the transition subscript and write $\beta_0=\beta_{\ell,0}$, $\Lambda_0=\Lambda_{0,\ell,0}$, $S_n^{(r)}=S_{\ell,n}^{(r)}$,
$s^{(r)}=s_\ell^{(r)}$, $\bar Z_n=\bar Z_{\ell,n}$, and $\bar x=\bar z_\ell$. For $r=0,1,2,3$, the class
$\mathcal F_r= \{O\mapsto Y_j(u)e^{\beta^\top X}X^{\otimes r}: \beta\in\mathcal B_\ell, 0\le u\le\tau\}$
is bounded and Euclidean, so
$\sup_{\beta\in\mathcal B_\ell,\,u\le\tau} \|S_n^{(r)}(\beta,u)-s^{(r)}(\beta,u)\|=O_p(n^{-1/2})$, $r=0,1,2,3$.
Likewise, for a bounded, fixed-dimensional Lipschitz family $g_\beta(u,x)$,
\begin{equation}
\label{eq:cox-counting-empirical-process}
    \sup_{\beta\in\mathcal B_\ell}
    \left|
    (\Ptr-P)\int_0^{\tau}g_\beta(u,X)dN_\ell(u)
    \right|=O_p(n^{-1/2}).
\end{equation}
This applies to $g_{\beta_0}(u,X)=X-\bar x(\beta_0,u)$ and to each scalar entry of $V(\beta,u)$, since
$\bar x$ and $V$ are bounded and locally Lipschitz by
Lemma~\ref{lem:cox-risk-regularity}.

At $\beta_0$, adding and subtracting the population risk-set mean gives
\begin{align*}
U_n(\beta_0)-U(\beta_0)
&=(\Ptr-P)\int_0^{\tau}\{X-\bar x(\beta_0,u)\}dN_\ell(u) \\
&\quad+
\Ptr\int_0^{\tau}\{\bar x(\beta_0,u)-\bar Z_n(\beta_0,u)\}dN_\ell(u).
\end{align*}
The first term is $O_p(n^{-1/2})$ by \eqref{eq:cox-counting-empirical-process}.
The second is $O_p(n^{-1/2})$ by the fixed-$\beta$ conclusion of
Lemma~\ref{lem:cox-self-normalized-ratios}, with $g_\beta(u,x)=x$ at
$\beta=\beta_0$. Since $U(\beta_0)=0$ by Lemma~\ref{lem:cox-population-score}, this
proves \eqref{eq:cox-score-at-truth}.

The derivative difference has the decomposition
\[
\dot U_n(\beta)-\dot U(\beta)
=-(\Ptr-P)\int_0^{\tau}V(\beta,u)dN_\ell(u)
-
\Ptr\int_0^{\tau}\{V_n(\beta,u)-V(\beta,u)\}dN_\ell(u).
\]
The first term is uniformly $O_p(n^{-1/2})$ by
\eqref{eq:cox-counting-empirical-process}. For the second, write
\[
V_n-V=
\left\{\frac{J_nS_n^{(2)}}{S_n^{(0)}}-
       \frac{j_0s^{(2)}}{s^{(0)}}\right\}
-
\left\{\bar Z_n^{\otimes2}-\bar x^{\otimes2}\right\}.
\]
Each entry of the first bracket is a self-normalized ratio error covered
by Lemma~\ref{lem:cox-self-normalized-ratios} with $g_\beta(u,x)=z_rz_s$.
Its integral against $\Ptr dN_\ell$ is therefore uniformly over $\beta\in\mathcal B_\ell$ of
order $o_p(1)$. For the second bracket,
$\|\bar Z_n^{\otimes2}-\bar x^{\otimes2}\| \le \|\bar Z_n-\bar x\|\{\|\bar Z_n\|+\|\bar x\|\} \le 2K\|\bar Z_n-\bar x\|$,
The same lemma with $g_\beta(u,x)=x$ gives a uniformly $o_p(1)$ integral.
Consequently, $\mathcal D_{\ell,n}=o_p(1)$, proving \eqref{eq:cox-score-derivative-uniform}.

\end{proof}

\begin{lemma}
\label{lem:cox-root-consistency}
Under Assumption~\ref{ass:cox-main-primitives} together with
Lemmas~\ref{lem:cox-population-score} and
\ref{lem:cox-empirical-score}, for each transition
$\ell\in\cJ$, the convex minimizer $\widehat\beta_\ell$ defined in
\eqref{eq:cox-beta-hat-def} satisfies, with probability tending to one,
$U_{\ell,n}(\widehat\beta_\ell)=0, \qquad \widehat\beta_\ell\in\operatorname{int}(\Theta_\ell), \qquad \|\widehat\beta_\ell-\beta_{\ell,0}\|=O_p(n^{-1/2}).$
\end{lemma}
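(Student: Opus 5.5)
The argument follows the standard Z-estimation route for a convex criterion, built from Lemmas~\ref{lem:cox-population-score} and~\ref{lem:cox-empirical-score}. Write $\beta_0=\beta_{\ell,0}$, $I_\ell=-\dot U_\ell(\beta_0)$, and $\iota_\ell=\lambda_{\min}(I_\ell)>0$ by \ref{cxmain:information}. Let $\mathcal B_\ell$ be the open convex ball of Lemma~\ref{lem:cox-population-score}(iv), shrunk if necessary so that its closure lies in $\operatorname{int}(\Theta_\ell)$. This is possible because $\|\beta_0\|<R_\ell$ strictly. On $\mathcal B_\ell$, Lemma~\ref{lem:cox-population-score}(iv) and the uniform bound \eqref{eq:cox-score-derivative-uniform} give, on an event $\mathcal E_n$ with $P(\mathcal E_n)\to1$,
\[
\sup_{\beta\in\mathcal B_\ell}\|\dot U_{\ell,n}(\beta)+I_\ell\|\le \iota_\ell/2 .
\]
Since $\dot U_{\ell,n}=-\nabla^2 Q_{\ell,n}$ is symmetric, this yields $\nabla^2Q_{\ell,n}(\beta)\succeq(\iota_\ell/2)I$ on $\mathcal B_\ell$. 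So $Q_{\ell,n}$ is strongly convex there.

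Next I localize the minimizer. Take $\beta\in\mathcal B_\ell$ and integrate the gradient along the segment from $\beta_0$. This gives
\[
\langle U_{\ell,n}(\beta)-U_{\ell,n}(\beta_0),\,\beta-\beta_0\rangle\le-(\iota_\ell/2)\|\beta-\beta_0\|^2 .
\]
Fix a radius $r$ with $\{\|\beta-\beta_0\|\le r\}\subset\mathcal B_\ell$. On the sphere $\|\beta-\beta_0\|=r$, the directional derivative of $Q_{\ell,n}$ pointing outward is at least $(\iota_\ell/2)r-\|U_{\ell,n}(\beta_0)\|$. By \eqref{eq:cox-score-at-truth}, this is positive with probability tending to one. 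Convexity of $Q_{\ell,n}$ on all of $\Theta_\ell$ then puts every minimizer over $\Theta_\ell$ inside the open ball. The minimizer is unique there by strong convexity, so it does not depend on the tie-breaking rule. It is interior to $\Theta_\ell$, and the first-order condition gives $U_{\ell,n}(\widehat\beta_\ell)=0$.

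For the rate, I evaluate the monotonicity inequality at $\beta=\widehat\beta_\ell$ and use $U_{\ell,n}(\widehat\beta_\ell)=0$:
\[
(\iota_\ell/2)\|\widehat\beta_\ell-\beta_0\|^2\le\langle U_{\ell,n}(\beta_0),\widehat\beta_\ell-\beta_0\rangle\le\|U_{\ell,n}(\beta_0)\|\,\|\widehat\beta_\ell-\beta_0\| .
\]
Hence $\|\widehat\beta_\ell-\beta_0\|\le(2/\iota_\ell)\|U_{\ell,n}(\beta_0)\|=O_p(n^{-1/2})$ by \eqref{eq:cox-score-at-truth}.

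The main obstacle is that Lemma~\ref{lem:cox-empirical-score} gives uniform control of the derivative only on the local neighborhood $\mathcal B_\ell$, not on all of $\Theta_\ell$. Consistency therefore cannot come from a global uniform argument, and it has to come from convexity: a convex function whose gradient points outward on a small sphere has all its minimizers inside that sphere. I would verify this carefully. A minimizer $\beta^\dagger$ outside the ball would force $Q_{\ell,n}(\beta^\dagger)\le Q_{\ell,n}(\beta_0)$. By convexity, $Q_{\ell,n}$ would then be nonincreasing along the segment from the sphere crossing point toward $\beta^\dagger$. That contradicts the positive outward derivative on the sphere. A second point to check is that the Hessian identity $\nabla^2Q_{\ell,n}=-\dot U_{\ell,n}$ holds for the zero-denominator convention with $J_{\ell,n}$. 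It does, because $J_{\ell,n}$ does not depend on $\beta$, so the log-sum-exp form differentiates term by term at the event times.
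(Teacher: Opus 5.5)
Your proposal is correct and follows essentially the paper's route: local strong concavity of the partial likelihood from Lemmas~\ref{lem:cox-population-score} and~\ref{lem:cox-empirical-score}, globalization to all of $\Theta_\ell$ by convexity along segments, and the score at the truth for the rate. The only difference is organizational. You localize on a fixed ball and then read the rate off the strong-monotonicity inequality at the root. The paper instead works directly on shrinking balls of radius $Mn^{-1/2}$, comparing criterion values by a Taylor expansion. Both are valid.
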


\begin{proof} [Proof of Lemma \ref{lem:cox-root-consistency}]
The criterion $Q_{\ell,n}=-\ell_{\ell,n}$ is continuous and convex on the
compact convex set $\Theta_\ell$, so its minimizer set is nonempty.
Since $\beta_{\ell,0}$ is interior, a fixed neighborhood of it lies inside
$\Theta_\ell$. On that neighborhood,
Lemmas~\ref{lem:cox-population-score} and
\ref{lem:cox-empirical-score} give local curvature and
uniform convergence of the score derivative. More precisely, for any
$\eta>0$, choose $L<\infty$ so that, for all sufficiently large $n$, the event
$\mathcal E_n(L)=\{\sqrt n\|U_{\ell,n}(\beta_{\ell,0})\|\le L, \ \mathcal D_{\ell,n}\le\iota_\ell/4\}$
has probability at least $1-\eta$.
For fixed $M$, the ball
$B_n(M)=\{\beta:\|\beta-\beta_{\ell,0}\|\le Mn^{-1/2}\}$ lies inside
that neighborhood for large $n$. The population derivative differs from
$\dot U_\ell(\beta_{\ell,0})=-I_\ell$ by at most $\iota_\ell/4$ there.
Since $\lambda_{\min}(I_\ell)\ge\iota_\ell$, on $\mathcal E_n(L)$,
\begin{equation}
\label{eq:cox-local-curvature}
v^\top\dot U_{\ell,n}(\beta)v
\le-\frac{\iota_\ell}{2}\|v\|^2,
\qquad \beta\in B_n(M).
\end{equation}
For $\|h\|=M$, Taylor's formula and
\eqref{eq:cox-local-curvature} imply
\[
\ell_{\ell,n}(\beta_{\ell,0}+n^{-1/2}h)
 -\ell_{\ell,n}(\beta_{\ell,0})
\le n^{-1}\{LM-\iota_\ell M^2/4\}<0
\]
when $M>4L/\iota_\ell$. Thus the maximizer over $B_n(M)$ is interior;
strict concavity there makes it unique and gives the score equation.
It is also the global maximizer over $\Theta_\ell$: an exterior point
with at least as large a criterion value would, by concavity along the
joining segment, give a boundary point with criterion value at least
$\ell_{\ell,n}(\beta_{\ell,0})$, contradicting the displayed inequality.
Consequently the estimator in \eqref{eq:cox-beta-hat-def} is this unique
interior maximizer on $\mathcal E_n(L)$ and satisfies
$\|\widehat\beta_\ell-\beta_{\ell,0}\|\le Mn^{-1/2}$ and
$U_{\ell,n}(\widehat\beta_\ell)=0$.
Since $\eta$ was arbitrary, the asserted rate, interiority, and uniqueness follow.
\end{proof}

\begin{lemma}
\label{lem:breslow-rates}
Under Assumption~\ref{ass:cox-main-primitives}, for each transition $\ell\in\cJ$,
\begin{align}
\label{eq:breslow-sup-rate}
\sup_{0\le t\le\tau}
|\widehat\Lambda_{0,\ell}(t)-\Lambda_{0,\ell,0}(t)|&=O_p(n^{-1/2}), \\
\label{eq:breslow-squared-jumps}
\sum_{0<u\le\tau}\{\Delta\widehat\Lambda_{0,\ell}(u)\}^2&=O_p(n^{-1}).
\end{align}
\end{lemma}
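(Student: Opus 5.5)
We will prove Lemma~\ref{lem:breslow-rates} by decomposing the Breslow error into a martingale part, a coefficient-perturbation part, and a zero-risk part, and then bounding each uniformly in $t$. Write $\beta_0=\beta_{\ell,0}$, $\Lambda_0=\Lambda_{0,\ell,0}$, $S_n(\beta,u)=S_{\ell,n}^{(0)}(\beta,u)$ and $J=J_{\ell,n}$. Using \eqref{eq:cox-average-martingale}, $\Ptr dN_\ell=d\bar M^\beta_{\ell,0}+S_n(\beta_0,u)\,d\Lambda_0$, we get
\begin{align*}
\widehat\Lambda_{0,\ell}(t)-\Lambda_0(t)
&=\int_0^t\frac{J(u)}{S_n(\widehat\beta_\ell,u)}\,d\bar M^\beta_{\ell,0}(u)
+\int_0^t J(u)\Big\{\frac{S_n(\beta_0,u)}{S_n(\widehat\beta_\ell,u)}-1\Big\}d\Lambda_0(u)\\
&\quad-\int_0^t\{1-J(u)\}\,d\Lambda_0(u)
=:E_{1}(t)+E_{2}(t)+E_{3}(t).
\end{align*}

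\textbf{The zero-risk term $E_3$.} By \eqref{eq:cox-denominator-zero} and the source-support condition \eqref{eq:cox-main-source-support}, $E\,|E_3|(\tau)\le\int \Pr\{S_n(\beta_0,u)=0\}\,d\Lambda_0\le Cn^{-1}\int \mathbf 1\{s_{\ell,0}>0\}s_{\ell,0}^{-1}\,d\Lambda_0=O(n^{-1})$. This is the same argument as \eqref{eq:zero-risk-expectation-bound}.

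\textbf{The coefficient term $E_2$.} Bounded covariates give $|S_n(\beta_0,u)/S_n(\beta,u)-1|\le C\|\beta-\beta_0\|$ on $J=1$, by \eqref{eq:selfnorm-denominator-comparison} and a mean-value bound. Together with $\Lambda_0(\tau)<\infty$ from Lemma~\ref{lem:cox-no-ties} and Lemma~\ref{lem:cox-root-consistency}, this gives $\sup_t|E_2(t)|\le C\|\widehat\beta_\ell-\beta_0\|\Lambda_0(\tau)=O_p(n^{-1/2})$.

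\textbf{The martingale term $E_1$, the main obstacle.} The integrand involves $\widehat\beta_\ell$, which is not predictable. To handle this, replace $\widehat\beta_\ell$ by $\beta_0$ via the same Lipschitz ratio bound: $|J/S_n(\widehat\beta_\ell,u)-J/S_n(\beta_0,u)|\le C\|\widehat\beta_\ell-\beta_0\|J/S_n(\beta_0,u)$. The resulting difference is then controlled pathwise against the counting measure, since $\int J/S_n(\beta_0)\,d\bar M\le\int J/S_n(\beta_0)\,\Ptr dN+\int J\,d\Lambda_0$. The first piece is $\widehat\Lambda$ evaluated at $\beta_0$, which is $O_p(1)$. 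So this correction is $O_p(n^{-1/2})$. The remaining $\int_0^t J/S_n(\beta_0,u)\,d\bar M^\beta_{\ell,0}$ is a square-integrable martingale with predictable variation $n^{-1}\int J/S_n(\beta_0,u)\,d\Lambda_0$. By \eqref{eq:cox-denominator-inverse-risk} and the finite inverse-risk mass in \eqref{eq:cox-main-source-support}, its expectation is $O(n^{-1})$, and Doob's $L^2$ inequality yields $\sup_t|\cdot|=O_p(n^{-1/2})$. Combining the three terms proves \eqref{eq:breslow-sup-rate}.

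\textbf{The squared jumps \eqref{eq:breslow-squared-jumps}.} By Lemma~\ref{lem:cox-no-ties}, jumps occur only at the single-subject event times, with $\Delta\widehat\Lambda_{0,\ell}(u)=J(u)\Delta\Ptr N_\ell(u)/S_n(\widehat\beta_\ell,u)\le n^{-1}J/S_n(\widehat\beta_\ell,u)$. Hence
\[
\sum_u\{\Delta\widehat\Lambda_{0,\ell}(u)\}^2\le n^{-1}\int \frac{J}{S_n(\widehat\beta_\ell,u)^2}\,\Ptr dN_\ell
\le \frac{C}{n}\int\frac{J}{S_n(\beta_0,u)^2}\,\Ptr dN_\ell .
\]
Take expectations through the compensator, so that the integral becomes $\int J/S_n(\beta_0)\,d\Lambda_0$, whose mean is bounded by \eqref{eq:cox-denominator-inverse-risk}. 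This gives $O_p(n^{-1})$ on the high-probability event where $\widehat\beta_\ell\in\mathcal B_\ell$.
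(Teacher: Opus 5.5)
Your proposal is correct and follows essentially the same route as the paper. The paper first shows that the oracle Breslow estimator $\widetilde\Lambda_{0,\ell}$ at $\beta_{\ell,0}$ is within $O_p(n^{-1/2})$ of $\Lambda_{0,\ell,0}$, using Doob for the martingale via \eqref{eq:cox-denominator-inverse-risk} and \eqref{eq:cox-denominator-zero} for the zero-risk term; it then bounds $\sup_t|\widehat\Lambda_{0,\ell}-\widetilde\Lambda_{0,\ell}|\le C\|\widehat\beta_\ell-\beta_{\ell,0}\|\,\widetilde\Lambda_{0,\ell}(\tau)$ in one pathwise step, which is exactly your $E_2$ plus your martingale correction, and its squared-jump argument is identical to yours.
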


\begin{proof} [Proof of Lemma \ref{lem:breslow-rates}]
Write $\beta_0=\beta_{\ell,0}$, $\widehat\beta=\widehat\beta_\ell$, $S_n(\beta,u)=S_{\ell,n}^{(0)}(\beta,u)$, and $s_0(u)=s_{\ell,0}(u)$. We first control the
Breslow estimator at $\beta_0$, then the effect of replacing $\beta_0$ by
$\widehat\beta$, and finally its squared jumps. Define
$\widetilde\Lambda_{0,\ell}(t)=\int_0^t\frac{J_{\ell,n}(u)}{S_n(\beta_0,u)}\,\Ptr dN_\ell(u).$
The training-average martingale decomposition in
Definition~\ref{def:cox-martingale} gives
$\Ptr dN_\ell(u)=d\bar M_{\ell,0}^{\beta}(u)+S_n(\beta_0,u)d\Lambda_{0,\ell,0}(u)$.
and therefore
\begin{equation}
\label{eq:breslow-truth-decomposition}
\widetilde\Lambda_{0,\ell}(t)-\Lambda_{0,\ell,0}(t)
 =
\int_0^t\frac{J_{\ell,n}(u)}{S_n(\beta_0,u)}\,d\bar M_{\ell,0}^{\beta}(u)
-
\int_0^t\mathbf 1\{S_n(\beta_0,u)=0\}\,d\Lambda_{0,\ell,0}(u).
\end{equation}

Since $S_n(\beta_0,u)=0$ is equivalent to an empty empirical source-state risk set,
\eqref{eq:cox-denominator-zero} in
Lemma~\ref{lem:cox-denominator-bounds} gives
$P\{S_n(\beta_0,u)=0\} \le C n^{-1}\frac{\mathbf 1\{s_0(u)>0\}}{s_0(u)}$
for $d\Lambda_{0,\ell,0}$-almost every $u$. The set $\{s_0=0\}$ has no baseline-hazard
mass by \eqref{eq:cox-main-source-support}. Hence
$E\int_0^{\tau}\mathbf 1\{S_n(\beta_0,u)=0\}\,d\Lambda_{0,\ell,0}(u) \le Cn^{-1}V_\ell^{\mathrm{cox}}=O(n^{-1}).$
Thus the zero-risk term in \eqref{eq:breslow-truth-decomposition} is $O_p(n^{-1})$
uniformly in $t$. The martingale term has predictable quadratic
variation bounded by
$\frac1n\int_0^t\frac{J_{\ell,n}(u)}{S_n(\beta_0,u)}\,d\Lambda_{0,\ell,0}(u)$.
Taking expectations and using \eqref{eq:cox-denominator-inverse-risk} gives
\[
E\left\langle\int_0^\cdot\frac{J_{\ell,n}}{S_n(\beta_0)}d\bar M_{\ell,0}^{\beta}\right\rangle(\tau)
\le \frac{C}{n}\int\frac{\mathbf 1\{s_0(u)>0\}}{s_0(u)}d\Lambda_{0,\ell,0}(u)
=O(n^{-1}).
\]
Doob's $L_2$ inequality yields
$\sup_{t\le\tau}\left|\int_0^t\frac{J_{\ell,n}}{S_n(\beta_0)}d\bar M_{\ell,0}^{\beta}\right| =O_p(n^{-1/2}).$
Combining the two bounds, we obtain
$\sup_{t\le\tau}|\widetilde\Lambda_{0,\ell}(t)-\Lambda_{0,\ell,0}(t)| =O_p(n^{-1/2})$, $\widetilde\Lambda_{0,\ell}(\tau)=O_p(1)$.
To replace $\beta_0$ by $\widehat\beta$, work on $\|\widehat\beta-\beta_0\|\le1$. Bounded covariates give
$e^{-K\|\widehat\beta-\beta_0\|}S_n(\beta_0,u) \le S_n(\widehat\beta,u) \le e^{K\|\widehat\beta-\beta_0\|}S_n(\beta_0,u).$
Thus, on $J_{\ell,n}=1$,
$\left|\frac{1}{S_n(\widehat\beta,u)}- \frac{1}{S_n(\beta_0,u)}\right| \le C\|\widehat\beta-\beta_0\|\frac{1}{S_n(\beta_0,u)}.$
Consequently,
$\sup_t|\widehat\Lambda_{0,\ell}(t)-\widetilde\Lambda_{0,\ell}(t)| \le C\|\widehat\beta-\beta_0\|\widetilde\Lambda_{0,\ell}(\tau) =O_p(n^{-1/2}),$
by Lemma~\ref{lem:cox-root-consistency}. This proves
\eqref{eq:breslow-sup-rate}.

For the squared jumps, Lemma~\ref{lem:cox-no-ties} ensures that
at most one subject has a transition-$\ell$ jump at any calendar time.
Thus, as discrete measures,
$(\Ptr dN_\ell)^2=n^{-1}\Ptr dN_\ell.$
On $\|\widehat\beta-\beta_0\|\le1$, the preceding denominator comparison gives
\[
\sum_u\{\Delta\widehat\Lambda_{0,\ell}(u)\}^2
\le
\frac{C}{n}\int_0^{\tau}
\frac{J_{\ell,n}(u)}{S_n(\beta_0,u)^2}\,\Ptr dN_\ell(u).
\]
The integrand at $\beta_0$ is predictable, with compensator
$\frac{C}{n}\int_0^{\tau} \frac{J_{\ell,n}(u)}{S_n(\beta_0,u)}\,d\Lambda_{0,\ell,0}(u),$
whose expectation is at most $Cn^{-1}V_\ell^{\mathrm{cox}}$ by
\eqref{eq:cox-denominator-inverse-risk}. Markov's inequality proves
\eqref{eq:breslow-squared-jumps}.
\end{proof}

\begin{lemma}
\label{lem:cox-risk-mean-replacement}
Fix a transition $\ell=(j,k)$, write $\beta_0=\beta_{\ell,0}$,
$s_0(u)=s_\ell^{(0)}(\beta_0,u)$, and let $h$ be bounded and deterministic.
Under Assumption~\ref{ass:cox-main-primitives},
\begin{equation}
\label{eq:cox-risk-mean-replacement}
\int_0^{\tau}s_0(u)h(u)
\{\bar Z_{\ell,n}(\beta_0,u)-\bar z_\ell(\beta_0,u)\}
\,d\Lambda_{0,\ell,0}(u)=O_p(n^{-1/2}).
\end{equation}
The same conclusion holds coordinatewise after multiplying the integrand by any
fixed bounded coordinate of $X$.
\end{lemma}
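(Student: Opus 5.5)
Write $S_n=S^{(0)}_{\ell,n}(\beta_0,\cdot)$, $S_n^{(1)}=S^{(1)}_{\ell,n}(\beta_0,\cdot)$, $s^{(1)}=s^{(1)}_\ell(\beta_0,\cdot)$ and $J=J_{\ell,n}$. We split the integrand using the same decomposition as in the proof of Lemma~\ref{lem:cox-self-normalized-ratios}, with $g_\beta(u,x)=x$ at $\beta=\beta_0$. On the source support this gives
\[
\bar Z_{\ell,n}-\bar z_\ell
=\frac{J}{S_n}\{S_n^{(1)}-s^{(1)}\}
+s^{(1)}\Big\{\frac{J}{S_n}-\frac{\mathbf 1\{s_0>0\}}{s_0}\Big\}=:A_n+B_n .
\]
Off $\{s_0>0\}$ the measure $d\Lambda_{0,\ell,0}$ puts no mass, by \eqref{eq:cox-main-source-support}. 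So it is enough to show that the two integrals $\int s_0hA_n\,d\Lambda_0$ and $\int s_0hB_n\,d\Lambda_0$ are each $O_p(n^{-1/2})$.

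For the $A_n$ term, the weight $s_0(u)J(u)/S_n(u)$ appears. Its expectation is bounded uniformly in $u$ by \eqref{eq:cox-denominator-inverse}. The numerator error satisfies
\[
\Delta_n:=\sup_{u\le\tau}\|S_n^{(1)}(u)-s^{(1)}(u)\|=O_p(n^{-1/2}),
\]
by the bounded Euclidean class argument used in Lemma~\ref{lem:cox-empirical-score}. This yields
\[
\Big|\int s_0hA_n\,d\Lambda_0\Big|\le\|h\|_\infty\,\Delta_n\int_0^\tau\frac{s_0J}{S_n}\,d\Lambda_0 .
\]
The last integral is $O_p(1)$ by Markov's inequality, using Fubini, \eqref{eq:cox-denominator-inverse} and $\Lambda_{0,\ell,0}(\tau)<\infty$ from Lemma~\ref{lem:cox-no-ties}. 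Hence this term is $O_p(n^{-1/2})$.

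For the $B_n$ term, bounded covariates give $\|s^{(1)}\|\le Ks_0$. Hence
\[
\|s_0hB_n\|\le K\|h\|_\infty\, s_0\Big|\frac{s_0J}{S_n}-\mathbf 1\{s_0>0\}\Big| .
\]
We need $E\,s_0|s_0J/S_n-1|\le Cn^{-1/2}s_0^{1/2}$. To get it, apply Cauchy--Schwarz:
\[
E\,s_0|R_n|\le s_0^{1/2}\{E\,s_0R_n^2\}^{1/2},\qquad R_n=\frac{s_0J}{S_n}-\mathbf 1\{s_0>0\}.
\]
The second factor is at most $Cn^{-1/2}$ by the weighted ratio bound \eqref{eq:cox-denominator-weighted-ratio}, whose left side is exactly $E\,s_0R_n^2$. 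Integrating against $d\Lambda_0$ and using $s_0\le C$ and $\Lambda_0(\tau)<\infty$ then gives an $L^1$ bound of order $n^{-1/2}$. The conclusion follows by Markov's inequality.

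The coordinatewise version, with the integrand multiplied by a bounded coordinate of $X$, is the same argument. We simply replace $g_\beta(u,x)=x$ by $x\,x_r$, which is again bounded and Euclidean, and $\|s_\ell^{(2)}\|\le K^2s_0$.

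The main obstacle is that $q_j$, and hence $s_0$, has no lower bound near time zero. Standard uniform convergence of $S_n/s_0$ is therefore unavailable. The plan is to avoid any division by $s_0$: every bound is kept in the self-normalized form $s_0J/S_n$, whose moments Lemma~\ref{lem:cox-denominator-bounds} controls uniformly in $u$. The remaining point to verify is that the expectation bounds may be integrated in $u$ by Fubini. This holds because all integrands are nonnegative after taking norms.
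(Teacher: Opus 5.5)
Your proof is correct. The decomposition is the paper's: the paper writes it after multiplying through by $s_0$, with an extra term $-s^{(1)}\mathbf 1\{s_0=0\}$ that carries no $d\Lambda_{0,\ell,0}$-mass. Your treatment of the ratio term $s^{(1)}\{s_0J/S_n-1\}$ also rests on the same inequality \eqref{eq:cox-denominator-weighted-ratio}. The only difference there is that you apply Cauchy--Schwarz in $\omega$ at fixed $u$ to get an $L^1$ bound, whereas the paper applies it over the finite measure $d\Lambda_{0,\ell,0}$ to get an $L^2$ bound.

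The genuine divergence is in the numerator term $(s_0J/S_n)\{S_n^{(1)}-s^{(1)}\}$. The paper proves the pointwise second-moment bound $E\{A_n(u)^2\}\le Cn^{-1}s_0(u)$ directly. It does so by splitting on the binomial regimes $\{K_n\ge nq/2\}$, $\{0<K_n<nq/2,\ nq\ge 2\}$ (where Chernoff's bound is used), and $\{K_n>0,\ nq<2\}$. You instead factor out the uniform error $\sup_u\|S_n^{(1)}-s^{(1)}\|=O_p(n^{-1/2})$ and control $\int s_0J/S_n\,d\Lambda_{0,\ell,0}$ in mean via \eqref{eq:cox-denominator-inverse}.

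Your route is shorter and reuses machinery already invoked in Lemma~\ref{lem:cox-empirical-score}. It does depend on the Euclidean (VC) maximal inequality for the at-risk-indexed class $\{Y_j(u)e^{\beta_0^\top X}X: u\le\tau\}$, and it only delivers a stochastic-order statement, as a product of an $O_p(n^{-1/2})$ and an $O_p(1)$ factor. The paper's route is self-contained at each fixed $u$ and uses nothing beyond binomial calculations. It also yields the stronger conclusion that the whole integral has mean square $O(n^{-1})$. Both arguments use only boundedness of the multiplier, so either covers the coordinatewise version with any fixed bounded coordinate of $X$.
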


\begin{proof}[Proof of Lemma \ref{lem:cox-risk-mean-replacement}]
It suffices to work coordinatewise, since the Cox dimension is fixed. Write
$S_n^{(0)}(u)=S_{\ell,n}^{(0)}(\beta_0,u),\qquad S_n^{(1)}(u)=n^{-1}\sum_iY_{j,i}(u)e^{\beta_0^\top X_i}Z_{ir},$
and $s^{(0)}(u)=s_0(u)$, $s^{(1)}(u)=E\{Y_j(u)e^{\beta_0^\top X}Z_r\}$. Put $J(u)=\mathbf 1\{S_n^{(0)}(u)>0\}$. The bound $|Z_r|\le K$ implies $|s^{(1)}(u)|\le K s_0(u)$,
and $s^{(1)}(u)=0$ whenever $s_0(u)=0$. The exact decomposition is
\begin{equation}
\label{eq:cox-risk-mean-decomposition}
s_0J\bar Z_{\ell,n,r}-s_0\bar z_{\ell,r}
 =\frac{s_0J}{S_n^{(0)}}\{S_n^{(1)}-s^{(1)}\}
+s^{(1)}\left\{\frac{s_0J}{S_n^{(0)}}-1\right\}
-s^{(1)}\mathbf 1\{s_0=0\} .
\end{equation}
The last term has zero $d\Lambda_{0,\ell,0}$-mass by
\eqref{eq:cox-main-source-support}.

For the first term, set
$A_n(u)=\frac{s_0(u)J(u)}{S_n^{(0)}(u)}\{S_n^{(1)}(u)-s^{(1)}(u)\}.$
We establish
\begin{equation}
\label{eq:cox-risk-mean-first-moment}
E\{A_n(u)^2\}\le C n^{-1}s_0(u).
\end{equation}
Let $K_n(u)=\sum_iY_{j,i}(u)$ and $q(u)=P\{Y_j(u)=1\}$. As in
Lemma~\ref{lem:cox-denominator-bounds}, $S_n^{(0)}$ and $s_0$ are
comparable to $K_n/n$ and $q$, respectively. Consider $A_1=\{K_n\ge nq/2\}$,
$A_2=\{0<K_n<nq/2,nq\ge2\}$, and $A_3=\{0<K_n,nq<2\}$. On $A_1$, the multiplier $s_0J/S_n^{(0)}$ is bounded,
and
$E\{(S_n^{(1)}-s^{(1)})^2\}\le Cq/n\le C s_0/n,$
so the contribution of $A_1$ is bounded by $Cs_0/n$. On $A_2$,
the multiplier is at most $C nq$ and $|S_n^{(1)}-s^{(1)}|\le C(K_n/n+q)\le Cq$. The integrand is therefore
bounded by $C(nq)^2q^2$, giving contribution $C(nq)^2q^2e^{-cnq}\le Cs_0/n$ by Chernoff's inequality.
On $A_3$, we have $nq<2$ and $s_0J/S_n^{(0)}\le Cnq\le C$, while
$E\{(K_n/n+q)^2\mathbf 1(K_n>0)\}\le Cq/n.$
Thus the contribution of $A_3$ is also bounded by $Cs_0/n$, proving
\eqref{eq:cox-risk-mean-first-moment}.

For the second term in \eqref{eq:cox-risk-mean-decomposition}, use $|s^{(1)}|\le Ks_0$ and
\eqref{eq:cox-denominator-weighted-ratio}. Since $0\le s_0(u)\le C$,
\begin{align}
\label{eq:cox-risk-mean-second-moment}
&E\left[
|s^{(1)}(u)|^2\left\{\frac{s_0(u)J(u)}{S_n^{(0)}(u)}-1\right\}^2
\right] \\
&\qquad\le C s_0(u)
E\left[
s_0(u)\left\{\frac{s_0(u)J(u)}{S_n^{(0)}(u)}-1\right\}^2
\right]
\le C n^{-1}s_0(u). \notag
\end{align}
Let $D_n(u)$ be the sum of the first two terms in
\eqref{eq:cox-risk-mean-decomposition}. Cauchy--Schwarz for the finite measure $d\Lambda_{0,\ell,0}$
gives
\[
E\left[\left\{\int_0^{\tau}h(u)D_n(u)d\Lambda_{0,\ell,0}(u)\right\}^2\right]
\le \|h\|_\infty^2 \Lambda_{0,\ell,0}(\tau)
\int_0^{\tau}E\{D_n(u)^2\}d\Lambda_{0,\ell,0}(u).
\]
The integral on the right is $O(n^{-1})$ by
\eqref{eq:cox-risk-mean-first-moment} and \eqref{eq:cox-risk-mean-second-moment},
since $\int s_0d\Lambda_{0,\ell,0}$ and $\Lambda_{0,\ell,0}(\tau)$ are finite. Thus the second moment of the
integral in \eqref{eq:cox-risk-mean-replacement} is $O(n^{-1})$, as required.
\end{proof}

\begin{lemma}
\label{lem:cox-self-normalized-equicontinuity}
Fix a transition $\ell$ and a bounded deterministic function $h$.  Let
$p_\ell=\dim(\beta_{\ell,0})$, which is fixed by
Assumption~\ref{ass:cox-main-primitives}\ref{cxmain:bounded}.  For every fixed $M<\infty$,
\begin{equation}
\label{eq:selfnorm-beta-equicontinuity}
\sup_{\|\beta-\beta_{\ell,0}\|\le Mn^{-1/2}}
\left|
\int_0^{\tau}h(u)
\left\{
\frac{s_{\ell,0}(u)J_{\ell,n}(u)}{S_{\ell,n}^{(0)}(\beta,u)}-
\frac{s_{\ell,0}(u)J_{\ell,n}(u)}{S_{\ell,n}^{(0)}(\beta_{\ell,0},u)}
\right\}
 d\bar M_{\ell,0}^{\beta}(u)
\right|=O_p(n^{-1}).
\end{equation}
\end{lemma}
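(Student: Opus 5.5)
The plan is to reparametrize the shrinking ball and apply the same chaining argument used for the uniform statement in Lemma~\ref{lem:cox-self-normalized-ratios}, now scaled by $n$.

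Write $\beta_0=\beta_{\ell,0}$, $S_n(\beta,u)=S_{\ell,n}^{(0)}(\beta,u)$, $s_0=s_{\ell,0}$, $J=J_{\ell,n}$ and $d\Lambda_0=d\Lambda_{0,\ell,0}$. Put $\beta=\beta_0+n^{-1/2}t$ with $\|t\|\le M$, and define
\[
W_n(t)=n\int_0^\tau h(u)\,\rho_n(t,u)\,d\bar M_{\ell,0}^{\beta}(u),
\qquad
\rho_n(t,u)=\frac{s_0(u)J(u)}{S_n(\beta_0+n^{-1/2}t,u)}-\frac{s_0(u)J(u)}{S_n(\beta_0,u)}.
\]
The claim is equivalent to $\sup_{\|t\|\le M}|W_n(t)|=O_p(1)$.

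For each fixed deterministic $t$, $\rho_n(t,\cdot)$ is predictable, because $S_n(\beta,u)$ uses only the left-continuous risk indicators. Hence $W_n(t)$ is a martingale integral against $\bar M^\beta_{\ell,0}$.

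\textbf{Pointwise bound.} Bounded covariates give $S_n(\beta,u)\ge e^{-K\|\beta-\beta_0\|}S_n(\beta_0,u)$. On $\{J=1\}$ this yields
\[
|\rho_n(t,u)|\le C\,n^{-1/2}\|t\|\,\frac{s_0J}{S_n(\beta_0,u)}.
\]
The predictable variation of $\int h\rho_n\,d\bar M^\beta_{\ell,0}$ is at most $n^{-1}\int h^2\rho_n^2S_n(\beta_0,u)\,d\Lambda_0$. Using the inequality above, this is at most
\[
C n^{-2}\|t\|^2\int_0^\tau \frac{s_0^2J}{S_n(\beta_0,u)}\,d\Lambda_0(u).
\]
By \eqref{eq:cox-denominator-moment-weighted} with $m=2$, the expectation of this integral is at most $C\int s_0\,d\Lambda_0<\infty$. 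Therefore $\langle n\!\int h\rho_n\,d\bar M\rangle(\tau)=O_p(1)$, and Lenglart's inequality gives $W_n(t)=O_p(1)$ for each fixed $t$.

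\textbf{Increment bound.} For $t,t'$, the Lipschitz bound \eqref{eq:selfnorm-ratio-lipschitz} gives
\[
|\rho_n(t,u)-\rho_n(t',u)|\le Q_n(u)\,n^{-1/2}\|t-t'\|.
\]
Apply the purely discontinuous Burkholder--Davis--Gundy inequality to $W_n(t)-W_n(t')$, exactly as in \eqref{eq:selfnorm-martingale-increment}.

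\emph{Bracket term.} It contributes
\[
C_r\,n^{2r}\,E\Bigl\{n^{-2}\|t-t'\|^2\int Q_n^2S_n(\beta_0,u)\,d\Lambda_0\Bigr\}^r\le C_r\|t-t'\|^{2r},
\]
using \eqref{eq:selfnorm-envelope-moment}.

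\emph{Jump term.} By Lemma~\ref{lem:cox-no-ties}, each jump of $\bar M^\beta_{\ell,0}$ is at most $n^{-1}$. The jump contribution is then bounded by
\[
C_r\,n^{2r}\,n^{-2r+1}\,n^{-r}\|t-t'\|^{2r}\,E\!\int Q_n^{2r}S_n(\beta_0,u)\,d\Lambda_0\le C_r\,n^{1-r}\|t-t'\|^{2r},
\]
using \eqref{eq:selfnorm-jump-envelope-moment}.

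Together these give $E|W_n(t)-W_n(t')|^{2r}\le C_r\|t-t'\|^{2r}$, uniformly in $n$.

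\textbf{Chaining.} Take $r>p_\ell$ and dyadic nets of the ball $\{\|t\|\le M\}$. Kolmogorov chaining with Markov's inequality, as in the proof of Lemma~\ref{lem:cox-self-normalized-ratios}, then gives tightness of $\sup_{\|t\|\le M}|W_n(t)-W_n(0)|$. Since $W_n(0)=0$, the supremum is $O_p(1)$.

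\textbf{Main obstacle.} The main technical point is the increment moment bound: the $2r$-th moment must not blow up as $n$ grows. This relies on $Q_n$ being controlled by the self-normalized moment bounds \eqref{eq:cox-denominator-moment-ratio}--\eqref{eq:cox-denominator-moment-inverse}, without any lower bound on $s_0$. A secondary issue is measurability of the supremum. It is handled by continuity of $t\mapsto W_n(t)$, since $S_n(\beta,u)$ is continuous in $\beta$, so the supremum may be taken over a countable dense set.
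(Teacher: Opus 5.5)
Your proof is correct and follows essentially the paper's route: $2r$-th moment bounds on increments via the purely discontinuous Burkholder--Davis--Gundy inequality (bracket term plus jump term, the latter using the $n^{-1}$ jump size from Lemma~\ref{lem:cox-no-ties}), followed by dyadic chaining over the $n^{-1/2}$-ball. Your rescaling $t=\sqrt n(\beta-\beta_{\ell,0})$ is only cosmetic. The one difference is that you borrow the envelope $Q_n$ from Lemma~\ref{lem:cox-self-normalized-ratios}, whose Lipschitz bound is stated with $s(\beta,u)$ rather than the fixed $s_{\ell,0}$ in the numerator. The paper instead uses the sharper direct bound $C\|\beta-\beta'\|\,s_{\ell,0}J_{\ell,n}/S_{\ell,n}^{(0)}(\beta_{\ell,0},u)$, which your $Q_n$ dominates, so both routes work (and your $r>p_\ell$ indeed suffices for the chaining).
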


\begin{proof} [Proof of Lemma \ref{lem:cox-self-normalized-equicontinuity}]
Write $\beta_0=\beta_{\ell,0}$, $S_n(\beta,u)=S_{\ell,n}^{(0)}(\beta,u)$, $s_0=s_{\ell,0}$, and $J=J_{\ell,n}$. On the shrinking ball
$B_n(M)=\{\beta:\|\beta-\beta_0\|\le Mn^{-1/2}\}$, define
$$M_n(\beta)=\int_0^{\tau}h(u) \left\{\frac{s_0(u)J(u)}{S_n(\beta,u)}- \frac{s_0(u)J(u)}{S_n(\beta_0,u)}\right\} d\bar M_{\ell,0}^{\beta}(u).$$

For $\beta,\beta'\in B_n(M)$, the mean-value theorem and bounded covariates give,
$\left|\frac1{S_n(\beta,u)}-\frac1{S_n(\beta',u)}\right| \le C\|\beta-\beta'\|\frac1{S_n(\beta_0,u)}$,
on $J=1$ .
Indeed, $S_n(\beta,u)/S_n(\beta_0,u)$ lies between $e^{-K\|\beta-\beta_0\|}$ and $e^{K\|\beta-\beta_0\|}$ on this ball.
With
$g_{\beta,\beta'}(u)=h(u)s_0(u)J(u) \left\{\frac1{S_n(\beta,u)}-\frac1{S_n(\beta',u)}\right\}$,
we therefore have
\begin{equation}
\label{eq:selfnorm-lipschitz-bound}
|g_{\beta,\beta'}(u)|
\le C\|\beta-\beta'\|\,\frac{s_0(u)J(u)}{S_n(\beta_0,u)}.
\end{equation}

Let
$\Delta_n(\beta,\beta')=M_n(\beta)-M_n(\beta') =\int g_{\beta,\beta'}(u)d\bar M_{\ell,0}^{\beta}(u)$.
For an integer $r>p_\ell+2$, the Burkholder--Davis--Gundy inequality for purely
discontinuous square-integrable martingales gives
\begin{equation}
\label{eq:selfnorm-bdg-bound}
E|\Delta_n(\beta,\beta')|^{2r}
\le C_r E\{\langle\Delta_n(\beta,\beta')\rangle(\tau)^r\}
+C_rE\sum_{u\le\tau}|\Delta\Delta_n(\beta,\beta';u)|^{2r}.
\end{equation}
The predictable quadratic variation satisfies
$\langle\Delta_n(\beta,\beta')\rangle(\tau) \le \frac1n\int_0^{\tau}g_{\beta,\beta'}(u)^2 S_n(\beta_0,u)d\Lambda_{0,\ell,0}(u).$
Apply \eqref{eq:selfnorm-lipschitz-bound}, Jensen's inequality for the finite measure
$s_0(u)d\Lambda_{0,\ell,0}(u)$, and \eqref{eq:cox-denominator-moment-ratio} to obtain
\begin{equation}
\label{eq:selfnorm-bracket-moment}
E\{\langle\Delta_n(\beta,\beta')\rangle(\tau)^r\}
\le C_r n^{-r}\|\beta-\beta'\|^{2r}.
\end{equation}
For the jump contribution, Lemma~\ref{lem:cox-no-ties} gives
$\Delta\Lambda_{0,\ell,0}(u)=0$. Thus the compensator $S_n(\beta_0,u)d\Lambda_{0,\ell,0}(u)$ is continuous and
$\Delta\bar M_{\ell,0}^{\beta}(u)=\Delta\Ptr N_\ell(u).$
At each transition-$\ell$ jump time, at most one subject jumps, so
$\Delta\Ptr N_\ell(u)\le n^{-1}$. Consequently,
\[
\sum_{u\le\tau}|\Delta\Delta_n(\beta,\beta';u)|^{2r} \le n^{-2r}\sum_{i=1}^n\int_0^{\tau} |g_{\beta,\beta'}(u)|^{2r}\,dN_{\ell,i}(u).
\]
Taking expectations with the predictable compensator of $N_{\ell,i}$ yields
\[
E\sum_{u\le\tau}|\Delta\Delta_n(\beta,\beta';u)|^{2r}
\le n^{-2r+1}
\int E\{|g_{\beta,\beta'}(u)|^{2r}S_n(\beta_0,u)\}
    d\Lambda_{0,\ell,0}(u).
\]
By \eqref{eq:selfnorm-lipschitz-bound} and \eqref{eq:cox-denominator-moment-weighted},
\begin{align}
\label{eq:selfnorm-jump-moment}
E\sum_{u\le\tau}|\Delta\Delta_n(\beta,\beta';u)|^{2r}
&\le C_r n^{-2r+1}\|\beta-\beta'\|^{2r}
\int s_0(u)d\Lambda_{0,\ell,0}(u) \\
&\le C_r n^{-r}\|\beta-\beta'\|^{2r}. \notag
\end{align}
The final inequality uses $n^{-2r+1}\le n^{-r}$ for $r\ge1$. Combining
\eqref{eq:selfnorm-bdg-bound}--\eqref{eq:selfnorm-jump-moment} gives
\begin{equation}
\label{eq:selfnorm-increment-moment}
E|M_n(\beta)-M_n(\beta')|^{2r}
\le C_r n^{-r}\|\beta-\beta'\|^{2r}.
\end{equation}
Taking $\beta'=\beta_0$ and using $\|\beta-\beta_0\|\le Mn^{-1/2}$ gives
\begin{equation}
\label{eq:selfnorm-pointwise-moment}
E|M_n(\beta)|^{2r}\le C_r n^{-2r}.
\end{equation}

For the uniform bound, apply the dyadic-net argument used in the proof of
Lemma~\ref{lem:cox-self-normalized-ratios}, now on $B_n(M)$.
A $2^{-m}Mn^{-1/2}$-net has cardinality at most $C2^{mp_\ell}$.
The paths are continuous in $\beta$, since $S_n(\beta,u)$ is smooth
and its reciprocal is used only where $J=1$.
For the successive net increments, \eqref{eq:selfnorm-increment-moment} and
Markov's inequality give
\[
\Pr\left\{\max_{\text{level }m\text{ pairs}}
|M_n(t)-M_n(s)|>An^{-1}2^{-m/2}\right\}
\le C_rA^{-2r}2^{-m(r-p_\ell)}.
\]
The right-hand side is summable because $r>p_\ell+2$.
The coarse-net contribution is $O_p(n^{-1})$ by
\eqref{eq:selfnorm-pointwise-moment}, and the remaining thresholds sum to
$CAn^{-1}$. Thus $\sup_{\beta\in B_n(M)}|M_n(\beta)|=O_p(n^{-1})$,
proving \eqref{eq:selfnorm-beta-equicontinuity} .
\end{proof}

\begin{lemma}
\label{lem:breslow-weighted-linearization}
Fix a transition $\ell=(j,k)$, write $s_{\ell,0}(u)=s_\ell^{(0)}(\beta_{\ell,0},u)$,
and let $h_\ell$ be a bounded deterministic function.  Under
Assumption~\ref{ass:cox-main-primitives},
\begin{align}
\label{eq:breslow-weighted-linearization}
&\int_0^{\tau}s_{\ell,0}(u)h_\ell(u)
\,d\{\widehat\Lambda_{0,\ell}-\Lambda_{0,\ell,0}\}(u) \\
&\qquad=
\int_0^{\tau}h_\ell(u)\,d\bar M_{\ell,0}^{\beta}(u)
-
B_{\ell,h}^{\top}(\widehat\beta_\ell-\beta_{\ell,0})
+O_p(n^{-1}), \notag
\end{align}
where $d\bar M_{\ell,0}^{\beta}$ is the training-average Cox martingale
increment defined in \eqref{eq:cox-average-martingale},
and
$B_{\ell,h} =\int_0^{\tau}s_{\ell,0}(u)h_\ell(u) \bar z_\ell(\beta_{\ell,0},u)\,d\Lambda_{0,\ell,0}(u)$.
Consequently the left side is $O_p(n^{-1/2})$.
\end{lemma}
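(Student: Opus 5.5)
We split the Breslow error into a fixed-coefficient part and a coefficient-replacement part. Using the auxiliary estimator $\widetilde\Lambda_{0,\ell}$ from the proof of Lemma~\ref{lem:breslow-rates}, write
\[
\int s_{\ell,0}h_\ell\,d(\widehat\Lambda_{0,\ell}-\Lambda_{0,\ell,0})
=\int s_{\ell,0}h_\ell\,d(\widetilde\Lambda_{0,\ell}-\Lambda_{0,\ell,0})
+\int s_{\ell,0}h_\ell\,d(\widehat\Lambda_{0,\ell}-\widetilde\Lambda_{0,\ell}).
\]
The first term is handled as in Lemma~\ref{lem:weighted-na-linearization}. Decomposition \eqref{eq:breslow-truth-decomposition} gives this term as $\int h_\ell\,d\bar M^\beta_{\ell,0}$ plus two remainders. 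One remainder is the martingale integral of $h_\ell\{s_{\ell,0}J_{\ell,n}/S_n(\beta_0)-1\}$. Its predictable variation has expectation at most $n^{-1}\int h_\ell^2 E[S_n\{s_0J/S_n-1\}^2]\,d\Lambda_0=O(n^{-2})$ by \eqref{eq:cox-denominator-ratio}. The other remainder is the zero-risk term $\int s_{\ell,0}h_\ell\1\{S_n=0\}\,d\Lambda_0$, which is $O_p(n^{-1})$ by \eqref{eq:cox-denominator-zero} and finiteness of $\Lambda_{0,\ell,0}(\tau)$ (Lemma~\ref{lem:cox-no-ties}). So the first term equals $\int h_\ell\,d\bar M^\beta_{\ell,0}+O_p(n^{-1})$.

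For the replacement term, write $d\widehat\Lambda_{0,\ell}-d\widetilde\Lambda_{0,\ell}=J\{1/S_n(\widehat\beta)-1/S_n(\beta_0)\}\,\Ptr dN_\ell$. Substituting $\Ptr dN_\ell=d\bar M^\beta_{\ell,0}+S_n(\beta_0)\,d\Lambda_0$ splits it into two pieces.

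\emph{Martingale piece.} The weight is $h_\ell s_{\ell,0}J\{1/S_n(\widehat\beta)-1/S_n(\beta_0)\}$. Since $\widehat\beta$ is not predictable, we bound this piece by the supremum over $\|\beta-\beta_0\|\le Mn^{-1/2}$. Lemma~\ref{lem:cox-root-consistency} places $\widehat\beta$ in that ball with high probability, and Lemma~\ref{lem:cox-self-normalized-equicontinuity} then gives $O_p(n^{-1})$. This non-predictability is exactly why the equicontinuity lemma was proved.

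\emph{Compensator piece.} This is $\int h_\ell s_{\ell,0}J\{S_n(\beta_0)/S_n(\widehat\beta)-1\}\,d\Lambda_0$. A second-order Taylor expansion in $\beta$ gives
\[
S_n(\beta_0)/S_n(\widehat\beta)-1=-\bar Z_{\ell,n}(\beta_0,u)^\top(\widehat\beta-\beta_0)+O(\|\widehat\beta-\beta_0\|^2),
\]
uniformly on $J=1$, because bounded covariates make the Hessian of $\log S_n$ bounded. The quadratic term contributes $O_p(n^{-1})$ since $s_{\ell,0}$ is bounded and $\Lambda_0(\tau)<\infty$. The linear term is $-(\widehat\beta-\beta_0)^\top\int h_\ell s_{\ell,0}J\bar Z_{\ell,n}(\beta_0,u)\,d\Lambda_0$. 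Replace $J\bar Z_{\ell,n}$ by $\bar z_\ell$ using Lemma~\ref{lem:cox-risk-mean-replacement}; the error is $O_p(n^{-1/2})$, and multiplied by $\|\widehat\beta-\beta_0\|=O_p(n^{-1/2})$ it becomes $O_p(n^{-1})$. What remains is $-B_{\ell,h}^\top(\widehat\beta_\ell-\beta_{\ell,0})$.

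Collecting terms gives \eqref{eq:breslow-weighted-linearization}. The final claim that the left side is $O_p(n^{-1/2})$ follows because $\int h_\ell\,d\bar M^\beta_{\ell,0}$ has variance $O(n^{-1})$ and $\|B_{\ell,h}\|<\infty$.

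The main obstacle is the non-predictable $\widehat\beta$ inside the martingale weight. The plan handles it by invoking the uniform shrinking-ball bound rather than attempting any direct argument.
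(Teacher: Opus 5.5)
Your proof is correct and follows essentially the paper's argument. The paper splits the error into the leading martingale, a martingale remainder (bounded at $\beta_{\ell,0}$ via \eqref{eq:cox-denominator-ratio} and, for the non-predictable $\widehat\beta_\ell$, via Lemma~\ref{lem:cox-self-normalized-equicontinuity} on the shrinking ball), and a compensator term that is Taylor-expanded and corrected with Lemma~\ref{lem:cox-risk-mean-replacement}; you do the same, except that you route through $\widetilde\Lambda_{0,\ell}$ and book the zero-risk term with the fixed-coefficient part, whereas the paper keeps it inside its compensator remainder $R_{3,n}$.
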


\begin{proof} [Proof of Lemma \ref{lem:breslow-weighted-linearization}]
Write $\beta_0=\beta_{\ell,0}$, $\widehat\beta=\widehat\beta_\ell$, $S_n(\beta,u)=S_{\ell,n}^{(0)}(\beta,u)$, $s_0=s_{\ell,0}$, and $J=J_{\ell,n}$. The Breslow
definition and the decomposition $\Ptr dN_\ell=d\bar M_{\ell,0}^{\beta}+S_n(\beta_0,u)d\Lambda_{0,\ell,0}$ give
\begin{align*}
\int s_0h\,d(\widehat\Lambda_0-\Lambda_0)
&=R_{1,n}+R_{2,n}+R_{3,n}, \\
R_{1,n}&=\int h\,d\bar M_{\ell,0}^{\beta}, \\
R_{2,n}&=\int h\left\{\frac{s_0J}{S_n(\widehat\beta)}-1\right\}d\bar M_{\ell,0}^{\beta}, \\
R_{3,n}&=\int s_0h\left\{\frac{JS_n(\beta_0)}{S_n(\widehat\beta)}-1\right\}
 d\Lambda_{0,\ell,0}.
\end{align*}
For $R_{2,n}$, write

\[
\begin{aligned}
R_{2,n}&=\int h\left\{\frac{s_0J}{S_n(\beta_0)}-1\right\}d\bar M_{\ell,0}^{\beta}\\
&\quad+\int h\left\{\frac{s_0J}{S_n(\widehat\beta)}-\frac{s_0J}{S_n(\beta_0)}\right\} d\bar M_{\ell,0}^{\beta}.
\end{aligned}
\]

The first term has a predictable integrand and expected quadratic
variation bounded by

\[
\frac{\|h\|_\infty^2}{n}\int E\left[ S_n(\beta_0,u)\left\{\frac{s_0(u)J(u)}{S_n(\beta_0,u)}-1\right\}^2 \right]d\Lambda_{0,\ell,0}(u)=O(n^{-2})
\]

using \eqref{eq:cox-denominator-ratio}; hence it is $O_p(n^{-1})$.
The second contains the random parameter $\widehat\beta$. On $\|\widehat\beta-\beta_0\|\le Mn^{-1/2}$,
Lemma~\ref{lem:cox-self-normalized-equicontinuity} bounds it by $O_p(n^{-1})$.
Since this event has probability tending to one by
Lemma~\ref{lem:cox-root-consistency}, $R_{2,n}=O_p(n^{-1})$.

For $R_{3,n}$, the indicator $J$ does not depend on $\beta$.
On $\|\widehat\beta-\beta_0\|\le1$, Taylor expansion of $S_n(\beta_0,u)/S_n(\widehat\beta,u)$ on $\{J=1\}$ gives
\[
\frac{S_n(\beta_0,u)}{S_n(\widehat\beta,u)}-1
= -\bar Z_{\ell,n}(\beta_0,u)^\top(\widehat\beta-\beta_0)+\rho_n(u),
\qquad
|\rho_n(u)|\le C\|\widehat\beta-\beta_0\|^2.
\]
The contribution from $\{J=0\}$ has expectation bounded by $Cn^{-1}\int \mathbf 1\{s_0>0\}s_0(u)^{-1}d\Lambda_{0,\ell,0}(u)$ and
is therefore $O_p(n^{-1})$. Since $\|\widehat\beta-\beta_0\|^2=O_p(n^{-1})$ and $\int s_0d\Lambda_{0,\ell,0}<\infty$, the Taylor remainder
contributes $O_p(n^{-1})$. Thus
\[
R_{3,n}=-(\widehat\beta-\beta_0)^\top
\int s_0(u)h(u)\bar Z_{\ell,n}(\beta_0,u)d\Lambda_{0,\ell,0}(u)+O_p(n^{-1}).
\]
Lemma~\ref{lem:cox-risk-mean-replacement} replaces $\bar Z_{\ell,n}$ in this
integral by $\bar z_\ell$ with error $O_p(n^{-1/2})$. Multiplication by $\|\widehat\beta-\beta_0\|=O_p(n^{-1/2})$
makes this error $O_p(n^{-1})$, so $R_{3,n}=-B_{\ell,h}^\top(\widehat\beta-\beta_0)+O_p(n^{-1})$. Combining the three terms proves
\eqref{eq:breslow-weighted-linearization}.

The leading martingale $R_{1,n}$ has expected quadratic variation bounded
by $n^{-1}\|h\|_\infty^2\int s_0d\Lambda_{0,\ell,0}$ and is consequently $O_p(n^{-1/2})$. The coefficient-estimation term
is also $O_p(n^{-1/2})$.
\end{proof}

\begin{lemma}
\label{lem:cox-row-stabilization}
Under Assumption~\ref{ass:cox-main-primitives}, the stabilized Cox plug-in satisfies, uniformly
over covariate values in the support of $X$,
\begin{align}
\label{eq:cox-conditional-hazard-sup}
\|\widehat A_n(\cdot\mid x)-A_0(\cdot\mid x)\|_{\infty,\tau}&=O_p(n^{-1/2}), \\
\label{eq:cox-conditional-hazard-jumps}
[\widehat A_n(\cdot\mid x)-A_0(\cdot\mid x)]_{\Delta,2,\tau}&=O_p(n^{-1}).
\end{align}
Moreover, $\widehat A_n(\cdot\mid x)\in\Aspace_{\tau}$ pathwise for every
$x$.  The only total-variation bound asserted in this lemma is the
bound for the artificial row-stabilization correction $\widehat A_n-\bar A_n$:
\begin{equation}
\label{eq:cox-row-stabilization-variation}
\sum_{\ell\in\cJ}\sup_x
\left|\widehat A_{\ell,n}(\cdot\mid x)-\bar A_{\ell,n}(\cdot\mid x)\right|_{\mathrm{TV},\tau}
=O_p(n^{-1}).
\end{equation}
 \end{lemma}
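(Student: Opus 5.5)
The plan is to treat the raw hazards $\bar A_n(\cdot\mid x)$ and the stabilization correction $\widehat A_n-\bar A_n$ separately. The raw fit is controlled by the Cox and Breslow rates already in hand. The correction is a deterministic, jump-by-jump map, so it is controlled by elementary inequalities on the jump sizes.

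\emph{Step 1 (raw hazards).} For each $\ell$ and each $x$ in the support, write
\[
\bar A_{\ell,n}(t\mid x)-A_{\ell,0}(t\mid x)
= e^{\widehat\beta_\ell^\top x}\{\widehat\Lambda_{0,\ell}(t)-\Lambda_{0,\ell,0}(t)\}
+\{e^{\widehat\beta_\ell^\top x}-e^{\beta_{\ell,0}^\top x}\}\Lambda_{0,\ell,0}(t).
\]
Assumption~\ref{ass:cox-main-primitives}\ref{cxmain:bounded} gives $\|x\|\le K$. Lemma~\ref{lem:cox-root-consistency} and a mean-value bound then give $\sup_x|e^{\widehat\beta_\ell^\top x}-e^{\beta_{\ell,0}^\top x}|=O_p(n^{-1/2})$. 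Lemma~\ref{lem:cox-no-ties} gives $\Lambda_{0,\ell,0}(\tau)<\infty$, and Lemma~\ref{lem:breslow-rates} gives the sup rate of $\widehat\Lambda_{0,\ell}$. Together these yield the uniform-in-$x$ sup rate $O_p(n^{-1/2})$ for $\bar A_n$. For the jumps, $\Delta A_{\ell,0}=0$ (Lemma~\ref{lem:cox-no-ties}), so
\[
[\bar A_n-A_0]_{\Delta,2,\tau}\le e^{2R_\ell K}\sum_u\{\Delta\widehat\Lambda_{0,\ell}(u)\}^2=O_p(n^{-1})
\]
by \eqref{eq:breslow-squared-jumps}, uniformly in $x$. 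The same squared-jump bound also implies $\max_u\Delta\bar A_{\ell,n}(u\mid x)=O_p(n^{-1/2})$ uniformly in $x$.

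\emph{Step 2 (admissibility).} Admissibility is checked directly from the definition. Row zero has increments $\frac{x_{0k}}{x_0}(1-e^{-x_0})\ge0$, which sum to $1-e^{-x_0}<1$. Row one has increment $1-e^{-\Delta\bar A_{12}}\in[0,1)$. The fitted $\widehat A_n$ is constant between empirical jumps, vanishes at zero, and is cadlag. Hence $\widehat A_n(\cdot\mid x)\in\Aspace_\tau$ for every $x$ and every sample path.

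\emph{Step 3 (correction).} Each correction jump satisfies $0\le x-(1-e^{-x})\le x^2/2$, and the row-zero split inherits this factor through the proportions $x_{0k}/x_0\le1$. Hence
\[
|\Delta(\widehat A_{\ell,n}-\bar A_{\ell,n})(u\mid x)|\le C\sum_{\ell'}\{\Delta\bar A_{\ell',n}(u\mid x)\}^2 .
\]
Since the correction is a pure-jump function, its total variation is the sum of these absolute jumps. Summing over $u$ and using Step 1 gives
\[
\sup_x|\widehat A_n-\bar A_n|_{\TV,\tau}\le C\,e^{2RK}\sum_{\ell'}\sum_u\{\Delta\widehat\Lambda_{0,\ell'}(u)\}^2=O_p(n^{-1}),
\]
which is \eqref{eq:cox-row-stabilization-variation}.

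\emph{Step 4 (conclusion).} The sup norm is dominated by the total variation, so the correction changes the sup error by only $O_p(n^{-1})$. Its jumps are also bounded by the total variation. Writing $\widehat A_n-A_0=(\bar A_n-A_0)+(\widehat A_n-\bar A_n)$ and using $(a+b)^2\le2a^2+2b^2$ gives $[\widehat A_n-A_0]_{\Delta,2,\tau}\le 2\,O_p(n^{-1})+2\,O_p(n^{-2})$. This establishes \eqref{eq:cox-conditional-hazard-sup} and \eqref{eq:cox-conditional-hazard-jumps}.

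The main point requiring care is uniformity over $x$. It follows because every $x$-dependence enters through the factor $e^{\beta^\top x}$, which is bounded above and below by constants depending only on $(K,R_\ell)$. All stochastic terms therefore reduce to the $x$-free quantities $\widehat\beta_\ell$ and $\widehat\Lambda_{0,\ell}$.
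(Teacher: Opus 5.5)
Your proposal is correct and follows the paper's proof essentially step for step. You get the raw-hazard sup rate from the root-$n$ coefficient and Breslow rates with bounded covariates, use continuity of $A_0$ to reduce the jump error to the Breslow squared jumps, and check pathwise admissibility of the row transform directly. You then bound each correction jump by $|1-e^{-x}-x|\le x^2/2$ and sum over empirical jump times to get the $O_p(n^{-1})$ total-variation bound. The only cosmetic difference is at the end: you transfer the squared-jump bound to $\widehat A_n$ via $(a+b)^2\le 2a^2+2b^2$ and total-variation domination, whereas the paper uses the pointwise bound $|\Delta\widehat A_{\ell,n}|\le|\Delta\bar A_{\ell,n}|+C(\Delta\bar A_{\ell,n})^2$.
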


\begin{proof}[Proof of Lemma \ref{lem:cox-row-stabilization}]
Lemmas~\ref{lem:breslow-rates} and~\ref{lem:cox-root-consistency}
give the raw-hazard bound
$\sup_x\|\bar A_n(\cdot\mid x)-A_0(\cdot\mid x)\|_{\infty,\tau}=O_p(n^{-1/2}).$
Indeed, on the localization event of Lemma~\ref{lem:cox-root-consistency},
bounded covariates imply
$|e^{\widehat\beta_\ell^\top x}-e^{\beta_{\ell,0}^\top x}| \le C\|\widehat\beta_\ell-\beta_{\ell,0}\|, \qquad \sup_x e^{\widehat\beta_\ell^\top x}\le C$
The squared-jump bound \eqref{eq:breslow-squared-jumps} likewise gives
$\sup_x\sum_u\{\Delta\bar A_{\ell,n}(u\mid x)\}^2=O_p(n^{-1}).$
Continuity of $A_0$ makes this also the squared-jump bound for the raw
hazard error.

The row transformation is pathwise admissible: the nonnegative row-zero
increments sum to $1-e^{-x_0}\le1$, and the row-one increment is $1-e^{-x_{12}}\le1$.
Hence $\widehat A_n(\cdot\mid x)\in\Aspace_{\tau}$. To bound the correction $\widehat A_n-\bar A_n$ in total variation,
fix $u$ and write the raw row-zero increments as
$x_{01}=\Delta\bar A_{01,n}(u\mid x)$, $x_{02}=\Delta\bar A_{02,n}(u\mid x)$, $x_0=x_{01}+x_{02}$.
The transformed increments are
$\Delta\widehat A_{0k,n}(u\mid x) =\frac{x_{0k}}{x_0}\{1-e^{-x_0}\}, \qquad k=1,2,$
with value zero when $x_0=0$. The inequality $|1-e^{-x}-x|\le x^2/2$ gives
$\left|\Delta\widehat A_{0k,n}(u\mid x)-x_{0k}\right| = x_{0k} \left|\frac{1-e^{-x_0}}{x_0}-1\right| \le x_{0k}x_0/2 \le x_0^2/2 .$
Summing over $k=1,2$ and using $(x_{01}+x_{02})^2\le2x_{01}^2+2x_{02}^2$ bounds the row-zero contribution
by a constant times $x_{01}^2+x_{02}^2$. For row one, set $x_{12}=\Delta\bar A_{12,n}(u\mid x)$; then
$|\Delta\widehat A_{12,n}(u\mid x)-x_{12}| =|1-e^{-x_{12}}-x_{12}| \le x_{12}^2/2.$
Summing over the empirical jump times yields
\[
\sum_{\ell\in\cJ}\sup_x
\left|\widehat A_{\ell,n}(\cdot\mid x)-\bar A_{\ell,n}(\cdot\mid x)\right|_{\mathrm{TV},\tau}
\le C\sup_x\sum_{\ell\in\cJ}\sum_{0<u\le\tau}
\{\Delta\bar A_{\ell,n}(u\mid x)\}^2
=O_p(n^{-1}),
\]
The final equality follows from \eqref{eq:breslow-squared-jumps},
transferred to the conditional hazards by bounded covariates. The sup-norm
correction is therefore also $O_p(n^{-1})$. Finally,
$|\Delta\widehat A_{\ell,n}|\le |\Delta\bar A_{\ell,n}|+C(\Delta\bar A_{\ell,n})^2$
and the raw squared-jump bound give the corresponding bound for the
transformed hazards. This proves \eqref{eq:cox-conditional-hazard-sup},
\eqref{eq:cox-conditional-hazard-jumps}, and \eqref{eq:cox-row-stabilization-variation}.
\end{proof}

\begin{theorem}
\label{thm:cox-breslow-rates}
Under Assumption~\ref{ass:cox-main-primitives}, the explicit coefficient and
baseline-hazard estimators above satisfy the corresponding rates stated
in Proposition~\ref{prop:cox-main-regime}. For each $\ell\in\cJ$ and fixed
bounded deterministic $h_\ell$, they also satisfy
\begin{equation}
\label{eq:cox-source-weighted-rate}
\int_0^{\tau}s_{\ell,0}(u)h_\ell(u)
\,d\{\widehat\Lambda_{0,\ell}-\Lambda_{0,\ell,0}\}(u)=O_p(n^{-1/2}).
\end{equation}
For the row-stabilized conditional hazards specified above,
\begin{equation}
\label{eq:cox-plugin-rate}
\widehat\mu_{n,m}^{\mathrm{plug}}(\tau)-\mu_\tau(A_0)
=O_p(m^{-1/2}+n^{-1/2}).
\end{equation}
In particular, the rate is $O_p(n^{-1/2})$ when $m\asymp n$.
No condition of the form $\inf_{u\le\tau}s_{12,0}(u)>0$ is imposed.
\end{theorem}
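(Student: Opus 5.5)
The parts concerning the coefficients and baseline hazards are already available. Lemma~\ref{lem:cox-root-consistency} gives $\|\widehat\beta_\ell-\beta_{\ell,0}\|=O_p(n^{-1/2})$, and \eqref{eq:breslow-sup-rate} in Lemma~\ref{lem:breslow-rates} gives the uniform Breslow rate. Together these are the first two rates of Proposition~\ref{prop:cox-main-regime}. For \eqref{eq:cox-source-weighted-rate} I would invoke Lemma~\ref{lem:breslow-weighted-linearization}. The martingale term $\int h_\ell\,d\bar M^\beta_{\ell,0}$ has expected predictable variation at most $n^{-1}\|h_\ell\|_\infty^2\int s_{\ell,0}\,d\Lambda_{0,\ell,0}$, so it is $O_p(n^{-1/2})$. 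The coefficient term $B_{\ell,h}^\top(\widehat\beta_\ell-\beta_{\ell,0})$ is also $O_p(n^{-1/2})$, because $\|B_{\ell,h}\|\le K\|h_\ell\|_\infty\int s_{\ell,0}\,d\Lambda_{0,\ell,0}<\infty$.

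For the plug-in rate \eqref{eq:cox-plugin-rate}, I would split
\[
\widehat\mu^{\mathrm{plug}}_{n,m}-\mu_\tau(A_0)=(\Pev-P_X)\widehat\mu_n+P_X\{\widehat\mu_n-\mu_{A_0}(\cdot\mid X)\}.
\]
Row-stabilization makes $\widehat A_n(\cdot\mid x)$ admissible (Lemma~\ref{lem:cox-row-stabilization}), so \eqref{eq:v-bounds} gives $0\le\widehat\mu_n\le\tau$. The conditional variance bound \eqref{eq:conditional-evaluation-variance} then gives $O_p(m^{-1/2})$ for the first term. For the second term, Lemma~\ref{lem:rmst-hazard-derivative} yields $|\mu_{\widehat A_n}(x)-\mu_{A_0}(x)|\le2\tau\kappa_{\rm PI}(A_0(\cdot\mid x),\widehat A_n(\cdot\mid x))$. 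Lemma~\ref{lem:product-integral-expansion}, \eqref{eq:product-integral-lipschitz}, bounds this by $C_{\rm PI}(M,\tau)\|\widehat A_n(\cdot\mid x)-A_0(\cdot\mid x)\|_{\infty,\tau}$, provided the total masses stay below $M$. That supremum is $O_p(n^{-1/2})$ uniformly in $x$ by \eqref{eq:cox-conditional-hazard-sup}. Averaging over $P_X$ gives $O_p(n^{-1/2})$, and the case $m\asymp n$ follows immediately. No positivity of $s_{12,0}$ enters, because all risk-set control has already been obtained through the inverse-risk conditions in the lemmas.

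The main obstacle is the mass localization needed to apply \eqref{eq:product-integral-lipschitz}, namely $|A_0(\cdot\mid x)|_\tau\vee|\widehat A_n(\cdot\mid x)|_\tau\le M$ uniformly in $x$ with probability tending to one. For the truth, Lemma~\ref{lem:cox-no-ties} gives $\Lambda_{0,\ell,0}(\tau)<\infty$, so $|A_0(\cdot\mid x)|_\tau\le e^{R_\ell K}\sum_\ell\Lambda_{0,\ell,0}(\tau)$. For the estimate, I would bound $\widehat A_{\ell,n}(\tau\mid x)\le\bar A_{\ell,n}(\tau\mid x)$, which holds because $1-e^{-y}\le y$ and the row-zero rescaling only shrinks increments. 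Then $\bar A_{\ell,n}(\tau\mid x)\le e^{\|\widehat\beta_\ell\|K}\widehat\Lambda_{0,\ell}(\tau)$. This is at most $e^{(R_\ell+1)K}\{\Lambda_{0,\ell,0}(\tau)+O_p(n^{-1/2})\}$ on the event $\|\widehat\beta_\ell-\beta_{\ell,0}\|\le1$, whose probability tends to one. Choosing $M$ slightly above the resulting constant gives an event of probability tending to one on which all the bounds above hold uniformly in $x$, and this completes the argument.
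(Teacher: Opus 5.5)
Your proposal is correct and follows essentially the same route as the paper: it takes the rates from Lemmas~\ref{lem:cox-root-consistency}, \ref{lem:breslow-rates} and~\ref{lem:breslow-weighted-linearization}, and admissibility and the uniform conditional-hazard rate from Lemma~\ref{lem:cox-row-stabilization}, then applies a mass localization, the product-integral Lipschitz bound, and $|\mu_B-\mu_A|\le 2\tau\kappa_{\rm PI}$. The only differences are cosmetic, and both are valid for the stated rate: you split the plug-in error into two terms, $(\Pev-P_X)\widehat\mu_n+P_X(\widehat\mu_n-\mu_{A_0})$, rather than the paper's three (which isolates $(\Pev-P_X)\mu_{A_0}$ plus a remainder of order $m^{-1/2}n^{-1/2}$), and you dominate the stabilized mass directly by the raw Cox mass via $1-e^{-y}\le y$ instead of invoking the $O_p(n^{-1})$ total-variation correction.
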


\begin{proof}[Proof of Theorem~\ref{thm:cox-breslow-rates}]
The coefficient and baseline-hazard rates stated in
Proposition~\ref{prop:cox-main-regime} follow, for the estimators specified
here, from Lemmas~\ref{lem:cox-root-consistency}
and~\ref{lem:breslow-rates}.
Lemma~\ref{lem:breslow-weighted-linearization} gives
\eqref{eq:cox-source-weighted-rate}: its leading martingale has order
$n^{-1/2}$, the coefficient-estimation term is $O_p(n^{-1/2})$, and the remainder
is $O_p(n^{-1})$.

By Lemma~\ref{lem:cox-row-stabilization}, $\widehat A_n(\cdot\mid x)\in\Aspace_{\tau}$ for every $x$.
Lemma~\ref{lem:cox-row-stabilization} gives
$\sup_x\|\widehat A_n(\cdot\mid x)-A_0(\cdot\mid x)\|_{\infty,\tau} =O_p(n^{-1/2}).$
Its proof obtains this rate for the raw hazards and transfers it to the
row-admissible hazards using the $O_p(n^{-1})$ total-variation correction in
\eqref{eq:cox-row-stabilization-variation}.

The true hazards satisfy
$\sup_x A_{\ell,0}(\tau\mid x) \le e^{\|\beta_{\ell,0}\|K}\Lambda_{0,\ell,0}(\tau)<\infty.$
For the fitted hazards, Lemma~\ref{lem:breslow-rates} gives
$\widehat\Lambda_{0,\ell}(\tau)=O_p(1)$. Bounded covariates and $\widehat\beta_\ell=\beta_{\ell,0}+O_p(n^{-1/2})$ imply $\sup_x e^{\widehat\beta_\ell^\top x}=O_p(1)$. Row stabilization
changes the total mass by at most the $O_p(n^{-1})$ total-variation correction.
Thus, for each $\eta>0$, there exists $M_\eta<\infty$ such that
$\Pr\left( \sup_x |A_0(\cdot\mid x)|_{\tau} \vee \sup_x |\widehat A_n(\cdot\mid x)|_{\tau} \le M_\eta \right)\ge1-\eta$
for sufficiently large $n$.
On this event, \eqref{eq:product-integral-lipschitz} in
Lemma~\ref{lem:product-integral-expansion} gives
\[
\sup_x\kappa_{\mathrm{PI}}\{A_0(\cdot\mid x),\widehat A_n(\cdot\mid x)\}
\le
C_{\mathrm{PI}}(M_\eta,\tau)
\sup_x\|\widehat A_n(\cdot\mid x)-A_0(\cdot\mid x)\|_{\infty,\tau}.
\]
The conditional-hazard rate above makes the right-hand side $O_p(n^{-1/2})$.
Applying \eqref{eq:rmst-product-integral-bound} in
Lemma~\ref{lem:rmst-hazard-derivative} yields
\[
\sup_x|\mu_{\widehat A_n}(\tau\mid x)-\mu_{A_0}(\tau\mid x)|
\le 2\tau
\sup_x\kappa_{\mathrm{PI}}\{A_0(\cdot\mid x),\widehat A_n(\cdot\mid x)\}
=O_p(n^{-1/2}).
\]
and in particular
$P_X\{\mu_{\widehat A_n}(\tau\mid X)-\mu_{A_0}(\tau\mid X)\} =O_p(n^{-1/2}).$

Adding and subtracting the evaluation and population means gives
\[
\widehat\mu_{n,m}^{\mathrm{plug}}-\mu_\tau(A_0)
=(\Pev-P_X)\mu_{A_0}
+P_X(\mu_{\widehat A_n}-\mu_{A_0})
+(\Pev-P_X)(\mu_{\widehat A_n}-\mu_{A_0})
\]
The first term is $O_p(m^{-1/2})$ by boundedness of $\mu_{A_0}$,
and the second is $O_p(n^{-1/2})$ by the preceding uniform bound.
For the last term, conditional independence and
\eqref{eq:conditional-evaluation-variance} give a variance bounded by
$m^{-1}P_X(\mu_{\widehat A_n}-\mu_{A_0})^2=O_p(m^{-1}n^{-1})$.
Conditional Chebyshev's inequality proves
\eqref{eq:cox-plugin-rate}; the stated special case follows when $m\asymp n$.
\end{proof}

We record a further consequence for the standardized RMST derivative.

\begin{lemma}
\label{lem:cox-rmst-source-domination}
Let
$L_{01}(u,x)=p_0(u-\mid x)\{V_{1,0}(u,\tau\mid x)-V_{0,0}(u,\tau\mid x)\},$
$L_{02}(u,x)=-p_0(u-\mid x)V_{0,0}(u,\tau\mid x)$, $L_{12}(u,x)=-p_1(u-\mid x)V_{1,0}(u,\tau\mid x)$.
For $r=0,1,\ldots,d$, with $Z_0\equiv1$, define
$a_{\ell,r}(u)=P_X\{L_\ell(u,X)e^{\beta_{\ell,0}^\top X}Z_r\}$, $s_{\ell,0}(u)=E\{Y_{j(\ell)}(u)e^{\beta_{\ell,0}^\top X}\}$.
Under conditional independent censoring with $\inf_{u,x}G(u-\mid x)\ge g_0>0$,
there are bounded deterministic functions $h_{\ell,r}$ such that
$a_{\ell,r}(u)=s_{\ell,0}(u)h_{\ell,r}(u)$, and $\|h_{\ell,r}\|_\infty\le K_r\tau/g_0,$
where $K_0=1$ and $K_r=K$ for $r\ge1$.
\end{lemma}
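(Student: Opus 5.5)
The plan is to define $h_{\ell,r}$ explicitly as the ratio $a_{\ell,r}(u)/s_{\ell,0}(u)$ on $\{s_{\ell,0}>0\}$ and as zero elsewhere. The work then splits into two tasks: show that $a_{\ell,r}$ vanishes where $s_{\ell,0}$ does, and bound the ratio uniformly. Both follow once the source-state occupancy $p_{j(\ell)}(u-\mid x)$ appearing in $L_\ell$ is rewritten as the conditional risk-set mean divided by the censoring survival.

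\textbf{Step 1: rewrite the occupancy.} Under conditionally independent censoring and the coarsening map \eqref{eq:C-coarsening-map}, the observed source risk indicator factorizes as $Y_j(u)=Y_j^*(u)Y_C^*(u)$. Hence
\[
E\{Y_j(u)\mid X=x\}=p_j(u-\mid x)\,G(u-\mid x),
\]
the conditional analogue of $q_j(u)=P_j(u-)G(u-)$ in Assumption~\ref{ass:aj-main-integrated}. Since $G(u-\mid x)\ge g_0>0$, this gives
\[
p_j(u-\mid x)=\frac{E\{Y_j(u)\mid X=x\}}{G(u-\mid x)} .
\]

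\textbf{Step 2: rewrite the coefficient.} Write $L_\ell(u,x)=p_{j(\ell)}(u-\mid x)\,W_{\ell,0}(u,x)$, where $W_{\ell,0}$ is $V_{1,0}-V_{0,0}$, $-V_{0,0}$, or $-V_{1,0}$. By \eqref{eq:v-bounds}, $|W_{\ell,0}|\le\tau$. Substituting Step 1 and using the tower property over $X$ yields
\[
a_{\ell,r}(u)=E\Big\{Y_{j(\ell)}(u)\,e^{\beta_{\ell,0}^\top X}\,\frac{Z_r\,W_{\ell,0}(u,X)}{G(u-\mid X)}\Big\}.
\]
The factor $c_{\ell,r}(u,X)=Z_r W_{\ell,0}(u,X)/G(u-\mid X)$ is bounded in absolute value by $K_r\tau/g_0$, using $|Z_0|=1$ and $|Z_r|\le K$ from \ref{cxmain:bounded}.

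\textbf{Step 3: domination and the bound.} The weight $Y_{j(\ell)}(u)e^{\beta_{\ell,0}^\top X}$ is nonnegative and its expectation is $s_{\ell,0}(u)$, so
\[
|a_{\ell,r}(u)|\le (K_r\tau/g_0)\,s_{\ell,0}(u).
\]
In particular, $a_{\ell,r}(u)=0$ whenever $s_{\ell,0}(u)=0$, so $a_{\ell,r}=s_{\ell,0}h_{\ell,r}$ holds everywhere with the stated bound on $h_{\ell,r}$. Equivalently, $h_{\ell,r}(u)$ is a weighted average of $c_{\ell,r}(u,X)$ under the tilted probability proportional to $Y_{j(\ell)}(u)e^{\beta_{\ell,0}^\top X}$.

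\textbf{Step 4: measurability.} Each $h_{\ell,r}$ is deterministic and Borel in $u$, because $a_{\ell,r}$ and $s_{\ell,0}$ are.

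The main obstacle is Step 1, specifically the conditional factorization $E\{Y_j(u)\mid X\}=p_j(u-\mid X)G(u-\mid X)$. This needs independence of $C$ from the illness--death path given $X$ (Assumption~\ref{ass:C-ignore}), and it needs the left limits to be handled consistently: $Y_j$ uses $C\ge u$, while the denominator uses $G(u-)=P(C\ge u)$. I would verify it directly from the definitions of $Y_0^*$, $Y_1^*$ and $Y_C^*$, conditioning on $X$. The remaining steps are elementary.
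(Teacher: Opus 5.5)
Your proof is correct and is essentially the paper's argument. Both rest on the factorization $E\{Y_{j}(u)\mid X\}=p_{j}(u-\mid X)G(u-\mid X)$ under conditionally independent censoring, the bound $|L_\ell(u,x)|\le\tau p_{j(\ell)}(u-\mid x)$, and $G\ge g_0$, and both set $h_{\ell,r}=a_{\ell,r}/s_{\ell,0}$ on $\{s_{\ell,0}>0\}$ and zero elsewhere. The only difference is the order of steps: the paper bounds $|a_{\ell,r}(u)|\le K_r\tau P_X\{p_{j}(u-\mid X)e^{\beta_{\ell,0}^\top X}\}$ and then lower-bounds $s_{\ell,0}(u)\ge g_0P_X\{p_{j}(u-\mid X)e^{\beta_{\ell,0}^\top X}\}$, whereas you substitute the factorization first and write $h_{\ell,r}$ as a tilted average of a bounded factor, which yields the same constant.
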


\begin{proof} [Proof of Lemma \ref{lem:cox-rmst-source-domination}]
For $\ell=(j,k)$, the bound $|L_\ell(u,x)|\le\tau p_j(u-\mid x)$ gives
$|a_{\ell,r}(u)| \le K_r\tau P_X\{p_j(u-\mid X)e^{\beta_{\ell,0}^\top X}\}.$
Conditional independent censoring implies
\[
s_{\ell,0}(u)=P_X\{p_j(u-\mid X)G(u-\mid X)e^{\beta_{\ell,0}^\top X}\}
\ge g_0P_X\{p_j(u-\mid X)e^{\beta_{\ell,0}^\top X}\}.
\]
Consequently, $|a_{\ell,r}(u)|\le K_r\tau g_0^{-1}s_{\ell,0}(u)$.
Taking $h_{\ell,r}=a_{\ell,r}/s_{\ell,0}$ on $\{s_{\ell,0}>0\}$ and zero elsewhere
proves the assertion.
\end{proof}

\begin{lemma}
\label{lem:cox-standardized-derivative-rate}
Recall that $\dotmu_{\tau,A_0}[\widehat A_n-A_0](x)$ is the conditional
RMST derivative from Lemma~\ref{lem:rmst-hazard-derivative}, evaluated at the true
conditional hazard $A_0(\cdot\mid x)$ in the direction
$\widehat A_n(\cdot\mid x)-A_0(\cdot\mid x)$.   Under Assumption~\ref{ass:cox-main-primitives},
\begin{equation}
\label{eq:cox-standardized-derivative-rate}
\dotmu_{\tau,A_0}^{\mathrm{std}}[\widehat A_n-A_0]=O_p(n^{-1/2}).
\end{equation}
\end{lemma}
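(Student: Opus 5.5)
The plan is to write the standardized derivative explicitly and reduce it to the weighted Breslow linearization of Lemma~\ref{lem:breslow-weighted-linearization}. By Lemma~\ref{lem:rmst-hazard-derivative}, applied at each $x$ and then averaged over $P_X$,
\[
\dotmu_{\tau,A_0}^{\mathrm{std}}[\widehat A_n-A_0]
=\sum_{\ell\in\cJ}P_X\int_{(0,\tau]}L_\ell(u,X)\,d\{\widehat A_{\ell,n}-A_{\ell,0}\}(u\mid X),
\]
with $L_\ell$ as in Lemma~\ref{lem:cox-rmst-source-domination}. The first step is to replace the row-stabilized $\widehat A_n$ by the raw Cox--Breslow $\bar A_n$. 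Since $|L_\ell|\le\tau$, the replacement error is bounded by $\tau$ times the total-variation correction, and \eqref{eq:cox-row-stabilization-variation} makes it $O_p(n^{-1})$.

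Next I would decompose the raw increment:
\[
d\bar A_{\ell,n}(u\mid x)-dA_{\ell,0}(u\mid x)=e^{\beta_{\ell,0}^\top x}\,d\{\widehat\Lambda_{0,\ell}-\Lambda_{0,\ell,0}\}(u)+\{e^{\widehat\beta_\ell^\top x}-e^{\beta_{\ell,0}^\top x}\}\,d\widehat\Lambda_{0,\ell}(u).
\]
For the first piece, Fubini moves $P_X$ inside the $u$-integral. This gives $\int a_{\ell,0}(u)\,d\{\widehat\Lambda_{0,\ell}-\Lambda_{0,\ell,0}\}(u)$ with $a_{\ell,0}(u)=P_X\{L_\ell(u,X)e^{\beta_{\ell,0}^\top X}\}$. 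By Lemma~\ref{lem:cox-rmst-source-domination} we have $a_{\ell,0}=s_{\ell,0}h_{\ell,0}$ with $h_{\ell,0}$ bounded and deterministic. Then \eqref{eq:cox-source-weighted-rate} of Theorem~\ref{thm:cox-breslow-rates} (equivalently Lemma~\ref{lem:breslow-weighted-linearization}) makes this piece $O_p(n^{-1/2})$.

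For the second piece, a first-order Taylor expansion gives
\[
e^{\widehat\beta_\ell^\top x}-e^{\beta_{\ell,0}^\top x}=e^{\beta_{\ell,0}^\top x}x^\top(\widehat\beta_\ell-\beta_{\ell,0})+O(\|\widehat\beta_\ell-\beta_{\ell,0}\|^2),
\]
uniformly over $\|x\|\le K$. The quadratic remainder integrates against $\widehat\Lambda_{0,\ell}$, whose total mass $\widehat\Lambda_{0,\ell}(\tau)=O_p(1)$ by Lemma~\ref{lem:breslow-rates}. With $|L_\ell|\le\tau$ it therefore contributes $O_p(n^{-1})$. The linear term equals $(\widehat\beta_\ell-\beta_{\ell,0})^\top\sum_r e_r\int a_{\ell,r}(u)\,d\widehat\Lambda_{0,\ell}(u)$. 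Each integral satisfies $|\int a_{\ell,r}\,d\widehat\Lambda_{0,\ell}|\le(K_r\tau/g_0)\int s_{\ell,0}\,d\widehat\Lambda_{0,\ell}$. This is $O_p(1)$, since the integral equals $\int s_{\ell,0}\,d\Lambda_{0,\ell,0}<\infty$ plus an $O_p(n^{-1/2})$ term, by \eqref{eq:cox-source-weighted-rate} with $h\equiv1$. Combined with $\|\widehat\beta_\ell-\beta_{\ell,0}\|=O_p(n^{-1/2})$ from Lemma~\ref{lem:cox-root-consistency}, the linear term is $O_p(n^{-1/2})$. Summing over the three transitions proves \eqref{eq:cox-standardized-derivative-rate}.

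The main obstacle is the first piece, because $1/s_{\ell,0}$ may be unbounded near time zero for the $12$ transition. A direct sup-norm bound on $\widehat\Lambda_{0,\ell}-\Lambda_{0,\ell,0}$ would need integration by parts against the variation of $a_{\ell,0}$, which I would avoid. The source domination $a_{\ell,r}=s_{\ell,0}h_{\ell,r}$ is exactly what lets the weighted Breslow lemma absorb the inverse risk through the martingale term, with no positivity condition on $s_{12,0}$. A minor check is that applying Fubini to the signed measures is justified; this holds because $P_X\{e^{\beta^\top X}\}$ is bounded and the hazard masses are finite.
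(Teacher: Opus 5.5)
Your proof is correct and follows the paper's argument. Both use the derivative representation, the $O_p(n^{-1})$ row-stabilization correction, the split of the Cox increment into a baseline-hazard error weighted by $e^{\beta_{\ell,0}^\top x}$ plus a coefficient error, source domination feeding $a_{\ell,0}=s_{\ell,0}h_{\ell,0}$ into the weighted Breslow linearization, and a quadratic Taylor remainder in $\widehat\beta_\ell-\beta_{\ell,0}$. The only difference is bookkeeping: the paper splits the coefficient error into a term against $d\Lambda_{0,\ell,0}$ and a cross term against $d\{\widehat\Lambda_{0,\ell}-\Lambda_{0,\ell,0}\}$, which it shows is $O_p(n^{-1})$ by reusing the weighted Breslow lemma; you integrate against $d\widehat\Lambda_{0,\ell}$ and bound the mass by $O_p(1)$, which is enough for the stated $O_p(n^{-1/2})$ rate.
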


\begin{proof} [Proof of Lemma \ref{lem:cox-standardized-derivative-rate}]
Lemma~\ref{lem:rmst-hazard-derivative} gives
$\dotmu_{\tau,A_0}[\widehat A_n-A_0](x) =\sum_{\ell\in\cJ}\int_0^{\tau}L_\ell(u,x) \,d\{\widehat A_{\ell,n}-A_{\ell,0}\}(u\mid x).$
We first treat the raw Cox hazards $\bar A_n$ and then account for the
row-stabilized hazards $\widehat A_n$. Fix $\ell$ and write $\Delta_\beta=\widehat\beta_\ell-\beta_{\ell,0}$.
For bounded $x$, Taylor's formula gives
\begin{align}
\label{eq:cox-rmst-taylor-expansion}
&\int L_\ell(u,x)\,d\{\bar A_{\ell,n}-A_{\ell,0}\}(u\mid x) \\
&=\int L_\ell(u,x)e^{\beta_{\ell,0}^\top x}
\,d\{\widehat\Lambda_{0,\ell}-\Lambda_{0,\ell,0}\}(u) \notag \\
&\quad+\Delta_\beta^\top
\int L_\ell(u,x)e^{\beta_{\ell,0}^\top x}x\,d\Lambda_{0,\ell,0}(u) \notag \\
&\quad+\Delta_\beta^\top
\int L_\ell(u,x)e^{\beta_{\ell,0}^\top x}x\,
   d\{\widehat\Lambda_{0,\ell}-\Lambda_{0,\ell,0}\}(u)
+R_{\ell,n}(x). \notag
\end{align}
The last line retains the product of the regression and baseline-hazard
errors. The remaining Taylor term satisfies
$P_X|R_{\ell,n}(X)|=O_p(n^{-1}).$
Indeed, the second derivative of $e^{\beta^\top x}$ is uniformly bounded on the
local-root event, giving
$$|R_{\ell,n}(x)|\le C\|\Delta_\beta\|^2 \left\{\Lambda_{0,\ell,0}(\tau)+ \sup_{t\le\tau}|\widehat\Lambda_{0,\ell}(t)-\Lambda_{0,\ell,0}(t)|\right\}.$$
By Lemmas~\ref{lem:cox-root-consistency} and~\ref{lem:breslow-rates},
the right-hand side is $O_p(n^{-1})$ after taking $P_X$-expectations.

The coefficient of the first baseline term is
$a_{\ell,0}(u)=P_X\{L_\ell(u,X)e^{\beta_{\ell,0}^\top X}\}$, obtained
by taking $P_X$-expectations in \eqref{eq:cox-rmst-taylor-expansion}.
Lemma~\ref{lem:cox-rmst-source-domination} gives $a_{\ell,0}(u)=s_{\ell,0}(u)h_{\ell,0}(u)$ with bounded
$h_{\ell,0}$.
Hence Lemma~\ref{lem:breslow-weighted-linearization} yields
$\int a_{\ell,0}(u)d\{\widehat\Lambda_{0,\ell}-\Lambda_{0,\ell,0}\}(u) =O_p(n^{-1/2})$.
The pure parameter term is therefore $O_p(n^{-1/2})$ by Lemma~\ref{lem:cox-root-consistency}.
with the finite vector coefficient
$\int P_X\{ L_\ell(u,X)e^{\beta_{\ell,0}^\top X}X\} d\Lambda_{0,\ell,0}(u)$.

For the cross term, the coordinate coefficients
$u_{\ell,r}(u)=P_X\{L_\ell(u,X)e^{\beta_{\ell,0}^\top X}Z_r\}$
satisfy $u_{\ell,r}(u)=s_{\ell,0}(u)h_{\ell,r}(u)$ with bounded $h_{\ell,r}$, again by
Lemma~\ref{lem:cox-rmst-source-domination}.
Thus Lemma~\ref{lem:breslow-weighted-linearization} gives
$\int u_{\ell,r}(u)d\{\widehat\Lambda_{0,\ell}-\Lambda_{0,\ell,0}\}(u) =O_p(n^{-1/2})$
for each $r$.
Fixed dimension and $\|\Delta_\beta\|=O_p(n^{-1/2})$ make the cross term $O_p(n^{-1})$.

Finally, replace $\bar A_n$ by $\widehat A_n$. The derivative coefficients satisfy
$|L_\ell(u,x)|\le\tau$, while \eqref{eq:cox-row-stabilization-variation} in
Lemma~\ref{lem:cox-row-stabilization} gives
$\sum_{\ell\in\cJ}\sup_x \left|\widehat A_{\ell,n}(\cdot\mid x)-\bar A_{\ell,n}(\cdot\mid x)\right|_{\mathrm{TV},\tau} =O_p(n^{-1})$.
Thus row stabilization changes $\dotmu_{\tau,A_0}^{\mathrm{std}}[\cdot]$ by $O_p(n^{-1})$. Summing over the
three transitions proves \eqref{eq:cox-standardized-derivative-rate}.
\end{proof}

\begin{remark}
\label{rem:bounded-vs-subgaussian-cox}
The bounded-support assumption gives the uniform-in-$x$ conditional-hazard bound
used in Proposition~\ref{prop:cox-main-regime}.  On the local-root event,
$\|\widehat\beta_\ell-\beta_{\ell,0}\|=O_p(n^{-1/2})$, and bounded covariates imply
\[
\sup_x\|
\bar A_n(\cdot\mid x)-A_0(\cdot\mid x)\|_{\infty,\tau}
\le C\sum_{\ell\in\cJ}\left
\{\|\widehat\beta_\ell-\beta_{\ell,0}\|+
\sup_{t\le\tau}|\widehat\Lambda_{0,\ell}(t)-\Lambda_{0,\ell,0}(t)|\right\}
=O_p(n^{-1/2}).
\]
For fixed-dimensional sub-Gaussian covariates, the same argument gives the
integrated replacement
$P_X\left[\sup_{t\le\tau} |\bar A_{\ell,n}(t\mid X)-A_{\ell,0}(t\mid X)|^2\right]=O_p(n^{-1}),$
provided the corresponding exponential moments are finite.  The inverse-risk
condition \eqref{eq:cox-main-source-support} is needed in both formulations.
\end{remark}

\subsection{Flexible conditional hazards and one-step estimation}

The following single-split expansion supplies the derivative term and
separate training and evaluation sample-size dependence used in part~(b)
of Theorem~\ref{thm:plugin-onestep-integrated-summary}.
Use $\kappa_{\rm PI}$ and $\omega_{\rm PI}$ from
Section~\ref{sec:covariates}, with the matrix norm fixed in
Section~\ref{sec:setup-notation}. Set
$A_x=A_0(\cdot\mid x)$ and $\widehat A_{n,x}=\widehat A_n(\cdot\mid x)$.
For the random fitted field we use the pointwise abbreviations
$\kappa_n(x):=\kappa_{\mathrm{PI}}(A_x,\widehat A_{n,x})$, $\omega_n(x):=\omega_{\mathrm{PI}}(A_x,\widehat A_{n,x})$.

\begin{theorem}
\label{thm:plugin-rmst-expansion}
Assume the conditional-hazard and sample-splitting setup at the beginning of
Section~\ref{app:covariate-proofs}, with the product-integral notation of
Section~\ref{sec:setup-notation}.  Let $\widehat A_n$ be $\calD_{\mathrm{tr},n}$-measurable and let the evaluation
sample be independent of $\calD_{\mathrm{tr},n}$.  Assume that, with probability
tending to one,
$A_0(\cdot\mid x),\widehat A_n(\cdot\mid x)\in\Aspace_{\tau} \quad\text{for }P_X\text{-a.e. }x,$
and that, for deterministic $a_n,b_n\downarrow0$,
\begin{equation}
\label{eq:plugin-rate-assumptions}
    P_X\{\kappa_n(X)^2\}=O_p(a_n^2),
    \qquad
    P_X\{\omega_n(X)\}=O_p(b_n).
\end{equation}
Then
\begin{equation}
\label{eq:plugin-rate-expansion}
\widehat\mu_{n,m}^{\mathrm{plug}}(\tau)-\mu_\tau(A_0)
=
(\Pev-P_X)\mu_{A_0}
+
\dotmu_{\tau,A_0}^{\mathrm{std}}[\widehat A_n-A_0]
+
O_p\{b_n+m^{-1/2}a_n\}.
\end{equation}
Moreover,
\begin{equation}
\label{eq:plugin-derivative-bound}
\left|\dotmu_{\tau,A_0}^{\mathrm{std}}[\widehat A_n-A_0]\right|
=O_p(a_n+b_n),
\end{equation}
and hence
\begin{equation}
\label{eq:plugin-upper-rate}
\widehat\mu_{n,m}^{\mathrm{plug}}(\tau)-\mu_\tau(A_0)
=
O_p(m^{-1/2}+a_n+b_n).
\end{equation}
In particular, if $m\asymp n$, $a_n=n^{-1/2}$, and $b_n=n^{-1}$, then
$\widehat\mu_{n,m}^{\mathrm{plug}}(\tau)-\mu_\tau(A_0) = O_p(n^{-1/2}).$
\end{theorem}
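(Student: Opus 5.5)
The plan is a three-term decomposition of the plug-in error. I then apply Lemma~\ref{lem:rmst-hazard-derivative} pointwise in $x$. Write
\[
\widehat\mu_{n,m}^{\mathrm{plug}}(\tau)-\mu_\tau(A_0)
=(\Pev-P_X)\mu_{A_0}
+P_X(\mu_{\widehat A_n}-\mu_{A_0})
+(\Pev-P_X)(\mu_{\widehat A_n}-\mu_{A_0}).
\]
All three terms are analysed on the event, of probability tending to one, where $A_x$ and $\widehat A_{n,x}$ lie in $\Aspace_\tau$ for $P_X$-a.e.\ $x$. Off that event the required $O_p$ statements are automatic.

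\emph{Step 1 (second term).} For each $x$, Lemma~\ref{lem:rmst-hazard-derivative} gives
\[
\mu_{\widehat A_{n,x}}(\tau)-\mu_{A_x}(\tau)=\dotmu_{\tau,A_x}[\widehat A_{n,x}-A_x]+r_n(x),
\qquad |r_n(x)|\le\tau\,\omega_n(x).
\]
Integrating over $P_X$ yields
\[
P_X(\mu_{\widehat A_n}-\mu_{A_0})=\dotmu^{\mathrm{std}}_{\tau,A_0}[\widehat A_n-A_0]+O_p(b_n),
\]
by \eqref{eq:plugin-rate-assumptions}. Measurability of $x\mapsto\omega_n(x)$ and $x\mapsto\kappa_n(x)$ is taken as part of the setup.

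\emph{Step 2 (cross term).} The Lemma also gives $|\mu_{\widehat A_{n,x}}-\mu_{A_x}|\le2\tau\kappa_n(x)$. The function $\mu_{\widehat A_n}-\mu_{A_0}$ is training-measurable and bounded by $2\tau$. Hence \eqref{eq:conditional-evaluation-variance} bounds its conditional second moment by $m^{-1}4\tau^2P_X\kappa_n^2=O_p(m^{-1}a_n^2)$. The conditional Chebyshev inequality (via the usual conditional-to-unconditional $O_p$ argument) then makes the cross term $O_p(m^{-1/2}a_n)$. Combining Steps 1 and 2 proves \eqref{eq:plugin-rate-expansion}.

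\emph{Step 3 (derivative bound and final rate).} Rearranging Step 1 gives
\[
|\dotmu^{\mathrm{std}}_{\tau,A_0}[\widehat A_n-A_0]|\le P_X|\mu_{\widehat A_n}-\mu_{A_0}|+\tau P_X\omega_n\le 2\tau\{P_X\kappa_n^2\}^{1/2}+\tau P_X\omega_n,
\]
using Jensen's inequality. This is $O_p(a_n+b_n)$, which is \eqref{eq:plugin-derivative-bound}. This avoids bounding the Stieltjes integrals in \eqref{eq:rmst-pathwise-derivative} directly, which would need total-variation control that is not assumed. The first term is $O_p(m^{-1/2})$ because $|\mu_{A_0}|\le\tau$. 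Summing gives \eqref{eq:plugin-upper-rate}, using $m^{-1/2}a_n\le m^{-1/2}$. The special case is immediate.

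I expect the main obstacle to be only bookkeeping. Specifically, the derivative must be well defined and the Fubini steps legitimate for directions $D=\widehat A_{n,x}-A_x$ of finite variation. Admissibility ensures this, since both triplets have finite mass, so \eqref{eq:rmst-fubini-bound} applies pointwise in $x$.
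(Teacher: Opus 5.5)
Your proposal is correct and follows essentially the same proof as the paper. You use the same split into $(\Pev-P_X)\mu_{A_0}$, the standardized derivative, and the remainder $P_X r_n+(\Pev-P_X)(\mu_{\widehat A_n}-\mu_{A_0})$, controlled by the pointwise expansion of Lemma~\ref{lem:rmst-hazard-derivative} and the conditional variance bound \eqref{eq:conditional-evaluation-variance}; the derivative bound is likewise obtained by rearranging the pointwise expansion and applying Cauchy--Schwarz to $P_X\kappa_n$, which, as you note, avoids any total-variation control of $\widehat A_n-A_0$.
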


\begin{proof}[Proof of Theorem~\ref{thm:plugin-rmst-expansion}]
Condition on $\calD_{\mathrm{tr},n}$ and put
$g_n(x)=\mu_{\widehat A_n}(\tau\mid x)-\mu_{A_0}(\tau\mid x)$.
The evaluation observations are conditionally independent with common law $P_X$.
On the admissibility event in the theorem, Lemma~\ref{lem:rmst-hazard-derivative}
gives, for $P_X$-almost every $x$,
\begin{equation}
\label{eq:plugin-pointwise-expansion}
g_n(x)=\dotmu_{\tau,A_0}[\widehat A_n-A_0](x)+r_n(x),
\end{equation}
where $|r_n(x)|\le\tau\omega_n(x)$ and $|g_n(x)|\le2\tau\kappa_n(x)$.
Consequently, $P_X|r_n|\le\tau P_X\omega_n$ and
$P_Xg_n^2\le4\tau^2P_X\kappa_n^2$, which also justify the expectations below.
Adding and subtracting the population and evaluation means yields
\begin{align}
\label{eq:plugin-main-expansion}
\widehat\mu_{n,m}^{\mathrm{plug}}(\tau)-\mu_\tau(A_0)
&=(\Pev-P_X)\mu_{A_0}
 +\dotmu_{\tau,A_0}^{\mathrm{std}}[\widehat A_n-A_0]+R_{n,m},\\
\label{eq:plugin-remainder}
R_{n,m}&=P_Xr_n+(\Pev-P_X)g_n.
\end{align}
Applying \eqref{eq:conditional-evaluation-variance} conditionally on the
training sample, $(\Pev-P_X)g_n$ has mean zero and
\begin{equation}
\label{eq:plugin-conditional-variance}
E\!\left[\left.\{(\Pev-P_X)g_n\}^{2}\right|\calD_{\mathrm{tr},n}\right]
\le m^{-1}P_Xg_n^2
\le 4\tau^2m^{-1}P_X\kappa_n^2.
\end{equation}
Under \eqref{eq:plugin-rate-assumptions}, conditional Chebyshev's inequality
therefore gives $(\Pev-P_X)g_n=O_p(m^{-1/2}a_n)$, while
$P_Xr_n=O_p(b_n)$. Hence
$R_{n,m}=O_p\{b_n+m^{-1/2}a_n\}$,
proving \eqref{eq:plugin-rate-expansion}.

For the derivative, \eqref{eq:plugin-pointwise-expansion} and Cauchy--Schwarz imply
\begin{equation}
\label{eq:plugin-derivative-rate}
\left|\dotmu_{\tau,A_0}^{\mathrm{std}}[\widehat A_n-A_0]\right|
\le 2\tau\{P_X\kappa_n^2\}^{1/2}+\tau P_X\omega_n
=O_p(a_n+b_n).
\end{equation}
This proves \eqref{eq:plugin-derivative-bound}. Since
$0\le\mu_{A_0}(\tau\mid X)\le\tau$, we also have
$(\Pev-P_X)\mu_{A_0}=O_p(m^{-1/2})$.
Combining these bounds in \eqref{eq:plugin-main-expansion}, and using $a_n\downarrow0$,
gives $\widehat\mu_{n,m}^{\mathrm{plug}}-\mu_\tau(A_0)
=O_p(m^{-1/2}+a_n+b_n)$, as required in
\eqref{eq:plugin-upper-rate}. The special case follows by
substituting $m\asymp n$, $a_n=n^{-1/2}$, and $b_n=n^{-1}$.
\end{proof}

The one-step summand, single-split estimators, and cross-fitted estimators
are defined in \eqref{eq:main-psihat-def},
\eqref{eq:os-est-integrated-main}, and~\eqref{eq:crossfit-plugin-os-main}.
Their fitted residual RMST and fitted weights are those specified just
before \eqref{eq:main-psihat-def}. The exact conditional bias identity below
underlies Theorem~\ref{thm:plugin-onestep-integrated-summary}.

For an admissible law $A$, use the residual contrast $W_{\ell,A}$ of
Section~\ref{sec:covariates}. Its weight using the true censoring law is
\begin{equation}
\label{eq:rmst-weight-definition}
H_{\ell,A}(u,x)=\frac{W_{\ell,A}(u,x)}{G(u-\mid x)},\qquad \ell\in\cJ.
\end{equation}
Thus $H_{\ell,0}=H_{\ell,A_0}$ and $H_{\ell,\widehat A_n}$ is the oracle
fitted weight. The implemented $\widehat H_{\ell,n}$ uses $\widehat G_n$,
not $G$, as in \eqref{eq:main-psihat-def}.
Write $\mu_0(x)=\mu_{A_0}(x)$ and
$dM_{\ell,0}(u)=dN_\ell(u)-Y_{j(\ell)}(u)dA_{\ell,0}(u\mid X)$.
The true centered score $\phi^\lozenge$ is the one specified in part~(c) of
Theorem~\ref{thm:plugin-onestep-integrated-summary}. The centered fitted
score is $\widehat\phi_n^\lozenge(O)=\widehat\psi_n(O)-
\widehat\mu_{n,m}^{\os}(\tau)$.

\begin{theorem}
\label{thm:onestep-exact-bias}
Assume the conditional-hazard and sample-splitting setup at the beginning of
Section~\ref{app:covariate-proofs}, with the product-integral conventions of
Section~\ref{sec:setup-notation}.  Assume independent evaluation sampling, conditional independent censoring, and
$\inf_{u,x}G(u-\mid x)>0$.  Conditionally on $\calD_{\mathrm{tr},n}$, assume
that $\widehat A_n$ is measurable, that
$A_0(\cdot\mid X),\widehat A_n(\cdot\mid X)\in\Aspace_{\tau}$ a.s., and
that the fitted weights $\widehat H_{\ell,n}$ are predictable and square
integrable.  Suppose also that
\[
\sum_{\ell\in\cJ}
  P\int_0^{\tau}
  \{|\widehat H_{\ell,n}|+|H_{\ell,0}|\}
  Y_{j(\ell)}
  \,d|\widehat A_{\ell,n}-A_{\ell,0}| <\infty,
  \qquad
  P_X\omega_{\mathrm{PI}}\{A_0,\widehat A_n\}<\infty .
\]
Then
\begin{align}
\label{eq:onestep-decomposition}
  &\widehat\mu_{n,m}^{\os}-\mu_\tau(A_0)
  =(\Pev-P)\widehat\psi_n+B_n, \\
  \label{eq:onestep-bias}
  &B_n
  =
  \sum_{\ell\in\cJ}P\int_0^{\tau}
  \{H_{\ell,0}(u,X)-\widehat H_{\ell,n}(u,X)\}Y_{j(\ell)}(u)
  \,d\{\widehat A_{\ell,n}-A_{\ell,0}\}(u\mid X)
  +P_X r_n,
\end{align}
where
$|r_n(x)| \le \tau \omega_{\mathrm{PI}}\{A_0(\cdot\mid x),\widehat A_n(\cdot\mid x)\}.$
Hence the linear plug-in term cancels exactly.  Equivalently, for the hazard-induced weights that use the fitted censoring survival $\widehat G_n$,
\begin{align}
\label{eq:onestep-censoring-bias}
  B_n
  &=
  \sum_{\ell\in\cJ}P_X\int_0^\tau
  P^{A_0}_{j(\ell)}(u-\mid X)
  \{V^{\widehat A_n}_{k(\ell)}(u,\tau\mid X)-V^{\widehat A_n}_{j(\ell)}(u,\tau\mid X)\} \\
&\hspace{1.0in}\times
  \left\{1-\frac{G(u-\mid X)}{\widehat G_n(u-\mid X)}\right\}
  \,d\{\widehat A_{\ell,n}-A_{\ell,0}\}(u\mid X). \notag
\end{align}
In particular, if $\widehat G_n=G$, the conditional mean of the hazard-induced one-step summand equals $\mu_\tau(A_0)$ for every bounded admissible fitted transition law $\widehat A_n$.
\end{theorem}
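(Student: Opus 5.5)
The plan is to condition on $\calD_{\mathrm{tr},n}$ throughout and split the one-step estimator as $\Pev\widehat\psi_n-\mu_\tau(A_0)=(\Pev-P)\widehat\psi_n+\{P\widehat\psi_n-\mu_\tau(A_0)\}$. This gives \eqref{eq:onestep-decomposition} with $B_n:=P\widehat\psi_n-\mu_\tau(A_0)$. The work is then to compute $B_n$ exactly, in three steps.

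First, compute the conditional mean of the residual term. Given $X$, replace $dN_\ell$ by $dM_{\ell,0}+Y_{j(\ell)}dA_{\ell,0}$. Since $\widehat H_{\ell,n}$ is predictable, square integrable and fixed given the training sample, the martingale part has mean zero, and the integrability hypothesis justifies Fubini. This yields
\[
P\widehat\psi_n=P_X\widehat\mu_n-\sum_{\ell\in\cJ}P\int_0^\tau\widehat H_{\ell,n}(u,X)Y_{j(\ell)}(u)\,d\{\widehat A_{\ell,n}-A_{\ell,0}\}(u\mid X).
\]

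Second, expand the plug-in part. Lemma~\ref{lem:rmst-hazard-derivative}, applied pointwise in $x$, gives $\widehat\mu_n(x)-\mu_0(x)=\dotmu_{\tau,A_0}[\widehat A_n-A_0](x)+r_n(x)$ with $|r_n(x)|\le\tau\omega_{\mathrm{PI}}$, and $P_X\omega_{\mathrm{PI}}<\infty$ makes $P_Xr_n$ finite. To match the derivative with the hazard-weighted term, use conditional independent censoring: $E\{Y_j(u)\mid X\}=P^{A_0}_j(u-\mid X)G(u-\mid X)$, because $Y_j(u)=Y_j^*(u)I(C\ge u)$. Hence $P\int H_{\ell,0}Y_j\,dD=P_X\int P^{A_0}_jW_{\ell,A_0}\,dD$, which is exactly the $\ell$-th term of \eqref{eq:rmst-pathwise-derivative}. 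Here $W_{\ell,A_0}$ reproduces $V_1-V_0$, $-V_0$ and $-V_1$ since $V_2=0$. The standardized derivative therefore equals $\sum_\ell P\int H_{\ell,0}Y_j\,d(\widehat A_\ell-A_{\ell,0})$. Adding the two pieces cancels the linear plug-in term and proves \eqref{eq:onestep-bias}.

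Third, obtain the censoring form \eqref{eq:onestep-censoring-bias}, which I read with no $P_Xr_n$ term. The $H_{\ell,0}$-part of the weight difference must not be written as the derivative at $A_0$. Instead, Duhamel's left form \eqref{eq:duhamel-left} from Lemma~\ref{lem:product-integral-expansion} should be used with $B=\widehat A_n$, integrated in $t$ and multiplied by $\ezero^\top$ and $\alive$, exactly as in \eqref{eq:aj-exact-rmst-identity}. This gives the exact identity
\[
\widehat\mu_n(x)-\mu_0(x)=\sum_\ell\int P^{A_0}_{j(\ell)}(u-\mid x)\,W_{\ell,\widehat A_n}(u,x)\,dD_\ell(u\mid x),
\]
with the true occupation probabilities and the fitted residual RMSTs and no remainder. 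The choice of which side of Duhamel's formula carries $A_0$ matters here. The other side, $P_{\widehat A}(0,u-)\cdots P_{A_0}(u,t)$, gives the wrong pairing.

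The fitted residual term conditionally equals $\sum_\ell P_X\int P^{A_0}_j(u-\mid X)\,G(u-\mid X)\,W_{\ell,\widehat A_n}/\widehat G_n\,dD_\ell$. Subtracting it from the exact identity leaves the integrand multiplied by the factor $1-G/\widehat G_n$, which is \eqref{eq:onestep-censoring-bias}. The last claim follows immediately by setting $\widehat G_n=G$.

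I expect the main obstacle to be this exact, remainder-free identity. It requires the Stieltjes--Fubini interchange and the left-limit conventions at jumps of the fitted hazard, which may have atoms while $A_0$ is continuous. It also requires care that $P_{\widehat A}(u,t)$ is evaluated with the closed interval $(u,t]$, so that $W_{\ell,\widehat A_n}(u)$ is the quantity appearing in the definition of $\widehat H_{\ell,n}$.
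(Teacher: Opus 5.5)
Your proposal is correct and follows the paper's proof. The steps match: the split with $B_n=P\widehat\psi_n-\mu_\tau(A_0)$; Doob--Meyer plus the first-order expansion of Lemma~\ref{lem:rmst-hazard-derivative} and $E\{Y_j(u)\mid X\}=P^{A_0}_j(u-\mid X)G(u-\mid X)$ for \eqref{eq:onestep-bias}; and an exact, remainder-free Duhamel identity pairing the true occupation $P_{A_0}(0,u-)$ with the fitted residual $P_{\widehat A_n}(u,t)$ for \eqref{eq:onestep-censoring-bias}.

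One citation slip: that pairing is \eqref{eq:duhamel} with $A=A_0$, $B=\widehat A_n$, not \eqref{eq:duhamel-left}. The latter, as used in \eqref{eq:aj-exact-rmst-identity}, yields $P_{\widehat A_n}(0,u-)\,d\mathbf D\,P_{A_0}(u,t)$, which is the pairing you yourself reject. The identity you actually display is nonetheless the correct one.
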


\begin{proof}[Proof of Theorem~\ref{thm:onestep-exact-bias}]
Condition on the training sample and write $B=\widehat A_n$, $A=A_0$, and $D=B-A$.
The evaluation observations are then independent and identically distributed,
and
$\widehat\mu_{n,m}^{\os}-\mu_\tau(A_0) =(\Pev-P)\widehat\psi_n+B_n, \qquad B_n=P\widehat\psi_n-\mu_\tau(A_0).$
The true Doob--Meyer decomposition $dN_\ell=dM_{\ell,0}+Y_{j(\ell)}dA_{\ell,0}$ and the mean-zero property of
martingale integrals yield
\[
B_n=P_X\{\mu_B(\tau\mid X)-\mu_A(\tau\mid X)\}
+
\sum_{\ell\in\cJ}P\int_0^\tau
\widehat H_{\ell,n}(u,X)Y_{j(\ell)}(u)
\,d\{A_{\ell,0}-B_\ell\}(u\mid X).
\]
The first-order expansion at $A$ in Lemma~\ref{lem:rmst-hazard-derivative}
gives
\[
\mu_B(\tau\mid x)-\mu_A(\tau\mid x)
=
\sum_{\ell\in\cJ}\int_0^\tau
P^A_{j(\ell)}(u-\mid x)
\{V^A_{k(\ell)}-V^A_{j(\ell)}\}(u,\tau\mid x)
\,dD_\ell(u\mid x)+r_n(x),
\]
with $|r_n(x)|\le \tau\omega_{\rm PI}\{A(\cdot\mid x),B(\cdot\mid x)\}$. Conditional independent censoring implies $E\{Y_{j(\ell)}(u)\mid X=x\}=P^A_{j(\ell)}(0,u-\mid x)G(u-\mid x)$.
Substitution proves \eqref{eq:onestep-decomposition}--\eqref{eq:onestep-bias}.

For the censoring-only representation, start from the exact Duhamel identity
\[
P_B(0,t\mid x)-P_A(0,t\mid x)
=
\int_{(0,t]}P_A(0,u-\mid x)d\mathbf D(u\mid x)P_B(u,t\mid x).
\]
Multiplication by $\alive$, integration over $t\in[0,\tau]$, and Stieltjes--Fubini
give
\[
\mu_B(\tau\mid x)-\mu_A(\tau\mid x)
=
\sum_{\ell\in\cJ}\int_0^\tau
P^A_{j(\ell)}(u-\mid x)
\{V^B_{k(\ell)}-V^B_{j(\ell)}\}(u,\tau\mid x)
\,dD_\ell(u\mid x).
\]
The implemented hazard-induced weights use the same fitted law $B$ in
the fitted RMST, residual contrast, and compensator. Substitution therefore
gives \eqref{eq:onestep-censoring-bias}. When $\widehat G_n=G$, the factor in braces
in that identity vanishes, so $B_n=0$ for every bounded admissible fitted
transition law $B$.
\end{proof}

The identity \eqref{eq:onestep-censoring-bias} is the single-fold form of the hazard-induced cancellation.  It is used fold by fold in the proof of Theorem~\ref{thm:plugin-onestep-integrated-summary}.

\begin{lemma}
\label{lem:rmst-weight-variation}
Fix a covariate value $x$.  Let $A(\cdot\mid x)\in\Aspace_{\tau}$ and
assume the censoring survival satisfies $G(u-\mid x)\ge g_0>0$ and is
nonincreasing in $u$.  Let $H_{\ell,A}$ be the RMST martingale weights induced by
$A$, as in \eqref{eq:rmst-weight-definition}.  Then
\begin{equation}
\label{eq:rmst-weight-variation}
\max_{\ell\in\cJ}
\left[
\sup_{u\le\tau}|H_{\ell,A}(u,x)|
+
\PathVar\{H_{\ell,A}(\cdot,x);[0,\tau]\}
\right]
\le C_{\tau,g_0}\{1+|A(\cdot\mid x)|_{\tau}\} .
\end{equation}
Consequently, if $Y_j$ is an illness--death at-risk indicator, then
\begin{equation}
\label{eq:weighted-risk-variation}
\PathVar\{Y_j(\cdot)H_{\ell,A}(\cdot,x);[0,\tau]\}
+
\sup_{u\le\tau}|Y_j(u)H_{\ell,A}(u,x)|
\le C_{\tau,g_0}\{1+|A(\cdot\mid x)|_{\tau}\} .
\end{equation}
\end{lemma}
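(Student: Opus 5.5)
The plan is to bound the numerator $W_{\ell,A}$ and the reciprocal weight $1/G$ separately, and then combine them through the product rule for path variation. For bounded functions $f,g$ on $[0,\tau]$,
\[
\PathVar(fg)\le \|f\|_\infty\PathVar(g)+\|g\|_\infty\PathVar(f).
\]
Write $H_{\ell,A}=W_{\ell,A}\cdot(1/G)$ with $W_{\ell,A}=V^A_{k}-V^A_{j}$, where $V^A_2=0$.

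The censoring factor is elementary. Since $G(\cdot-\mid x)$ is nonincreasing and bounded below by $g_0$, the function $1/G(u-\mid x)$ is nondecreasing with values in $[1,1/g_0]$. Hence $\sup|1/G|\le 1/g_0$ and $\PathVar(1/G)\le 1/g_0-1$. For the numerator, \eqref{eq:v-bounds} gives $0\le V^A_j(u,\tau\mid x)\le\tau$, so $\sup_u|W_{\ell,A}|\le\tau$.

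The main step is the path variation of $u\mapsto V^A_j(u,\tau)=\int_u^\tau v^A_j(u,t)\,dt$ for $j=0,1$. I would use the backward Kolmogorov (Duhamel) form of the product integral, which is \eqref{eq:duhamel} applied on the interval $(u,u']$ and reduces to the evolution $P_A(u,t)=\{I+d\mathbf A\}P_A(u+,t)$. This gives, for $u<u'\le t$,
\[
P_A(u,t)-P_A(u',t)=\int_{(u,u']}d\mathbf A(s)\,P_A(s,t),
\]
where the product-integral factors are absorbed by row-stochasticity \eqref{eq:row-stoch}. Multiplying on the right by $\alive$ and using $\|P_A(s,t)\alive\|_\infty\le1$ together with the row-sum bound $\|d\mathbf A\|_\infty\le 2\,d|A|$, I get
\[
|v^A_j(u,t)-v^A_j(u',t)|\le 2\{|A|_{u'}-|A|_{u}\}.
\]
Integrating in $t$ requires splitting off the piece $\int_u^{u'}v^A_j(u,t)\,dt$, which is bounded by $u'-u$. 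This yields
\[
|V^A_j(u,\tau)-V^A_j(u',\tau)|\le (u'-u)+2\tau\{|A|_{u'}-|A|_{u}\}.
\]
Summing over any partition of $[0,\tau]$ then gives $\PathVar(V^A_j)\le\tau+2\tau|A|_\tau$. The care point is the jump and left-limit bookkeeping in the product integral on $(u,u']$. Using the finite-grid telescoping identity \eqref{eq:finite-telescope} and passing to the limit as in the proof of Lemma~\ref{lem:product-integral-expansion} handles this.

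Combining the pieces gives $\PathVar(H_{\ell,A})\le \tau(1/g_0)+(1/g_0)\,2(\tau+2\tau|A|_\tau)$, and the sup bound $\tau/g_0$ was already established, which proves \eqref{eq:rmst-weight-variation}. For \eqref{eq:weighted-risk-variation}, note that $Y_j$ is an indicator of an interval in the illness--death process: $Y_0=I(u\le R\wedge\tilde T)$ and $Y_1=I(R<u\le\tilde T)$. Each therefore has path variation at most $2$ and sup at most $1$. Applying the product rule once more gives the stated bound with an enlarged constant $C_{\tau,g_0}$.
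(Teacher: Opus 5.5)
Your proposal is correct and follows essentially the same argument as the paper: bound the increment of $u\mapsto V_j^A(u,\tau)$ over each partition cell by $(u'-u)+2\tau\sum_{\ell\in\cJ}A_\ell((u,u']\mid x)$ via a product-integral identity and row-stochasticity, then combine the sup and $\PathVar$ bounds for $W_{\ell,A}$, $1/G(u-\mid x)$, and the interval indicator $Y_j$ through the product rule for path variation. The only cosmetic difference is that you use the backward equation $P_A(u,t)-P_A(u',t)=\int_{(u,u']}d\mathbf A(s)\,P_A(s,t)$ directly. The paper instead bounds $\|P_A(s,t)-I\|_\infty$ and factors $P_A(t_{r-1},t)=P_A(t_{r-1},t_r)P_A(t_r,t)$, which gives the same estimate.
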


\begin{proof} [Proof of Lemma \ref{lem:rmst-weight-variation}]
Fix $x$ and suppress it where unambiguous.
Duhamel's identity with the zero hazard triplet, row-stochasticity, and
$\|d\mathbf A(r)\|_\infty\le2\sum_{\ell\in\cJ}dA_\ell(r\mid x)$ give
$\|P_A(s,t)-I\|_\infty \le2\sum_{\ell\in\cJ}A_\ell((s,t]\mid x)$, $0\le s<t\le\tau$.
For a partition $0=t_0<\cdots<t_m=\tau$, set
$\Delta_r|A|=\sum_{\ell\in\cJ}A_\ell((t_{r-1},t_r]\mid x)$.
Splitting the integrals defining $V_j^A$ at $t_r$ and using
$P_A(t_{r-1},t)=P_A(t_{r-1},t_r)P_A(t_r,t)$ yields
\[
|V_j^A(t_r,\tau\mid x)-V_j^A(t_{r-1},\tau\mid x)|
\le t_r-t_{r-1}+2(\tau-t_r)\Delta_r|A|
\le t_r-t_{r-1}+2\tau\Delta_r|A|.
\]
Summing over the partition and taking the supremum gives
\begin{equation}
\label{eq:V-pvar-direct}
\max_{j=0,1}\PathVar\{V_j^A(\cdot,\tau\mid x);[0,\tau]\}
\le\tau\{1+2|A(\cdot\mid x)|_\tau\}.
\end{equation}
Write $W_{01,A}=V_1^A-V_0^A$, $W_{02,A}=-V_0^A$, and
$W_{12,A}=-V_1^A$, so $H_{\ell,A}=W_{\ell,A}q_G$ with
$q_G(u,x)=G(u-\mid x)^{-1}$.
Then $\|W_{\ell,A}\|_\infty\le\tau$ and
$\PathVar(W_{\ell,A})\le2\tau\{1+2|A(\cdot\mid x)|_\tau\}$.
Monotonicity and positivity of $G$ give
$\|q_G\|_\infty\le g_0^{-1}$ and $\PathVar(q_G)\le g_0^{-1}$.
Applying
$\PathVar(fg)\le\|f\|_\infty\PathVar(g)+\|g\|_\infty\PathVar(f)$
therefore yields
\[
\max_{\ell\in\cJ}\{\|H_{\ell,A}\|_\infty+\PathVar(H_{\ell,A})\}
\le\frac{4\tau}{g_0}\{1+|A(\cdot\mid x)|_\tau\}.
\]
Finally, $\|Y_j\|_\infty\le1$ and $\PathVar(Y_j)\le2$. The same
product-variation inequality gives
\[
\PathVar(Y_jH_{\ell,A})+\|Y_jH_{\ell,A}\|_\infty
\le\PathVar(H_{\ell,A})+3\|H_{\ell,A}\|_\infty
\le\frac{6\tau}{g_0}\{1+|A(\cdot\mid x)|_\tau\},
\]
proving \eqref{eq:rmst-weight-variation} and \eqref{eq:weighted-risk-variation}.
\end{proof}

\begin{proof}[Proof of Theorem~\ref{thm:plugin-onestep-integrated-summary}]
Work on $\mathcal E_N$. Since $\Pr(\mathcal E_N)\to1$, the stochastic orders established there
also hold unconditionally. Constants may depend on $(\tau,g_0,M,K,\underline\pi, \overline\pi)$, but not on
$N$ or the realized training samples. For fold $k$, condition
on $\mathcal D_{-k}$. The observations in $I_k$ are independent of $(\widehat A^{(-k)},\widehat G^{(-k)})$
and have common law $P$. Pointwise in $x$, write
$A=A_0(\cdot\mid x),\qquad B_k=\widehat A^{(-k)}(\cdot\mid x),\qquad D_k=B_k-A .$

Let $\widehat\psi_{-k}$ be the fold-specific summand and put
$B_{k,N}=P\widehat\psi_{-k}-\mu_\tau$.
Apply \eqref{eq:onestep-censoring-bias} from
Theorem~\ref{thm:onestep-exact-bias} conditionally on $\mathcal D_{-k}$.
The fitted hazards and censoring law satisfy its measurability,
admissibility, positivity, and integrability requirements on $\mathcal E_N$;
bounded weights and hazard masses give the latter integrability.
The resulting exact identity is
\begin{align*}
B_{k,N}
&=\sum_{\ell\in\cJ}P_X\int_0^\tau
P^{A_0}_{j(\ell)}(u-\mid X)
\{V^{\widehat A^{(-k)}}_{k(\ell)}-V^{\widehat A^{(-k)}}_{j(\ell)}\}(u,\tau\mid X)\\
&\qquad\times\left\{1-\frac{G(u-\mid X)}{\widehat G^{(-k)}(u-\mid X)}\right\}
\,d\{\widehat A_\ell^{(-k)}-A_{\ell,0}\}(u\mid X).
\end{align*}
This cancellation uses the same fitted transition law in the fitted RMST,
residual weights, and compensator. In particular, $B_{k,N}=0$ when
$\widehat G^{(-k)}=G$, for every bounded admissible fitted transition law.

For part~(a), on $\mathcal E_N$ we have
$|W_{\ell,\widehat A^{(-k)}}(u,x)|\le 2\tau, \qquad P^{A_0}_{j(\ell)}(u-\mid x)\le1,$
and
$|dD_{k,\ell}| \le d\widehat A_{\ell}^{(-k)}+dA_{\ell,0}.$
Both hazard laws have total mass at most $M$, so
\begin{equation}
\label{eq:fold-bias-bound-crossfit}
\max_{1\le k\le K}|B_{k,N}|
\le C_{\tau,M}\max_{1\le k\le K}P_X\{\eta_{-k}(X)\}
=O_p(c_N).
\end{equation}
The last equality follows from Cauchy--Schwarz and
\eqref{eq:main-censoring-rate}.
Let $\mathbb P_{I_k}f=m_k^{-1}\sum_{i\in I_k}f(O_i)$. Adding and subtracting the foldwise means gives
\begin{equation}
\label{eq:crossfit-onestep-decomposition}
\widehat\mu_N^{\rm cf,os}-\mu_\tau
 =\sum_{k=1}^K\frac{m_k}{N}(\mathbb P_{I_k}-P)\widehat\psi_{-k}
+
\sum_{k=1}^K\frac{m_k}{N}B_{k,N}.
\end{equation}
On $\mathcal E_N$,
$0\le \widehat\mu_{-k}(X)\le\tau, \qquad |\widehat H_{\ell}^{(-k)}(u,X)|\le 4\tau/g_0,$
and
$$\sum_{\ell\in\cJ}\int_0^\tau Y_{j(\ell)}(u)d\widehat A_{\ell}^{(-k)}(u\mid X) \le |\widehat A^{(-k)}(\cdot\mid X)|_\tau\le M ,  \qquad \sum_{\ell\in\cJ}N_\ell(\tau)\le2.$$
Thus $|\widehat\psi_{-k}(O)|\le C_{\tau,g_0,M}$. Conditional on $\mathcal D_{-k}$,
$$E\{(\mathbb P_{I_k}-P)\widehat\psi_{-k}\mid\mathcal D_{-k}\}=0, \ \ E\left[\{(\mathbb P_{I_k}-P)\widehat\psi_{-k}\}^2\mid\mathcal D_{-k}\right] \le C m_k^{-1}.$$
Chebyshev's inequality gives $(\mathbb P_{I_k}-P)\widehat\psi_{-k}=O_p(m_k^{-1/2})$ in each fold. Since $K$ is
fixed and $m_k\asymp N$, the weighted sum of empirical terms in
\eqref{eq:crossfit-onestep-decomposition} is $O_p(N^{-1/2})$. Combining this with
\eqref{eq:fold-bias-bound-crossfit} proves \eqref{eq:main-os-rate-with-Ghat}.

For part~(b), apply the pointwise expansion and
conditional variance bound of Theorem~\ref{thm:plugin-rmst-expansion} separately in each
fold. Admissibility and bounded mass hold on $\mathcal E_N$, and
\eqref{eq:main-plugin-precise-rates} and the product-integral Lipschitz bound
give $\max_kP_X\kappa_{-k}^2=O_p(a_N^2)$ and
$\max_kP_X\omega_{-k}=O_p(b_N)$.
The exact decomposition is
\begin{align}
\label{eq:crossfit-plugin-decomposition}
\widehat\mu_N^{\rm cf,plug}(\tau)-\mu_\tau
&=(\mathbb P_N-P_X)\mu_{A_0}
+\sum_{k=1}^K\frac{m_k}{N}P_X\{\mu_{\widehat A^{(-k)}}-\mu_{A_0}\} \\
&\quad+
\sum_{k=1}^K\frac{m_k}{N}(\mathbb P_{I_k}-P_X)
\{\mu_{\widehat A^{(-k)}}-\mu_{A_0}\}. \notag
\end{align}
The first term is $O_p(N^{-1/2})$.
By \eqref{eq:plugin-pointwise-expansion} and \eqref{eq:plugin-derivative-rate},
each population mean difference in the second term is $O_p(a_N+b_N)$.
The conditional variance bound \eqref{eq:plugin-conditional-variance}
makes each foldwise empirical remainder $O_p(m_k^{-1/2}a_N)$.
Since $K$ is fixed and $m_k\asymp N$, their weighted sum is
$O_p(N^{-1/2}a_N)=o_p(N^{-1/2})$, proving \eqref{eq:main-plugin-rate}.
This uses separate foldwise conditional bounds and a finite sum,
not independence between the fold remainders.

For part~(c), impose the additional assumptions in that part and add and
subtract the true residual-RMST contrast:
\begin{align*}
B_{k,N}
&=T_{0,k}+T_{1,k}, \\
T_{0,k}
&=
\sum_{\ell\in\cJ}P_X\int_0^\tau
L_{\ell,0}(u,X)e_{G,-k}(u,X)\,dD_{k,\ell}(u\mid X), \\
T_{1,k}
&=
\sum_{\ell\in\cJ}P_X\int_0^\tau
P^{A_0}_{j(\ell)}(u-\mid X)
\{W_{\ell,\widehat A^{(-k)}}-W_{\ell,A_0}\}(u,X)
 e_{G,-k}(u,X)\,dD_{k,\ell}(u\mid X).
\end{align*}
By \eqref{eq:main-AG-transform-rate},
$\max_k|T_{0,k}|=O_p(a_Nc_N).$
The definition of $\kappa_{-k}$ gives
$\sup_{u\le\tau} |W_{\ell,\widehat A^{(-k)}}(u,x)-W_{\ell,A_0}(u,x)| \le C_\tau\kappa_{-k}(x)$.
Using the bounded-mass inequality
$\int_0^\tau |dD_{k,\ell}(u\mid x)| \le \widehat A_{\ell}^{(-k)}(\tau\mid x)+A_{\ell,0}(\tau\mid x) \le 2M$,
and Cauchy--Schwarz, we obtain
\begin{align*}
\max_k|T_{1,k}|
&\le
C_{\tau,M}\max_kP_X\{\kappa_{-k}(X)\eta_{-k}(X)\} \\
&\le
C_{\tau,M}\max_k
\{P_X\kappa_{-k}(X)^2\}^{1/2}
\{P_X\eta_{-k}(X)^2\}^{1/2}
=O_p(a_Nc_N).
\end{align*}
Consequently,
\begin{equation}
\label{eq:fold-bias-product-rate}
\max_{1\le k\le K}|B_{k,N}|=O_p(a_Nc_N).
\end{equation}

To derive the fitted-score rate, define
$\psi_0(O)=\mu_{A_0}(\tau\mid X)+ \sum_{\ell\in\cJ}\int_0^\tau H_{\ell,0}(u,X)dM_{\ell,0}(u)$, $\phi^\lozenge=\psi_0-\mu_\tau$.
For a fixed fold, the true Doob--Meyer decomposition gives
\begin{align}
\label{eq:fitted-score-decomposition}
\widehat\psi_{-k}-\psi_0
&=
\mu_{\widehat A^{(-k)}}(\tau\mid X)-\mu_{A_0}(\tau\mid X) \\
&\quad+
\sum_{\ell\in\cJ}\int_0^\tau
\{\widehat H_{\ell}^{(-k)}-H_{\ell,0}\}(u,X)\,dM_{\ell,0}(u) \notag \\
&\quad-
\sum_{\ell\in\cJ}\int_0^\tau
Y_{j(\ell)}(u)\widehat H_{\ell}^{(-k)}(u,X)\,dD_{k,\ell}(u\mid X). \notag
\end{align}
The first term is bounded by
$|\mu_{\widehat A^{(-k)}}(\tau\mid x)-\mu_{A_0}(\tau\mid x)| \le 2\tau\kappa_{-k}(x) \le C\delta_{A,-k}(x).$
For the second, positivity and the residual-RMST representation give
\[
\max_{\ell\in\cJ}\sup_{u\le\tau}
|\widehat H_{\ell}^{(-k)}(u,x)-H_{\ell,0}(u,x)|
\le C\{\kappa_{-k}(x)+\eta_{-k}(x)\}.
\]
The martingale isometry, finiteness of the transition set, and bounded
true hazard mass imply
$\left\| \sum_{\ell\in\cJ}\int_0^\tau (\widehat H_{\ell}^{(-k)}-H_{\ell,0})dM_{\ell,0} \right\|_{L_2(P)} =O_p(a_N+c_N).$
For the last term of \eqref{eq:fitted-score-decomposition}, apply
\[
\left|\int_{(0,\tau]}f(u)dD(u)\right|
\le
2\|D\|_{\infty,\tau}
\left\{\|f\|_{\infty,\tau}+\operatorname{pvar}(f;[0,\tau])\right\},
\]
which holds for finite-variation $f,D$ with $D(0)=0$.
Lemma~\ref{lem:rmst-weight-variation}, applied to the fitted transition and
censoring laws, gives uniformly on $\mathcal E_N$
\[
\sup_u|Y_{j(\ell)}(u)\widehat H_{\ell}^{(-k)}(u,X)|
+
\operatorname{pvar}\{Y_{j(\ell)}\widehat H_{\ell}^{(-k)};[0,\tau]\}
\le C_{\tau,g_0,M}.
\]
Hence
$\left| \sum_{\ell\in\cJ}\int_0^\tau Y_{j(\ell)}\widehat H_{\ell}^{(-k)}dD_{k,\ell} \right| \le C\delta_{A,-k}(X),$
with $L_2(P)$-norm $O_p(a_N)$. Combining these bounds in
\eqref{eq:fitted-score-decomposition} yields
$\max_{1\le k\le K} \|\widehat\psi_{-k}-\psi_0\|_{L_2(P)} =O_p(a_N+c_N).$
Thus fitted-score stability follows from the stated nuisance conditions.

Finally, add and subtract $\psi_0$ in \eqref{eq:crossfit-onestep-decomposition}:
\begin{equation}
\label{eq:crossfit-canonical-decomposition}
\widehat\mu_N^{\cf,\os}-\mu_\tau
 =(\mathbb P_N-P)\phi^\lozenge
+\sum_{k=1}^K\frac{m_k}{N}
 (\mathbb P_{I_k}-P)(\widehat\psi_{-k}-\psi_0)
+\sum_{k=1}^K\frac{m_k}{N}B_{k,N}.
\end{equation}
Conditional on $\mathcal D_{-k}$, the variance of the middle fold term is at most
$m_k^{-1}\|\widehat\psi_{-k}-\psi_0\|_{L_2(P)}^2.$
Since $K$ is fixed and $m_k\asymp N$,
$\sum_{k=1}^K\frac{m_k}{N} (\mathbb P_{I_k}-P)(\widehat\psi_{-k}-\psi_0) =O_p\{N^{-1/2}(a_N+c_N)\}.$
The bias bound \eqref{eq:fold-bias-product-rate} gives
$\left|\sum_{k=1}^K\frac{m_k}{N}B_{k,N}\right| \le \max_k|B_{k,N}|=O_p(a_Nc_N).$
Substitution into \eqref{eq:crossfit-canonical-decomposition} proves
\eqref{eq:main-os-canonical-expansion}. Since $a_N+c_N=o(1)$,
\eqref{eq:main-os-canonical-rate} also follows. Under $a_Nc_N=o(N^{-1/2})$, both
remainders are $o_p(N^{-1/2})$, establishing the canonical asymptotic linear
expansion.

\end{proof}

\section{Numerical details}\label{app:simulations}
This section gives the estimator implementations and data-generating
designs supporting Sections~\ref{subsec:gain-snapshot},
\ref{sec:simulations}, and~\ref{sec:application}. We first collect the
simulation conventions and estimator formulas, then give the no-covariate
and covariate scenarios. The semi-synthetic clinical-trial construction,
evaluation, and results are presented together in
Section~\ref{app:application}.

\subsection{Simulation conventions and estimator implementations}
\label{app:sim-notation}
\label{app:sim-estimators}

The data, process, and estimator notation is inherited from
Sections~\ref{sec:problem-setup}, \ref{subsec:estimation},
and~\ref{sec:covariates}; the supplementary empirical-measure notation is
fixed in Section~\ref{app:covariate-proofs}. Here $B$ denotes the number of
Monte Carlo replicates, $N$ the sample size, and $K$ the number of
cross-fitting folds. The target and administrative horizons are specified
for each design. We retain the implementation formulas below for the eight
estimators listed in \eqref{eq:eight-estimators}.

We give implementation formulas for the eight estimators in
\eqref{eq:eight-estimators}, using the notation referenced in
Section~\ref{app:sim-notation}.  For the
marginal (KM) family, the illness--death states $0,1$ are merged into a
single alive state, so $N_T$ and $Y_T$ are the only transition and at-risk
process, and $\lambda\equiv\lambda_T$, $S\equiv S_T$ are the marginal death
hazard and survival.  A covariate-indexed marginal hazard is fit by the
single-transition Cox/Breslow model
$E\{dN_T(u)\mid\calF_{u-},X\}=Y_T(u)\exp(\beta_{T,0}^\top X)\,d\Lambda_{0,T,0}(u)$,
the state-collapsed case of \eqref{eq:cox-main-transition-model}, with fitted
hazard $d\widehat A_{T,n}(u\mid x)=\exp(\widehat\beta_T^\top x)\,d\widehat\Lambda_{0,T}(u)$
as in \eqref{eq:Cox-plug-in}.

\paragraph{Marginal, no covariates (KM).}
\[
\widehat A_T(t)=\int_0^t\frac{I\{\bar Y_T(u)>0\}}{\bar Y_T(u)}\,d\bar N_T(u),
\qquad
\widehat S(t)=\prod_{0<u\le t}\{1-d\widehat A_T(u)\},
\qquad
\widehat\mu_\tau^{\rm KM}=\int_0^\tau\widehat S(t)\,dt.
\]

\paragraph{Recurrence-augmented, no covariates (AJ).}
For an initial-state distribution $(\widehat p_0,\widehat p_1)$,
\[
\widehat\theta(t)=\widehat p_0\{\widehat P_{00}(0,t)+\widehat P_{01}(0,t)\}
+\widehat p_1\widehat P_{11}(0,t),
\qquad
\widehat\mu_\tau^{\rm AJ}=\int_0^\tau\widehat\theta(t)\,dt.
\]
Under the initial-state convention $\mathcal S(0)=0$ in \eqref{eq:S-state},
this reduces to \eqref{eq:AJ}.

\paragraph{Marginal, with covariates, plug-in (KM-Cov$_{\rm plug}$).}
With $\widehat S_n(t\mid x)=\prod_{0<u\le t}\{1-d\widehat A_{T,n}(u\mid x)\}$
and $\widehat\mu_n^T(x)=\int_0^\tau\widehat S_n(t\mid x)\,dt$,
$\widehat\mu_{n,m}^{\rm KM\mbox{-}Cov,plug}(\tau)=\Pev\,\widehat\mu_n^T(X).$

\paragraph{Recurrence-augmented, with covariates, plug-in (AJ-Cov$_{\rm plug}$).}
With the state-occupation recursion of \Cref{sec:covariates} and
$\mu_{\widehat A_n}(\tau\mid x)=\int_0^\tau\{\widehat p_0(t\mid x)
+\widehat p_1(t\mid x)\}\,dt$,
$\widehat\mu_{n,m}^{\rm AJ\mbox{-}Cov,plug}(\tau) \equiv \widehat\mu_{n,m}^{\rm Cox,plug}(\tau) =\Pev\,\mu_{\widehat A_n}(\tau\mid X).$

\paragraph{Marginal, no covariates, one-step (KM$_{\rm 1s}$).}
The state-collapsed case $V_0=V_1=:V$ of \eqref{eq:main-nocov-rmst-H} gives a
single weight $\widehat H_T(u)=-\widehat V(u,\tau)/\widehat G(u-)$, with
$\widehat V(u,\tau)=\int_u^\tau\widehat S(t)/\widehat S(u)\,dt$, and
$\widehat\mu_\tau^{\rm KM_{\rm 1s}} =\widehat\mu_\tau^{\rm KM} +\Pn\int_0^\tau\widehat H_T(u) \{d\bar N_T(u)-\bar Y_T(u)\,d\widehat A_T(u)\}.$

\paragraph{Recurrence-augmented, no covariates, one-step (AJ$_{\rm 1s}$).}
With $\widehat H_{01},\widehat H_{02},\widehat H_{12}$ as in
\eqref{eq:main-nocov-rmst-H}, evaluated at $\widehat A^{AJ}$ and
$\widehat G$,
$\widehat\mu_\tau^{\rm AJ_{\rm 1s}} =\widehat\mu_\tau^{\rm AJ} +\Pn\sum_{\ell\in\cJ}\int_0^\tau\widehat H_\ell(u) \{d\bar N_\ell(u)-\bar Y_{j(\ell)}(u)\,d\widehat A_\ell^{AJ}(u)\}.$

\paragraph{Marginal, with covariates, one-step (KM-Cov$_{\rm 1s}$).}
With $\widehat V_n(u,\tau\mid x)=\int_u^\tau\widehat S_n(t\mid x)/\widehat S_n(u\mid x)\,dt$
and $\widehat H_{T,n}(u,x)=-\widehat V_n(u,\tau\mid x)/\widehat G_n(u-\mid x)$,
the state-collapsed case of \eqref{eq:main-psihat-def},
\begin{align*}
\widehat\psi_n(O)&=\widehat\mu_n^T(X)
+\int_0^\tau\widehat H_{T,n}(u,X)
\{dN_T(u)-Y_T(u)\,d\widehat A_{T,n}(u\mid X)\}, \\
\widehat\mu_{n,m}^{\rm KM\mbox{-}Cov_{\rm 1s}}(\tau)&=\Pev\,\widehat\psi_n(O).
\end{align*}

\paragraph{Recurrence-augmented, with covariates, one-step (AJ-Cov$_{\rm 1s}$).}
Using the hazard-induced summand and fitted weights of \eqref{eq:main-psihat-def},
\begin{align*}
\widehat\psi_n(O)&=\widehat\mu_n(X)
+\sum_{\ell\in\cJ}\int_0^\tau\widehat H_{\ell,n}(u,X)
\{dN_\ell(u)-Y_{j(\ell)}(u)\,d\widehat A_{\ell,n}(u\mid X)\}, \\
\widehat\mu_{n,m}^{\rm AJ\mbox{-}Cov_{\rm 1s}}(\tau)&\equiv
\widehat\mu_{n,m}^{\rm os}(\tau)=\Pev\,\widehat\psi_n(O),
\end{align*}
exactly \eqref{eq:os-est-integrated-main}.  In every case cross-fitting
replaces $\Pn$ or $\Pev$ by the fold-averaged versions of
\eqref{eq:crossfit-plugin-os-main}, with $K=5$ folds in Simulations 1 and 3 and $K=3$
folds in Simulation 2.

\subsection{No-covariate simulations}
\label{app:no-X-simulations}

We illustrate the gain identity \eqref{eq:gain} with synthetic data experiments.
To generate data, we specify the transition hazards $\lambda = (\lambda_{01}, \lambda_{02}, \lambda_{12}, \lambda_{C})$, a finite time horizon $\tau$, and a target number of subjects, $n=1000$.
We approximate  the hazards as piecewise constant on a grid over $[0,\tau]$ with step $dt=0.025$, and track the state of each of the $n$ subjects
forward through time, with all subjects starting in state 0 (alive, pre-recurrence).

In each grid cell, subjects still in state $0$ race an exponential clock with total rate $\lambda_{01}+\lambda_{02}+\lambda_C$.
If the clock fires within the cell, we choose which of the three competing causes occurred (recurrence, death, censoring), with probabilities proportional to the individual hazards, and the subject's state is updated accordingly.
Similarly, subjects who reach state 1 (alive, post-recurrence) then race a second exponential clock with rate $\lambda_{12}+\lambda_C$ for death vs.\ censoring.

In this manner, we generate $800$ Monte Carlo samples, and for each we compute the restricted mean survival time (RMST)
using two estimators: the $\mathcal{F}^\lozenge$-measurable AJ and the $\mathcal{F}^\circ$-measurable KM.
We report empirical variance reduction via boxplots (Fig. \ref{fig:gain-snapshot-combined}, third column) and compare the empirical AJ-vs-KM variance reduction, expressed as a percentage of the KM variance, against our theoretical predictions \ref{eq:M-theta-variance}, computed straight from the input $\lambda$s (Fig. \ref{fig:gain-snapshot-combined}, second column).

To illustrate the impact of censoring, recurrence, and post-recurrence prognostic lift on variance reduction, we vary the hazards according to the following equations:

\begin{equation}
\label{eq:no-X-hazards}
\begin{aligned}
  \lambda_C(t)   &= A\,e^{-(t-8)^2/4},
  \qquad
  \lambda_{01}(t) = B\,e^{-(t-4)^2/3},
  \qquad
  \lambda_{02}(t) = 0.02\,e^{-(t-2)^2/18},
  \\
  \lambda_{12}(t) &= \lambda_{02}(t) + \kappa\, b(t),
    \qquad b(t) = \bigl(0.03 + 0.0005t\bigr)\left(1 + \frac{2.4}{1+e^{-(t-11)/1.7}}\right)
\end{aligned}
\end{equation}

In the first set of experiments,
we vary the censoring hazards according to \eqref{eq:no-X-hazards}, with $A\in\{0,\,0.2,\,0.4,\,0.8\}$ for none, low, medium,
respectively high censoring (Fig.  \ref{fig:gain-snapshot-combined}, first row).
The variance gain increases proportionally with the strength of censoring at a roughly linear rate (row one, column two), but so does the variance of both the KM and the AJ estimators (row one, column three): the stronger the censoring, the harder the estimation problem becomes.

In the second set of experiments (Fig. \ref{fig:gain-snapshot-combined}, second row),
we vary the recurrence amplitude according to \eqref{eq:no-X-hazards}, with $B\in\{0,0.3,0.4,0.8\}$ for the same four levels.
Recurrence amplitude enters the efficiency gain \eqref{eq:gain} through the product $P_{00}(0,t)P_{01}(0,t)$, which is maximized when occupancy is balanced between the two states.
As recurrence increases from zero, subjects begin transitioning into state $1$, and the gain rises sharply; as recurrence increases further, we observe a slow decay, since subjects in state $1$ are much more likely to transition onward to state $2$, decreasing the occupancies for both states $0$ and $1$.

For the third set of experiments (Fig. \ref{fig:gain-snapshot-combined}, third row),
the post-recurrence death hazard is given in \eqref{eq:no-X-hazards}, with $\kappa\in\{0,0.75,1.5,2.5\}$ for none through high.
The prognostic lift affects efficiency gain through the squared value difference in \eqref{eq:gain}.
While in theory the efficiency gain is always positive, in practice, if the surrogate is not predictive of the true outcome, the AJ estimator can be noisier than KM, as we observe in the empirical curve of the second column.
This, however, is not the case as the surrogate becomes even a little predictive, and we observe a linear proportional relationship between the prognostic lift and the efficiency gain.

\subsection{Covariate scenarios}\label{app:sim-covariate-scenarios}

In all three covariate designs the state process is the illness--death
process \eqref{eq:S-state}, the target is $\mu_{20}=E(T\wedge\tau)$ at
$\tau=20$ with administrative horizon $\tau_0=30$, and
$N\in\{400,800,1600\}$.  Truth is fixed once per (scenario,$N$) cell from a
common $100{,}000$-draw scrambled-Sobol integral,
\[
\mu_{20}=\frac1{100000}\sum_{r=1}^{100000}\int_0^{20}
\{p_0(t\mid X_r)+p_1(t\mid X_r)\}\,dt,
\qquad
p_j(t\mid x)=\Pr\{\mathcal S(t)=j\mid X=x\},
\]
propagated on the same piecewise-constant grid used to generate the data,
and reported errors are $\widehat\mu-\mu_{20}$.  In all three designs the
baseline covariate vector is
$X=(X_1,\ldots,X_5)^\top\sim N_5(0,\Sigma)$, $\Sigma_{jk}=0.3^{|j-k|}$,
truncated coordinatewise to $[-2.5,2.5]$, and censoring is calibrated,
scenario by scenario, to $\approx60\%$ observed censoring; only deviations
from this common target are noted below.

\paragraph{Simulation 1: transition-law misspecification.}\label{app:sim1}

This design isolates a single mechanism: the working KM-Cov/AJ-Cov Cox
nuisance model is graded, in a known and controlled way, from correctly
specified to badly misspecified, so that the one-step correction can be
judged against a persistent (not sampling) plug-in bias.  The true
transition hazards are
$\lambda_{jk}(t\mid X)=b_{jk}(t)\exp\{\eta_{jk}(t,X)\}$ with
\[
b_{01}(t)=0.035+0.055\exp\{-(t-5)^2/18\},
\qquad
b_{02}(t)=0.020,
\qquad
b_{12}(t)=0.100,
\]
and linear predictors
\[
\eta_{01}(X)=0.25X_1-0.20X_2+0.15X_3,
\qquad
\eta_{02}(X)=0.20X_1+0.15X_2-0.15X_4,
\]
\[
\eta_{12}^{(s)}(X)=
\underbrace{0.45X_1-0.25X_2+0.15X_3+0.10X_5}_{\eta_{12}^{\rm lin}(X)}
+a_s\tanh(1.5X_1),
\quad
a_s\in\{0.50,1.50,3.00\}
\]
for weak, moderate, and strong misspecification.  The $0\to1$ and $0\to2$
laws are exactly Cox; only the $1\to2$ law departs from it, by the added
term $a_s\tanh(1.5X_1)$, while the working KM-Cov/AJ-Cov nuisance fits use a
linear Cox/Breslow model in $X$ for every transition, so the departure is
unmodeled by construction.  Censoring is $X$-free,
$\lambda_C(t\mid X)=\rho_C$, with $\rho_C$ calibrated to the common
$\approx60\%$ target (realized $\approx61\%$ in every cell).  There
are $B=100$ replicates per (strength,$N$) cell ($9\times100=900$ data sets,
7{,}200 estimator rows).

At $N=800$ under strong misspecification ($a_s=3$), AJ-Cov plug-in has bias
$-0.208$ months and RMSE $0.365$, while AJ-Cov one-step has bias $-0.026$
and RMSE $0.296$; at $N=1600$ the plug-in bias is $-0.238$ (not shrinking),
against a one-step bias of $-0.013$, so the plug-in bias is a persistent
misspecification effect rather than finite-sample noise.  Panel~A of
Figure~\ref{fig:sim-contrasts} plots this bias path across the weak,
moderate, and strong misspecification levels.

\paragraph{Simulation 2: non-Cox hazards under flexible learning.}\label{app:sim2}

This design changes the mechanism tested by Simulation 1: the
hazards are no longer close to Cox at all, and the covariate-adjusted
nuisances are learned by a flexible, cross-fitted machine-learning model
instead of a Cox/Breslow fit, so that the plug-in/one-step comparison can be
checked outside the proportional-hazards family.  Writing $\rho\in\{0.25,1,2\}$ for weak, moderate, and strong recurrence
signal, the transition baselines are
\[
b_{01}(t)=0.040+0.085\exp\{-(t-4.5)^2/12.5\},
\qquad
b_{02}(t)=0.008,
\qquad
b_{12}(t)=0.020+0.064\rho,
\]
and the true hazards contain nonlinear, non-proportional covariate and time
effects,
\begin{align*}
\lambda_{01}(t\mid X)&=b_{01}(t)\exp\{0.55\sin(1.15X_1)+0.35(X_2^2-0.8) \\
&\qquad+0.30\tanh(X_1X_4)+0.30\cos(2\pi t/\tau_0)X_3\}, \\
\lambda_{02}(t\mid X)&=0.008\exp\{-0.30\cos(0.8X_1)+0.35\tanh(X_2X_3) \\
&\qquad+0.25\sin(2\pi t/\tau_0)X_4+0.20\tanh(X_1-X_5)\}, \\
\lambda_{12}(t\mid X)&=b_{12}(t)\exp\big[\rho\{0.85\tanh(1.25X_1)+0.50\sin(X_2) \\
&\qquad+0.35\cos(2\pi t/\tau_0)X_4+0.25\tanh(X_3X_5)\}\big],
\end{align*}
so none of the three transitions has proportional-hazards form.  Censoring
is likewise non-Cox and time-varying,
\begin{align*}
\lambda_C(t\mid X)&=\rho_C\,e^{-(t-6)^2/32}
\exp\{0.40\sin(1.10X_1)+0.25\tanh(X_2^2-1) \\
&\qquad+0.20\cos(2\pi t/\tau_0)\tanh(X_3)+0.15\tanh(X_4X_5)\},
\end{align*}
with $\rho_C$ again calibrated to the common target within each scenario.
Rather than a Cox/Breslow fit, KM-Cov and AJ-Cov nuisances
are learned on person-interval data (grid width $0.10$) using a
gradient-boosted Poisson-loss regression with learning rate $0.07$, $70$
boosting iterations, at most seven leaves, minimum leaf size
$\max\{50,\min(300,\lfloor n_{\rm rows}/200\rfloor)\}$, and $\ell_2$ penalty
$5.0$, under three-fold cross-fitting.  There are $B=50$ replicates per
(signal,$N$) cell ($9\times50=450$ data sets, 3{,}600 rows).

At $N=800$ in the moderate cell ($\rho=1$), AJ-Cov plug-in has bias
$-0.113$/RMSE $0.294$ against one-step bias $-0.057$/RMSE $0.331$ (bias
correction at a finite-sample variance cost); under strong signal ($\rho=2$)
the corresponding values are approximately $-0.176/0.390$ (plug-in) and
$0.012/0.342$ (one-step), so the one-step correction is most valuable where
the learner's residual bias is largest.  Panels~B--C of
Figure~\ref{fig:sim-contrasts} show this one-step-versus-plug-in RMSE
change across signal strength.

\paragraph{Simulation 3: value and timing of recurrence.}\label{app:sim3}

This design isolates a third, independent mechanism: holding the
covariate-adjusted Cox/Breslow working model correctly specified (unlike
Simulation 1) and the hazard family fixed (unlike Simulation 2), it asks
only whether recurrence is itself prognostic for death, and whether
censoring can hide that information before it is observed.  Covariate
effects are fixed across all regimes,
\[
\eta_{01}(X)=0.80X_1-0.35X_2+0.20X_3,
\qquad
\eta_{02}(X)=\eta_{12}(X)=0.30X_1+0.10X_2-0.15X_4,
\]
with baselines
\begin{align*}
b_{01}(t)&=0.060+0.100\exp\{-(t-4.5)^2/12.5\}, \\
b_{02}(t)&=0.025,
\qquad
b_{12}\in\{0.025,\,0.080,\,0.240\}
\end{align*}
for null, moderate, and strong post-recurrence information; in the null
regime $b_{12}=b_{02}$, so state 1 carries no incremental information about
future death given $X$, and only $b_{12}$ -- not $\eta_{01},\eta_{02},\eta_{12}$
-- changes across regimes.  Censoring shares the linear predictor $\eta_{02}$
and is calibrated to the common target, with an early
Gaussian timing pulse centered at $t=6$ or, in the strong/later setting, at
$t=14$ (making recurrence more likely to be observed before censoring).
Four settings (null/early, moderate/early, strong/early, strong/later) are
each run with $B=100$ replicates per $N$ (1{,}200 data sets, 9{,}600 rows),
under five-fold cross-fitted Cox/Breslow nuisance fits as in
Simulation 1.

At $N=800$, the AJ-versus-KM one-step RMSE reduction is about $0.7\%$
(null), $10.6\%$ (moderate), and $23.3\%$ (strong/early); the
covariate-adjusted AJ-Cov-versus-KM-Cov reduction is smaller (about
$0.1\%$, $3.3\%$, $6.1\%$), since baseline covariates already explain part
of the variation recurrence would otherwise recover.  In the strong/later
cell, delaying the observability of recurrence relative to censoring makes
the inverse-censoring correction unstable: the one-step RMSE for KM-Cov and
AJ-Cov rises to approximately $1.20$ and $1.17$ months, with the
censoring-survival clipping floor active on many replicates -- a positivity
diagnostic rather than an estimator ranking.  Panels~D--F of
Figure~\ref{fig:sim-contrasts} show this recurrence-value and
censoring-timing sensitivity.

Read together, the three designs separate the two questions that the
covariate-free gain identity of
\Cref{subsec:gain-snapshot} conflates: Simulations 1 and 2 show that the
one-step correction protects the target under a wrong or merely flexible
nuisance model, while Simulation 3 shows that, even with a correctly
specified model, the AJ-versus-KM gain itself still tracks how strongly
recurrence has split the risk set and separated the state-specific
residual-survival means, collapsing to zero in the null cell and eroding
its finite-sample benefit once censoring hides recurrence before it can be observed.

\begin{figure}[h]
\centering
\includegraphics[width=\linewidth]{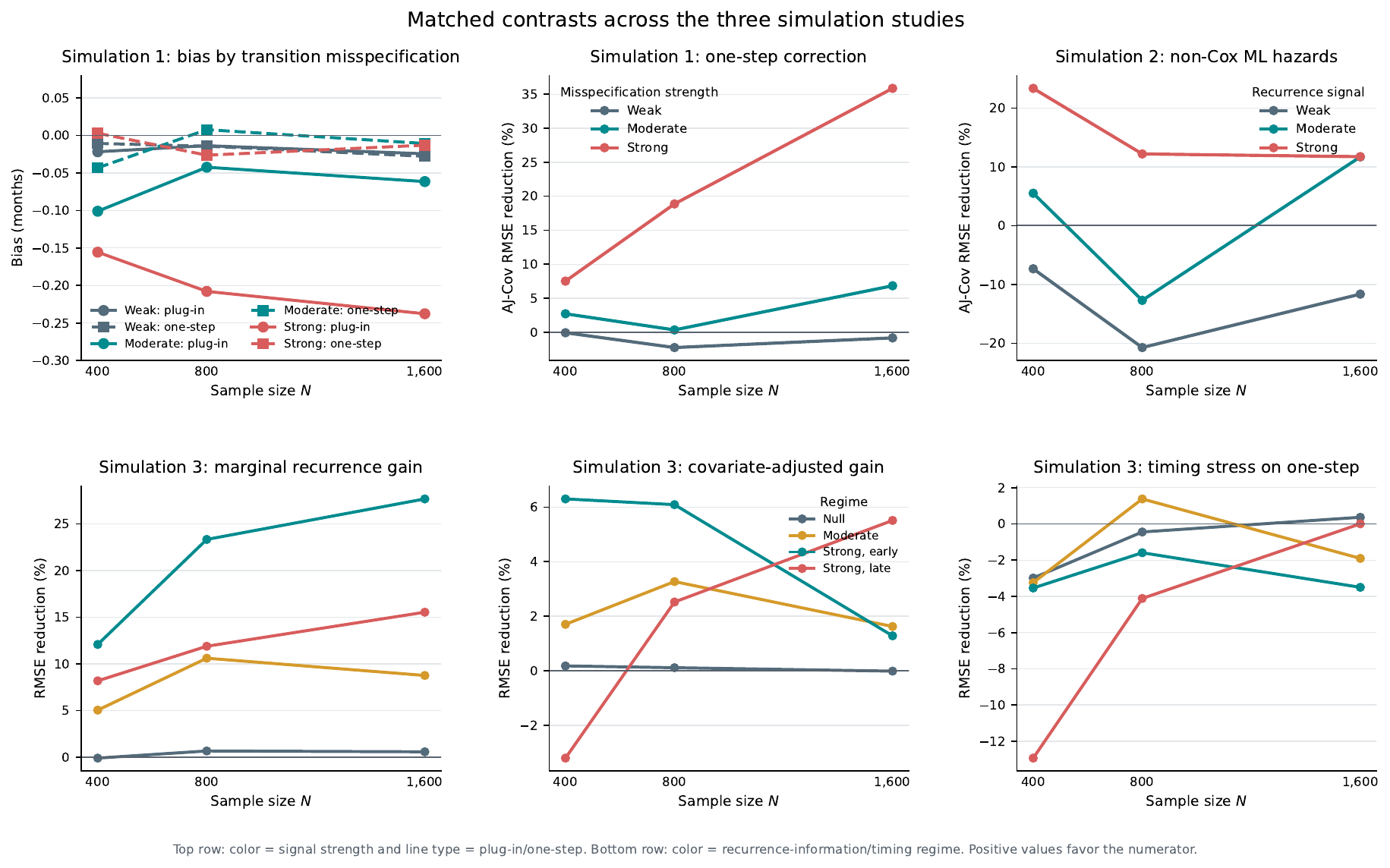}
\caption{Mechanism-specific simulation contrasts.  Panel A displays bias
across transition-law misspecification strength; panels B--C display
one-step RMSE changes; panels D--F display recurrence value and censoring
timing.}
\label{fig:sim-contrasts}
\end{figure}

\subsection{Semi-synthetic clinical-trial experiments}
\label{app:application}

We use metastatic breast cancer (MBC) clinical-trial curves to build semi-synthetic experiments to contrast the performance of our eight estimators.
The data come from a repository of 1,865 drug-therapy trials assembled by \citet{silberholz2019} and digitized by \citet{anderer2021adaptive}.
Our setting requires a two-arm randomized trial with armwise curves for the intermediate (progression/PFS) and terminal (death/OS) events.
We then conducted a quality-control audit to verify that the repository representation of each trial was sufficiently consistent with the corresponding source publication for our analysis.
This yields 40 comparisons we can simulate from.
Table~\ref{tab:rct_summary} in the Appendix summarizes key features of the patients and their outcomes in these comparisons.

From each trial arm, we calibrate a latent, arm-specific event-time law.
The public files record only the censoring-adjusted survival curves and not the underlying patient-level censoring process, so we generate censoring from a separately specified design.
The application is thus semi-synthetic in a precise sense: the event-time marginals are calibrated to the published trial curves, whereas the joint illness--death path, patient covariates, and the censoring mechanism are produced by an explicitly stated model, which we describe next.

The following material gives the trial characteristics and the curve-compatible
illness--death data-generating construction for the semi-synthetic,
MBC-anchored simulations of \Cref{sec:application}.

\subsubsection{Trial characteristics}

\begin{table}[H]
\centering
\begin{tabular}{lrrcc}
\toprule
Characteristic & Mean & Median & IQR & Range \\
\midrule
Publication year                & 2008 & 2010 & [2005, 2012]     & [1985, 2017] \\
Total Patients per comparison         & 376  & 280  & [187, 500]       & [51, 1286] \\
Median patient age (years)      & 55.6 & 55.0 & [53.0, 57.5]     & [47.5, 65.0] \\
Visceral disease (\%)$^{a}$     & 66.6 & 69.0 & [57.3, 74.7]     & [41.0, 86.0] \\
Median overall survival (mo.)$^{b}$ & 23.7 & 23.2 & [18.5, 27.9] & [10.0, 45.9] \\
\addlinespace
RMST difference, OS (mo.)       & 1.28 & 0.10 & [$-0.52$, 2.57]  & [$-4.04$, 32.59] \\
RMST difference, PFS (mo.)      & 1.53 & 1.17 & [$-0.48$, 2.92]  & [$-3.85$, 12.95] \\
\bottomrule
\end{tabular}
\vspace{0.5em}
\begin{minipage}{0.9\linewidth}
\footnotesize
$^{a}$ Available for 36 of 52 comparisons.\quad
$^{b}$ Available for 48 of 52 comparisons.
\end{minipage}
\caption{Characteristics of the 52 randomized comparisons with both PFS and OS KM curves.  Covariates are arm-size-weighted means.  RMST differences are treatment-minus-control differences in restricted mean survival time (months); positive values favor treatment.}
\label{tab:rct_summary}
\end{table}

\subsubsection{Experimental designs}

We run two experiments.
In both we simulate an illness-death process with transitions $0\to1$ (progression), $0\to2$ (death before progression), and $1\to2$ (death after progression) on a quarter-month grid, and in both we hold the arm-specific PFS and OS curves fixed while varying the joint law that generates them.
The estimand throughout is the arm-wise $\tau$-restricted mean survival time with $\tau=24$ months, and the treatment effect $\theta_1-\theta_0$.
Full details of the construction are given in Appendix~\ref{app:datagen}.

\paragraph{Controlled experiment.}
We first run controlled experiments where we fix the marginals at prespecified smooth Weibull PFS and OS curves.
The distributions for these curves are chosen to resemble the MBC trials in scale but not fitted specifically to any of them.
We calibrate a single illness--death process such that its induced marginal curves match these two Weibull distributions.
Holding both marginals smooth and fixed allows us to isolate how much the efficiency gained from observing progression depends on the joint transition mechanism alone.

\paragraph{Trial-anchored experiment.}
We then repeat this exercise using the margins from an actual trial from the processed digitized PFS and OS curves. For clarity of exposition we present one comparison from \cite{conte1996chemotherapy} in full detail. We select this example because it displays the surrogate paradox: at $\tau = 24$ months, treatment improves PFS by 1.322 months on average, while worsening OS by 0.749 months. Later, in Section~\ref{app:allTrials}, we further examine what happens when the same construction is applied to all 40 comparisons. We take only the arm-specific digitized survival values as trial inputs. Here we examine whether the mechanism established under the controlled experiment holds under the real trial margins.

\paragraph{Joint laws compatible with curve pairs.}
The pair of curves alone cannot determine the process that generated it. Progression-free survival counts both recurrence and death without recurrence without distinction. We therefore construct a family of joint laws for recurrence and death, all of which reproduce the PFS and OS margins, and present the results across that family.
Let $\pi_j$ denote the share of recurrence-free exits in interval $j$ allocated to recurrence; choosing $\pi_j$ fixes the remaining transition flows, so the compatible laws form a one-dimensional family in each interval.
We evaluate five constructions sweeping the feasible range, \emph{Lower}, \emph{Quarter}, \emph{Midpoint}, \emph{Three-quarter}, and \emph{Upper}. In the trial-anchored experiment, we evaluate two additional constructions \emph{Smooth} and \emph{Constant projection} (details can be found in~\ref{app:datagen}). The constructions differ in how many patients occupy the post-recurrence state, when they enter it, and how quickly they then die.

Each simulated patient will also carry a baseline covariate $X\sim\mathcal N(0,1)$, drawn independently of the treatment. It enters all three transition hazards through nonlinear, time-varying log-risk terms, with per-interval intercepts calibrated so that averaging over $X$ still reproduces the same PFS and OS margins; see \Cref{app:datagen} for the exact functional form and calibration.

\paragraph{Censoring generation.}
Censoring is imposed after the latent paths are drawn. We draw censoring times independently of recurrence, death, and the baseline covariate given treatment arm. Each arm follows a single two-point law, censoring at a fixed landmark with a probability calibrated so that the pooled-arm expected chance of censoring before death is $50\%$; see \Cref{app:datagen} for the exact distribution and calibration.

\subsubsection{Derivation of curve-compatible illness-death joint laws}\label{app:datagen}

Fix the horizon $\tau=24$ and a quarter-month grid
$0=t_0<t_1<\cdots<t_J=\tau$, $t_j=j\Delta,\quad \Delta=0.25,\quad \tau=24,\quad J=96$.
Given target overall-survival and progression-free-survival curves
$S_{OS},S_{PFS}:[0,\tau]\to[0,1]$ on this grid, we construct $(X,T,R)$ with
a joint illness--death law that reproduces both curves exactly.  We
suppress the arm index $a$ throughout this subsection: the entire
construction below -- allocation, covariate calibration, and path
generation -- is carried out separately for each arm, using that arm's own
$S_{\OS,a}$ and $S_{\PFS,a}$.

\paragraph{Marginal targets and allocation laws.}
Define
$p_{0,j}=S_{\PFS}(t_j)$, $p_{1,j}=S_{\OS}(t_j)-S_{\PFS}(t_j)$, $p_{2,j}=1-S_{\OS}(t_j)$.
These are the target marginal state-occupation probabilities. Further let
$M_j=p_{0,j-1}-p_{0,j},\qquad f_j=\frac{M_j}{p_{0,j-1}}$
be the unconditional state-0 exit mass and conditional state-0 exit probability. To define
a full joint law, we need to specify a recurrence allocation $\pi_j$,  which is the probability that the subject
exits state 0 to state 1. This gives the unconditional exit probabilities
$d_{01,j}=\pi_jM_j,\qquad d_{02,j}=(1-\pi_j)M_j,$
or, equivalently, the discrete hazards $q_{01,j}=\pi_jf_j$ and $q_{02,j}=(1-\pi_j)f_j$ for transitions $0\to1$ and $0\to2$.
Preservation of state 1 requires
$p_{1,j}=p_{1,j-1}+d_{01,j}-d_{12,j}, \qquad d_{12,j}=p_{1,j-1}+\pi_jM_j-p_{1,j}.$
The restrictions $0\le d_{12,j}\le p_{1,j-1}$ yield, when $M_j>0$,
\begin{equation}
\ell_j=\max\left\{0,\frac{p_{1,j}-p_{1,j-1}}{M_j}\right\}
 \le \pi_j \le
 \min\left\{1,\frac{p_{1,j}}{M_j}\right\}=u_j.
 \label{eq:bounds}
\end{equation}
If $M_j=0$, $\pi_j$ does not affect any transition flow, and the code assigns a numerical convention.  When $p_{1,j-1}>0$,
$q_{12,j}=\frac{d_{12,j}}{p_{1,j-1}} =1-\frac{p_{1,j}-p_{0,j-1}q_{01,j}}{p_{1,j-1}}.$
If the source state is empty, the code sets $q_{12,j}=0$; this is harmless,
since it multiplies zero state mass, and is a numerical convention rather
than an identified transition probability.

We consider 7 different allocation probabilities that are consistent with \eqref{eq:bounds}.  The same named rule is applied independently to each arm's own bounds $\ell_j,u_j$, so a shared construction name need not imply the same $\pi_j$ trajectory, or even the same source-state exit mass, in both arms:
\begin{align*}
\text{Lower:}\quad&\pi_j=\ell_j, \\
\text{Quarter:}\quad&\pi_j=\ell_j+0.25(u_j-\ell_j), \\
\text{Midpoint:}\quad&\pi_j=\ell_j+0.50(u_j-\ell_j), \\
\text{Three-quarter:}\quad&\pi_j=\ell_j+0.75(u_j-\ell_j), \\
\text{Upper:}\quad&\pi_j=u_j.
\end{align*}
The Smooth construction solves
$\min_{\ell_j\le\pi_j\le u_j}\sum_{j=1}^{J-1}(\pi_{j+1}-\pi_j)^2.$
It is computed by L-BFGS-B, initialized by projecting 0.5 into every interval.  The Constant-projection construction searches $v\in\{0,0.0005,\ldots,1\}$ to minimize
$\sum_{j=1}^{J}\{\operatorname{clip}(v;\ell_j,u_j)-v\}^2, \qquad \pi_j=\operatorname{clip}(v^\star;\ell_j,u_j).$
It is therefore a documented grid search, not an exact continuous optimizer.  Smooth can be nonunique, and both global laws depend on grid extent and resolution.  To prevent post-horizon choices from changing the estimand-horizon mechanism, all seven laws are defined only on $[0,\tau]$, without terminal closure.  The seven laws are illustrative feasible members, not naturally ordered by their efficiency gains and not an exhaustive or conservative range of the true joint law.

\paragraph{Covariates.}
We independently draw a baseline covariate $X\sim N(0,1)$ in each arm;
neither $X$ nor its coefficients is estimated from trial data.  Writing
$s_j=t_j/\tau$ and $z=x^2-1$, the transition-specific log-risk functions
are
\[
\begin{aligned}
g_{01}(x,s)&=0.45\{x+0.25z\}\{0.75+0.25\cos(2\pi s)\},\\
g_{02}(x,s)&=0.45\{-0.55x+0.20z\}\{0.80+0.20\sin(2\pi s)\},\\
g_{12}(x,s)&=0.45\{0.70x-0.30z\}\{0.85+0.15\cos(\pi s)\}.
\end{aligned}
\]
These effects are nonlinear and time-varying, so the data-generating law is
not a Cox proportional-hazards model.

For state 0, let $h_{01,j}(x)=a_{01,j}\exp\{g_{01}(x,s_j)\}$,
$h_{02,j}(x)=a_{02,j}\exp\{g_{02}(x,s_j)\}$, and
$H_j(x)=h_{01,j}(x)+h_{02,j}(x)$.  The conditional, individual-level
transition probabilities are the competing-risks parameterization
\begin{align}
q_{01,j}(x)&=\{1-e^{-H_j(x)}\}\frac{h_{01,j}(x)}{H_j(x)},
\label{eq:xhazards-01} \\
q_{02,j}(x)&=\{1-e^{-H_j(x)}\}\frac{h_{02,j}(x)}{H_j(x)},
\label{eq:xhazards-02} \\
q_{12,j}(x)&=1-\exp[-a_{12,j}\exp\{g_{12}(x,s_j)\}],
\label{eq:xhazards-12}
\end{align}
which enforces $q_{01,j}(x)+q_{02,j}(x)\le1$ by construction.

Let $P_{r,j-1}(x)$ be the conditional probability of state $r$ immediately
before transition $j$, and recall the marginal flows $d_{01,j}$, $d_{02,j}$,
$d_{12,j}$ defined above.  The per-interval log-intercepts
$(a_{01,j},a_{02,j},a_{12,j})$ are chosen so that averaging the conditional
hazards over $X$ reproduces those flows exactly:
\begin{align}
\E_X\{P_{0,j-1}(X)q_{01,j}(X)\}&=d_{01,j},
\label{eq:flow-calibration-01} \\
\E_X\{P_{0,j-1}(X)q_{02,j}(X)\}&=d_{02,j},
\label{eq:flow-calibration-02} \\
\E_X\{P_{1,j-1}(X)q_{12,j}(X)\}&=d_{12,j}.
\label{eq:flow-calibration-12}
\end{align}
The first two equations are solved jointly, on the log-intercept scale, by
bounded nonlinear least squares with bounds $[-32,10]$, tolerance
$10^{-13}$, and at most 200 evaluations; the third is solved by Brent's
method on the same log scale with tolerance $10^{-13}$.  Expectations are
evaluated by 41-node Gauss--Hermite quadrature, $x_h=\sqrt2\,z_h$ with
weights $w_h/\sqrt\pi$.  Because $P_{0,j-1}(x)$ and $P_{1,j-1}(x)$ depend
only on the already-calibrated hazards for intervals $1,\ldots,j-1$, the
intercepts are solved and the state probabilities propagated forward one
interval at a time, from $j=1$ to $j=J$, rather than for all $j$ at once.

\paragraph{Marginal preservation.}
Initialize $P_{0,0}(x)=1$, $P_{1,0}(x)=0$.  After interval $j$,
\[
\begin{aligned}
P_{0,j}(x)&=P_{0,j-1}(x)\{1-q_{01,j}(x)-q_{02,j}(x)\},\\
P_{1,j}(x)&=P_{1,j-1}(x)\{1-q_{12,j}(x)\}+P_{0,j-1}(x)q_{01,j}(x).
\end{aligned}
\]
If $\E_XP_{r,j-1}(X)=p_{r,j-1}$ for $r=0,1$, then
\eqref{eq:flow-calibration-01}--\eqref{eq:flow-calibration-12} give
\[
\E_XP_{0,j}(X)=p_{0,j-1}-d_{01,j}-d_{02,j}=p_{0,j},
\qquad
\E_XP_{1,j}(X)=p_{1,j-1}-d_{12,j}+d_{01,j}=p_{1,j}.
\]
Since the claim holds at $j=0$, induction gives it at every grid point, so
\[
\E_XP_{0,j}(X)=S_{\PFS}(t_j),
\qquad
\E_X\{P_{0,j}(X)+P_{1,j}(X)\}=S_{\OS}(t_j)
\]
exactly, for every $j$: the $X$-dependent law reproduces the target curves,
not merely their pointwise increments.  The implementation records the
numerical flow and curve calibration errors and stops if any flow error
exceeds $10^{-7}$ or any transition or state probability is invalid.

\paragraph{Subject-level path generation.}
Paths are drawn directly in discrete time from the calibrated conditional
probabilities \eqref{eq:xhazards-01}--\eqref{eq:xhazards-12}.  For each arm
and subject, draw $X$, initialize $Z(t_0)=0$, and visit the endpoints
$t_1,\ldots,t_J$ in order.  At each endpoint, draw one
independent $U\sim\mathrm{Uniform}(0,1)$.  If the subject is in state 0
immediately before $t_j$, set
\[
Z(t_j)=
\begin{cases}
1, & U<q_{01,j}(X),\\
2, & q_{01,j}(X)\le U<q_{01,j}(X)+q_{02,j}(X),\\
0, & \text{otherwise};
\end{cases}
\]
if the subject is in state 1, set $Z(t_j)=2$ when $U<q_{12,j}(X)$ and
retain state 1 otherwise; state 2 is absorbing.  The recurrence time $R$
and death time $T$ are each set to the first endpoint at which the
corresponding transition occurs, and to $\infty$ if no such transition
occurs by $\tau$.

\paragraph{Censoring.}
Censoring follows a single two-point law,
\[
C=
\begin{cases}
L, & \text{with probability } m,\\
\infty, & \text{with probability } 1-m,
\end{cases}
\qquad L=6,
\]
independent of $(T,R,X)$ within arm; a transition recorded at $L$ is taken
to occur before censoring, so
\[
G(t-)=P(C\ge t)=
\begin{cases}
1, & t\le L,\\
1-m, & t>L.
\end{cases}
\]
With equal arm allocation, the censoring mass is fixed once, before
simulation, as
\[
m=\frac{0.50}{\{S_{\OS,0}(L)+S_{\OS,1}(L)\}/2},
\]
so that
$\frac12\sum_{a=0}^1P(C<T\mid A=a) =\frac m2\sum_{a=0}^1S_{\OS,a}(L)=0.50$.
The distribution is therefore fixed by the marginal calibration, not tuned
per replicate; the realized censoring fraction is allowed to vary across
replicates.  A subject alive and uncensored through $\tau$ contributes the
truncated outcome $T\wedge\tau$ and is not counted as censored.

\subsubsection{Evaluation and diagnostics}\label{app:mbc-eval}

Before simulating, we first ask how much recurrence can improve efficiency under exact knowledge of the generating law for all of the constructions. We fix the construction and compare two analyses of the same population: one that uses the full recurrence history, as the AJ estimator does, and one that uses only the death information as the KM estimator does. Write $V^{\rm or}_{{\rm AJ},a}$ and $V^{\rm or}_{{\rm KM},a}$ for the smallest asymptotic variance each analysis can attain in arm $a$. With equal arm sizes, the oracle gain for the treatment contrast is
$\Gamma^{\rm or} = 1-\frac{V^{\rm or}_{{\rm AJ},0}+V^{\rm or}_{{\rm AJ},1}} {V^{\rm or}_{{\rm KM},0}+V^{\rm or}_{{\rm KM},1}}.$
We compute this from the true transition law, covariate distribution and censoring distribution so that it measures what recurrence offers under a given law.

We design a diagnostic to measure how much information recurrence carries under each construction we study. Let
$V_k(t,x)=E\{\min(T,\tau)-t \mid Z(t)=k,\ X=x\},\qquad k=0,1,$
be the expected remaining restricted survival in each living state. Their difference $D_R(t,x)=V_1(t,x)-V_0(t,x)$ is how much learning that a patient has recurred changes what we expect of them. Weighting this difference by whether both states are appreciably occupied and integrating over the horizon gives
$D_a=\frac{1}{\tau}\int_0^\tau \left[\,\mathbb{E}_X\!\left\{ \frac{p_{0,a}(t,X)\,p_{1,a}(t,X)}{p_{0,a}(t,X)+p_{1,a}(t,X)}\; D_{R,a}(t,X)^2\right\}\right]^{1/2} dt ,$
where $p_{k,a}(t,x)=P\{Z(t)=k\mid A=a,X=x\}$; we report the average of $D_0$ and $D_1$. The diagnostic is small when nearly everyone alive is in the same state, when recurrence does not change the outlook, or when the difference arises only after censoring has removed the relevant death information.

In the controlled experiment we simulate all five constructions. For the  trial-anchored experiment, we explore three of our seven constructions here; based on the residual-value diagnostic \emph{Quarter}, \emph{Smooth}, and \emph{Upper} are respectively the least, median, and most informative of the seven, so we report these.
For each experiment and law combination we simulate 500 trials with 1,000 patients per arm. Nuisance quantities are estimated by a prespecified smoothed histogram over $X$ and event time, fit on independent folds under two-fold cross-fitting. We compare four matched pairs of estimators: marginal KM against AJ plug-ins, the corresponding one-step estimators, and the covariate-standardized versions of each. Within each pair the two estimators differ only in whether they leverage recurrence history, so difference in sampling variability is dependent solely on recurrence information.

For each input marginal curve, we generate Monte Carlo trial replicates
from the calibrated, curve-compatible law above and evaluate the eight
estimators of \Cref{app:sim-estimators} on them.  Each replicate simulates
$N=1{,}000$ patients per arm, and every scenario below is evaluated from
$B=500$ independent replicates.

The oracle-gain calculation is the same functional of the true law for
every marginal input.  Let $V_{0,j}(x)$ and
$V_{1,j}(x)$ be the expected unnormalized residual survival time after
$t_j$, conditional on being in state $0$ or $1$ immediately after the
endpoint transition; with $V_{0,J}=V_{1,J}=0$, for $j=J-1,\ldots,0$,
\[
V_{0,j}(x)=\Delta+\{1-q_{01,j+1}(x)-q_{02,j+1}(x)\}V_{0,j+1}(x)
+q_{01,j+1}(x)V_{1,j+1}(x),
\]
$V_{1,j}(x)=\Delta+\{1-q_{12,j+1}(x)\}V_{1,j+1}(x),$
so that the arm-$a$ conditional plug-in mean is $\mu_a(x)=V_{0,0}(x)/\tau$.
With $\delta_{01,j}(x)=V_{1,j}(x)-V_{0,j}(x)$, $\delta_{02,j}(x)=-V_{0,j}(x)$, and
$\delta_{12,j}(x)=-V_{1,j}(x)$ (distinct from the hazard-rate building
blocks $h_{01,j},h_{02,j}$ of the Covariates paragraph above), the
transition innovation variances are
\[
\sigma^2_{0,j}(x)=q_{01,j}(x)\delta_{01,j}(x)^2+q_{02,j}(x)\delta_{02,j}(x)^2
-\{q_{01,j}(x)\delta_{01,j}(x)+q_{02,j}(x)\delta_{02,j}(x)\}^2,
\]
$\sigma^2_{1,j}(x)=q_{12,j}(x)\{1-q_{12,j}(x)\}\delta_{12,j}(x)^2,$
and the arm-$a$ within-$X$ AJ variance component is
$\mathcal R_{{\rm AJ},a}=\frac1{\tau^2}\sum_{j=1}^J\E_X\left[ \frac{P_{0,j-1}(X)\sigma^2_{0,j}(X)+P_{1,j-1}(X)\sigma^2_{1,j}(X)}{G(t_j-)} \right].$
For the death-only coarsening, with $A_{j-1}=P_{0,j-1}+P_{1,j-1}$,
$q_{D,j}=\{P_{0,j-1}q_{02,j}+P_{1,j-1}q_{12,j}\}/A_{j-1}$,
$P^+_{0,j}=P_{0,j-1}(1-q_{01,j}-q_{02,j})$,
$P^+_{1,j}=P_{1,j-1}(1-q_{12,j})+P_{0,j-1}q_{01,j}$, and continuation value
$\bar V_j=\{P^+_{0,j}V_{0,j}+P^+_{1,j}V_{1,j}\}/\{P^+_{0,j}+P^+_{1,j}\}$,
\[
\mathcal R_{{\rm KM},a}=\frac1{\tau^2}\sum_{j=1}^J\E_X\left[
\frac{A_{j-1}(X)q_{D,j}(X)\{1-q_{D,j}(X)\}\bar V_j(X)^2}{G(t_j-)}
\right].
\]
With the pooled-standardization term
$B_\Delta=\frac12\Var_X\{\mu_1(X)-\mu_0(X)\}$, the oracle contrast
variances and gain are
\begin{equation}
\mathcal V_b=B_\Delta+\mathcal R_{b,0}+\mathcal R_{b,1},
\quad b\in\{{\rm KM},{\rm AJ}\},
\qquad
\Gamma^{\rm or}=1-\frac{\mathcal V_{\rm AJ}}{\mathcal V_{\rm KM}}.
\label{eq:pooled-oracle}
\end{equation}
This is a deterministic calculation under the true law and censoring,
evaluated once per marginal input; it is not an empirical ratio from the
simulated replicates.

\subsubsection{Results}

All results below report the same quantity: the proportional reduction in the variance of the estimated treatment effect obtained by using the recurrence history. Unless noted otherwise, gains quoted in the text refer to the covariate-adjusted one-step pair.

\begin{figure}[t]
\centering
\includegraphics[width=\linewidth]{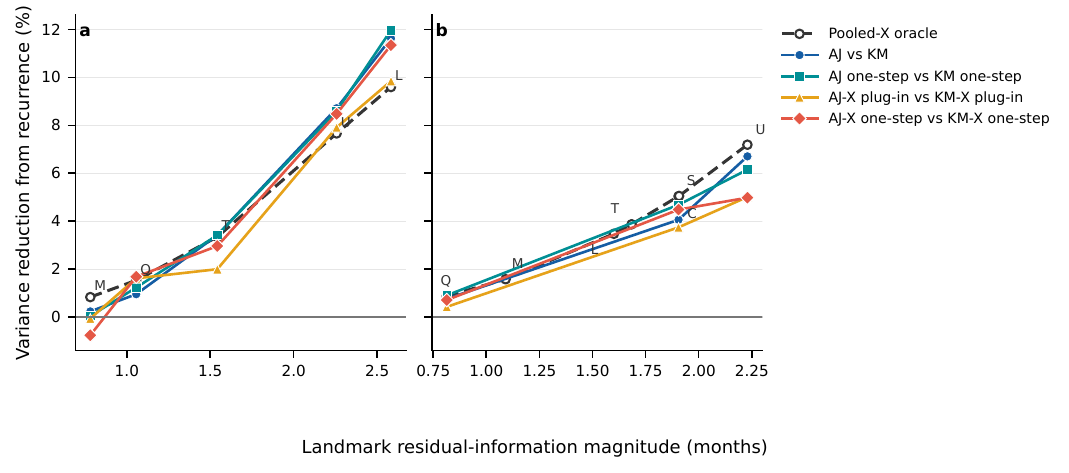}
\caption{Variance gain against the residual-value diagnostic for every
matched estimator pair.  Panel (a): the five pointwise transition
allocations under the controlled Weibull-type margins.  Panel (b): all
seven CONTE0796-compatible allocations; oracle gains are shown for all
seven, and simulated gains for \emph{Quarter}, \emph{Smooth}, and
\emph{Upper}.  Black dashed curves are oracle gains; colored points are
$500$-replicate simulated gains.}
\label{fig:mechanism}
\end{figure}

Figure~\ref{fig:mechanism} plots the variance gain against the residual-value diagnostic for every matched estimator pair, with the results from the controlled experiment in panel~(a) and the trial-anchored experiment results in panel~(b). The black dashed curves plot the oracle gains. The relationship is monotone: the rank correlation between the diagnostic and oracle gain is $1.00$ across the controlled experiment constructions and $1.00$ across the trial-anchored constructions, with no inversions in either family. All four matched pairs of estimators trace the same relationship, so the diagnostic is predictive of the gain under any of them.

Holding a single pair of synthetic curves fixed in the controlled experiment, the oracle gain runs from $0.84\%$ to $9.60\%$ across the five constructions and the simulated gain from $-0.76\%$ to $11.35\%$. Observing recurrence cannot reduce efficiency in population: the oracle gain is positive in every law and censoring configuration we computed. The slightly negative simulated value reflects Monte Carlo variability at $500$ replicates. Full results appear in \Cref{app:mbc-eval}.

Under the trial-anchored curves the same pattern appears at a more modest scale. Table~\ref{tab:gain} reports the results from the three representative trial-anchored constructions. All three constructions reproduce the PFS and OS curves from \cite{conte1996chemotherapy} exactly, yet the oracle gain rises from $0.88\%$ under \emph{Quarter} to $5.04\%$ under \emph{Smooth} and $7.19\%$ under \emph{Upper}, tracking residual-value diagnostics of $0.81$, $1.91$, and $2.23$ months. The simulated gains follow at $0.71\%$, $4.49\%$, and $4.98\%$: close to the oracle for \emph{Quarter} and \emph{Smooth}, while \emph{Upper} shows the widest gap, consistent with $500$-replicate Monte Carlo noise rather than a systematic shortfall.

\begin{figure}[t]
\centering
\includegraphics[width=\linewidth]{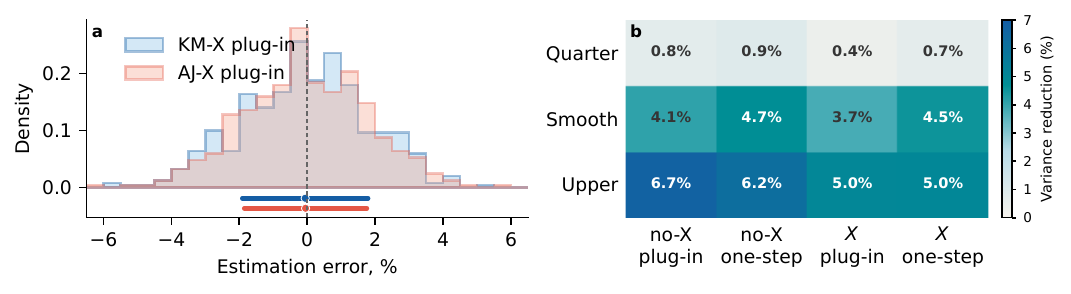}
\caption{Finite-sample comparisons under the CONTE0796-anchored
constructions.  Panel (a): sampling distributions of the marginal plug-in
estimators under \emph{Upper}, KM versus AJ.  Panel (b): variance
reduction for all twelve matched estimator-by-scenario combinations
across \emph{Quarter}, \emph{Smooth}, and \emph{Upper}.}
\label{fig:mc}
\end{figure}

Figure~\ref{fig:mc} shows an overview of the finite-sample comparisons for the trial-anchored constructions.
Panel~(a) enlarges a single representative pair, the marginal plug-in estimators under \emph{Upper}, to show what a reduction of this size looks like.
The two sampling distributions are centered together but differ in spread, a $6.7\%$ variance reduction.
Panel~(b) reports the exact reduction in all twelve scenario-by-estimator combinations.
All twelve are positive.
The ordering \emph{Quarter} $<$ \emph{Smooth} $<$ \emph{Upper} holds in every estimator family, so the ranking is a property of the generating law rather than of the analysis.
The families differ in magnitude but not in order: under \emph{Upper} the marginal estimators gain $6.7\%$ and $6.2\%$ while their covariate-adjusted counterparts gain $5.0\%$, consistent with the baseline covariate already supplying part of the same prognostic information.
Differences between plug-in and one-step estimators are smaller and run in both directions.

Collectively, these results show that observing and incorporating recurrence can never reduce efficiency, but how much it helps depends on the joint law connecting recurrence and death. This cannot simply be determined from effect sizes and PFS and OS curves.
Importantly, the estimator exploits recurrence when it is informative without requiring the joint law to be known in advance: in the worst case the benefit is small, and in the best case it is substantial.

These gains translate directly into trial size. A variance reduction of $\Gamma$ means a Kaplan--Meier analysis requires $1/(1-\Gamma)$ times as many patients to reach the same precision. Under the most informative trial-compatible construction, where $\Gamma\approx5\%$, a trial that would otherwise enroll $600$ patients reaches the same precision with roughly $570$---about $30$ fewer enrollments. At \$50{,}000 per participant \citep{moore2018estimated}, that is a saving of approximately \$1.5 million; under the intermediate construction ($\Gamma\approx4.5\%$) it is closer to \$1.3 million. In both cases the saving comes from information already collected in the course of the trial.

One might naturally ask whether the measures conventionally used to validate surrogate endpoints, individual-level and trial-level correlation \citep{prentice1989surrogate, buyse2000validation}, would tell us in advance how much incorporating recurrence might reduce variance.
However, they cannot because while they are properties of the joint law, the gain also depends on when recurrence occurs relative to when follow-up ends.
These measures were developed to assess whether a surrogate can substitute for the true endpoint.
Our question is different: overall survival remains the estimand and recurrence enters as auxiliary information.

\paragraph{Controlled Weibull-type margins.}
As a first, fully specified target we fix prespecified Weibull-type
margins,
\[
S_{\OS,0}(t)=\exp\{-(t/22)^{1.25}\},
\qquad
S_{\PFS,0}(t)=\exp\{-(t/10.5)^{1.12}\},
\]
\[
S_{\OS,1}(t)=\exp\{-(t/25)^{1.25}\},
\qquad
S_{\PFS,1}(t)=\exp\{-(t/12)^{1.12}\},
\]
with PFS replaced by its pointwise minimum with OS as a compatibility
safeguard (inactive on $[0,\tau]$).  These margins are design constants,
not fit to any trial; they resemble the MBC time scale, but their
24-month RMST contrast has the opposite sign from the CONTE0796 contrast
used below.  Under this margin we simulate all five pointwise allocations
Lower, Quarter, Midpoint, Three-quarter, and Upper.

\paragraph{CONTE0796-anchored representative constructions.}
Under the CONTE0796-anchored margins, all seven constructions of
\Cref{app:datagen} receive the oracle calculation of
\eqref{eq:pooled-oracle}, but only \emph{Quarter}, \emph{Smooth}, and
\emph{Upper} -- a low-signal interior allocation, a globally regularized
allocation, and the boundary allocation with the largest landmark signal,
respectively -- are carried to Monte Carlo evaluation; \emph{Lower},
\emph{Midpoint}, \emph{Three-quarter}, and \emph{Constant projection} have
oracle results only.  Table~\ref{tab:gain} reports the oracle and empirical,
covariate-adjusted one-step gains for the three representative
constructions of \Cref{sec:application}.

\begin{table}[H]
\centering\footnotesize
\setlength{\tabcolsep}{4pt}
\begin{tabular}{lccccc}
\toprule
Construction & Diagnostic (mo.) & Oracle gain (\%) & Empirical gain (\%) & MCSE (\%) & 95\% MC interval \\
\midrule
Quarter & 0.81 & 0.88 & 0.71 & 0.91 & [$-1.08$, $2.50$] \\
Smooth  & 1.91 & 5.04 & 4.49 & 2.11 & [$0.36$, $8.62$] \\
Upper   & 2.23 & 7.19 & 4.98 & 2.29 & [$0.49$, $9.47$] \\
\bottomrule
\end{tabular}
\caption{Oracle and empirical variance gains for the three representative
CONTE0796-anchored constructions, covariate-adjusted one-step pair.  The
diagnostic is $\{[J_0(L)+J_1(L)]/2\}^{1/2}$ at the censoring landmark
$L=6$.  Empirical gain is reported with its Monte Carlo standard error
(MCSE) and 95\% interval.}
\label{tab:gain}
\end{table}

\subsubsection{Results across trials}\label{app:allTrials}

To assess whether the CONTE0796 pattern generalizes, we repeat the same
construction across all 40 eligible comparisons with covariates, holding the estimators
and the seven transition constructions fixed and varying only the trial.
Figure~\ref{fig:alltrials} reports the resulting empirical variance gain
for every trial-by-construction combination.

\begin{figure}[t]
\centering
\includegraphics[width=\linewidth,height=0.85\textheight,keepaspectratio]{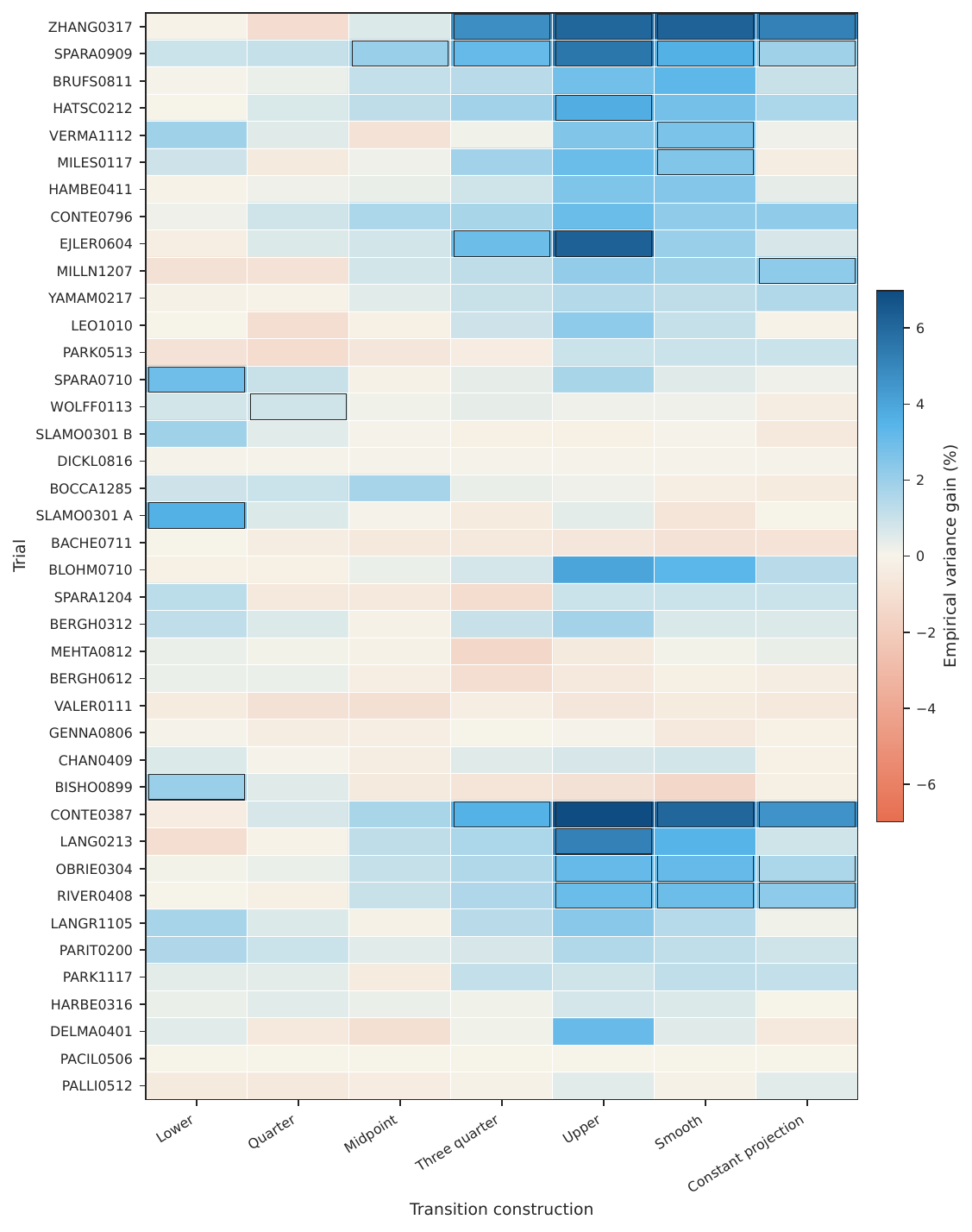}
\caption{Empirical variance gain (\%) from using recurrence, for each of
the 40 eligible trials (rows) under each of the seven transition
constructions (columns).}
\label{fig:alltrials}
\end{figure}

Figure~\ref{fig:alltrials-effects} plots the same gain (\emph{Smooth}
construction) against each trial's own 12-month PFS and OS treatment
effects.  There is no discernible relationship between the two treatment
effects and the color: trials with similar joint PFS/OS effects achieve
markedly different gains, and trials achieving similarly large gains can
have very different, even oppositely signed, PFS and OS effects.  The
correlation between the PFS and OS treatment effects is therefore not
informative about the value of recurrence; as in \Cref{sec:application},
that value is a property of the underlying joint law connecting recurrence
and death, not of the marginal effect sizes.

\begin{figure}[t]
\centering
\includegraphics[width=\linewidth]{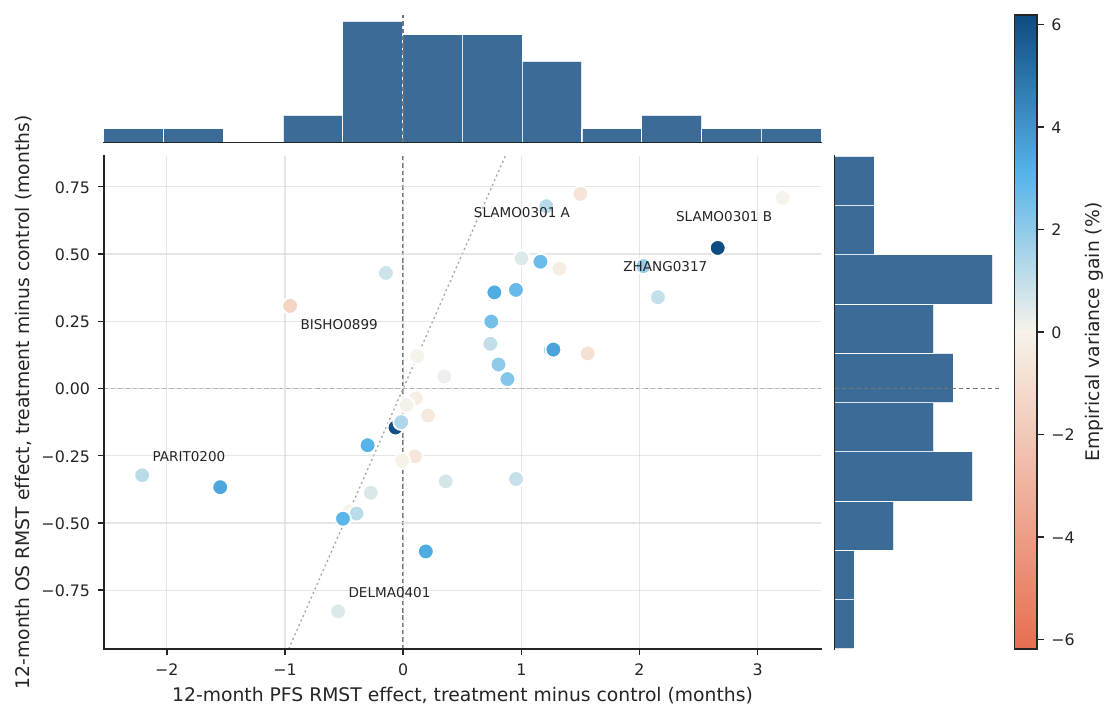}
\caption{Empirical variance gain (\%, color) from using recurrence,
\emph{Smooth} construction, plotted against each trial's own 12-month PFS
and OS treatment effects (treatment minus control, months).  Marginal
histograms show the distribution of each effect across the 40 trials.}
\label{fig:alltrials-effects}
\end{figure}

Across the resulting $40\times 7=280$ trial-by-construction cells, the
covariate-adjusted one-step empirical gain has median $0.44\%$ (from
$-0.12\%$ to $1.33\%$ over the interquartile range, and from $-1.47\%$ to
$6.98\%$ overall), and the continuous-covariate oracle gain has median
$0.70\%$ (from $0.00\%$ to $5.77\%$); the two correlate at $0.78$ across
cells, and the paired Monte Carlo interval covers the oracle value in
$92\%$ of cells, with the remaining negative empirical gains attributable
to finite-$B$ noise around a population gain that is itself never
negative. Restricted to the \emph{Smooth} construction plotted in
Figure~\ref{fig:alltrials-effects}, the 40 trial-level gains average
$1.37\%$ (median $1.01\%$), matching the oracle median of $1.38\%$ to
within Monte Carlo error. Holding each trial's marginal PFS and OS
curves -- and hence its coordinates in Figure~\ref{fig:alltrials-effects}
-- fixed while varying only the joint recurrence/death construction still
moves the gain substantially, which is the sharpest demonstration that
the PFS/OS treatment-effect pair cannot determine the value of recurrence:
the same marginal effects are compatible with many different joint laws,
and it is the joint law alone, through how the two living states are
populated at the censoring landmark and how different their remaining
survival prospects are, that sets the gain.

\clearpage
\bibliographystyle{agsm}
\bibliography{surr4surv}

\end{spacing} 
\end{document}